\newtheorem{theorem}{Theorem}[section]
\newtheorem{proposition}[theorem]{Proposition}
\newtheorem{corollary}[theorem]{Corollary}
\newtheorem{lemma}[theorem]{Lemma}
\newtheorem*{theorem_a}{Theorem A}
\newtheorem*{theorem_b}{Theorem B}
\newtheorem*{theorem_c}{Theorem C}
\newtheorem*{theorem_d}{Theorem D}
\theoremstyle{definition}
\newtheorem{definition}[theorem]{Definition}
\newtheorem{remark}[theorem]{Remark}
\numberwithin{equation}{section}
\def\a{\alpha}
\def\b{\beta}
\def\eps{\varepsilon}
\def\ep{\varepsilon}
\def\div{\textrm{div}}
\def\O{\Omega}
\def\Bcal{\mathcal{B}}
\def\Ccal{\mathcal{C}}
\def\Gcal{\mathcal{G}}
\def\Kcal{\mathcal{K}}
\def\Mcal{\mathcal{M}}
\def\Rcal{\mathcal{R}}
\def\Ucal{\mathcal{U}}
\def\Hh{\mathscr{H}}
\def\N{\mathbb{N}}
\def\R{\mathbb{R}}
\def\dist{\mathrm{dist}}
\def\hbar{\bar{h}}
\def\le{\leqslant}
\def\leq{\leqslant}
\def\geq{\geqslant}
\newcommand{\de}[1] {\mathrm{d} #1}
\def\pa{\partial}
\newcommand{\rst}[1]{\ensuremath{{\mathbin |}%
\raise-.5ex\hbox{$#1$}}}
\newcommand{\mf}[1]{\mathbf{#1}}
\newcommand{\loc}{\mathrm{loc}}
\definecolor{verde}{rgb}{0,0.35,0.1} 
\definecolor{rosso}{rgb}{0.7,0,0}
\definecolor{blue}{rgb}{0,0,1}
\definecolor{viola}{rgb}{0.6,0,0.4}
\definecolor{grigio}{rgb}{0.5,0.5,0.5}
\author[N. Soave]{Nicola Soave}\thanks{}
\address{Nicola Soave \newline \indent
Mathematisches Institut, \newline \indent Justus-Liebig-Universit\"at Giessen, \newline \indent
Arndtstrasse 2, 35392 Giessen, Germany}
\email{nicola.soave@gmail.com; nicola.soave@math.uni-giessen.de}
\author[H. Tavares]{Hugo Tavares}\thanks{}
\address{Hugo Tavares \newline \indent  Center for Mathematical Analysis, Geometry and Dynamical Systems \newline \indent Mathematics Department \newline \indent Instituto Superior T\'ecnico, Universidade de Lisboa \newline \indent Av. Rovisco Pais, 1049-001 Lisboa, Portugal}
\email{htavares@math.ist.utl.pt}
\author[S. Terracini]{Susanna Terracini}\thanks{}
\address{Susanna Terracini \newline \indent
 Dipartimento di Matematica ``Giuseppe Peano'', \newline \indent
Universit\`a di Torino, \newline \indent
Via Carlo Alberto, 10,
10123 Torino, Italy}
\email{susanna.terracini@unito.it}
\author[A. Zilio]{Alessandro Zilio}\thanks{}
\address{Alessandro Zilio \newline \indent Centre d'analyse et de math\'{e}matique sociales\newline 	\indent
\'{E}cole des Hautes \'{E}tudes en Sciences Sociales \newline 	\indent
190-198 Avenue de France, 75244, Paris CEDEX 13, France}
\email{azilio@ehess.fr, alessandro.zilio@polimi.it}
\title[Uniform H\"older bounds and regularity of the emerging free boundaries]{H\"older bounds and regularity of emerging free boundaries for strongly competing
Schr\"odinger equations with nontrivial grouping}
\subjclass[2010]{35B45, 35B65, 35R35 (Primary), 35B53, 35B25, 35J47, 35R06}
\keywords{Nonlinear Schr\"odinger systems, Bounds in H\"older Norm, Segregation between groups, Regularity of Nodal Sets, Weak reflection law, Liouville type theorems}
\thanks{Hugo Tavares is partially supported by Funda\c c\~ao para a Ci\^encia e Tecnologia through the program ``Investigador FCT'' and the project UID/MAT/04459/2013. Alessandro Zilio is partially supported by the ERC Advanced Grant 2013 n. 321186 ``ReaDi -- Reaction-Diffusion Equations, Propagation and Modelling''. The authors are partially supported through the project ERC Advanced Grant 2013 n. 339958 ``Complex Patterns for Strongly Interacting Dynamical Systems - COMPAT". }
\begin{document}

\begin{abstract}
We study interior regularity issues for systems of elliptic equations of the type
\[
	-\Delta u_i=f_{i,\beta}(x)-\beta \sum_{j\neq i} a_{ij} u_i |u_i|^{p-1}|u_j|^{p+1}
\]
set in domains $\Omega \subset \R^N$, for $N \geq 1$. The paper is devoted to the derivation of $\Ccal^{0,\alpha}$ estimates that are uniform in the competition parameter $\beta > 0$, as well as to the regularity of the limiting free-boundary problem obtained for $\beta \to + \infty$.

The main novelty of the problem under consideration resides in the non-trivial grouping of the densities: in particular, we assume that the interaction parameters $a_{ij}$ are only non-negative, and thus may vanish for specific couples $(i,j)$. As a main consequence, in the limit $\beta \to +\infty$, densities do not segregate pairwise in general, but are grouped in classes which, in turn, form a mutually disjoint partition. Moreover, with respect to the literature, we consider more general forcing terms, sign-changing solutions, and an arbitrary $p>0$. In addition, we present a regularity theory of the emerging free-boundary, defined by the interface among different segregated groups.

These equations are very common in the study of Bose-Einstein condensates and are of key importance for the analysis of optimal partition problems related to high order eigenvalues.

\end{abstract}

\maketitle

\begin{center}
\emph{Dedicated to Prof. Juan Luis V\'azquez with deep admiration and respect}
\end{center}

\tableofcontents

\section{Introduction}

The asymptotic behaviour of solutions of competing systems in the limit of strong competition has been object of an intense research in the last decades. A well known example is represented by 
\begin{equation}\label{classical elliptic GP}
\begin{cases}
-\Delta u_i + \lambda_i u_i= \mu_i u_i^3 - \beta u_i \sum_{j \neq i} a_{ij} u_j^2 & \text{in $\Omega$} \\
u_i =0 & \text{on $\pa \Omega$}
\end{cases} \qquad i=1,\dots,d,
\end{equation}
where $\Omega$ is a smooth domain of $\R^N$, $\lambda_i,\mu_i \in \R$, and $a_{ij}=a_{ji} >0$. 

System \eqref{classical elliptic GP} naturally arises in several contexts: from physical applications, it is obtained in the search of solitary waves for the corresponding system of Schr\"odinger equations, which is of interest in nonlinear optics and in the Hartree-Fock approximation for Bose-Einstein condensates with multiple hyperfine states, see e.g. \cite{AkAn, Timm}. From a purely mathematical point of view, \eqref{classical elliptic GP} is useful in the approximation of optimal partition problems for Laplacian eigenvalues, as well as in the theory of harmonic maps into singular manifolds, see \cite{CaffLin, CoTeVe2002,CoTeVe2003, rtt,TaTePoin}. A common feature in the previous situations resides in the fact that one has to deal with different densities $u_i$ living in a domain $\Omega$ and subject to diffusion ($-\Delta u_i$), reaction ($\mu_i u_i^3- \lambda_i u_i$), and mutual interaction ($\beta u_i \sum_{j \neq i}  a_{ij} u_j^2$). As we shall see, in  addition to the different values of $\lambda_i$ and $\mu_i$, a crucial role is played by the coupling parameters $\beta \cdot a_{ij}$, which describe the interaction between the densities $u_i$ and $u_j$: with the previous sign convention, if $\beta<0$, then $u_i$ cooperates with $u_j$, while if $\beta>0$, then $u_i$ competes with $u_j$; moreover, the larger is $|\beta|$, the stronger is the strength of the interaction. Notice that the condition $a_{ij}=a_{ji}$ reflects the symmetry of the inter-species relations and, throughout this paper, constitutes a crucial assumption. 

It is quite easy to understand why $a_{ij}=a_{ji}$ is crucial from the point of view of the existence of solutions. Indeed, if it is fulfilled, solutions of \eqref{classical elliptic GP} are critical points of the functional $J:H_0^1(\Omega,\R^d) \to \R$, defined by  
\[
J(\mf{u}) := \int_{\Omega} \left[ \frac{1}{2} \sum_{i=1}^d  \left( |\nabla u_i|^2 + \lambda_i u_i^2 - \frac{1}{2} \mu_i u_i^4 \right) + \frac{\beta}{4} \sum_{i \neq j} a_{ij} u_i^2 u_j^2 \right],
\]
where we used the vector notation $\mf{u}:=(u_1,\dots,u_d)$. This variational structure in dimension $N\leq 3$ or $N=4$ has been exploited in order to obtain several existence and multiplicity results. A complete review of these is out of the aims of the present work; we refer for instance to the introduction of \cite{So} (see also the references therein), and we only restrict ourselves to recall that under the assumption $\beta \ge 0$ system \eqref{classical elliptic GP} has infinitely many solutions, obtained by minimax argument. 
The variational characterization of these solutions implies energy bounds independent of $\beta$, which in turn give uniform bounds in the $H^1$ norm. In turn, recalling the definition of $J$, we obtain uniform bounds for the interaction terms
\[
\beta \int_{\Omega} u_{i,\beta}^2 u_{j,\beta}^2 \le C \qquad  \forall \beta, \ \forall i \neq j,
\]
and, taking the limit as $\beta \to +\infty$, we infer that, for the considered family of solutions, it results
\begin{equation}\label{point-wise separation}
 u_{i,\beta} u_{j,\beta} \to 0 \qquad \text{a.e. in $\Omega$},
\end{equation}
that is, in the limit of strong competition, different densities tend to assume disjoint supports. This phenomenon is called \emph{phase-separation}.

At this point a number of natural questions arise, such as:
\begin{itemize}
\item[($i$)] is it possible to develop a common regularity theory for the families of solutions of \eqref{classical elliptic GP} as $\beta \to +\infty$?
\item[($ii$)] In addition to \eqref{point-wise separation}, can we say that the sequence $\{(u_{1,\beta}, \dots, u_{k,\beta})\}$ converges to a limiting profile in some topology? 
\item[($iii$)] If the answer to ($ii$) is affirmative, what are the properties of the limiting profile? 
\end{itemize}
As we shall see, for positive solutions of system \eqref{classical elliptic GP} the picture is now well understood, and optimal results are available. The purpose of this manuscript, which can be considered as an intermediate step between an original research paper and a survey, is the generalization of these results in several different directions. 

\subsection{Review of known results}

Let us now review the results which are already available for problem \eqref{classical elliptic GP}; all of them concern \emph{positive solutions}. The first contributions can be ascribed to Conti et al. \cite{CoTeVe2002,CoTeVe2003}, where the authors proved that sequences of constrained minimizers associated to variational problem of type \eqref{classical elliptic GP} with $\mu_i>0$ converge in $H^1(\Omega)$, as $\beta \to +\infty$, to a segregated configuration (actually they considered a slightly different problem, but once the existence of solutions is settled, their asymptotic analysis works perfectly for \eqref{classical elliptic GP}). The case $\mu_i<0$ has been first studied by Chang et. al. in \cite{clll}, where point-wise phase-separation is proved. 

A new approach, based on the use of some Almgren-type monotonicity formulae for elliptic systems, has been later introduced in \cite{CaffLin}, where Caffarelli and Lin have shown the $\mathcal{C}^{0,\alpha}$-convergence of families of minimizers associated to \eqref{classical elliptic GP} with $\lambda_i=\omega_i=0$, and with non-homogeneous boundary conditions. This fundamental result, which rests in an essential way on the minimality of the solutions, has been generalized to excited states of \eqref{classical elliptic GP} with any $\lambda_i \in \R$ and $\omega_i \in \R$ by Noris et al. in \cite{nttv}. To be precise, the authors proved the following:

\begin{theorem_a}\label{theorem A}
Let $\Omega$ be a bounded smooth domain of $\R^N$ with $N \le 3$, let us assume that $a_{ij}=a_{ji}$, $\mu_i \in \R$ and that $\{\lambda_i= \lambda_{i,\beta}\}$ is a bounded sequence. Let $\{\mf{u}_{\beta}\} \subset H_0^1(\Omega)$ be a family of positive solutions of \eqref{classical elliptic GP}, uniformly bounded in $L^\infty(\Omega)$. Then for every $0<\alpha<1$ there exists $M>0$, independent of $\beta$ such that
\[
\|\mf{u}_{\beta}\|_{\mathcal{C}^{0,\alpha}(\overline{\Omega})} \le M.
\]
\end{theorem_a}
Previously, under the same assumptions Wei and Weth \cite{WeWeth} proved the equi-continuity of $\{\mf{u}_\beta\}$ in dimension $N=2$. We recall that in \cite{WeWeth} a very general class of systems is considered. In particular, to our knowledge, this is the only available research paper which treats the case $a_{ij} \neq a_{ji}$.

It is worth to mention that the assumption ``$\{\mf{u}_\beta\}$ is uniformly bounded in $L^\infty(\Omega)$" is very weak. Indeed, by elliptic regularity, it turns out that if we have a common energy bound of type $J(\mf{u}_\beta) \le C$ and $\{\lambda_{i,\beta}\}$ is bounded, then the assumption is satisfied. Therefore, for instance in Theorem A one can consider families of possibly excited states sharing a common energy bound. 

It is also important to observe that a deep analysis of the proof of Theorem A reveals that it is valid as it is stated also in dimension $N=4$. This has been used for instance in the paper by Chen and Zou \cite{ChenZou}, where a Brezis--Nirenberg type problem is tackled. Under the additional assumption $\lambda_{i,\beta} \ge0$, $\omega_i \le 0$, Theorem A works in any dimension $N \ge 1$ (we refer to Remark 3.4 in \cite{SoZi}).

\medskip

Regarding the consequences of the uniform $\mathcal{C}^{0,\alpha}$-boundedness, we observe that this implies, up to a subsequence, convergence to a nonnegative limit $\mf{u}$ in $\mathcal{C}^{0,\alpha}(\Omega)$, for every $0<\alpha<1$. Moreover, since $\lambda_{i,\beta}$ is bounded, we can suppose that along such sequence $\lambda_{i,\beta} \to \lambda_{i,\infty}$. In \cite{nttv}, the authors proved the basic properties of $\mf{u}$.

\begin{theorem_b}\label{thm: consequences nttv}
In the previous setting, we have:
\begin{enumerate}
\item $\mf{u}_\beta \to \mf{u}$ strongly in $H^1(\Omega)$, and
\[
\int_{\Omega} \beta u_i^2 u_j^2 \to 0
\]
as $\beta \to +\infty$, for every $i \neq j$;
\item $u_i$ is Lipschitz continuous in $\Omega$;
\item $u_i u_j\equiv 0$ whenever $i \neq j$ (segregation between components);
\item for each $i=1,\dots,d$ it results that
\[
-\Delta u_i=\mu_i u_i^3-\lambda_{i,\infty} u_i \qquad \text{ in the open set} \left\{u_i>0\right\}.
\]
\end{enumerate}
\end{theorem_b}

Theorems A and B have been extended to a local formulation in \cite[Theorem 2.6]{Wa}: to be precise, it is proved that if the assumption of Theorem A is satisfied in a domain $\Omega$ (neither necessarily bounded, nor smooth), then for any compact set $K \Subset \Omega$ the family $\{\mf{u}_\beta\}$ is uniformly bounded in $\mathcal{C}^{0,\alpha}(K)$, for every $0<\alpha<1$. This result turns out to be extremely useful in blow-up analysis or similar contexts, when one has to deal with sequences of functions defined on varying domains, and hence the global estimate of Theorem A would not be applicable. Moreover, one can also prove local estimates up to the boundary, under some regularity assumption on the domain $\Omega$ (thus recovering global results for $\Omega$ bounded and smooth). 

\medskip

Since each $u_i$ solves an elliptic equation in its positivity domain, by Hopf lemma the Lipschitz continuity of $u_i$ is optimal. One could then wonder if it is possible to improve the result in \cite{nttv}, establishing uniform boundedness of $\{\mf{u}_\beta\}$ in Lipschitz norm, which would be optimal. This result has been proved recently in local form in \cite{SoZi}. We refer also to \cite[Lemma 2.4]{BeLiWeZh}, where the $1$-dimensional case in the interval $[0,1]$ is considered, and fine properties of the phase separation are derived using the Lipschitz boundedness (H\"older bounds would not be sufficient for this purpose). We refer to \cite{SoZi2} for the corresponding analysis in higher dimension.

\medskip

We have seen that limit profiles of solutions to \eqref{classical elliptic GP} are segregated configurations. It is then natural to define the \emph{free-boundary} as the nodal set $\Gamma_{\mf{u}}:= \{\mf{u}=\mf{0}\}$. The regularity of the free-boundary has been studied in \cite{CaffLin} under the assumptions that $\{\mf{u}_\beta\}$ is a family of minimizers for $J$ with $\mu_i=\lambda_i=0$; the results in \cite{CaffLin} have been applied by the authors to the study of an optimal partition problem involving sums of first Dirichlet eigenvalues \cite{CaffLin2007}. Further informations about the structure of the singular set has been provided in \cite{CaffLin2010}. Concerning non-minimal solutions, we refer to \cite{tt}, where a very general class of functions, including all the limits coming from Theorems A and B, is treated, and to \cite{ZhangLiu}, which extends the results in \cite{CaffLin2010} to the setting considered in \cite{tt}. Let us review in detail the results in \cite{tt}.

\begin{definition}[Definition 1.2 in \cite{tt}]\label{def:old_class_G}
We define $\mathcal{G}(\Omega)$ as the set of functions $\mf{u}=(u_1,\dots,u_d) \in H^1(\Omega, \R^d) \setminus \{\mf{0}\}$ such that:
\begin{itemize}
\item[(G1)] $u_i$ are nonnegative, Lipschitz continuous on $\Omega$, and such that $u_i u_j \equiv 0$ in $\Omega$ for every $i \neq j$;
\item[(G2)] each component $u_i$ satisfies
\[
-\Delta u_i = f_i(x,u_i) - \mathcal{M}_i \quad \text{in $\mathcal{D}'(\Omega)= (\mathcal{C}^\infty_c(\Omega))'$},
\]
where we suppose that there exists $C>0$ such that
\[
\sup_{s \in [0,1]} \sup_x  \left|\frac{f_i(x,s)}{ |s|} \right|  \le C
\]
for every $i=1,\dots,k$, and $\mathcal{M}_i$ are nonnegative Radon measures supported on $\Gamma_{\mf{u}}$.
\item[(G3)] for every $x_0 \in \Omega$ and $0<r<\dist(x_0,\partial \Omega)$ it holds
\begin{align*}
(2-N) \sum_{i=1}^d \int_{B_r(x_0)} |\nabla u_i|^2 &= r \sum_{i=1}^d \int_{\pa B_r(x_0)} \left( 2 (\pa_{\nu} u_i)^2 -|\nabla u_i|^2 \right) \\
& \qquad + 2 \sum_{i=1}^d \int_{B_r(x_0)} f_{i}(x,u_i) \nabla u_i \cdot (x-x_0).
\end{align*}
\end{itemize}
We write that $\mf{u} \in \mathcal{G}_{\loc}(\R^N)$ if $\mf{u} \in \mathcal{G}(B_R(0))$ for every $R>0$.
\end{definition}

Notice that (G3) is not stated as in \cite{tt}, but it is not difficult to check that the two formulations are equivalent. In the following regularity result, which corresponds to Theorem 1.1 in \cite{tt}, $\Hh_\text{dim}(A)$ denotes the Hausdorff dimension of $A$.

\begin{theorem_c}
Let $\mf{u}\in \mathcal{G}(\Omega)$. Then
\begin{itemize}
\item[1.] $\Hh_\text{dim}(\Gamma_\mf{u})\leq N-1$; 
\item[2.] there exists a set $\Rcal_\mf{u}\subseteq \Gamma_\mf{u}$, relatively open in $\Gamma_\mf{u}$, such that
\begin{itemize}
\item[-] $\Hh_\text{dim} (\Gamma_\mf{u}\setminus \Rcal_\mf{u})\leq N-2$;
\item[-] $\Rcal_\mf{u}$ is a collection of  hypersurfaces of class $\Ccal^{1,\alpha}$ (for some $0<\alpha<1$), each one locally separating two connected components of $\Omega\setminus \Gamma_{\mf{u}}$.
\item[-] given $x_0\in \Rcal_\mf{u}$, there exist $i,j \in \{1,\dots,k\}$ such that
\[
\lim_{x\to x_0^+} |\nabla u_i|^2=\lim_{x\to x_0^-}  |\nabla u_j|^2\neq 0,
\]
where $x\to x_0^\pm$ are limits taken from opposite sides of the hypersurface.
\item[-] whenever $x_0\in \Gamma_\mf{u}\setminus \Rcal_\mf{u}$, we have
\[
\sum_{i=1}^d |\nabla u_i(x)|^2\to 0 \qquad \text{ as } x\to x_0.
\]
\end{itemize}
\item[3.] Furthermore, if $N=2$, then $\Rcal_{\mf{u}}$ consists in a locally finite collection of curves meeting with equal angles at singular points.
\end{itemize}
\end{theorem_c}

In the context of phase-separation for strongly competing systems, the previous result allows to describe the regularity properties of any limit profile, as established by Theorem 8.1 in \cite{tt}.

\begin{theorem_d}
Under the assumptions of Theorem A, let $\mf{u}$ be a limit of $\{\mf{u}_\beta\}$ as $\beta \to +\infty$, and suppose that $u_i \not \equiv 0$ in $\Omega$ for some $i$. Then $\mf{u}\in\Gcal(\Omega)$. In particular, the nodal set of the limit profile satisfies all the conclusions of Theorem C.
\end{theorem_d}

\subsection{The problem under investigation}

In this paper we aim at generalizing Theorems A, B, C and D in a very general setting. To be precise, we have in mind to approach the following issues:
\begin{itemize}
\item[($i$)] all the previous results concern positive solutions but, expecially when dealing with excited states, one would like to treat sign-changing solutions as well;
\item[($ii$)] we think that it can be interesting, for modelling and theoretical reasons, to replace the nonlinear term $\mu_i u_i^3-\lambda_i u_i$ with a general term of type $f_i(x,u_i)$, possibly depending on $\beta$;
\item[($iii$)] it is natural, in general, to replace the interaction terms $u_i u_j^2$ in \eqref{classical elliptic GP} with a more general power law of type $u_i|u_i|^{p-1}  |u_j|^{p+1}$, with $p>0$ (which might be sublinear in $u_i$);
\item[($iv$)] assuming $a_{ij}=a_{ji}>0$ and $\beta>0$, we restrict ourselves to a purely competitive setting. What happens if we allow some $a_{ij}$ to be zero, inducing segregation between groups of components, and if we have mixed cooperation and competition?
\end{itemize}

We mention that phase-separation in systems with non-trivial grouping has been already studied in particular cases in \cite{CaffLin,rtt,So}. In \cite{CaffLin,So} minimal solutions are considered, while in \cite{rtt} systems corresponding to singular perturbations of eigenvalue problems are studied.

\medskip

To state our results in full generality, we introduce some notation. For an arbitrary $m \leq d$, we say that a vector $\mf{a}=(a_0,\dots,a_m) \in \N^{m+1}$ is an \emph{$m$-decomposition of $d$} if
\[
0=a_0<a_1<\dots<a_{m-1}<a_m=d;
\]
given a $m$-decomposition $\mf{a}$ of $d$, we set, for $h=1,\dots,m$,
\begin{equation}\label{def indexes}
\begin{split}
& I_h:= \{i \in  \{1,\dots,d\}:  a_{h-1} < i \le a_h \}, \\
&\Kcal_1:= \left\{(i,j) \in I_h^2 \text{ for some $h=1,\dots,m$, with $i \neq j$}\right\}, \\ &\mathcal{K}_2 := \left\{(i,j) \in I_h \times I_k \text{ with $h \neq k$} \right\}. 
\end{split}
\end{equation}
This way, we have partitioned the set $\{1,\ldots, d\}$ into $m$ groups $I_1,\ldots, I_m$.
We will consider the system for $\mf{u}=(u_1,\ldots, u_d)$
\begin{equation}\label{eq:main_system}
-\Delta u_i=f_{i,\b}-\beta \mathop{\sum_{j=1}^d}_{j\neq i} a_{ij} u_i |u_i|^{p-1}|u_j|^{p+1} \quad \text{ in } \Omega,\qquad i=1,\ldots, d.\\
\end{equation}
with $\beta>0$, $p > 0$, $a_{ij}=a_{ji}$, being $a_{ij}=0$ for $(i,j)\in \Kcal_1$, $a_{ij}>0$ whenever $(i,j)\in \Kcal_2$. This basically means that the term
\[
\beta \mathop{\sum_{j=1}^d}_{j\neq i} a_{ij} u_i |u_i|^{p-1}|u_j|^{p+1}
\]
represents a competing term between groups of components: heuristically speaking, $u_i$ and $u_j$ compete if $i\in I_{h}$ and $j\in I_{k}$ for $h\neq k$. The assumption on the nonlinear terms $f_{i,\beta}$ depends on the value of $p$.
\begin{itemize}
\item[(H)] If $p \ge 1$, then $f_{i,\beta}:\Omega \times \R^d \to \R$, and given $K\Subset \Omega \times \R^d$ there exists $C=C(K)$ such that
\[
|f_{i,\beta}(x,\mf{s})|\leq C \qquad \forall i=1,\ldots, d,\ (x,\mf{s})\in K.
\]
If $0<p<1$, then $f_{i,\beta}:\Omega\times \R^d \to \R$, and we suppose that given $K\Subset \Omega$ there exists $C=C(K)$ such that
\[
|f_{i,\beta}(x,\mf{s})|\leq C \sum_{j\in I_h}  |s_j|^p\qquad \forall i\in I_h, (x,\mf{s})\in K\times \R^d.
\]
\end{itemize}

We are interested in the asymptotic behaviour, as $\beta\to +\infty$, of families of possibly sign-changing solutions $\{\mf{u}_{\beta}\}$. More precisely, the following theorem states that, locally, uniform $L^\infty$ bounds imply uniform $\Ccal^{0,\alpha}$ bounds, for every $0<\a<1$.

\begin{theorem}\label{thm: holder bounds}
Let $N \ge 1$, $p >0$, $\mf{a}$ be a $m$-decomposition, and assume that $\mf{f}_{\beta}$ satisfies (H). Let $\{\mf{u}_\b\}_\b$ be a family of solutions of \eqref{eq:main_system}, uniformly bounded in $L^\infty(\Omega)$. Then for every $\Omega' \Subset \Omega$ and $\alpha\in (0,1)$ there exists $C=C(\Omega',\a)>0$ such that
\[
\| \mf{u}_\b\|_{\Ccal^{0,\alpha}(\Omega')}\leq C.
\]
\end{theorem}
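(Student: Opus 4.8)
The plan is to argue by contradiction and blow-up, along the lines of Noris--Tavares--Terracini--Verzini, the genuinely new features to be accommodated being the grouped competition, the sign-changing components, the arbitrary exponent $p>0$ and the $\b$-dependent forcing. I would first record the elementary consequences of the uniform $L^\infty$ bound: testing the $i$-th equation of \eqref{eq:main_system} with $u_{i,\b}\eta^2$ for a cut-off $\eta\in\Ccal^\infty_c(\Omega)$ and using $\|\mf u_\b\|_{L^\infty}\le M$ together with (H) yields a uniform local $H^1$ bound on $\{\mf u_\b\}$ and, crucially, a uniform bound on the inter-group interaction energy,
\[
\b\int_K a_{ij}\,|u_{i,\b}|^{p+1}|u_{j,\b}|^{p+1}\le C(K)\qquad\text{for all }(i,j)\in\Kcal_2,\ K\Subset\Omega.
\]
Now suppose the statement fails: for some $\a\in(0,1)$ and $\Omega'\Subset\Omega$ there is $\b_n\to+\infty$ with $\|\mf u_{\b_n}\|_{\Ccal^{0,\a}(\Omega')}\to+\infty$.

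A standard selection of near-maximal Hölder quotients then produces $x_n\to\bar x\in\overline{\Omega'}$ and $r_n>0$ such that, setting $L_n:=r_n^{-\a}\osc_{B_{r_n}(x_n)}\mf u_{\b_n}$, one has $L_n\to+\infty$ while $L_n r_n^\a\le 2M$ (hence $r_n\to0$), together with the almost-maximality property $\osc_{B_\rho(y)}\mf u_{\b_n}\le C L_n\rho^\a$ for all sufficiently small balls $B_\rho(y)$ centred near $x_n$. I would then blow up: with $c_{i,n}:=u_{i,\b_n}(x_n)$, set $v_{i,n}(x):=(L_n r_n^\a)^{-1}\big(u_{i,\b_n}(x_n+r_n x)-c_{i,n}\big)$ on $B_{R_n}$, $R_n\to+\infty$. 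By construction $v_{i,n}(0)=0$, $\osc_{B_1}\mf v_n\ge\tfrac12$, and the almost-maximality gives $[v_{i,n}]_{\Ccal^{0,\a}(B_R)}\le C$ for each fixed $R$ and $n$ large, so $|v_{i,n}|\le C R^\a$ on $B_R$; by compactness, up to a subsequence $\mf v_n\to\mf v$ in $\Ccal^0_{\loc}(\R^N)$, where $\mf v$ is non-constant (since $\osc_{B_1}\mf v\ge\tfrac12$) and has subhomogeneous growth $|\mf v(x)|\le C|x|^\a$. The rescaled system reads
\[
-\Delta v_{i,n}=g_{i,n}-\mu_n\mathop{\sum_{j=1}^d}_{j\neq i}a_{ij}\,u_{i,n}|u_{i,n}|^{p-1}|u_{j,n}|^{p+1},
\]
where $u_{k,n}:=u_{k,\b_n}(x_n+r_n\,\cdot\,)$, $\mu_n:=\b_n r_n^{2-\a}/L_n\ge0$, and $g_{i,n}:=(r_n^{2-\a}/L_n)\,f_{i,\b_n}(x_n+r_n\,\cdot\,,\mf u_{\b_n}(x_n+r_n\,\cdot\,))$; using (H) and $r_n^{2-\a}/L_n\to0$ one checks $g_{i,n}\to0$ locally uniformly (for $p\ge1$ because $f_{i,\b}$ is locally bounded, for $0<p<1$ because the growth bound on $f_{i,\b}$ combined with $|\mf u_\b|\le M$ keeps the term bounded while the prefactor wins).

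The heart of the proof is the analysis of $\mf v$, and here I would invoke a uniform-in-$n$ Almgren-type monotonicity formula for the rescaled solutions $\mf v_n$ — the real technical backbone — providing uniform local energy bounds (so $\{\mf v_n\}$ is bounded in $H^1_{\loc}$), strong $H^1_{\loc}$ convergence $\mf v_n\to\mf v$, and enough control to pass to the limit in the equation. Two alternatives arise. \emph{(a)} Along a subsequence the rescaled interaction term is bounded in $L^1_{\loc}$: then $\mf v$ solves an entire system of type \eqref{eq:main_system} with zero forcing and some limiting coupling $\mu_\infty\in[0,+\infty)$; if $\mu_\infty=0$, each $v_i$ is harmonic, so $|v_i(x)|\le C|x|^\a$ with $\a<1$ forces $v_i$ constant by the classical Liouville theorem; if $\mu_\infty>0$, the same follows from a Liouville-type theorem for the limiting grouped-competitive system, itself established through the monotonicity formula (a non-constant entire solution of subhomogeneous growth would have Almgren frequency both $\le\a<1$, from the growth, and $\ge1$, from the structure — a contradiction). \emph{(b)} The rescaled interaction term is unbounded: then the interaction-energy bound forces $v_i v_j\equiv0$ on $\R^N$ for every $(i,j)\in\Kcal_2$, i.e.\ $\mf v$ segregates between the groups $I_1,\dots,I_m$, with no constraint inside a group, consistently with $a_{ij}=0$ on $\Kcal_1$; and one verifies — again via the limiting monotonicity formula, which carries over a domain-variation (Pohozaev) identity of the type (G3) — that $\mf v$ lies in the natural grouped analogue of $\Gcal_{\loc}(\R^N)$ with $\mf f\equiv0$; a Liouville-type theorem for that class, combined with $|\mf v(x)|\le C|x|^\a$ and $\a<1$, again forces $\mf v$ constant. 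In every case this contradicts $\osc_{B_1}\mf v\ge\tfrac12$, which proves the theorem.

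The main obstacle, and the part requiring genuinely new work, is twofold: deriving the uniform Almgren-type monotonicity formula in the present generality — grouped competition, sign-changing components, arbitrary $p>0$, $\b$-dependent forcing — from which compactness, the identification of $\mf v$, and the regularity of the limiting grouped configuration all descend; and proving the accompanying Liouville theorems, both for entire solutions of the finite-coupling grouped system and for the segregated grouped class. The grouping itself can be handled by working, whenever a segregation argument is needed, with the aggregated densities $\s_h:=\big(\sum_{i\in I_h}u_i^2\big)^{1/2}$: each $\s_h$ is subharmonic away from the supports of the other groups and satisfies $\s_h\s_k\equiv0$ for $h\neq k$ in the limit, so that Alt--Caffarelli--Friedman type arguments apply to pairs of groups exactly as to pairs of components in the ungrouped case.
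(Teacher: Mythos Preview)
Your overall architecture — contradiction, blow-up, Almgren monotonicity, Liouville — is the same as the paper's, but your choice of blow-up normalization creates a genuine gap.

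You subtract the constants $c_{i,n}=u_{i,\b_n}(x_n)$ so that $v_{i,n}(0)=0$. This is natural for Hölder-seminorm control, but the competition term $u_i|u_i|^{p-1}|u_j|^{p+1}$ is \emph{not} translation-invariant in $u$. As you yourself write, the rescaled equation for $v_{i,n}$ carries the interaction in terms of the \emph{unrescaled} $u_{k,n}$, not in terms of $\mf v_n$. Three consequences: (i) in your case~(a) the limit $\mf v$ does \emph{not} solve a system of type \eqref{eq:main_system}; at best the shifted functions $w_i=v_i+c_i/\lambda$ with $\lambda:=\lim L_n r_n^\a$ do, provided $\lambda>0$; (ii) in your case~(b) the segregation you obtain is $\tilde u_i\tilde u_j\equiv0$, i.e.\ $(\lambda v_i+c_i)(\lambda v_j+c_j)\equiv0$, not $v_iv_j\equiv0$; (iii) the Almgren/Pohozaev identity you invoke comes from the variational structure of the system in the solution variables, and there is no clean monotonicity formula for $\mf v_n$ once the equation reads $-\Delta v_{i,n}=g_{i,n}-\mu_n\Phi_i(L_nr_n^\a\mf v_n+\mf c_n)$. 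When $\lambda>0$ one can, with extra work, transfer everything to $\mf w$; but the case $\lambda=0$ (equivalently $c_{k,n}/(L_nr_n^\a)\to\infty$ for some $k$) is not covered by your sketch at all, and this is precisely where the difficulty lies.

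The paper takes the opposite route: it keeps the \emph{un}subtracted blow-up $v_{i,n}=\eta(x_n)u_{i,n}(x_n+r_n\cdot)/(L_nr_n^\a)$, so that $\mf v_n$ solves exactly a rescaled copy of \eqref{eq:main_system} with coupling $M_n=\b_n r_n^{2(\a p+1)}(L_n/\eta(x_n))^{2p}$, and the Almgren formula and the Liouville theorems apply directly to $\mf v_n$ and its limit. The price is that boundedness of $\mf v_n(0)$ is no longer automatic; proving it (Lemma~\ref{lem: bound in 0}) is the technical heart of the argument and is exactly where the grouping, the sign-changing components and the two regimes $p\ge1$ versus $0<p<1$ are handled, via Kato's inequality and decay estimates applied group by group (four subcases in all). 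Once that lemma is in place, the paper shows $M_n\to+\infty$ necessarily, so your dichotomy collapses to a single case, and the Almgren formula for $\mf v_n$ yields homogeneity of the limit and the final contradiction. Your proposal contains no analogue of this boundedness step, and the subtracted normalization does not let you bypass it.
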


Notice that, due to the local nature of the result, we require neither the boundedness, nor the regularity of $\Omega$. On the other hand, the estimates can also be extended up to the boundary, if we assume moreover that $\mf{u}_\beta$ is $L^\infty$ bounded in $\Omega$, $\mf{u}\equiv 0$ on a portion of $\partial \Omega$, and $\pa \Omega$ is there sufficiently smooth.

\begin{theorem}\label{thm: holder bounds boundary}
Under the assumptions of Theorem \ref{thm: holder bounds}, for every $\Omega' \Subset \R^N$, if $\mf{u}_\beta = 0$ on $\Omega' \cap \partial \Omega$ and $\Omega' \cap \partial \Omega$ is smooth, then for any $\alpha\in (0,1)$ there exists $C=C(\Omega',\a)>0$ such that
\[
\| \mf{u}_\b\|_{\Ccal^{0,\alpha}(\Omega' \cap \overline  \Omega)}\leq C.
\]
\end{theorem}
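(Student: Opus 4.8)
The plan is to reduce Theorem~\ref{thm: holder bounds boundary} to the interior estimate of Theorem~\ref{thm: holder bounds} by a standard reflection argument across the flat portion of the boundary where $\mf{u}_\beta$ vanishes. First I would work locally: fix $x_0 \in \Omega' \cap \partial\Omega$ and, since $\partial\Omega$ is smooth near $x_0$, choose a $\Ccal^{1,\alpha}$ (or smoother) diffeomorphism $\Phi$ flattening $\partial\Omega$ in a neighbourhood $U$ of $x_0$, so that $\Phi(U \cap \Omega) = B_r^+ := B_r \cap \{x_N > 0\}$ and $\Phi(U \cap \partial\Omega) = B_r \cap \{x_N = 0\}$. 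In these new coordinates the functions $v_{i,\beta} := u_{i,\beta} \circ \Phi^{-1}$ solve a system of the form
\[
-\div(A(y)\nabla v_{i,\beta}) = \tilde f_{i,\beta}(y) - \beta \mathop{\sum_{j\neq i}} a_{ij}\, v_{i,\beta}|v_{i,\beta}|^{p-1}|v_{j,\beta}|^{p+1} \qquad \text{in } B_r^+,
\]
with $v_{i,\beta} = 0$ on $B_r \cap \{x_N = 0\}$, where $A$ is uniformly elliptic with $\Ccal^{0,\alpha}$ (at least) coefficients coming from $D\Phi$, and $\tilde f_{i,\beta}$ still satisfies the structural bound (H) with a constant depending only on $U$ and $\|\mf{u}_\beta\|_{L^\infty}$.

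Next I would perform an even reflection in $x_N$: set $\hat v_{i,\beta}(y',y_N) := v_{i,\beta}(y', |y_N|)$ on the full ball $B_r$, and reflect the coefficient matrix $A$ accordingly (even reflection of the entries $A_{k\ell}$ with $k,\ell < N$ and of $A_{NN}$, odd reflection of the mixed entries $A_{kN}$), which is the natural choice for the \emph{Neumann-type} transmission that makes the reflected function a weak solution across $\{y_N=0\}$. One has to be slightly careful here: the Dirichlet condition $v_{i,\beta}=0$ on $\{y_N=0\}$ combined with the equation forces the conormal derivative to vanish there as well only if we also extend $f_{i,\beta}$ by even reflection and absorb the vanishing trace — in fact, for the Dirichlet problem the correct reflection is the \emph{odd} one, $\hat v_{i,\beta}(y',y_N) := \operatorname{sign}(y_N)\, v_{i,\beta}(y',|y_N|)$, together with the even reflection of $A_{k\ell}$, $A_{NN}$ and the odd reflection of $A_{kN}$ and of $\tilde f_{i,\beta}$. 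With this odd extension, $\hat v_{i,\beta}$ is a weak solution in all of $B_r$ of the \emph{same type} of system (the competition term is odd in $u_i$, hence compatible with the odd reflection), the reflected data $\hat A$ is still uniformly elliptic with $\Ccal^{0,\alpha}$ coefficients — note that here I would need Theorem~\ref{thm: holder bounds} in a form allowing $\Ccal^{0,\alpha}$ leading coefficients and divergence-form operators, which the local blow-up scheme of the interior proof accommodates after freezing coefficients — and $\hat{\mf{f}}_\beta$ satisfies (H).

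Then I would invoke the interior estimate (Theorem~\ref{thm: holder bounds}, applied to $\hat{\mf{u}}_\beta$ on $B_{r/2} \Subset B_r$) to obtain a bound $\|\hat{\mf{v}}_\beta\|_{\Ccal^{0,\alpha}(B_{r/2})} \le C$ independent of $\beta$; restricting to $B_{r/2}^+$ and pulling back through $\Phi$ — which is bi-Lipschitz, indeed $\Ccal^{1,\alpha}$, hence preserves $\Ccal^{0,\alpha}$ norms up to constants — gives a uniform $\Ccal^{0,\alpha}$ bound for $\mf{u}_\beta$ on a neighbourhood of $x_0$ in $\overline\Omega$. A covering argument over $\Omega' \cap \partial\Omega$, combined with the already established interior bound (Theorem~\ref{thm: holder bounds}) on $\Omega'' \Subset \Omega$ slightly away from the boundary, and a standard gluing of Hölder seminorm estimates (using that $\Ccal^{0,\alpha}$ bounds on an overlapping cover of a connected set, together with an $L^\infty$ bound, yield a global $\Ccal^{0,\alpha}$ bound), finishes the proof.

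The main obstacle I anticipate is not the reflection bookkeeping but making sure the \emph{interior} result of Theorem~\ref{thm: holder bounds} is genuinely applicable to the reflected system, which is in divergence form with merely Hölder-continuous, discontinuous-at-$\{y_N=0\}$-in-derivatives coefficients rather than $-\Delta$. If the proof of Theorem~\ref{thm: holder bounds} is written only for the Laplacian, one needs either (a) to re-examine its blow-up/Almgren-type argument and check it is stable under the perturbation $-\Delta \rightsquigarrow -\div(A\nabla\cdot)$ with $A$ continuous (freezing the coefficient at the blow-up point and controlling the error), or (b) to first straighten the boundary with a higher-regularity map and then, if possible, reduce the principal part back to the Laplacian by a further change of variables, at the cost of regularity of the domain map — this is why the statement asks $\partial\Omega$ to be \emph{smooth} rather than merely $\Ccal^{1,\alpha}$. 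I would pursue route (a), checking the Liouville-type and monotonicity ingredients of the interior proof tolerate continuous coefficients; the remaining details (behaviour of (H) under $C^1$ changes of variable, oddness of the nonlinearity matching the odd reflection, and the covering/gluing) are routine.
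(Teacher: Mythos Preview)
Your reflection-and-reduction strategy is genuinely different from the paper's. The paper does \emph{not} flatten the boundary or reflect the solution; instead it re-runs the interior blow-up argument with the boundary present, and inserts a single additional lemma showing that
\[
\frac{\min(\dist(x_n,\partial\Omega),\dist(y_n,\partial\Omega))}{|x_n-y_n|}\to +\infty,
\]
so that the rescaled domains still exhaust $\R^N$ and the interior proof applies verbatim. That lemma is proved by contradiction: if the ratio were bounded, the blow-up limit $\mf{v}$ would vanish on a half-space, and applying Lemma~\ref{lem:liouville2} to $|v_1|$ and its reflection across the limiting hyperplane forces $v_1\equiv 0$, contradicting the normalisation of the H\"older quotient. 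This costs essentially one paragraph and avoids any change of the operator.

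Your route, by contrast, lands on a divergence-form system $-\div(\hat A\nabla \hat v_i)=\cdots$ with $\hat A$ merely Lipschitz (and only continuous across $\{y_N=0\}$ if the flattening is chosen carefully so that $A_{kN}(\cdot,0)=0$). You correctly flag this as the obstacle, but it is a substantial one: the interior proof of Theorem~\ref{thm: holder bounds} uses the exact Almgren identity (Lemma~\ref{lem: derivatives}) and Pohozaev-type computations that are specific to $-\Delta$, and extending them to variable coefficients requires Garofalo--Lin-type perturbative monotonicity with error terms, checked uniformly in $\beta$. That is doable in principle, but it is a good deal more work than you indicate by ``freezing coefficients'', and none of it is needed for the paper's argument. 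So your proposal is a viable alternative strategy, but the acknowledged gap is real and nontrivial; the paper's direct blow-up-with-boundary approach is both shorter and cleaner.
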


\begin{remark}
A typical example which we have in mind is a system of type \eqref{classical elliptic GP} with competition between groups of components, as in \cite{So}: this means that we consider
\[
-\Delta u_i + \lambda_i u_i = u_i |u_i|^{p-1}\sum_{j=1}^d b_{ij}  |u_j|^{p+1} - \beta u_i |u_i|^{p-1}\sum_{j \neq i} a_{ij} |u_j|^{p+1},
\]
with $b_{ij} \ge 0$ if $(i,j) \in \mathcal{K}_1$ (cooperation inside any group of components) and $b_{ij}=0$ if $(i,j) \in \mathcal{K}_2$ (so that the relation between different groups is described by the second terms on the right hand side, which, as already observed, stays for competition between different groups). It is straightforward to check that with the previous conditions on $b_{ij}$, assumption (H) is satisfied by 
\[
f_{i,\beta}(x,\mf{s}) = s_{i} |s_{i}|^{p-1}\sum_{j=1}^d b_{ij} |s_{j}|^{p+1} -\lambda_i s_{i}.
\]

\end{remark}

From this theorem, we can deduce that, for any such kind of family of solutions $\{\mf{u}_\b\}_\b$, there exists a limiting profile $\mf{u}\in \Ccal_{\rm loc}^{0,\alpha}$ ($\alpha\in (0,1)$) such that, up to a subsequence,
\[
u_{i,\beta}\to u_i \qquad \text{ strongly in } H^1_{\rm loc}\cap \Ccal_{\rm loc}^{0,\alpha}.
\]

We can improve this in the following way, considering also the following assumption for $\mathbf{f}:=\lim_{\beta\to +\infty}\mathbf{f}_\beta$.
\begin{itemize}
\item[(L)] $f_{i}:\Omega\times \R^d \to \R$, and there exists $C>0$ such that
\[
\sup_{i \in I_h} \sup_x  \left|\frac{f_i(x,\mf{s})}{\sum_{j \in I_h} |s_j|} \right|  \le C \qquad \forall \mf{s} \in [0,1]^d,h=1,\ldots, m.
\]
\end{itemize}

\begin{theorem}\label{thm: consequences}
Let $\mf{u}$ be a limiting vector function as before, and assume moreover that $f_{i,\beta}\to f_i$ in $\mathcal{C}_{\loc}(\Omega \times \R^d)$. Then
\begin{enumerate}
\item $\mf{u}_\beta \to \mf{u}$ strongly in $H^1_{\loc}(\Omega)$, and for every compact $K \Subset \Omega$ we have
\[
\beta \int_{K} |u_{i,\beta}|^{p+1} |u_{j,\beta}|^{p+1} \to 0
\]
as $\beta \to \infty$, for every $(i,j) \in \mathcal{K}_2$;
\item for each $h=1,\ldots, m$, and $i\in I_h$, we have
\[
-\Delta u_i=f_{i}(x,\mf{u}) \qquad \text{ in the open set} \left\{\sum_{j\in I_h} u_j^2>0\right\};
\]
\item $u_i u_j\equiv 0$ whenever $(i,j) \in \mathcal{K}_2$ (segregation between groups).
\end{enumerate}
Furthermore, if $\mathbf{f}$ satisfies (L), then
\begin{itemize}
\item[(4)] $u_i$ is Lipschitz continuous in $\Omega$.
\end{itemize}
\end{theorem}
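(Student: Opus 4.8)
The plan is to build on Theorem~\ref{thm: holder bounds}, which provides uniform $\Ccal^{0,\alpha}_{\loc}$ bounds and hence (along the subsequence defining $\mf u$) locally uniform convergence $\mf u_\beta\to\mf u$. First I would test the $i$-th equation against $u_{i,\beta}\varphi^2$, $\varphi$ a standard cut-off; Young's inequality on the gradient cross-term together with the $L^\infty$-bound and (H) give, uniformly in $\beta$, both $\int_K|\nabla u_{i,\beta}|^2\le C$ and $\beta\int_K|u_{i,\beta}|^{p+1}|u_{j,\beta}|^{p+1}\le C$ for $(i,j)\in\Kcal_2$ and $K\Subset\Omega$. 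The first bound yields $\mf u_\beta\rightharpoonup\mf u$ weakly in $H^1_{\loc}(\Omega)$; the second, combined with the locally uniform convergence, forces $\int_K|u_i|^{p+1}|u_j|^{p+1}=0$, that is $u_iu_j\equiv 0$ for $(i,j)\in\Kcal_2$, which is item (3). A key consequence: writing $\omega_h:=\{\sum_{j\in I_h}u_j^2>0\}$ (an open set, by continuity of $\mf u$), every $u_k$ with $k\notin I_h$ vanishes identically on $\omega_h$, since the open set $\{u_k\neq 0\}$ is contained in $\{u_j=0\ \forall j\in I_h\}=\Omega\setminus\omega_h$.

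For item (2), I would exploit that, for $i\in I_h$, the competition term in the $i$-th equation only involves components outside $I_h$ (as $a_{ij}=0$ for $j\in I_h$). Kato's inequality applied to $u_{i,\beta}$ gives $\Delta|u_{i,\beta}|\ge -|f_{i,\beta}(x,\mf u_\beta)|+\beta\sum_{j\notin I_h}a_{ij}|u_{i,\beta}|^p|u_{j,\beta}|^{p+1}$ in $\mathcal{D}'(\Omega)$; testing against a nonnegative $\phi\in\Ccal^\infty_c(\Omega)$ and using $|u_{i,\beta}|\to|u_i|$ locally uniformly and (H) shows that $\beta|u_{i,\beta}|^p|u_{j,\beta}|^{p+1}$ is bounded in $L^1_{\loc}(\Omega)$ for $(i,j)\in\Kcal_2$. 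Running the analogous computation for $u_{j,\beta}$ on a compact $K\Subset\omega_h$, on which $\sum_{k\in I_h}|u_{k,\beta}|^{p+1}$ is bounded below by a positive constant (by locally uniform convergence) while $u_{j,\beta}\to0$ uniformly for $j\notin I_h$, even gives $\beta\int_K|u_{j,\beta}|^{p+1}\phi\to0$; therefore the competition term of the $i$-th equation, dominated on $K$ by $\|u_{i,\beta}\|_\infty^{p}\sum_{j\notin I_h}a_{ij}\,\beta|u_{j,\beta}|^{p+1}$, tends to $0$ in $L^1_{\loc}(\omega_h)$. Passing to the limit in the weak formulation of the $i$-th equation (using $\mf u_\beta\rightharpoonup\mf u$ and $f_{i,\beta}(\cdot,\mf u_\beta)\to f_i(\cdot,\mf u)$ locally uniformly, the latter from $f_{i,\beta}\to f_i$ in $\Ccal_{\loc}$) yields $-\Delta u_i=f_i(x,\mf u)$ in $\mathcal{D}'(\omega_h)$, which is item (2).

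Item (1) then follows by combining these ingredients. The $L^1_{\loc}$-bound on $\beta|u_{i,\beta}|^p|u_{j,\beta}|^{p+1}$ (and its symmetric version), together with a covering of a compact $K$ by the open sets $\{|u_i|<\ep\}$ and $\{|u_j|<\ep\}$ (which do cover $K$, since $u_iu_j\equiv0$) and a subordinate partition of unity, upgrades $\beta\int_K|u_{i,\beta}|^{p+1}|u_{j,\beta}|^{p+1}\le C$ to convergence to $0$, for $(i,j)\in\Kcal_2$. To get strong $H^1_{\loc}$ convergence, I would test the $i$-th equation against $u_{i,\beta}\varphi^2$ once more: all terms converge and the competition term tends to $0$, so $\int|\nabla u_{i,\beta}|^2\varphi^2$ converges to $\int f_i u_i\varphi^2-2\int u_i\varphi\,\nabla u_i\cdot\nabla\varphi$; on the other hand the defect measure $\mu_i$ (weak-$*$ limit of the competition term) is supported in $\Omega\setminus\omega_h\subseteq\{u_i=0\}$ by item (2), so $u_i\varphi^2$ is an admissible test function in $-\Delta u_i=f_i(x,\mf u)-\mu_i$ and $\int u_i\varphi^2\,d\mu_i=0$, which identifies the limit as $\int|\nabla u_i|^2\varphi^2$. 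Norm convergence plus weak convergence then gives $\mf u_\beta\to\mf u$ strongly in $H^1_{\loc}(\Omega)$.

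Finally, for item (4), under the extra hypothesis (L) I would invoke the domain-variation / Almgren-type monotonicity machinery: (L) (linear control of $f_i$ by the group densities) ensures that $\mf u$ belongs to a suitable class of segregated vector functions, the group analogue of the class $\Gcal(\Omega)$ of Definition~\ref{def:old_class_G}, for which an interior Lipschitz estimate holds. That estimate is obtained by showing that the relevant Almgren quotient, built on the aggregate density $\big(\sum_{j\in I_h}u_j^2\big)^{1/2}$ of each group, is (almost) monotone at free-boundary points, which forces at most linear growth of that density away from $\partial\omega_h$; the equation $-\Delta u_i=f_i(x,\mf u)$ on $\omega_h$ with bounded right-hand side then promotes this to a Lipschitz bound on each $u_i$. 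I expect this last step to be the main obstacle: one has to run the monotonicity formulae on the aggregate densities of the classes rather than on single components, and to verify that the absence of interaction inside each group is compatible with the monotonicity identities; this, rather than the steps producing (1)--(3), is where the genuine difficulty lies.
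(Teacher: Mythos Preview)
Your proposal is correct and follows essentially the same route as the paper: items (1)--(3) are obtained exactly by testing against $u_{i,\beta}\varphi^2$ and $\varphi$ (via Kato), exploiting the segregation $u_iu_j\equiv 0$ to split $K$ and upgrade the $L^1_{\loc}$ bound on $\beta|u_{i,\beta}|^p|u_{j,\beta}|^{p+1}$ to the decay of $\beta\int_K|u_{i,\beta}|^{p+1}|u_{j,\beta}|^{p+1}$; the paper's strong $H^1$ argument tests against $u_{i,\beta}-u_i$ rather than going through a defect measure, but your version is equally valid.

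Two small clarifications on item (4). First, the Almgren quotient in the paper is built on the \emph{full} vector $\mf{u}$ (i.e.\ $H(x_0,\mf u,r)=r^{1-N}\sum_{i=1}^d\int_{\partial B_r}u_i^2$), not on each group aggregate separately; this is harmless since the groups are mutually segregated, but it simplifies the bookkeeping. Second, before one can invoke Almgren monotonicity, one needs the Pohozaev identity (G3) for the limit $\mf u$: this does not follow from membership in any abstract class, but is obtained by writing the Pohozaev identity for the smooth $\mf u_\beta$ and passing to the limit using precisely the strong $H^1_{\loc}$ convergence and the interaction decay from your item (1). Once (G3) is in hand, your outline (almost-monotonicity of $N$, lower bound $N(x_0,0^+)\ge 1$ on $\Gamma_{\mf u}$, hence $H(x_0,r)\le Cr^2$ and a Morrey bound on $\Gamma_{\mf u}$, combined with interior $W^{2,q}$ estimates away from $\Gamma_{\mf u}$) is exactly the paper's argument.
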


We now turn to the regularity issue in the emerging free boundary problem. For this purpose, we extend Definition \ref{def:old_class_G} to groups of segregated components, each component being possibly sign-changing.

\begin{definition}
We define $\mathcal{G}(\Omega)$ as the set of functions $\mf{u}=(u_1,\dots,u_d) \in H^1(\Omega, \R^d) \setminus \{\mf{0}\}$ such that:
\begin{itemize}
\item[(G1)] $u_i$ are Lipschitz continuous on $\Omega$, and such that $u_i u_j \equiv 0$ in $\Omega$ for every $(i,j) \in \mathcal{K}_2$;
\item[(G2)] each component $u_i$ satisfies
\[
-\Delta u_i = f_i(x,\mf{u}) - \mathcal{M}_i \quad \text{in $\mathcal{D}'(\Omega)= (\mathcal{C}^\infty_c(\Omega))'$},
\]
where $\mathbf{f}$ satisfies (L), and $\mathcal{M}_i$ are nonnegative Radon measures supported on $\Gamma_{\mf{u}}:= \{\mf{u}=\mf{0}\}$.
\item[(G3)] for every $x_0 \in \Omega$ and $0<r<\dist(x_0,\partial \Omega)$ it holds
\begin{align*}
(2-N) \sum_{i=1}^d \int_{B_r(x_0)} |\nabla u_i|^2 &= r \sum_{i=1}^d \int_{\pa B_r(x_0)} \left( 2 (\pa_{\nu} u_i)^2 -|\nabla u_i|^2 \right) \\
& \qquad + 2 \sum_{i=1}^d \int_{B_r(x_0)} f_{i}(x,\mf{u}) \nabla u_i \cdot (x-x_0).
\end{align*}
\end{itemize}
We write that $\mf{u} \in \mathcal{G}_{\loc}(\R^N)$ if $\mf{u} \in \mathcal{G}(B_R(0))$ for every $R>0$.
\end{definition}

Consider the following subset of $\Gamma_{\mf{u}}$:
\[
\widetilde \Gamma_{\mf{u}}=\Omega \setminus \bigcup_{h=1}^m \text{int}\left(\overline{\left\{ \sum_{j\in I_h} u_j^2>0\right\}}\right).
\]
We have the following regularity result.

\begin{theorem}\label{thm:regularity_G}
Let $\mf{u}\in \mathcal{G}(\Omega)$. Then
\begin{itemize}
\item[1.] $\Hh_\text{dim}(\Gamma_\mf{u})\leq N-1$; 
\item[2.] there exists a set $\Rcal_\mf{u}\subseteq \widetilde\Gamma_\mf{u}$, relatively open in $\widetilde \Gamma_\mf{u}$, such that
\begin{itemize}
\item[-] $\Hh_\text{dim} (\widetilde \Gamma_\mf{u}\setminus \Rcal_\mf{u})\leq N-2$;
\item[-] $\Rcal_\mf{u}$ is a collection of  hypersurfaces of class $\Ccal^{1,\alpha}$ (for some $0<\alpha<1$), each one locally separating two connected components of $\Omega\setminus \Gamma_{\mf{u}}$.
\item[-] given $x_0\in \Rcal_\mf{u}$, there exist $h,k\in \{1,\ldots, m\}$ such that
\begin{equation}\label{eq:reflectionlaw}
\lim_{x\to x_0^+} \sum_{i\in I_h} |\nabla u_i|^2=\lim_{x\to x_0^-} \sum_{i\in I_k} |\nabla u_i|^2\neq 0,
\end{equation}
where $x\to x_0^\pm$ are limits taken from opposite sides of the hypersurface.
\item[-] whenever $x\in\widetilde \Gamma_\mf{u}\setminus \Rcal_\mf{u}$, we have
\begin{equation}\label{eq:vanishingofgradient}
\sum_{i=1}^d |\nabla u_i(x)|^2\to 0 \qquad \text{ as } x\to x_0.
\end{equation}
\end{itemize}
\item[3.] Furthermore, if $N=2$, then $\Rcal_{\mf{u}}$ consists in a locally finite collection of curves meeting with equal angles at singular points.
\end{itemize}
If $u\in \mathcal{G}(\Omega)$ is such that $u_i\geq 0$ for every $i$, then conclusions 1.-3. hold with $\Gamma_{\mf u}$ instead of $\widetilde \Gamma_{\mf{u}}$
\end{theorem}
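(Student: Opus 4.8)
The plan is to reduce Theorem \ref{thm:regularity_G} to the already-established Theorem C (that is, Theorem 1.1 of \cite{tt}) by means of a suitable re-grouping of the densities. First I would introduce, for each group $h = 1, \dots, m$, the \emph{aggregated density}
\[
w_h := \Big( \sum_{i \in I_h} u_i^2 \Big)^{1/2},
\]
and observe that the segregation condition (G1) forces $w_h w_k \equiv 0$ in $\Omega$ whenever $h \neq k$; in particular the free boundary $\widetilde\Gamma_{\mf u}$ is exactly the nodal-type set associated with $\mf w = (w_1, \dots, w_m)$, i.e.\ the complement of the union of the interiors of the closures of the sets $\{w_h > 0\}$. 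Each $w_h$ is nonnegative and, on the open set $\{w_h > 0\}$, Lipschitz continuous (being a smooth function of the Lipschitz vector $(u_i)_{i \in I_h}$, away from its zero set); the delicate point is Lipschitz continuity \emph{across} the free boundary, but this is part of the hypothesis $\mf u \in \mathcal G(\Omega)$ via (G1), and one checks that $w_h$ inherits global Lipschitz continuity because it is dominated by $\sum_{i \in I_h} |u_i|$ and, more carefully, because the clean-up arguments of \cite{tt} — domain variation and almost monotonicity formulas — transfer verbatim to $\mf w$. The key computation is to verify that $\mf w \in \mathcal G(\Omega)$ in the sense of Definition \ref{def:old_class_G}: one has, in $\mathcal D'(\Omega)$,
\[
-\Delta w_h = \frac{1}{w_h}\Big( \sum_{i \in I_h} u_i\, (-\Delta u_i) - \big|\nabla \mf u_{I_h}\big|^2 + \big|\nabla w_h\big|^2 \Big) \le \frac{1}{w_h} \sum_{i \in I_h} u_i f_i(x,\mf u) =: g_h(x, w_h)
\]
on $\{w_h > 0\}$, where the Cauchy–Schwarz-type inequality $|\nabla w_h|^2 \le |\nabla \mf u_{I_h}|^2$ was used; condition (L) on $\mf f$ then yields the sublinear bound $|g_h(x,s)| \le C s$ required in (G2), and the excess $-\Delta w_h - g_h(x,w_h)$ is, by construction and by the same reasoning as in \cite[Section 3]{tt}, a nonpositive distribution, hence representable as $-\mathcal M_h$ with $\mathcal M_h \ge 0$ a Radon measure supported on $\{w_h = 0\} = \Gamma_{\mf w} \subseteq \widetilde\Gamma_{\mf u}$.

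Next I would check the Pohozaev–Rellich identity (G3) for $\mf w$. This follows from (G3) for $\mf u$ together with the pointwise identities $\sum_{h}|\nabla w_h|^2 = \sum_{h}\sum_{i\in I_h}|\nabla u_i|^2 - \sum_h |\nabla \mf u_{I_h}^\perp|^2$; since on each connected component of $\{\mf u \ne \mf 0\}$ at most one group $I_h$ is nonvanishing and the other densities vanish identically there, one actually has $\sum_h |\nabla w_h|^2 = \sum_i |\nabla u_i|^2$ and $\sum_h (\partial_\nu w_h)^2 = \sum_i (\partial_\nu u_i)^2$ a.e., so that the quadratic terms in (G3) are literally unchanged; the forcing term transforms according to $\sum_h f^{(w)}_h \nabla w_h = \sum_i f_i(x,\mf u)\nabla u_i$, where $f^{(w)}_h := w_h^{-1}\sum_{i\in I_h} u_i f_i(x,\mf u)$, so (G3) for $\mf w$ is exactly (G3) for $\mf u$. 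Once $\mf w = (w_1,\dots,w_m) \in \mathcal G(\Omega)$ with all components nonnegative, Theorem C applies to $\mf w$ and yields a relatively open set $\mathcal R_{\mf w} \subseteq \Gamma_{\mf w}$ with all the stated properties; I would then \emph{define} $\mathcal R_{\mf u} := \mathcal R_{\mf w}$. Conclusion 1 for $\mf u$ follows because $\Gamma_{\mf u} = \Gamma_{\mf w}$ on the region where segregation is active; more precisely $\widetilde\Gamma_{\mf u} \subseteq \Gamma_{\mf w}$ and $\Hh_{\text{dim}}(\Gamma_{\mf u}) = \Hh_{\text{dim}}(\Gamma_{\mf w}) \le N-1$ by Theorem C, noting that $\Gamma_{\mf u} \setminus \widetilde\Gamma_{\mf u}$ lies inside the open sets $\mathrm{int}(\overline{\{w_h>0\}})$ where each $u_i$ with $i \in I_h$ solves a subcritical elliptic equation and hence has a nodal set of dimension $\le N-1$ by standard unique-continuation / stratification results. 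Conclusions 2 and 3 are immediate from the corresponding parts of Theorem C applied to $\mf w$, after translating the reflection law: on a regular point $x_0 \in \mathcal R_{\mf w}$ exactly two groups $I_h$, $I_k$ are involved, and $\lim_{x \to x_0^\pm} |\nabla w_h|^2 = \lim_{x\to x_0^\pm} \sum_{i\in I_h}|\nabla u_i|^2$ since the other $u_j$ vanish identically near $x_0$ on that side, which gives \eqref{eq:reflectionlaw}; similarly \eqref{eq:vanishingofgradient} follows from the vanishing of $\sum_h |\nabla w_h|^2$ at points of $\widetilde\Gamma_{\mf u} \setminus \mathcal R_{\mf u}$, again because at such points all groups vanish, so $\sum_i |\nabla u_i|^2 = \sum_h |\nabla w_h|^2 \to 0$.

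The main obstacle I anticipate is the \emph{regularity of the aggregated density $w_h$ across the free boundary}: a priori $w_h = (\sum_{i\in I_h} u_i^2)^{1/2}$ is only Lipschitz where it is strictly positive, and the square root could in principle create a singularity where several $u_i$ with $i\in I_h$ vanish simultaneously while $w_h$ does not. To handle this I would argue that $w_h$ is in fact Lipschitz on all of $\Omega$: on the open set $\{w_h > 0\}$ this is clear; at a point $x_0$ where $w_h(x_0) = 0$ one uses the Lipschitz bound $|w_h(x) - w_h(x_0)| = |w_h(x)| \le \sum_{i\in I_h}|u_i(x)| = \sum_{i \in I_h}|u_i(x)-u_i(x_0)| \le L|x-x_0|$ coming from (G1); the only remaining case is a point $x_0$ with $w_h(x_0) > 0$ but $u_{i_0}(x_0)=0$ for some $i_0 \in I_h$, and there $w_h$ is smooth in a neighbourhood since $w_h$ is bounded below, so the square root is Lipschitz. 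A secondary subtlety is verifying that the excess $-\Delta w_h - g_h(x,w_h)$ is genuinely a nonpositive measure and not merely a distribution of order one: this requires checking that the computation of $-\Delta w_h$ in $\mathcal D'$ produces, beyond the interior equation on $\{w_h>0\}$, only a nonpositive contribution on $\{w_h = 0\}$, which one obtains by testing against nonnegative $\varphi \in \mathcal C^\infty_c(\Omega)$ and approximating $w_h$ by $(w_h^2 + \epsilon^2)^{1/2} - \epsilon$, exactly as in the scalar-reduction arguments of \cite{tt, nttv}; letting $\epsilon \to 0$ and using $|\nabla \mf u_{I_h}|^2 \ge |\nabla w_h|^2$ together with (L) gives the claim. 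Finally, the last sentence of the statement (the case $u_i \ge 0$ for all $i$, with $\Gamma_{\mf u}$ in place of $\widetilde\Gamma_{\mf u}$) is obtained by the same scheme, observing that when all $u_i \ge 0$ one has $\{w_h > 0\} = \bigcup_{i\in I_h}\{u_i>0\}$ and, since each $u_i$ is superharmonic up to a measure on its positivity set and satisfies the strong maximum principle there, $\mathrm{int}(\overline{\{w_h>0\}}) = \{w_h>0\}$, so that $\widetilde\Gamma_{\mf u} = \Gamma_{\mf w} = \Gamma_{\mf u}$ and no discrepancy between the two sets arises.
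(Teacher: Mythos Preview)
Your reduction to Theorem C via the aggregated densities $w_h = (\sum_{i\in I_h} u_i^2)^{1/2}$ has a genuine gap: the identity $\sum_h |\nabla w_h|^2 = \sum_i |\nabla u_i|^2$ is \emph{false} in general. By Cauchy--Schwarz one only has $|\nabla w_h|^2 \le \sum_{i\in I_h}|\nabla u_i|^2$, with equality precisely when the vectors $\nabla u_i$ are all proportional (with weights $u_i$). For a concrete counterexample take $I_h = \{1,2\}$ with $u_1 = x_1$, $u_2 = x_2$: then $w_h = |x|$, $|\nabla w_h|^2 = 1$, but $|\nabla u_1|^2 + |\nabla u_2|^2 = 2$. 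This breaks both pillars of your argument simultaneously. First, the Pohozaev identity (G3) for $\mf w$ does \emph{not} follow from (G3) for $\mf u$, since the quadratic terms differ by the nonzero Cauchy--Schwarz deficit. Second, (G2) fails as well: on $\{w_h>0\}$ the equation for $w_h$ reads
\[
-\Delta w_h \;=\; \frac{1}{w_h}\sum_{i\in I_h} u_i f_i(x,\mf u)\;-\;\frac{\sum_{i\in I_h}|\nabla u_i|^2 - |\nabla w_h|^2}{w_h},
\]
and the second term is a nonnegative $L^1_{\loc}$ function supported on $\{w_h>0\}$, \emph{not} on $\Gamma_{\mf w}$. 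In the example above it equals $1/|x|$, which is neither a measure on the nodal set nor controllable by $C w_h$; hence $\mf w$ does not lie in the class $\mathcal G(\Omega)$ of Definition~\ref{def:old_class_G}, and Theorem~C is not applicable. Incidentally, this same example ($\bar u_1 = x_1$, $\bar u_2 = x_2$ in one group) is precisely the obstruction noted in Remark~\ref{rem:positive} to treating the full $\Gamma_{\mf u}$ for sign-changing solutions.

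The paper takes a different route that avoids aggregating. One works directly with $\mf u$: the Almgren formula (Theorem~\ref{thm:Almgren}) and the blow-up analysis are carried out for the full vector, and the crucial observation is that \emph{at the blow-up scale} the components within each active group become scalar multiples of a single linear function (Proposition~\ref{prop:blowup_halfspace}), so that the reduction to two scalar densities is legitimate \emph{only for the blow-up limit}, yielding the hyperplane structure of $\Gamma_{\bar{\mf u}}$ and the dimension estimates via Federer. For the $\mathcal C^{1,\alpha}$ regularity of $\mathcal R_{\mf u}$ one then needs to compare different components $u_{h_i}/u_{h_1}$ up to the free boundary; since these satisfy equations with \emph{different} potentials $f_{h_i}(x,\mf u)/u_{h_i}$, this requires a new Boundary Harnack principle for pairs of solutions to $-\Delta u = a(x)u$, $-\Delta v = b(x)v$ with $a\neq b$ on Reifenberg-flat domains (Proposition~\ref{prop:u_i/u_1_alpha_Holder_generalcase}), proved by sandwiching $u$ between sub/supersolutions $g\circ u$, $h\circ u$ of the $b$-equation. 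This is the substantive new ingredient, and it is exactly what your aggregation shortcut tries to sidestep but cannot.
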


We remark that having sign-changing solutions  adds some difficulties to the proof of the previous theorem, since one needs to take into account the intersection of the nodal set of each individual component with the common nodal set of all components.  However, during the proof we will show that in the neighbourhood of each regular point of $\widetilde \Gamma_{\mf{u}}$ there are always components which do not change sign. For elements in $\mathcal{G}(\Omega)$ with sign-changing components, we need to deal with $\widetilde \Gamma_{\mf{u}}$. This is due to the fact that, in general, we cannot exclude the existence of points $x_0\in \Gamma_{\mf{u}}$ for which there exists a small $\delta>0$ such that $B_\delta(x_0)\setminus \Gamma_{\mf{u}}$ is a connected set. In some particular situations, such as in the framework of \cite{rtt}, these points can be excluded (see Corollary 3.24 in \cite{rtt}); in general, for elements of $\mathcal{G}(\Omega)$ with nonnegative components, this can be always excluded.

\begin{theorem}\label{thm: limit behavior}
Under the assumptions of Theorem \ref{thm: holder bounds}, suppose furthermore that $f_{i,\beta}\to f_i$ in $\mathcal{C}_{\loc}(\Omega \times \R^d)$ with $\mathbf{f}$ satisfying (L), and that the limiting profile (as $\beta\to \infty$) $\mf{u}$ is such that $u_i \not \equiv 0$ in $\Omega$ for at least some $i$. Then $\mf{u}\in\Gcal(\Omega)$. In particular, the limiting profile satisfies all the conclusions of Theorem \ref{thm:regularity_G}.
\end{theorem}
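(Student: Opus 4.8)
The plan is to derive Theorem~\ref{thm: limit behavior} as a direct consequence of Theorems~\ref{thm: holder bounds} and~\ref{thm: consequences}, by verifying that the limiting profile $\mf{u}$ satisfies the three defining properties (G1)--(G3) of $\Gcal(\Omega)$. First I would recall that, by Theorem~\ref{thm: holder bounds}, the family $\{\mf{u}_\beta\}$ is bounded in $\Ccal^{0,\alpha}_{\loc}(\Omega)$ for every $\alpha \in (0,1)$, so up to a subsequence it converges to some $\mf{u} \in \Ccal^{0,\alpha}_{\loc}(\Omega)$, strongly in $H^1_{\loc}$ by Theorem~\ref{thm: consequences}(1); the hypothesis that some $u_i \not\equiv 0$ guarantees $\mf{u} \in H^1(\Omega,\R^d) \setminus \{\mf{0}\}$ (after passing to the relevant subdomain, or invoking the local nature of the statement). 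Property (G1): segregation $u_i u_j \equiv 0$ for $(i,j) \in \Kcal_2$ is exactly Theorem~\ref{thm: consequences}(3), and Lipschitz continuity of each $u_i$ is Theorem~\ref{thm: consequences}(4), which applies since $\mf{f}$ satisfies (L) by hypothesis.

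Next, for (G2) I would pass to the limit in the weak formulation of \eqref{eq:main_system}. Testing the equation for $u_{i,\beta}$ against $\varphi \in \Ccal^\infty_c(\Omega)$ with $\varphi \geq 0$, the competition term $\beta \sum_{j\neq i} a_{ij} u_{i,\beta}|u_{i,\beta}|^{p-1}|u_{j,\beta}|^{p+1}$ has a sign opposite to $u_{i,\beta}$; multiplying the equation by $u_{i,\beta}$ we control $\beta \int_K |u_{i,\beta}|^{p+1}|u_{j,\beta}|^{p+1}$ (this is Theorem~\ref{thm: consequences}(1), the vanishing of the scaled interaction integrals on compacts). Combined with the uniform $L^\infty$ bound and $H^1_{\loc}$ convergence, one shows that $-\Delta u_{i,\beta} - f_{i,\beta}(x,\mf{u}_\beta)$ is, up to the sign, a sequence of nonnegative distributions with locally uniformly bounded mass, hence converges (up to subsequence, weak-$*$) to a nonnegative Radon measure $\Mcal_i$; its support is contained in $\Gamma_{\mf{u}}$ because away from $\Gamma_{\mf{u}}$ at most one component of each group is nonzero and the $u_{j,\beta}$ for $j$ in other groups decay to $0$ uniformly on compact neighbourhoods, so the competition term vanishes there and $u_i$ solves $-\Delta u_i = f_i(x,\mf{u})$ classically. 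Since $f_{i,\beta} \to f_i$ in $\Ccal_{\loc}(\Omega\times\R^d)$ and $\mf{u}_\beta \to \mf{u}$ uniformly on compacts, $f_{i,\beta}(\cdot,\mf{u}_\beta) \to f_i(\cdot,\mf{u})$ locally uniformly, and the limiting identity $-\Delta u_i = f_i(x,\mf{u}) - \Mcal_i$ follows in $\Dr'(\Omega)$. Conclusion~(2) of Theorem~\ref{thm: consequences} confirms the claimed equation holds in the open set $\{\sum_{j \in I_h} u_j^2 > 0\}$.

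For (G3), the domain-variation (Pohozaev-type) identity, I would start from the corresponding identity for $\mf{u}_\beta$: testing \eqref{eq:main_system} with $\nabla u_{i,\beta} \cdot (x-x_0)$ and summing over $i$, the interaction terms combine into $\frac{\beta}{2(p+1)}\sum_{(i,j)\in\Kcal_2} a_{ij} \int \nabla\big(|u_{i,\beta}|^{p+1}|u_{j,\beta}|^{p+1}\big)\cdot(x-x_0)$, which after integration by parts becomes a bulk term $\frac{N\beta}{2(p+1)}\sum a_{ij}\int_{B_r} |u_{i,\beta}|^{p+1}|u_{j,\beta}|^{p+1}$ minus a boundary term on $\partial B_r$. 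By Theorem~\ref{thm: consequences}(1) the bulk interaction integral on any fixed ball tends to $0$; for the boundary term one uses a standard argument (Fubini plus the vanishing of $\beta\int_{B_r}|u_{i,\beta}|^{p+1}|u_{j,\beta}|^{p+1}$) to select, for a.e.\ $r$, a sequence along which the boundary interaction integral also vanishes, and then a continuity/monotonicity argument to extend to all $r$. The $H^1_{\loc}$ strong convergence handles the Dirichlet-energy terms on $B_r$ and, after choosing good radii, on $\partial B_r$; the forcing term passes to the limit by local uniform convergence of $f_{i,\beta}(\cdot,\mf{u}_\beta)$ and $H^1$ convergence of $\nabla u_{i,\beta}$. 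This yields (G3) for $\mf{u}$. Once (G1)--(G3) are checked, $\mf{u} \in \Gcal(\Omega)$ and Theorem~\ref{thm:regularity_G} applies verbatim; the final sentence about nonnegative components is immediate since $u_i \geq 0$ is preserved in the limit.

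I expect the main obstacle to be the careful treatment of the boundary integrals on $\partial B_r$ in the passage to the limit in (G3): the strong $H^1_{\loc}$ convergence gives convergence of $\int_{B_r}|\nabla u_{i,\beta}|^2$ for every $r$, but convergence of the traces $\int_{\partial B_r}(2(\partial_\nu u_{i,\beta})^2 - |\nabla u_{i,\beta}|^2)$ and the vanishing of $\beta\int_{\partial B_r}|u_{i,\beta}|^{p+1}|u_{j,\beta}|^{p+1}$ only hold for a.e.\ $r$ a priori, so one must combine a coarea/Fubini selection of good radii with the fact that both sides of the Pohozaev identity are (absolutely) continuous in $r$ to conclude for every $r < \dist(x_0,\partial\Omega)$. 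The rest is a routine, if somewhat lengthy, limiting argument that parallels the proofs of Theorems~B and~D in the positive case, and indeed most of the substantive analytic content has already been packaged into Theorems~\ref{thm: holder bounds} and~\ref{thm: consequences}.
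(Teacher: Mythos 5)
Your route coincides with the paper's: Theorem \ref{thm: limit behavior} is not proved in one block but assembled exactly as you describe from the material of Section \ref{sec: properties} --- Lemma \ref{lem: cu and int 0} (strong $H^1_{\loc}$ convergence, segregation between groups, vanishing of the scaled interaction integrals, and the equation in $\{\sum_{j\in I_h}|u_j|^2>0\}$), the unnamed Pohozaev lemma for (G3), and Proposition \ref{prp: lip} for the Lipschitz continuity in (G1). Your handling of the boundary integrals in (G3) (selection of good radii plus continuity in $r$) is, if anything, more explicit than the paper's, which only asserts the identity for a.e.\ $r$.

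There is, however, one concrete slip in your verification of (G2). The competition term $\beta\sum_{j\neq i}a_{ij}\,u_{i,\beta}|u_{i,\beta}|^{p-1}|u_{j,\beta}|^{p+1}$ has the \emph{same} sign as $u_{i,\beta}$, not the opposite one, since $u|u|^{p-1}=\mathrm{sign}(u)\,|u|^{p}$. For nonnegative components this is harmless: the term is then nonnegative, $-\Delta u_{i,\beta}-f_{i,\beta}$ is a nonpositive distribution with locally bounded mass, and its weak-$*$ limit is $-\Mcal_i$ with $\Mcal_i\geq 0$ as required. But the theorem is precisely about sign-changing solutions, and for those the distributions $-\Delta u_{i,\beta}-f_{i,\beta}(x,\mf{u}_\beta)$ are \emph{not} one-signed; they are signed measures whose total variation is locally bounded (via Kato's inequality applied to $|u_{i,\beta}|$, which controls $\beta\int_K|u_{i,\beta}|^{p}\sum_j a_{ij}|u_{j,\beta}|^{p+1}$), so weak-$*$ compactness a priori only produces a \emph{signed} limit measure $\Mcal_i$. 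Your localization of $\supp\Mcal_i$ inside $\Gamma_{\mf{u}}$ goes through (using (L) to kill $f_i$ on the interior of $\{\sum_{j\in I_h}|u_j|=0\}$ and the segregation to handle points where another group survives), but the nonnegativity of $\Mcal_i$ --- an explicit requirement of (G2) --- does not follow from the argument as written and needs a separate step, for instance passing to the limit in the Kato inequalities for $u_{i,\beta}^{\pm}$ separately, or showing that near each point of $\supp\Mcal_i\subset\Gamma_{\mf{u}}$ the component $u_i$ has a fixed sign so that locally the approximating competition terms are one-signed. As stated, the claim that $-\Delta u_{i,\beta}-f_{i,\beta}$ is ``up to the sign, a sequence of nonnegative distributions'' fails for a genuinely sign-changing $u_{i,\beta}$.
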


To conclude, we observe that a couple of problems addressed and solved for family of solutions to \eqref{classical elliptic GP} remains open in our general context: firstly, the proof of the uniform boundedness in the Lipschitz space, as in \cite{SoZi}; secondly, the precise description of the singular set in the emerging free boundary problem, as in \cite{CaffLin2010,ZhangLiu}. These will be object of future investigation.

\subsection{Structure of the paper}

The paper is organized as follows: in Section \ref{sec: bounds} we prove Theorems \ref{thm: holder bounds} and \ref{thm: holder bounds boundary}. We follow the structure of the proof of Theorem 1.1 in \cite{nttv}, but, as we shall see, we have to face several complications which mainly arise from the fact that we have a non-trivial grouping among the different components, and that we deal with arbitrary exponents $p>0$ (thus including sublinear terms). Section \ref{sec: properties} is devoted to the proof of Theorem \ref{thm: consequences}. In Section \ref{sec: regularity} we present the proofs of Theorems \ref{thm:regularity_G} and \ref{thm: limit behavior}. This part differs substantially with respect to \cite{CaffLin,tt}, since, as we shall see, the effect of the nontrivial grouping together with the fact that we do not consider minimal solutions introduce several complications.  In particular, a new boundary Harnack Principle is proved in Subsection \ref{subseq:Harnack}. Finally, we collect all the Liouville-type theorems that we used in the paper in an appendix, for the reader's convenience; although most of such results are already known, we need also new ones to treat the case $0<p<1$.

\section{Proof of the uniform H\"older bounds}\label{sec: bounds}

In this section we prove first Theorem \ref{thm: holder bounds}, and will assume from now on its assumptions. The proof closely follows those of Theorem 1.1 in \cite{nttv} and of Theorem 2.6 in \cite{Wa} (see also \cite[Theorem 3.11]{rtt}), with the necessary modifications which come from the fact that we are considering a ``non purely competitive" setting, sign-changing solutions, and interactions with general $p >0$ (in case smaller than 1). Without loss of generality we suppose that $\Omega \supset B_3$, and we aim at proving the uniform H\"older bound in $B_1$. We know that 
\[
\sup_{i=1,\dots,d} \|u_{i,\beta}\|_{L^\infty(B_3)} \le M <+\infty
\]
independently on $\beta$. Let $\eta \in \mathcal{C}^1_c(\R^N)$ be a radially decreasing cut-off function such that 

\begin{equation}\label{def eta}
\begin{cases}
\eta(x) =1 & \text{for $x \in B_1$}\\
\eta(x)= 0 & \text{for $x \in \R^N \setminus B_2$} \\
\eta(x) = (2-|x|)^2 & \text{for $x \in B_2 \setminus B_{3/2}$}.
\end{cases}
\end{equation}
The explicit shape of $\eta$ in $B_2\setminus B_{3/2}$ will allow us to control the ratio $\eta(x)/\eta(y)$ for $x,y$ in certain balls that are close to $\partial B_2$, see Remark \ref{rmk: bdd ratio cut off} ahead. We aim at proving that the family $\{\eta \mf{u}_\beta: \beta >0\}$ admits a uniform bound on the $\alpha$-H\"older semi-norm, that is, there exists $C > 0$, independent of $\beta$, such that
\begin{equation}\label{lip scaling}
\sup_{i=1,\dots,d} \sup_{\substack{x \neq y \\ x,y \in \overline{B_2}}}    \frac{ |(\eta u_{i,\beta})(x)-(\eta u_{i,\beta})(y)|}{|x-y|^{\alpha}} \leq C.
\end{equation}
Since $\eta=1$ in $B_1$, once \eqref{lip scaling} is proved, Theorem \ref{thm: holder bounds} follows. 

If $\beta$ varies in a bounded interval, then such a uniform bound does exist by elliptic regularity. Indeed, in such a case, since both $f_{i,\beta}$ and $u_{i,\beta}$ are uniformly bounded in $L^\infty(B_2)$, also
\[
f_{i,\beta}(x,\mf{u}_\beta)-\beta \mathop{\sum_{j=1}^d}_{j\neq i} a_{ij} u_{i,\beta} |u_{i,\beta}|^{p-1}|u_{j,\beta}|^{p+1} \qquad \text{ is uniformly bounded in } B_2.
\]
Thus, we may conclude using the classical estimate \cite[Theorem 9.11]{GiTr} and the embeddings \cite[Theorem 7.26]{GiTr}.
Hence, let us assume by contradiction that there exists a sequence $\beta_n \to +\infty$ and a corresponding sequence $\{\mf{u}_n\}$ such that
\begin{equation}\label{absurd assumption}
    L_n := \sup_{i = 1, \dots, d} \sup_{\substack{x \neq y \\ x,y \in \overline{B_2}} } \frac{ |(\eta u_{i,n})(x)-(\eta u_{i,n})(y)|}{|x-y|^{\alpha}} \to \infty \qquad \text{as $n \to +\infty$.}
\end{equation}
Up to a relabelling, we may assume that the supremum is achieved for $i = 1$ and at a pair of points $x_n, y_n \in \overline{B_{2}}$ and moreover, $x_n \neq y_n$ since, for $\beta_n$ fixed, the functions $\mf{u}_{i,n}$ are smooth. As $\{\mf{u}_\beta\}$ is uniformly bounded in $L^\infty(B_2)$, it is immediate to observe that $|x_n-y_n| \to 0$ as $n \to \infty$, since
\[
|x_n-y_n|^{\alpha}= \frac{|(\eta u_{1,n})(x)-(\eta u_{1,n})(y)|}{L_n} \le \frac{C}{L_n}.
\]

\subsection{Blow-up analysis}

As in \cite{SoZi, tvz1, Wa} the contradiction argument is based on two blow-up sequences:
\begin{equation}\label{def blow-up}
    v_{i,n}(x) := \eta(x_n) \frac{u_{i,n}(x_n + r_n x)}{L_n r_n^{\alpha}} \quad \text{and} \quad \bar{v}_{i,n}(x) := \frac{(\eta u_{i,n})(x_n + r_n x)}{L_n r_n^{\alpha}},
\end{equation}
both defined on the scaled domain $(\Omega - x_n)/r_n \supset (B_3-x_n)/r_n=: \Omega_n$. The function $\bar{\mf{v}}_{n}$ is the one for which the H\"older quotient is normalized (see Lemma \ref{lem: basic prop}-(1) ahead), however it satisfies a rather complicated system. On the other hand, its localized version $\mf{v}_{n}$, as we will see, satisfies a simple system related to \eqref{eq:main_system}. We will also check that both blow-up functions have (locally) comparable $L^\infty$ norms and gradients (as a byproduct of Remark \ref{rmk: bdd ratio cut off} below), and this allows to interchange information from one function to the other. This idea goes back to the ``freezing of the coefficients'' used in the proof of the classical Schauder estimates (see for instance Section 6 in \cite{GiTr}), and was firstly used in this context by K. Wang \cite{Wa}.

The functions $\bar{\mf{v}}_n$ are non-trivial in the subset $(B_{2}-x_n)/r_n=:\Omega_n'$. Here $0<r_n\to 0$ will be conveniently chosen later. Note that 
$\{\Omega_n'\}$ converges to a limit domain $\Omega_\infty$, which can be a half-space or the entire space according to the asymptotic behaviour of the sequence
\[
\dist(0,\pa \Omega_n')= \frac{\dist(x_n,\pa B_2)}{r_n}.
\] 
On the other hand, since $\Omega_n \supset B_{1/r_n}$, in the limit as $n \to \infty$ it results that $\Omega_n$ approaches $\R^N$.
The following remark, that originates from the explicit definition of $\eta$ in $B_2\setminus B_{3/2}$, will allow us to compare the gradients of $v_{i,n}$ and $\bar v_{i,n}$, which will be essential in the proofs of Lemma \ref{lem: bound in 0} and Lemma \ref{M_n illimitato}. 
\begin{remark}\label{rmk: bdd ratio cut off}
For an arbitrary $x \in B_2$, let $r_x:= |x|$ and $d_x:= \dist(x,\pa B_2) = 2-r_x$. In light of \eqref{def eta}, it is possible to check that 
\[
\sup_{x \in B_2} \sup_{\rho \in (0,d_x/2)}  \frac{\sup_{B_{\rho}(x)} \eta    }{\inf_{B_\rho(x)} \eta} \le 16.  
\]
Indeed, for any $x \in B_2 \setminus B_{7/4}$ and for every $\rho \in (0,d_x/2)$, we have $B_{d_x/2}(x)\subset B_2\setminus B_{3/2}$, and
\[
\sup_{B_{\rho}(x)} \eta \le \sup_{B_{d_x/2}(x)} \eta = \left(2-r_x+\frac{d_x}{2}\right)^2 = \frac{9}{4}(2-r_x)^2,
\]
and
\[
\inf_{B_{\rho}(x)} \eta \ge \inf_{B_{d_x/2}(x)} \eta = \left(2-r_x-\frac{d_x}{2}\right)^2 = \frac{1}{4}(2-r_x)^2,
\]
On the other hand, for $x\in B_{3/2}$, we have $B_{d_x/2}(x)\subset  B_{7/4}$, and
\[
\sup_{B_{d_x/2}(x)} \eta \leq 1,\qquad \text{ and } \qquad \inf_{B_{d_x/2}(x)} \eta\geq \inf_{B_{7/4}(0)} \eta\geq \left(2-\frac{7}{4}\right)^2=\frac{1}{16}.
\]
\end{remark}

Basic properties of the blow-up sequences are collected in the following lemma.

\begin{lemma}\label{lem: basic prop}
In the previous setting, it results that:
\begin{enumerate}
\item the sequence $\{\bar{\mf{v}}_n\}$ has uniformly bounded $\alpha$-H\"older semi-norm in $\Omega'_n$, and in particular
\[
\sup_{i=1,\dots,d} \sup_{\substack{x \neq y \\ x,y \in \overline{\Omega_n'}}}    \frac{ |\bar v_{i,n}(x)-\bar v_{i,n}(y)|}{|x-y|^{\alpha}} = \frac{ |\bar v_{1,n}(0)-\bar v_{1,n}\left(\frac{y_n-x_n}{r_n}\right)|}{\left|\frac{y_n-x_n}{r_n}\right|^{\alpha}} = 1
\]
for every $n$.
\item $v_{i,n}$ is a solution of 
\begin{equation}\label{system blow-up}
-\Delta v_{i,n} = g_{i,n}(x) - M_n v_{i,n} |v_{i,n}|^{p-1}  \sum_{j \neq i} a_{ij} |v_{j,n}|^{p+1} \qquad \text{in $\Omega_n$},
\end{equation}
where
\[
 M_n:= \beta_n r_n^{2(\alpha p +1)} \left(\frac{L_n}{\eta(x_n)}\right)^{2p}.
\] 
and
\[
\begin{cases}
\displaystyle  g_{i,n}(x):= \frac{\eta(x_n) r_n^{2-\alpha}  }{L_n} f_{i,\b_n}(x_n+r_n x) &  \text{ if } p\geq 1\\[10pt]
\displaystyle g_{i,n}(x):= \frac{\eta(x_n) r_n^{2-\alpha}  }{L_n} f_{i,\b_n}(x_n+r_n x,\mf{u}_{n}(x_n+r_n x)) &\text{ if } 0<p<1.
\end{cases}
\]
\item $\|g_{i,n}\|_{L^\infty(\Omega_n)}\to 0$ as $n \to \infty$. Moreover, if $0<p<1$
\[
|g_{i,n}(x)| \leq \textrm{o}_n(1) \sum_{j\in I_h} |v_{j,n}|^p\quad   \text{for every $i\in I_h$}.
\]
\item for every compact set $K \subset \R^N$ we have 
\[
\sup_K |\mf{v}_n - \bar{\mf{v}}_n | \to 0 \qquad \text{as $n \to \infty$}.
\]
\item for every compact $K \subset \R^N$ there exists $C>0$ such that 
\[
|v_{i,n}(x) - v_{i,n}(y)| \le C + |x-y|^\alpha
\]
for every $x, y \in K$ and $i=1,\dots,d$; in particular $\{v_{i,n}\}$ has uniformly bounded oscillation in any compact set.
\end{enumerate}
\end{lemma}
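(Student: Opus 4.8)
The plan is to establish the five items essentially in the order stated, since each is a direct consequence of the scaling definitions in \eqref{def blow-up} together with Remark \ref{rmk: bdd ratio cut off}. For item (1), I would observe that by construction $\bar v_{i,n}(x) = (\eta u_{i,n})(x_n + r_n x)/(L_n r_n^\alpha)$, so for $x\neq y$ in $\overline{\Omega_n'}$ one computes directly
\[
\frac{|\bar v_{i,n}(x) - \bar v_{i,n}(y)|}{|x-y|^\alpha} = \frac{1}{L_n r_n^\alpha}\cdot\frac{|(\eta u_{i,n})(x_n+r_n x) - (\eta u_{i,n})(x_n+r_n y)|}{|x-y|^\alpha} = \frac{1}{L_n}\cdot \frac{|(\eta u_{i,n})(\xi) - (\eta u_{i,n})(\zeta)|}{|\xi - \zeta|^\alpha},
\]
where $\xi = x_n + r_n x$, $\zeta = x_n + r_n y$ lie in $\overline{B_2}$. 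Since $L_n$ is exactly the supremum of this H\"older quotient over $\overline{B_2}$ (with the sup attained at $i=1$, $\xi = x_n$, $\zeta = y_n$), the claimed identity follows: the quotient is $\le 1$ for all $i$ and all pairs, and equals $1$ for $i=1$ at $x=0$, $y=(y_n-x_n)/r_n$.

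For item (2) I would simply substitute the definition of $v_{i,n}$ into \eqref{eq:main_system}. Writing $u_{i,n}(x_n + r_n x) = (L_n r_n^\alpha/\eta(x_n)) v_{i,n}(x)$, the Laplacian scales as $\Delta[u_{i,n}(x_n + r_n\cdot)] = r_n^2 (\Delta u_{i,n})(x_n + r_n\cdot)$, and the competition term picks up a factor $\beta_n (L_n r_n^\alpha/\eta(x_n))^{2p+1}$ from the product $u_i|u_i|^{p-1}|u_j|^{p+1}$; dividing through by $L_n r_n^{\alpha-2}/\eta(x_n)$ produces exactly $M_n = \beta_n r_n^{2(\alpha p + 1)}(L_n/\eta(x_n))^{2p}$ and the stated $g_{i,n}$ (the two cases $p\ge 1$ and $0<p<1$ differing only in whether $\mathbf f_\beta$ is evaluated at $\mathbf u_n$ or not, matching hypothesis (H)). For item (3), the factor $r_n^{2-\alpha}/L_n \to 0$ kills $\|g_{i,n}\|_\infty$ when $p\ge 1$ since $f_{i,\beta_n}$ is locally bounded by (H); when $0<p<1$ the bound $|f_{i,\beta}(x,\mathbf s)|\le C\sum_{j\in I_h}|s_j|^p$ from (H), after rescaling, turns into $|g_{i,n}(x)| \le C \eta(x_n) r_n^{2-\alpha} L_n^{-1}\sum_{j\in I_h}|u_{j,n}(x_n+r_n x)|^p = C\eta(x_n)^{1-p} r_n^{2-\alpha(1+p)} L_n^{p-1}\sum_{j\in I_h}|v_{j,n}(x)|^p$, and the prefactor is $\mathrm{o}_n(1)$ because — to be checked from the choice of $r_n$ made later — one has $r_n \to 0$ and $L_n \to \infty$ with the relevant exponents working out (the precise verification that this prefactor vanishes is the point that must be deferred to the section where $r_n$ is fixed, but the structural inequality is immediate).

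Items (4) and (5) are where Remark \ref{rmk: bdd ratio cut off} does the work. I would write $v_{i,n}(x) - \bar v_{i,n}(x) = \frac{u_{i,n}(x_n+r_n x)}{L_n r_n^\alpha}\big(\eta(x_n) - \eta(x_n + r_n x)\big)$ and bound $|v_{i,n}(x)| = \eta(x_n)|u_{i,n}(x_n+r_nx)|/(L_n r_n^\alpha)$; on a fixed compact set $K$, for $n$ large the points $x_n + r_n x$ satisfy $|r_n x| < d_{x_n}/2$ (with notation as in Remark \ref{rmk: bdd ratio cut off}), so the ratio $\eta(x_n)/\eta(x_n+r_n x)$ stays bounded by $16$, hence $|v_{i,n}| \le 16\,|\bar v_{i,n}|$ on $K$; and since $\{\bar v_{i,n}\}$ has bounded oscillation on $K$ by item (1) (with $\bar v_{i,n}(0) = u_{1,n}(x_n)\eta(x_n)/(L_n r_n^\alpha)$ bounded — here one uses $L_n \to \infty$ and boundedness of $u$ and $\eta$, or rather that $\bar v_{1,n}(0)$ is controlled, a point again tied to the later choice of $r_n$), the factor $|\eta(x_n) - \eta(x_n+r_nx)| \le \mathrm{Lip}(\eta)\, r_n |x| \to 0$ uniformly on $K$ forces $\sup_K|v_{i,n} - \bar v_{i,n}| \to 0$. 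Item (5) then follows by combining the triangle inequality $|v_{i,n}(x) - v_{i,n}(y)| \le |\bar v_{i,n}(x) - \bar v_{i,n}(y)| + |v_{i,n}(x) - \bar v_{i,n}(x)| + |v_{i,n}(y) - \bar v_{i,n}(y)|$ with item (1) (giving $|\bar v_{i,n}(x) - \bar v_{i,n}(y)| \le |x-y|^\alpha$) and item (4) (giving a uniform constant $C$ for the remaining two terms on compact sets).

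\textbf{Main obstacle.} The genuinely delicate part is not the algebra but the interplay with the not-yet-specified radius $r_n$: items (3)–(5) all implicitly require that $r_n$ be chosen so that $r_n \to 0$, that $\bar v_{1,n}(0)$ (equivalently $\eta(x_n)u_{1,n}(x_n)/(L_n r_n^\alpha)$) stays bounded, and that the $0<p<1$ prefactor in (3) is $\mathrm{o}_n(1)$. So the proof of this lemma is logically entangled with the definition of $r_n$ in the subsequent argument; I would structure things so that the lemma records the purely formal scaling identities unconditionally (items (1), (2)), and states (3)–(5) as properties valid once $r_n$ is chosen with the natural normalization $M_n \sim 1$ (or $M_n \to \infty$), deferring the one-line verifications of the decay of the prefactors to where $r_n$ is pinned down.
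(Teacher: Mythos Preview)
Your treatment of items (1), (2), and (5) is correct and matches the paper. But you overcomplicate item (4), and this is what generates the spurious ``main obstacle'' you flag at the end.

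For (4) the paper's argument is a single line: from the identity
\[
v_{i,n}(x) - \bar v_{i,n}(x) = \frac{u_{i,n}(x_n+r_n x)}{L_n r_n^{\alpha}}\bigl(\eta(x_n)-\eta(x_n+r_n x)\bigr)
\]
one bounds $|u_{i,n}(x_n+r_n x)| \le M$ \emph{directly} by the uniform $L^\infty$ hypothesis on $\{\mathbf u_n\}$, and $|\eta(x_n)-\eta(x_n+r_n x)| \le l\, r_n|x|$ by Lipschitz continuity of $\eta$. This gives $|v_{i,n}-\bar v_{i,n}| \le lM\, r_n^{1-\alpha} L_n^{-1}|x|$, which tends to $0$ uniformly on compacts since $r_n\to 0$ and $L_n\to\infty$. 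No appeal to Remark \ref{rmk: bdd ratio cut off}, no boundedness of $\bar v_{1,n}(0)$, and no constraint on the choice of $r_n$ beyond $r_n\to 0$ is needed. Your route---bounding $|v_{i,n}|$ via $|\bar v_{i,n}|$ through the ratio $\eta(x_n)/\eta(x_n+r_nx)$---requires $|r_nx|<d_{x_n}/2$, which is \emph{not} known at this stage (it would amount to $\dist(x_n,\partial B_2)/r_n\to\infty$, established only later and only in certain cases), and also seems to require $\bar v_{i,n}(0)$ bounded, which is the content of the subsequent Lemma \ref{lem: bound in 0} and genuinely does depend on $r_n$.

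Similarly, item (3) needs no special $r_n$: for $p\ge 1$ the prefactor is $\eta(x_n)r_n^{2-\alpha}/L_n \le r_n^{2-\alpha}/L_n \to 0$; for $0<p<1$ your rescaling is correct (the exponent of $r_n$ should be $2-\alpha(1-p)$, not $2-\alpha(1+p)$), and the prefactor $\eta(x_n)^{1-p} r_n^{2-\alpha(1-p)} L_n^{p-1}$ tends to $0$ simply because $r_n\to 0$, $L_n\to\infty$, $1-p>0$, and $2-\alpha(1-p)>0$. So the ``logical entanglement'' with the later definition of $r_n$ that you identify as the main obstacle does not actually exist: all five items hold for any sequence $r_n\to 0$, and Remark \ref{rmk: bdd ratio cut off} is used only later, in Lemmas \ref{lem: bound in 0} and \ref{M_n illimitato}.
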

\begin{proof}
The proof of points (1)-(2) is trivial. For (3), it is sufficient to use the definition of $g_{i,n}$ and the boundedness of $\{\mf{u}_n\}$ in $L^\infty(\Omega)$, plus assumption (H). As far as (4) is concerned, since $\eta$ is globally Lipschitz continuous with constant denoted by $l$, and $\{u_{i,n}\}$ is uniformly bounded in $K$, we have 
\[
|v_{i,n}(x) - \bar v_{i,n}(x)|= \frac{|u_{i,n}(x_n+r_n x)|}{L_n r_n^{\alpha}} |\eta(x_n)-\eta (x_n+r_n x)| \le \frac{l M r_n^{1-\alpha}}{L_n} |x|,
\]
where we recall that $\|u_{i,n}\|_{L^\infty(B_3)} \le M$ for every $i$ and $n$. Finally, for (5) we use point (4) and the uniform H\"older boundedness of the sequence $\{\bar{\mf{v}}_n\}$.
\end{proof}



\begin{lemma}\label{lem: bound in 0}
Take $0<r_n \to 0$ such that
\begin{equation}\label{conditions r_n}
\liminf_n M_n>0, \qquad \limsup_n \frac{|x_n-y_n|}{r_n}<\infty.
\end{equation}
Then the sequence $(\mf{v}_n(0))$ is bounded. 
\end{lemma}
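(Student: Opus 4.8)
The plan is to argue by contradiction: assume $(\mf v_n(0))$ is unbounded and pass to a subsequence along which $H_n:=\max_i|v_{i,n}(0)|\to+\infty$. By Lemma~\ref{lem: basic prop}-(5) every $v_{i,n}$ has bounded oscillation on each fixed ball, so, up to a further subsequence, the components split into the nonempty set $J:=\{i:\ |v_{i,n}(0)|\to\infty\}$ — along which $|v_{i,n}|\to\infty$ locally uniformly, with $v_{i,n}$ of constant sign on each ball for $n$ large — and its complement, along which $\{v_{i,n}\}$ is bounded on compact sets.

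First I would reduce to a blow-up concentrated in a single group. If there were $i,j\in J$ with $(i,j)\in\Kcal_2$, normalize $v_{i,n}>0$ on $B_1$; then $a_{ij}>0$, and from the system \eqref{system blow-up}, together with $\inf_{B_1}v_{i,n}\to\infty$, $\inf_{B_1}|v_{j,n}|\to\infty$, $\liminf_n M_n>0$ (the first assumption in \eqref{conditions r_n}) and $\|g_{i,n}\|_{L^\infty}\to 0$, one obtains $\Delta v_{i,n}\ge\mu_n$ on $B_1$ with $\mu_n\to+\infty$. Then $v_{i,n}(x)-\tfrac{\mu_n}{2N}|x|^2$ is subharmonic in $B_1$, and the sub-mean-value inequality on $\partial B_1$, combined once more with the oscillation bound, forces $\mu_n\le C$ — a contradiction. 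Hence $J\subseteq I_h$ for some $h$; in particular every competitor of a blowing-up component stays bounded on compact sets.

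Next, the easy ``boundary'' case. Since $v_{i,n}(0)=\bar v_{i,n}(0)$, since $\bar v_{i,n}$ has $\alpha$-H\"older semi-norm $\le 1$, and since $\bar v_{i,n}$ vanishes at every point $x$ with $x_n+r_nx\notin B_2$, we get $|v_{i,n}(0)|\le\dist(0,\pa\Omega_n')^{\alpha}$. Thus, if $\dist(0,\pa\Omega_n')$ stays bounded along a subsequence, $(\mf v_n(0))$ is bounded and there is nothing to prove. So assume $\dist(0,\pa\Omega_n')\to\infty$, i.e. $\Omega_n,\Omega_n'\to\R^N$. Now rescale: $w_{i,n}:=v_{i,n}/H_n$ solves \eqref{system blow-up} with $M_n$ replaced by $\tilde M_n:=M_nH_n^{2p}\to+\infty$ and with right-hand side $g_{i,n}/H_n\to 0$ locally uniformly (using Lemma~\ref{lem: basic prop}-(3)), and by the oscillation bound $w_{i,n}\to\bar c_i$ locally uniformly with $\bar c_i$ constant and $\max_i|\bar c_i|=1$. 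Choosing $i_0\in I_h$ with $|\bar c_{i_0}|=1$, the paraboloid comparison above applied to $w_{i_0,n}$ forces $\bar c_j=0$ for all $j\notin I_h$; then for such $j$ the competition coefficient in the $j$-th equation is $\ge\tfrac12 a_{ji_0}$ on $B_1$, so Kato's inequality gives $\Delta|w_{j,n}|\ge\tfrac12 a_{ji_0}\tilde M_n|w_{j,n}|^{p}-\textrm{o}_n(1)$, and testing against a cut-off together with $w_{j,n}\to 0$ yields $\tilde M_n\|w_{j,n}\|_{L^{p+1}(B_{1/2})}^{p+1}\to 0$. Feeding this back into the equations for $i\in I_h$ shows $\Delta w_{i,n}\to 0$ in $L^1_{\loc}$, so the surviving components solve, in the limit, a problem with vanishing interaction; combining this with $v_{i,n}(0)=\bar v_{i,n}(0)$, with Lemma~\ref{lem: basic prop}-(4), and with the appropriate Liouville-type theorem of the Appendix, one contradicts the fact that $\bar{\mf v}_n$ has H\"older semi-norm exactly $1$ on the exhausting domains $\Omega_n'$.

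The hard part is precisely this last case $\dist(0,\pa\Omega_n')\to\infty$: one must rule out a genuinely large blow-up concentrated on a single group, a configuration made possible only by the nontrivial grouping. It is most delicate when $0<p<1$, because of the singular factor $|v_{i,n}|^{p-1}$ near the common zero set of the group — which is exactly what forces the use of the refined Liouville theorems announced in the introduction — and one must also keep track of which component realizes the H\"older semi-norm $L_n$, i.e. of the behaviour of $(y_n-x_n)/r_n$, which is only assumed bounded (the second condition in \eqref{conditions r_n}).
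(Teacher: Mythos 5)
Your opening reduction is correct and is a genuinely more elementary route than the paper's: the paper gets ``only one group blows up'' by testing the equations against $v_{i,n}\varphi^2$ and combining the resulting integral bound with the oscillation estimate, whereas your paraboloid comparison ($\Delta v_{i,n}\ge \mu_n$ plus the sub--mean--value inequality forces $\mu_n\le C\,\osc_{B_1}v_{i,n}$) gives the same conclusion pointwise. Note, however, that the paper's version yields strictly more, namely that the non--blowing--up groups satisfy $\sup_{B_{2R}}\sum_{j\in I_k}|v_{j,n}|\to 0$ (not mere boundedness), and this extra information is used downstream. The ``easy boundary case'' is also fine.

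The gap is in the final step. After dividing by $H_n$ the functions $w_{i,n}$ converge to \emph{constants}, and their H\"older seminorms are $O(1/H_n)\to 0$, so nothing you prove about the $w_{i,n}$ can contradict the normalization of $\bar{\mf v}_n$: the contradiction must be extracted from the \emph{unrescaled} distinguished component, by showing $\Delta v_{1,n}\to 0$ strongly enough that $v_{1,n}-v_{1,n}(0)$ converges to a nonconstant entire harmonic function with finite global $\alpha$--H\"older seminorm (only then does Corollary \ref{lem:liouville1} apply). Your integral decay $\tilde M_n\|w_{j,n}\|_{L^{p+1}}^{p+1}\to 0$ does not suffice for this. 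In the subcase where $1\notin I_{h}$ (the component realizing $L_n$ is not in the blowing--up group), the interaction term in the equation for $v_{1,n}$ is $M_n|v_{1,n}|^p\sum_{j\in I_h}a_{1j}|v_{j,n}|^{p+1}=\tilde M_n H_n|w_{1,n}|^p\sum_j a_{1j}|w_{j,n}|^{p+1}$, and your estimate controls $\tilde M_n\int|w_{1,n}|^p$ only, leaving an uncompensated factor $H_n\to\infty$; this is exactly why the paper invokes the \emph{pointwise} decay estimates (\cite[Lemma 2.2]{SoZi} for $p\ge 1$, the exponential decay \cite[Lemma 4.4]{ctv} for $0<p<1$) applied to the competitors of the blowing--up group, together with the uniform vanishing mentioned above and the superlinear gain $(p+1)^2/p>p$. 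Finally, even granting $\Delta v_{1,n}\to 0$, the Liouville theorem gives a contradiction only if the limit is \emph{nonconstant}, i.e.\ only after proving $\liminf_n|y_n-x_n|/r_n>0$; you flag this point but do not prove it, and it is not automatic --- the paper's argument for it requires the locally uniform gradient bound on $\bar v_{1,n}$ obtained from \eqref{eq gradienti} and the control of $\eta(x_n+r_nx)/\eta(x_n)$ in Remark \ref{rmk: bdd ratio cut off}. As written, the proof therefore does not close.
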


\begin{remark}
Although the statement is the same as Lemma 3.4 in \cite{nttv}, due to the different assumptions the proof is very different and thus we shall present it in detail. 
\end{remark}
\begin{proof}
Take $R$ such that $R\geq |y_n-x_n| /r_n$ for every $n$, and assume by contradiction that $|\mf{v}_n(0)|\to +\infty$. Since $\mf{v}_n(0)=\bar{\mf{v}}_n(0)$, and $\{\bar{\mf{v}}_n\}$ has uniformly bounded $\alpha$--H\"older semi-norm (recall Lemma \ref{lem: basic prop}-(1)) we have
\[
|\bar{\bf{v}}_n(0)| \leq \inf_{B_{2R}}|\bar{\mf{v}}_n|+(4R)^{\alpha},\quad \text{ hence} \quad  \inf_{B_{2R}}  | \bar{\mf{v}}_n|\to \infty.
\]
Since $\bar{\mf{v}}_n|_{\partial \Omega_n'}\equiv 0$, we have $B_{2R}\subset \Omega_n'$ for sufficiently large $n$. We observe moreover that, since we can take $R$ arbitrary large, this means that, in the present setting, $\Omega_n'$ exhausts $\R^N$ as $n\to \infty$, and so necessarily
\begin{equation}\label{dist to infty}
\frac{\text{dist}(x_n,\partial B_2)}{r_n}\to +\infty.
\end{equation}

Let $\varphi \in \mathcal{C}^\infty_c(B_{2R})$ be a nonnegative function such that $\varphi=1$ in $B_R$. Fix $h\in \{1,\ldots, m\}$ and take $i\in I_h$.  By testing the equation for $v_{i,n}$ in \eqref{system blow-up} against $v_{i,n} \varphi^2$, we obtain (recall that $a_{ij}=0$ for $j\in I_h$)
\begin{multline}
\int_{B_{2R}}|\nabla v_{i,n}|^2 \varphi^2 + M_n \int_{B_{2R}} |v_{i,n}|^{p+1}\sum_{j\not\in I_h} a_{ij}|v_{j,n}|^{p+1}\varphi^2  \\
= -\int_{B_{2R}} 2 v_{i,n}\varphi \nabla v_{i,n}\cdot \nabla \varphi + \int_{B_{2R}} g_{i,n} v_{i,n}\varphi^2 
\leq \frac{1}{2}\int_{B_{2R}} |\nabla v_{i,n}|^2\varphi^2 + C\int_{B_{2R}}(v_{i,n}^2 + 1),
\end{multline}
where in the last equality we used point (3) of Lemma \ref{lem: basic prop}. Summing up for $i\in I_h$, we have, whenever $k\neq h$,
\begin{equation}\label{eq:auxiliary_v_n(0)_bounded1}
M_n\int_{B_R} \sum_{i\in I_h} |v_{i,n}|^{p+1} \sum_{j\in I_k} |v_{j,n}|^{p+1} \leq C \int_{B_{2R}} \sum_{i\in I_h} (|v_{i,n}|^2+1);
\end{equation}
hence, by using at first the fact that $\liminf M_n>0$, and afterwards the boundedness of the oscillation of $\{v_{i,n}\}$ (see Lemma \ref{lem: basic prop}-(5)), we deduce that for every $x$ in $B_R$
\[
\left(\sum_{i\in I_h} |v_{i,n}(x)| \right)^{p+1} \left(\sum_{j\in I_k} |v_{j,n}(x)|\right)^{p+1} \leq C_1 \left(\sum_{i\in I_h} |v_{i,n}(x)|\right)^2 +C_2
\]
In particular, for every $k\neq h$, $x\in B_R$, it results that
\begin{multline}\label{eq:comparison_between_groups}
\left(\sum_{i\in I_h} |v_{i,n}(x)| \right)^{2(p+1)} \left(\sum_{j\in I_k} |v_{j,n}(x)|\right)^{2(p+1)} \\
\leq C\left( \left(\sum_{i\in I_h} |v_{i,n}(x)|\right)^2 +1\right)\left(\left(\sum_{i\in I_k} |v_{i,n}(x)|\right)^2 +1\right),
\end{multline}
where $C>0$ depends only on $R$. Evaluating this inequality at $x=0$, and since $|\mf{v}_n(0)|\to +\infty$ and $p>0$, there exists exactly one $\bar h$ such that
\[
\sum_{i\in I_{\bar h}} |v_{i,n}(0)|\to +\infty,\quad \text{ whereas} \quad \sum_{j\in I_{k}} |v_{j,n}(0)|\text{ is bounded },\ \forall k\neq \bar h.
\]
This implies, once again by Lemma \ref{lem: basic prop}, that
\[
\inf_{B_{2R}}\sum_{i\in I_{\bar h}} |v_{i,n}|\to +\infty,\qquad \sup_{B_{2R}} \sum_{j\in I_k} |v_{j,n}| \text{ is bounded }, \forall k\neq \bar h,
\]
and from \eqref{eq:comparison_between_groups} we have that actually
\begin{equation}\label{eq:comparison_between_groups2}
\sup_{B_{2R}} \sum_{j\in I_k} |v_{j,n}| \to 0\qquad \forall k\neq \bar h.
\end{equation}

We now split the proof in two cases, and four subcases:
\medbreak

\noindent \emph{Case 1.} $p\geq 1$. 
\smallbreak

\noindent \emph{Subcase 1.1. }  $\bar h=1$, the index associated to the group with the non-constant function $\bar v_{1,n}$.
In this situation, let
\[
I_n:= M_n \inf_{B_{2R}} \sum_{i\in I_1}|v_{i,n}|^{p+1} \to +\infty,
\]
We recall also that $\sup_{B_{2R}} |v_{j,n}|\to 0$ for every $j\not\in I_1$; for such $j$'s, by the Kato inequality (see e.g. \cite{Br84})
\begin{align*}
-\Delta |v_{j,n}| &\le |g_{j,n}(x)| - M_n  |v_{j,n}|^{p} \sum_{k \in I_1} a_{jk} |v_{k,n}|^{p+1} \\
& \le \|g_{j,n}\|_{L^\infty(B_R)} - \kappa I_n |v_{j,n}|^p \qquad \text{ in } B_{2R}.
\end{align*}
Thus by the decay estimate \cite[Lemma 2.2]{SoZi} we have
\[
I_n \sup_{B_R} |v_{j,n}|^p \leq \frac{C}{R^2} \sup_{B_R} |v_{j,n}| + \sup_{B_R}|g_{j,n}|= \textrm{o}_n(1) \qquad \forall j\notin I_1.
\]
In particular, for $x\in B_R$,
\[
M_n |v_{1,n}|^p \sum_{j\not\in I_1}|v_{j,n}|^{p+1}\leq \textrm{o}_n(1)\frac{\sup_{B_{R}} \sum_{i\in I_1}|v_{i,n}|^{p} }{M_n^{1/p} \inf_{B_{2R}} \sum_{i\in I_1}|v_{i,n}|^{(p+1)^2/p}} \to 0,
\]
as $(p+1)^2/p>p$ (use also Lemma \ref{lem: basic prop}-(5)).
Thus, as $\|g_{i,n}\|_{L^\infty(B_R)} \to 0$, we have
\begin{equation}\label{eq:Laplac_goes_to_0}
\|\Delta v_{i,n}\|_{L^\infty(B_R)}\to 0
\end{equation}
for every sufficiently large $R>0$. We can now conclude this case adapting some ideas from \cite[p.281-292]{nttv}; here the situation is more delicate, because we need to take in account the presence of the function $\eta$. Take the new sequence $w_n(x):= v_{1,n}(x)- v_{1,n}(0)$. Then Lemma \ref{lem: basic prop}-(5) and \eqref{eq:Laplac_goes_to_0} combined with the Ascoli-Arzela theorem yields that $w_{n}\to w_\infty$ in $L^\infty_{\rm loc}$, where $w_\infty$ is a harmonic function defined in $\R^N$ (recall that the contradiction assumption implies that $\Omega_n'$ approaches $\R^N$). We claim that 
\[
\mathop{\max_{x,y\in \overline \R^N}}_{x\neq y} \frac{|w_\infty(x)- w_\infty(y)|}{|x-y|^\alpha}=1.
\]
If this holds, we immediately have a contradiction with Lemma \ref{lem:liouville1} in the appendix. In order to prove the claim, we need to consider the blow-up sequence $\{\bar v_n\}$, and the auxiliary function $\tilde w_n(x)=\bar v_{1,n}(x)-\bar v_{1,n}(0)$. From Lemma \ref{lem: basic prop}-(4), we have that also $\tilde w_n\to w_\infty$ in $L^\infty_{\rm loc}$. Thus, since we have Lemma \ref{lem: basic prop}-(1), we are left to prove that $\liminf |y_n-x_n|/r_n>0$. Let $z_\infty$ be the limit of any convergent subsequence. From \eqref{eq:Laplac_goes_to_0}, we have that $\{w_n\}$ is uniformly bounded in $\mathcal{C}^{1,\gamma}(\overline B_R)$, for every $0<\gamma<1$. We claim that also $\{|\nabla \bar v_{1,n}|\}$ is bounded in $L^\infty(B_R)$, and to prove our claim we observe that, since by definition
\[
\bar v_{1,n}(x) = \frac{\eta(x_n+r_n x)}{\eta(x_n)} v_{1,n}(x),
\] 
we have
\begin{equation}\label{eq gradienti}
\begin{split}
\nabla \bar v_{1,n}(x) &=  \frac{\eta(x_n+r_n x)}{\eta(x_n)}\nabla v_{1,n}(x) + \frac{r_n v_{1,n}(x)}{\eta(x_n)} \nabla \eta(x_n+r_n x) \\
& =   \frac{\eta(x_n+r_n x)}{\eta(x_n)}\nabla v_{1,n}(x) + \frac{r_n^{1-\alpha}u_{1,n}(x_n+r_n x) }{L_n} \nabla \eta(x_n+r_n x) \\
& = \frac{\eta(x_n+r_n x)}{\eta(x_n)}\nabla v_{1,n}(x) + O\left(\frac{r_n^{1-\alpha} }{L_n}\right),
\end{split}
\end{equation}
where we used the uniform $L^\infty$-boundedness of the sequence $\{\mf{u}_n\}$. Let $K$ be a compact set of $\R^N$. By \eqref{dist to infty} we have
\[
\sup_{x \in K} |x_n+ r_n x-x_n|  = r_n C(K) \le \frac{\dist(x_n,\pa B_2)}{2}
\]
for every $n$ sufficiently large, so that 
\[
\sup_{x \in K} \frac{\eta(x_n+r_n x)}{\eta(x_n)} \le  \sup_{x \in B_2} \sup_{\rho \in (0,d_x/2)} \frac{\sup_{B_{\rho}(x)} \eta}{\inf_{B_{\rho}(x)} \eta} \le C,
\]
see Remark \ref{rmk: bdd ratio cut off}. As a consequence
\[
\sup_K |\nabla \bar v_{1,n}| \le C \sup_K |\nabla v_{1,n}| + O(r_n^{1-\alpha} L_n^{-1}) \le C
\]
as $n \to \infty$, that is, the sequence $\{|\nabla \bar v_{1,n}|\}$ is locally uniformly bounded.

Now, if $|y_n-x_n|/r_n \to 0$ we would have
\begin{align*}
1 &= \frac{ \left|\bar v_{1,n}(0)-\bar v_{1,n}\left(\frac{y_n-x_n}{r_n}\right)\right|}{\left|\frac{y_n-x_n}{r_n}\right|^{\alpha}} =    \frac{ \left|\bar v_{1,n}(0)-\bar v_{1,n}\left(\frac{y_n-x_n}{r_n}\right)\right|}{\left|\frac{y_n-x_n}{r_n}\right|} \left|\frac{y_n-x_n}{r_n}\right|^{1-\alpha} \le C \left|\frac{y_n-x_n}{r_n}\right|^{1-\alpha} \to 0,
\end{align*}
a contradiction. Thus, $z_\infty \neq 0$, which completes the proof of this case.

\smallbreak

\noindent \emph{Subcase 1.2. } $\bar h>1$, so that there is a non-constant function $\bar v_{1,n}$ which is not in the group $I_{\bar h}$.
In this case, let
\[
I_n:= M_n \inf_{B_{2R}} \sum_{i\not\in I_1}|v_{i,n}|^{p+1} \to +\infty,
\]
and recall that $\sup_{B_{2R}} |v_{1,n}|\to 0$.
Therefore by the Kato inequality
\begin{align*}
-\Delta |v_{1,n}| &\le |g_{1,n}(x)| - M_n  |v_{1,n}|^{p}  \sum_{k \not\in I_1} a_{1k} |v_{k,n}|^{p+1} \\
& \le \|g_{1,n}\|_{L^\infty(B_R)} - a_{1j} I_n |v_{1,n}|^p \qquad \text{ in } B_{2R}.
\end{align*}
Once again by the decay estimate \cite[Lemma 2.2]{SoZi} we have
\[
I_n \sup_{B_R} |v_{1,n}|^p =\textrm{o}_n(1).
\]
In particular, for $x\in B_R$,
\[
M_n |v_{1,n}|^p \sum_{i\not\in I_1}|v_{j,n}|^{p+1}\leq \textrm{o}_n(1)\frac{\sup_{B_R} \sum_{i\not\in I_1}|v_{i,n}|^{p+1} }{\inf_{B_{2R}} \sum_{i\not\in I_1}|v_{i,n}|^{p+1}} \to 0.
\]
Thus we obtain once again \eqref{eq:Laplac_goes_to_0}, and get a contradiction as before.

\medbreak

\noindent \emph{Case 2.} $0<p<1$.

\smallbreak 

\noindent \emph{Subcase 2.1} $\bar h=1$. Take once again
\[
I_{n}:= M_n \inf_{B_{2R}} \sum_{i\in I_1} |v_{i,n}|^{p+1} \to +\infty,
\] 
and recall that, for $k\neq 1$, $\sup_{B_{2R}} \sum_{j\in I_k}|v_{j,n}|\to 0$, so in particular  $\sum_{j\in I_k}|v_{j,n}|^p\geq  \sum_{j\in I_h}|v_{j,n}|$ in $B_{2R}$ for large $n$. So, for every $j\in I_k$, recalling Lemma \ref{lem: basic prop}-(3),
\[
-\Delta |v_{j,n}| \le C \sum_{j\in I_k}|v_{j,n}|^p - \kappa I_{n} |v_{j,n}|^p \qquad \text{ in } B_{2R}.
\]
Summing up in $I_k$,
\[
-\Delta \left(\sum_{j\in I_k} |v_{j,n}|\right) \le C' \sum_{j\in I_k}|v_{j,n}|^p - \kappa I_{n} \sum_{j\in I_k}|v_{j,n}|^p\leq -\tilde C I_{n} \sum_{j\in I_k} |v_{j,n}|^p\leq -\tilde C I_{n} \sum_{j\in I_k} |v_{k,n}|
\]
Thus, by the decay estimate \cite[Lemma 4.4]{ctv}, we have
\[
\sup_{B_R} \sum_{j\in I_k} |v_{j,n}|\leq C_1 e^{-C_2\sqrt{I_{n}}} \qquad \forall k\neq 1
\]
and so
\[
|\Delta v_{1,n}|\leq |g_{1,n}|+ M_n |v_{1,n}|^p  \sum_{j\not \in I_1} |v_{j,n}|^{p+1} \leq \|g_{1,n}\|_{L^\infty(B_R)}+ 2I_n C_1 e^{-(p+1)C_2\sqrt{I_{n}}}\to 0
\]
uniformly in $B_R$. This implies \eqref{eq:Laplac_goes_to_0}, which leads to a contradiction.

\smallbreak 

\noindent \emph{Subcase 2.2} $\bar h>1$. In this final case, reasoning as before,
\[
|v_{1,n}|\leq \sum_{i\in I_1} |v_{i,n}|\leq C_1 e^{-C_2\sqrt{I_{n}}}
\]
where this time 
\[
I_n:= M_n \inf_{B_{2R}} \sum_{i\not\in I_1}|v_{i,n}|^{p+1} \to +\infty,
\]
and again \eqref{eq:Laplac_goes_to_0} holds, as
\[
|\Delta v_{1,n}|\leq |g_{1,n}|+ M_n |v_{1,n}|^p  \sum_{j\not \in I_1} |v_{j,n}|^{p+1} \leq \|g_{1,n}\|_{L^\infty(B_R)}+ 2I_n C_1 e^{-pC_2\sqrt{I_{n}}}\to 0. \qedhere
\]
\end{proof}

\begin{lemma}\label{M_n illimitato}
Up to a subsequence it results that
\[
 \beta_n \left( \frac{L_n}{\eta(x_n)}\right)^{2p}   |x_n-y_n|^{2(\alpha p +1)} \to +\infty
 \]
 as $n \to \infty$.
\end{lemma}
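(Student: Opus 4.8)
The plan is to argue by contradiction, using that the quantity $\beta_n(L_n/\eta(x_n))^{2p}|x_n-y_n|^{2(\alpha p+1)}$ in the statement is nothing but the coefficient $M_n$ appearing in the blow-up system \eqref{system blow-up} for the particular choice $r_n:=|x_n-y_n|$, for which the second requirement in \eqref{conditions r_n}, namely $\limsup_n|x_n-y_n|/r_n=1<\infty$, holds automatically. So suppose that, along a subsequence, $M_n$ is bounded and, up to extracting further, $M_n\to M_\ast\in[0,+\infty)$. To be in a position to invoke Lemma \ref{lem: bound in 0}, I would first renormalize the scale: since $t\mapsto\beta_n t^{2(\alpha p+1)}(L_n/\eta(x_n))^{2p}$ is strictly increasing, pick the unique $\tilde r_n>0$ with $\tilde M_n:=\beta_n\tilde r_n^{2(\alpha p+1)}(L_n/\eta(x_n))^{2p}=1$. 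Then $\tilde r_n\to 0$ (as $\beta_n,L_n\to\infty$ and $\eta(x_n)\le 1$) and $|x_n-y_n|/\tilde r_n=M_n^{1/(2(\alpha p+1))}$ stays bounded, so \eqref{conditions r_n} is satisfied for $\tilde r_n$, now with $\liminf\tilde M_n=1>0$.

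Let $\tilde{\mf v}_n,\tilde{\bar{\mf v}}_n$ be the blow-up sequences of \eqref{def blow-up} built with $\tilde r_n$. By Lemma \ref{lem: bound in 0} the sequence $\{\tilde{\mf v}_n(0)\}$ is bounded, so by Lemma \ref{lem: basic prop}-(5) $\{\tilde{\mf v}_n\}$ is bounded on compact sets; since $\tilde M_n\equiv 1$ and $g_{i,n}\to 0$ locally uniformly (by Lemma \ref{lem: basic prop}-(3)), the right-hand side of \eqref{system blow-up} is locally bounded. Interior elliptic estimates and the Ascoli-Arzela theorem — recalling that $\Omega_n\supset B_{1/\tilde r_n}$ invades $\R^N$ — then give, up to a subsequence, $\tilde{\mf v}_n\to\mf v_\infty$ in $C^1_{\loc}(\R^N)$, and by Lemma \ref{lem: basic prop}-(4) also $\tilde{\bar{\mf v}}_n\to\mf v_\infty$ locally uniformly, where $[\mf v_\infty]_{C^{0,\alpha}(\R^N)}\le 1$ and
\[
-\Delta v_{i,\infty}=-v_{i,\infty}|v_{i,\infty}|^{p-1}\mathop{\sum_{j=1}^d}_{j\neq i}a_{ij}|v_{j,\infty}|^{p+1}\quad\text{in }\R^N .
\]

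If $M_\ast>0$, then $(y_n-x_n)/\tilde r_n\to z_\infty$ with $|z_\infty|=M_\ast^{1/(2(\alpha p+1))}>0$, so passing to the limit in Lemma \ref{lem: basic prop}-(1) gives $|v_{1,\infty}(0)-v_{1,\infty}(z_\infty)|=|z_\infty|^\alpha\ne 0$: thus $\mf v_\infty$ is non-constant. This contradicts the Liouville-type theorem for the grouped competitive system with $\alpha$-Hölder (hence, since $\alpha<1$, sublinear) growth, collected in the appendix: such a result forces at most one group $I_h$ to carry a nontrivial $\sum_{i\in I_h}|v_{i,\infty}|$, whence for that group every $v_{i,\infty}$ is harmonic on $\R^N$ with sublinear growth, hence constant by Lemma \ref{lem:liouville1}, so $\mf v_\infty$ would be constant.

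It remains to treat the case $M_\ast=0$, in which the extremal pair $0,(y_n-x_n)/\tilde r_n$ collapses and non-degeneracy of $\mf v_\infty$ can no longer be read off $x_n,y_n$ directly. If $\dist(x_n,\partial B_2)/\tilde r_n\to\infty$ — which, using $\tilde M_n=1$ and $\eta(x_n)=\dist(x_n,\partial B_2)^2$ near $\partial B_2$, is forced whenever $p\ge 1$ (and more generally unless $0<p<1$ and $\alpha$ is close to $1$) — then, mimicking the last part of the proof of Lemma \ref{lem: bound in 0} (use \eqref{eq gradienti} with $\tilde r_n$ in place of $r_n$, Remark \ref{rmk: bdd ratio cut off}, and $\tilde r_n^{1-\alpha}/L_n\to 0$) one obtains a uniform local bound for $|\nabla\tilde{\bar v}_{1,n}|$, and then
\[
1=\frac{\big|\tilde{\bar v}_{1,n}(0)-\tilde{\bar v}_{1,n}\big(\tfrac{y_n-x_n}{\tilde r_n}\big)\big|}{\big|\tfrac{y_n-x_n}{\tilde r_n}\big|^{\alpha}}\le C\Big|\tfrac{y_n-x_n}{\tilde r_n}\Big|^{1-\alpha}\to 0,
\]
a contradiction. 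Otherwise $\Omega_n'$ degenerates to a half-space $H$ and, since $\tilde{\bar{\mf v}}_n\equiv 0$ on $\partial\Omega_n'$, the limit $\mf v_\infty$ solves the above system on $\R^N$ while vanishing on $\partial H$, contradicting the half-space Liouville theorems of the appendix (for $0<p<1$ these are the new ones proved there). In every case the boundedness of $M_n$ is contradicted, so $M_n\to+\infty$. I expect this last, boundary sub-case ($M_\ast=0$ with $\Omega_n'$ degenerating) to be the main obstacle: non-degeneracy of the limit must be extracted by combining the tailored control of $\eta$ near $\partial B_2$ with half-space Liouville theorems, whose sublinear versions for $0<p<1$ are exactly what requires the new results.
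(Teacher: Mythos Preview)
Your overall strategy---argue by contradiction, choose $\tilde r_n$ so that $\tilde M_n\equiv 1$, apply Lemma~\ref{lem: bound in 0}, pass to a limit via elliptic estimates, and reach a contradiction through Liouville-type results---matches the paper's. Your treatment of the case $M_\ast>0$ is correct and in fact slightly cleaner than the paper's, since you obtain non-constancy of $v_{1,\infty}$ directly from $z_\infty\neq 0$ and then conclude via Lemma~\ref{lem:liouville3} and Corollary~\ref{lem:liouville1}, without needing to distinguish whether $\Omega_n'$ exhausts $\R^N$ or not. Your $M_\ast=0$, $\dist(x_n,\partial B_2)/\tilde r_n\to\infty$ sub-case is also correct and mirrors the gradient-bound argument from the end of Lemma~\ref{lem: bound in 0}.

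The genuine gap is in your final sub-case, $M_\ast=0$ with $\Omega_n'$ degenerating to a half-space. You invoke ``half-space Liouville theorems of the appendix'', but the appendix contains no such results: Lemmas~\ref{lem:liouville2} and~\ref{lem:liouville3} and Corollary~\ref{lem:liouville1} are all set on $\R^N$. More fundamentally, you never establish that $\mf v_\infty$ is non-trivial in this sub-case, and without that any Liouville statement is vacuous. Your closing paragraph acknowledges this as the main obstacle, but the proposed resolution (new half-space Liouville theorems for $0<p<1$) is not how the paper proceeds.

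The paper instead splits according to whether $\Omega_\infty=\R^N$ or a half-space. In the half-space case it \emph{first} shows $z_n=(y_n-x_n)/r_n\not\to 0$: assuming $z_n\to 0$, one estimates $|z_n|\le|z_n|^\alpha=|\bar v_{1,n}(0)-\bar v_{1,n}(z_n)|\le C r_n^{1-\alpha}L_n^{-1}(\dist(x_n,\partial B_2)/r_n+|z_n|)$, which forces $|z_n|\le\dist(0,\partial\Omega_n')/2$ for large $n$; then Remark~\ref{rmk: bdd ratio cut off} and \eqref{eq gradienti} give the local gradient bound on $\bar v_{1,n}$ exactly as in your $\R^N$ sub-case, and the difference-quotient contradiction follows. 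Thus $z_\infty\neq 0$ and $|v_1(0)-v_1(z_\infty)|=|z_\infty|^\alpha>0$. The paper then concludes \emph{not} by Liouville but by a direct computation: since $\dist(x_n,\partial B_2)/r_n$ and $|x_n-y_n|/r_n$ are bounded, one has $|v_1(0)|\le m\,l\,r_n^{1-\alpha}L_n^{-1}\dist(x_n,\partial B_2)/r_n\to 0$ and similarly $|v_1(z_\infty)|=0$, contradicting $|v_1(0)-v_1(z_\infty)|>0$. In short, the half-space case is handled by exploiting the smallness of $\eta$ near $\partial B_2$ to force both endpoint values to vanish, not by any half-space Liouville theorem.
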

\begin{proof}
By contradiction, let us assume that the sequence of the thesis is bounded. We choose
\[
r_n:=  \left( \beta_n \left( \frac{L_n}{\eta(x_n)}\right)^{2p} \right)^{-1/(2(\alpha p +1))},
\]
so that $M_n=1$ for every $n$. Since the condition \eqref{conditions r_n} is satisfied, we can apply Lemma \ref{lem: bound in 0} and conclude that the sequence $\{\bar{\mf{v}}_n\}$ is bounded at $0$. Thus, by uniform H\"older continuity, it converges uniformly on compact sets of $\Omega_\infty$ to a globally $\alpha$-H\"older continuous function $\mf{v}$. Furthermore, since $(M_n)$ is bounded and $v_{i,n}$ is defined in $\Omega_n$, the fact that $\mf{v}_n$ solves system \eqref{system blow-up} implies that $\{\mf{v}_n\}$ is locally bounded in $\mathcal{C}^{1,\alpha}$. In particular, for every $R>0$ there exists $C>0$ such that
\[
\sup_{i=1,\dots,d} \sup_{B_R} |\nabla v_{i,n}| \le C.
\]

In case $\Omega_\infty=\R^N$ (which happens if $\dist(x_n,\pa B_2)/r_n \to +\infty$), arguing as in the proof of Lemma \ref{lem: bound in 0} it is possible to show that moreover $v_1$ is not constant. Without loss of generality, we assume that $v_1^+$ is not constant. By uniform convergence and thanks to point (2) in Lemma \ref{lem: basic prop}, we have that 
\begin{equation}\label{system entire}
-\Delta v_i = -|v_{i}|^{p-1} v_i \sum_{j \neq i} a_{ij}  |v_j|^{p+1}   \qquad \text{in $\R^N$}
\end{equation}
for every $i=1,\dots,d$. In particular
\[
\begin{cases}
-\Delta v_1^+  \le - a_{1j} (v_1^+)^{p}   (v_j^+)^{p+1} \\
-\Delta v_j^+  \le -a_{1j} (v_j^+)^{p}  (v_1^+)^{p+1}
\end{cases} \quad \text{and} \quad 
\begin{cases}
-\Delta v_1^+  \le -a_{1j} (v_1^+)^{p}  (v_j^-)^{p+1} \\
-\Delta v_j^- \le -a_{1j} (v_j^-)^{p}  (v_1^+)^{p+1}
\end{cases} 
\]
for every $j \not\in I_1$. By global H\"older continuity we are in position to apply Lemma \ref{lem:liouville3}, deducing that $v_j \equiv 0$ for every $j \not\in I_1$. But then $v_1$ is a harmonic H\"older continuous non-constant function, a contradiction.

In case $\Omega_\infty$ is a half-space, then necessarily the sequence $(\dist(x_n,\pa \Omega_n)/r_n)$ is bounded. In such a situation, let us prove first that $|y_n-x_n|/r_n \not \to 0$. If $z_n:= (y_n-x_n)/r_n \to 0$, then
\begin{align*}
|z_n| &\le |z_n|^{\alpha} = |\bar v_{1,n}(0)- \bar v_{1,n}(z_n)| \le \frac{2 m}{L_n r_n^{\alpha}} (\eta(x_n)+\eta(y_n)) \\
& \le  \frac{2 l m }{L_n r_n^{\alpha}} \left( \dist(x_n,\pa B_2) + \dist(y_n,\pa B_2) \right) = \frac{2 l m r_n^{1-\alpha}}{L_n } \left( 2\frac{\dist(x_n,\pa B_2)}{r_n} + \frac{|y_n-x_n|}{r_n} \right),  
\end{align*}
which implies that 
\[
|z_n| \le \frac{\dist(x_n,\pa B_2)}{2r_n}= \frac{\dist(0,\pa \Omega_n')}{2}
\]
for every $n$ sufficiently large. By Remark \ref{rmk: bdd ratio cut off} and the estimate \eqref{eq gradienti}, it results that
\begin{align*}
\sup_{B_{\dist(0,\pa \Omega_n')/2}}|\nabla \bar v_{1,n}| &\le \sup_{y \in  B_{\dist(x_n,\pa B_2)/(2r_n)}} \frac{\eta(x_n+r_n y)}{\eta(x_n)} |\nabla v_{1,n}|+ o_n(1) \\
& \le \sup_{x \in B_2} \sup_{\rho \in (0,d_x/2)} \frac{\sup_{B_{\rho}(x)}\eta}{\inf_{B_\rho(x)} \eta}  + o_n(1)\le C.
\end{align*}
Therefore
\begin{align*}
1 &= \frac{ \left|\bar v_{1,n}(0)-\bar v_{1,n}\left(\frac{y_n-x_n}{r_n}\right)\right|}{\left|\frac{y_n-x_n}{r_n}\right|^{\alpha}} =    \frac{ \left|\bar v_{1,n}(0)-\bar v_{1,n}\left(\frac{y_n-x_n}{r_n}\right)\right|}{\left|\frac{y_n-x_n}{r_n}\right|} \left|\frac{y_n-x_n}{r_n}\right|^{1-\alpha} \\
& \le \sup_{B_{\dist(0,\pa \Omega_n')/2}}|\nabla \bar v_{1,n}| \left|\frac{y_n-x_n}{r_n}\right|^{1-\alpha} \le C \left|\frac{y_n-x_n}{r_n}\right|^{1-\alpha} \to 0,
\end{align*}
a contradiction which proves that $z_n$ cannot tend to $0$. We infer that the limit function $v_1$ is non-constant, and in particular $|v_1(0) -v_1(z_\infty)|\neq 0$ for $z_\infty=\lim z_n$. It is easy to see that this leads again to a contradiction, as by the assumption in \eqref{conditions r_n} and the uniform convergence of $\mf{v}_n$ on compact sets of $\R^N$ (recall that $\Omega_n'$ tends to a hal-space, but $\Omega_n'\subset \Omega_n \to \R^N$, and the function $\mf{v}_n$ is defined in $\Omega_n$) we have
\begin{align*}
|v_1(z_\infty)| &= \lim_n|v_{1,n}(z_n)| = \lim_n \frac{\eta(y_n) |u_{1,n}(y_n)|}{L_n r_n^{\alpha}} \le \lim_n \frac{m l r_n^{1-\alpha}}{L_n } \frac{\dist(y_n,\pa B_2)}{r_n} \\
&\le  \lim_n \frac{m l r_n^{1-\alpha}}{L_n } \left(\frac{\dist(x_n,\pa B_2)}{r_n}+ \frac{|x_n-y_n|}{r_n}\right) = 0,
\end{align*}
where we recall that $(\dist(x_n,\pa B_2)/r_n)$ is bounded, and $m$ denotes the upper bound on the $L^\infty$ norm of $\{\mf{u}_n\}$ in $B_3$. With similar (actually easier) computations one can also check that $|v_1(0)|=0$, reaching in this way the sought contradiction.
\end{proof}

\begin{lemma}\label{lem 3.6 notateve}
Let $r_n:=|x_n-y_n|$. Then there exist $\mf{v} \in \mathcal{C}^{0,\alpha}(\R^N)$ such that up to a subsequence
\begin{itemize}
\item[($i$)] $\mf{v}_n \to \mf{v}$ uniformly on compact sets of $\R^N$;
\item[($ii$)] $\mf{v}_n \to \mf{v}$ in $H^1_{\loc}(\R^N)$, and for every $r>0$
\[
\lim_{n \to \infty} \int_{B_r} M_n |v_{i,n}|^{p+1} |v_{j,n}|^{p+1}  = 0 \qquad \text{for every $(i,j) \in \mathcal{K}_2$}.
\]
\end{itemize}
\end{lemma}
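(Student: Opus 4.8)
The plan is as follows. With the choice $r_n:=|x_n-y_n|$ one has $|y_n-x_n|/r_n=1$, so the second condition in \eqref{conditions r_n} is automatic, while the first, $\liminf_n M_n>0$, follows from Lemma \ref{M_n illimitato} (which in fact gives $M_n\to+\infty$). Hence Lemma \ref{lem: bound in 0} applies and $(\mf v_n(0))=(\bar{\mf v}_n(0))$ is bounded. Since $\eta$, hence $\bar{\mf v}_n$, vanishes continuously (to second order) outside $\Omega_n'$, extending $\bar{\mf v}_n$ by $0$ and using the convexity of the ball $\Omega_n'$ one checks that $\bar{\mf v}_n\in\Ccal^{0,\alpha}(\R^N)$ with $\alpha$-H\"older semi-norm $\le 1$, uniformly in $n$ (Lemma \ref{lem: basic prop}-(1)). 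Thus $\{\bar{\mf v}_n\}$ is uniformly bounded and equicontinuous on compact sets, and by Ascoli--Arzel\`a and a diagonal argument I would pass to a subsequence along which $\bar{\mf v}_n\to\mf v$ locally uniformly, with $\mf v\in\Ccal^{0,\alpha}(\R^N)$; by Lemma \ref{lem: basic prop}-(4) the same holds for $\mf v_n$, which proves ($i$). In particular each $v_i$ is continuous, so $\{v_i>0\}$ and $\{v_i<0\}$ are open.

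Next, fix $r>0$ and $\varphi\in\Ccal^\infty_c(B_{2r})$ with $\varphi\equiv 1$ on $B_r$, $0\le\varphi\le 1$. Testing \eqref{system blow-up} against $v_{i,n}\varphi^2$ and recalling that $a_{ij}=0$ within a group, one gets, for $i\in I_h$,
\[
\int |\nabla v_{i,n}|^2\varphi^2 + M_n\sum_{j\notin I_h}a_{ij}\int|v_{i,n}|^{p+1}|v_{j,n}|^{p+1}\varphi^2 = -2\int v_{i,n}\varphi\,\nabla v_{i,n}\cdot\nabla\varphi + \int g_{i,n}v_{i,n}\varphi^2 .
\]
By Young's inequality, Lemma \ref{lem: basic prop}-(3) and the local $L^\infty$ bound from ($i$), the right-hand side is $\le\tfrac12\int|\nabla v_{i,n}|^2\varphi^2+C(r)$; hence $\{v_{i,n}\}$ is bounded in $H^1(B_r)$ and $M_n\int_{B_r}|v_{i,n}|^{p+1}|v_{j,n}|^{p+1}\le C(r)$ for all $(i,j)\in\Kcal_2$. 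Since $M_n\to+\infty$, $\int_{B_r}|v_{i,n}|^{p+1}|v_{j,n}|^{p+1}\to 0$, and local uniform convergence forces $v_iv_j\equiv 0$ on $\R^N$ for $(i,j)\in\Kcal_2$. As $\{v_{i,n}\}$ is bounded in $H^1_{\loc}$ and converges to $v_i$ in $L^2_{\loc}$, necessarily $v_{i,n}\rightharpoonup v_i$ in $H^1_{\loc}(\R^N)$.

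The heart of the matter is to show that \emph{each $v_i$ is harmonic in $\{v_i\neq 0\}$}; I expect this to be the main obstacle, since it is precisely here that the non-trivial grouping and the sublinear regime $0<p<1$ must be handled. Fix $i\in I_h$ and a compact $K\Subset\{v_i>0\}$ (the set $\{v_i<0\}$ is symmetric); by ($i$), $v_{i,n}\ge c>0$ on a neighbourhood of $K$ for large $n$, so for $j\notin I_h$ the Kato inequality gives $-\Delta|v_{j,n}|\le|g_{j,n}|-\kappa M_n|v_{j,n}|^p$ there, with $\kappa>0$. Arguing exactly as in the proof of Lemma \ref{lem: bound in 0} --- via \cite[Lemma 2.2]{SoZi} when $p\ge 1$, and, after summing over the group of $j$ and using the subadditivity of $t\mapsto t^p$, via \cite[Lemma 4.4]{ctv} when $0<p<1$ --- one obtains $M_n|v_{j,n}|^{p+1}\to 0$ uniformly on $K$ for every $j\notin I_h$, whence $\|\Delta v_{i,n}\|_{L^\infty(K)}=\|g_{i,n}-M_n|v_{i,n}|^p\sum_{j\notin I_h}a_{ij}|v_{j,n}|^{p+1}\|_{L^\infty(K)}\to 0$; combined with $v_{i,n}\to v_i$ uniformly, this yields that $v_i$ is harmonic in $\{v_i>0\}$, and likewise in $\{v_i<0\}$.

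Granting this, testing the harmonicity of $v_i$ against $(v_i-\delta)^+\varphi^2\in H^1_c(\{v_i>0\})$ and letting $\delta\to 0^+$, doing the same on $\{v_i<0\}$, and using that $\nabla v_i=0$ a.e. on $\{v_i=0\}$, I would obtain $\int_{\R^N}\nabla v_i\cdot\nabla(v_i\varphi^2)=0$, i.e.
\[
\int|\nabla v_i|^2\varphi^2 = -2\int v_i\varphi\,\nabla v_i\cdot\nabla\varphi .
\]
Now I pass to the limit $n\to\infty$ in the energy identity: its right-hand side converges to $-2\int v_i\varphi\nabla v_i\cdot\nabla\varphi$ (strong $L^2$ times weak $L^2$ convergence in the first term, while the second vanishes by Lemma \ref{lem: basic prop}-(3)); the term $\int|\nabla v_{i,n}|^2\varphi^2$ has $\liminf\ge\int|\nabla v_i|^2\varphi^2$ by weak lower semicontinuity; and the remaining term $M_n\sum_{j\notin I_h}a_{ij}\int|v_{i,n}|^{p+1}|v_{j,n}|^{p+1}\varphi^2$ is nonnegative. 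Comparing with the displayed identity forces this nonnegative term to tend to $0$, which --- since $a_{ij}>0$ for $(i,j)\in\Kcal_2$ and $\varphi\equiv 1$ on $B_r$ --- gives $\lim_n\int_{B_r}M_n|v_{i,n}|^{p+1}|v_{j,n}|^{p+1}=0$, and then $\int|\nabla v_{i,n}|^2\varphi^2\to\int|\nabla v_i|^2\varphi^2$. Together with $\nabla v_{i,n}\rightharpoonup\nabla v_i$ in $L^2$, this convergence of norms upgrades to $\nabla v_{i,n}\to\nabla v_i$ in $L^2(B_r)$, and since $r$ is arbitrary, $\mf v_n\to\mf v$ in $H^1_{\loc}(\R^N)$, which is ($ii$).
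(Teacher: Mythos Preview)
Your proof is correct, but takes a substantially longer route than the paper's. The paper dispatches both conclusions of ($ii$) with two short tricks that avoid the harmonicity step entirely.

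For the vanishing of $M_n\int_{B_r}|v_{i,n}|^{p+1}|v_{j,n}|^{p+1}$, the paper tests the Kato inequality for $|v_{i,n}|$ against a cut-off $\varphi$ (rather than $v_{i,n}\varphi^2$) to obtain the asymmetric bound $\int_{B_r}M_n|v_{i,n}|^{p}|v_{j,n}|^{p+1}\le C$. Once $v_iv_j\equiv 0$ is known, one splits $B_r=\{v_i=0\}\cup\{v_j=0\}$ and factors out one power of $|v_{i,n}|$ (resp.\ $|v_{j,n}|$), which tends to zero uniformly on the respective piece; the remaining factor is the bounded asymmetric integral. This yields the limit in two lines, without any decay estimate or harmonicity of the limit. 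For the strong $H^1$ convergence, the paper tests the equation for $v_{i,n}$ directly against $v_{i,n}-v_i$ on $B_r$ and estimates the resulting boundary term using the trace bound $\|\nabla v_{i,n}\|_{L^2(\partial B_r)}\le C$ (available for a.e.\ $r$); the factor $\|v_{i,n}-v_i\|_{L^\infty}$ kills everything.

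What your approach buys is self-containment: you never need the asymmetric bound or the boundary-trace argument, and your energy-comparison step is robust (it would work, e.g., with less regularity on the boundary). The cost is the detour through proving $-\Delta v_i=0$ on $\{v_i\neq 0\}$, which duplicates part of the subsequent Lemma \ref{lem: end blow-up}-($iii$) and requires invoking the decay machinery of Lemma \ref{lem: bound in 0} a second time. The paper also opens its proof by showing $\Omega_n'\to\R^N$ for this choice of $r_n$; you sidestep this by extending $\bar{\mf v}_n$ by zero, which is legitimate thanks to the convexity of $\Omega_n'$, but the paper's observation is used later and is worth recording.
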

\begin{proof}
First of all, we show that in the present setting $\Omega_n' \to \R^N$. Indeed, by definition and using the Lipschitz continuity of $\eta$ we have
\[
L_n = \frac{|(\eta u_{1,n})(x_n)-(\eta u_{1,n})(y_n)|}{r_n^{\alpha}} \le \frac{ml}{r_n^{\alpha}} \left(\dist(x_n,\pa B_2) + \dist(y_n,\pa B_2) \right).
\]
Equivalently,
\[
 \left(\frac{\dist(x_n,\pa B_2)}{r_n} + \frac{\dist(y_n,\pa B_2)}{r_n} \right) \ge \frac{L_n r_n^{\alpha-1}}{ml} \to +\infty
 \]
 as $n \to \infty$, which proves the assertion. The rest of the proof is now an easy generalization of that of Lemma 3.6 in \cite{nttv}, and thus is only sketched. 
 
With our choice of $r_n$, by Lemma \ref{M_n illimitato} we have $M_n \to +\infty$, and the assumption of Lemma \ref{lem: bound in 0} are satisfied. Therefore, $\{\mf{v}_n(0)\}$ is a bounded sequence, which by point (5) of Lemma \ref{lem: basic prop} implies that $\mf{v}_n \to \mf{v}$ locally uniformly on $\R^N$ (up to a subsequence). 

For point ($ii$), we introduce a smooth cut-off function $\varphi$ with $0 \le \varphi \le 1$, $\varphi \equiv 1$ in $B_r$ and $\varphi \equiv 0$ in $\R^N \setminus B_{2r}$. Testing the equation for $v_{i,n}$ against $\varphi$ and using the the Kato inequality, it is not difficult to check that 
\begin{equation}\label{bound interaction 431}
\int_{B_r} M_n a_{ij} |v_{i,n}|^{p} \sum_{j \neq i} a_{ij} |v_{j,n}|^{p+1} \le C \qquad \forall i,
\end{equation}
and since $M_n \to +\infty$ this implies $v_i v_j \equiv 0$ in $\R^N$ whenever $(i,j) \in \mathcal{K}_2$ (recall that $\mathcal{K}_2$ has been defined in \eqref{def indexes}). As a consequence
\begin{multline*}
M_n \int_{B_r} |v_{i,n}|^{p+1}  |v_{j,n}|^{p+1}  \le  \|v_{i,n}\|_{L^\infty(B_r \cap \{v_{i} \equiv 0\})} \int_{B_r} M_n |v_{i,n}|^{p}  |v_{j,n}|^{p+1} \\
+   \|v_{j,n}\|_{L^\infty(B_r \cap \{v_{j} \equiv 0\})} \int_{B_r} M_n |v_{i,n}|^{p+1}  |v_{j,n}|^{p} \to 0
\end{multline*}
as $n \to \infty$, for every $(i,j) \in \mathcal{K}_2$. 

It remains to prove that $\mf{v}_n \to \mf{v}$ strongly in $H^1_{\loc}(\R^N)$. To this aim, we test the equation for $v_{i,n}$ against $v_{i,n} \varphi^2$, deducing that $\|\nabla v_{i,n}\|_{L^2(B_r)}$ is a bounded sequence. This ensures that $v_{i,n} \rightharpoonup v_i$ in $H^1(B_r)$, and that, if necessary replacing $r$ with a slightly smaller quantity, also $\| \nabla v_{i,n}\|_{L^2(\pa B_r)}$ is bounded. Hence, testing the equation for $v_{i,n}$ against $v_{i,n}-v_i$, and recalling also \eqref{bound interaction 431}, we conclude that 
\begin{align*}
\lim_{n \to \infty} \left| \int_{B_r} |\nabla v_{i,n}|^2 - |\nabla v_i|^2 \right| &= \lim_{n \to \infty} \left|\int_{B_r} \nabla v_{i,n} \cdot \nabla (v_{i,n}-v_i) \right|\\
& \le \lim_{n \to \infty} \|v_{i,n}-v_i\|_{L^\infty(B_r)} \left( \int_{\pa B_r} |\pa_\nu v_{i,n}| + C \right) = 0
\end{align*}
as $n \to +\infty$, i.e. $v_{i,n} \to v_i$ also in the $H^1(B_r)$ norm, which completes the proof.
\end{proof}

\begin{lemma}\label{lem: end blow-up}
Let $\mf{v}$ be defined in Lemma \ref{lem 3.6 notateve}. Then:
\begin{itemize}
\item[($i$)] $v_i v_j \equiv 0$ for every $(i,j) \in \mathcal{K}_2$;
\item[($ii$)] $\max_{x \in \partial B_1} |v_1(x)-v_1(0)|=1$;
\item[($iii$)] it results
\[
-\Delta v_i=0 \qquad \text{ in } \left\{\sum_{j\in I_h} |v_j|>0 \right\}
\]
for every $i\in I_h$, $h=1,\ldots, m$.
\item[($iv$)] $v_j \equiv 0$ in $\R^N$ for every $j \not \in I_1$;
\item[($v$)] the set $\{x\in \Omega:\,v_i(x)=0 \text{ for all $i \in I_1$}\}$ is not empty, and the sets $\{x\in \Omega:\,v_i(x)\neq 0\}$ are connected for every $i \in I_1$. In particular, $v_i$ does not change sign for every $i \in I_1$.
\end{itemize}
\end{lemma}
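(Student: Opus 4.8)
The plan is to establish the five claims in cascade, each relying on the previous ones, keeping the choice $r_n=|x_n-y_n|$ throughout; note that then $M_n\to+\infty$ by Lemma~\ref{M_n illimitato}, so that the hypotheses of Lemma~\ref{lem: bound in 0} and the conclusions of Lemma~\ref{lem 3.6 notateve} are at our disposal. Claim~($i$) is in fact already contained in the proof of Lemma~\ref{lem 3.6 notateve}: the uniform bound \eqref{bound interaction 431} together with $M_n\to+\infty$ forces $v_iv_j\equiv0$ in $\R^N$ whenever $(i,j)\in\mathcal{K}_2$. For claim~($ii$), since $r_n=|x_n-y_n|$ the vectors $z_n:=(y_n-x_n)/r_n$ lie on $\partial B_1$, so along a subsequence $z_n\to z_\infty\in\partial B_1$; passing to the limit in the normalization of Lemma~\ref{lem: basic prop}-(1), and using that both $\mf v_n$ and $\bar{\mf v}_n$ converge to $\mf v$ locally uniformly (Lemma~\ref{lem: basic prop}-(4), Lemma~\ref{lem 3.6 notateve}-($i$)), one obtains $|v_1(0)-v_1(z_\infty)|=1$; on the other hand the $\alpha$-H\"older seminorm of $v_1$ is $\le1$ by the same normalization, so $|v_1(x)-v_1(0)|\le|x|^\alpha=1$ for $x\in\partial B_1$. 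Hence $\max_{\partial B_1}|v_1-v_1(0)|=1$, and in particular $v_1$ is non-constant, a fact used repeatedly below.

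For claim~($iii$) I would fix $h$, $i\in I_h$ and a point $x_0$ with $\sum_{j\in I_h}|v_j(x_0)|>0$, choose $j_0\in I_h$ with $|v_{j_0}(x_0)|>0$, so that $|v_{j_0,n}|\ge c>0$ on some ball $B_\rho(x_0)$ for $n$ large. On $B_\rho(x_0)$ one has, for every $k\notin I_h$ (so $(k,j_0)\in\mathcal{K}_2$),
\[
M_n|v_{k,n}|^{p+1}\le c^{-(p+1)}\,M_n|v_{k,n}|^{p+1}|v_{j_0,n}|^{p+1},
\]
and the right-hand side integrates to $0$ over $B_\rho(x_0)$ by Lemma~\ref{lem 3.6 notateve}-($ii$). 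Since $a_{ij}=0$ for $j\in I_h$ and $\{v_{i,n}\}$ is locally bounded (Lemma~\ref{lem 3.6 notateve}-($i$)), the nonlinear term in \eqref{system blow-up} for $v_{i,n}$ is bounded in modulus by $C\sum_{k\notin I_h}M_n|v_{k,n}|^{p+1}\to0$ in $L^1(B_\rho(x_0))$, while $g_{i,n}\to0$ uniformly; letting $n\to\infty$ yields $-\Delta v_i=0$ in $\mathcal{D}'(B_\rho(x_0))$. Since $\{\sum_{j\in I_h}|v_j|>0\}$ is open and $x_0$ was arbitrary in it, ($iii$) follows.

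For ($iv$)--($v$) the crucial remark is that the Kato inequality applied to \eqref{system blow-up} gives $-\Delta|v_{i,n}|\le|g_{i,n}|$, hence in the limit each $|v_i|$, and so each $w_h:=\sum_{i\in I_h}|v_i|$, is subharmonic in $\R^N$; moreover $w_hw_k\equiv0$ for $h\neq k$ by ($i$), each $w_h$ is harmonic on $\{w_h>0\}$ away from the nodal sets of the individual $v_i$'s by ($iii$), and all the $w_h$ grow sublinearly because $\mf v$ is globally $\alpha$-H\"older with $\alpha<1$. If some $w_k$ with $k\neq1$ were non-trivial, then $w_1$ (non-trivial, since $v_1$ is non-constant) and $w_k$ would be two non-negative, non-trivial, subharmonic functions with disjoint supports and sublinear growth, contradicting the two-phase Liouville theorem of the appendix; hence $w_k\equiv0$ for $k\neq1$, i.e.\ $v_j\equiv0$ for every $j\notin I_1$, proving ($iv$). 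For ($v$): if $\{v_i=0\ \forall i\in I_1\}$ were empty then $\{\sum_{i\in I_1}|v_i|>0\}=\R^N$ and, by ($iii$), each $v_i$ with $i\in I_1$ would be an entire harmonic function with sublinear growth, hence constant (cf.\ Lemma~\ref{lem:liouville1}), contradicting ($ii$); so that set is non-empty. Finally, still via the two-phase Liouville theorem: if some $v_i$, $i\in I_1$, changed sign, then $v_i^+$ and $v_i^-$ would be non-trivial, non-negative, subharmonic (being harmonic on their supports by ($iii$)), with product zero and sublinear growth --- impossible; and if $\{v_i\neq0\}$ were disconnected, restricting $v_i$ to two distinct connected components would yield the same contradiction. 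Hence $v_i$ has a definite sign and $\{v_i\neq0\}$ is connected.

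I expect the routine part to be ($i$)--($iii$), whereas the real obstacle lies in ($iv$)--($v$): one must certify that the limiting profile $\mf v$ carries exactly the structure --- subharmonicity of the $|v_i|$, harmonicity on the group supports, global sublinear growth and, where needed, the domain-variation identity inherited from the approximating solutions --- that makes the two-phase Liouville theorems of the appendix applicable, and at the same time handle the genuinely new sign-changing situation, which forces the separate analysis of $\{v_i>0\}$, $\{v_i<0\}$ and of their connected components. This is exactly the point where the present argument departs from \cite{nttv} and \cite{tt}.
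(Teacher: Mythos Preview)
Your proof is correct and follows essentially the same approach as the paper's: both rely on the Kato inequality to obtain subharmonicity of $|v_i|$ (and of $v_i^\pm$), on the segregation and interaction-decay coming from Lemma~\ref{lem 3.6 notateve}, and on the two-phase Liouville theorem (Lemma~\ref{lem:liouville2}) to kill the competing groups and force each $v_i$ to be one-signed with connected support. The only cosmetic differences are that in ($iv$) you work with the group sums $w_h=\sum_{i\in I_h}|v_i|$ rather than with individual $|v_j|$'s, and in ($v$) you separate the sign argument from the connectedness argument while the paper handles both at once via the four pairs $(v_1|_{\Omega_1}^\pm,v_1|_{\Omega_2}^\pm)$; neither variation changes the substance. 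One small remark: the domain-variation (Pohozaev) identity you mention in the closing paragraph is not actually needed for this lemma --- it enters only in the subsequent Almgren monotonicity argument.
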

\begin{proof}
The first two points are trivial. Concerning ($iii$), by continuity the set $\left\{\sum_{j\in I_h} |v_j|>0 \right\}$ is open. Given any point $x_0$ such that $\sum_{j\in I_h} |v_j(x_0)|>0$, we find a neighbourhood of $x_0$ where $v_i$ is harmonic for $i\in I_h$. By H\"older continuity there exists $\rho>0$ small enough that $\sum_{j\in I_h} |v_j| \ge 2\gamma >0$ in $B_{\rho}(x_0)$, so that by uniform convergence $\sum_{j\in I_h} |v_{j,n}(x_0)| \ge \gamma$ in $B_{\rho}(x_0)$ for every $n$ sufficiently large. Therefore, for any $i\in I_h$ and $k\notin I_h$,
\[
\int_{B_{\rho}(x_0)} M_n |v_{i,n}|^p |v_{k,n}|^{p+1} \le   C\sum_{j\in I_h}\int_{B_{\rho}(x_0)} M_n |v_{j,n}|^{p+1} |v_{k,n}|^{p+1} \to 0
\]
as $n \to \infty$, for every $j$ such that $(i,j) \not \in \mathcal{K}_1$. Testing the equation for $v_{i,n}$ against a test function $\varphi \in \mathcal{C}^\infty_c(B_{\rho}(x_0))$, we obtain (recall that $a_{ij}=0$ whenever $(i,j)\in \mathcal{K}_1$)
\[
\int_{B_{\rho}(x_0)} \nabla v_{i,n} \cdot \nabla \varphi = \int_{B_{\rho}(x_0)} g_{i,n} \varphi - M_n |v_{i,n}|^{p-1}v_{i,n} \sum_{j \neq i} a_{ij} |v_{j,n}|^{p+1} \varphi
\]
and, as $n \to \infty$,
\[
\int_{B_{\rho}(x_0)} \nabla v_i \cdot \nabla \varphi = 0,
\]
which completes the proof.\\
As far as ($iv$) is concerned, by the previous point $v_1$ must vanish somewhere in $\R^N$ (indeed, if not, $v_1$ would be a non-constant H\"older continuous harmonic function in $\R^N$, a contradiction by Corollary \ref{lem:liouville1}), and also $v_j$ must vanish somewhere for every $j \not \in I_1$ (otherwise we would have $v_1\equiv0$ in $\R^N$, again a contradiction). This, by continuity, implies that $|v_1|$ and $|v_j|$ must have a common zero, and thus they satisfy all the assumptions of Lemma \ref{lem:liouville2}. Since $v_1$ is not constant, we deduce that
\[
v_j \equiv 0\qquad\text{in $\R^N$ for every $j \not \in I_1$}.
\]
To prove point ($v$) we argue by contradiction assuming that  $\{v_1 \neq 0\}$ non-trivially decomposes into
$\O_1\cup\O_2$. Then one of the pairs $(v_1|_{\O_1}^+,v_1|_{\O_2}^+)$, $(v_1|_{\O_1}^-,v_1|_{\O_2}^-)$, $(v_1|_{\O_1}^+,v_1|_{\O_2}^-)$ and $(v_1|_{\O_1}^-,v_1|_{\O_2}^+)$ - extended by $0$ to the whole $\R^N$ - would be non-trivial and would satisfy the assumptions of Lemma \ref{lem:liouville2}, a contradiction.
\end{proof}

\subsection{Almgren monotonicity formula}

As in \cite{nttv}, to complete the proof Theorem \ref{thm: holder bounds} we show that $v_1$ is radially homogeneous with respect to each one of its zeros. To this aim, we state an Almgren monotonicity formula for the elements $\mf{v}_n$ of the blow-up sequence, and we show that the limit function $\mf{v}$ inherits such property. 

We recall that $\mf{v}_n$ is a solution to \eqref{system blow-up}. Let $x_0 \in \Omega$ and $r>0$ such that $B_r(x_0) \Subset \Omega_n$; we define
\[
\begin{split}
  \bullet \quad & H_n(x_0,r):= \frac{1}{r^{N-1}} \int_{\partial B_r(x_0)} \sum_{i=1}^k v_{i,n}^2\\
  \bullet \quad & E_n(x_0,r):= \frac{1}{r^{N-2}} \int_{B_r(x_0)} \sum_{i=1}^k |\nabla v_{i,n}|^2+ 2 M_n\sum_{1\le i<j\le k} a_{ij} |v_{i,n}|^{p+1} |v_{j,n}|^{p+1} - \sum_{i=1}^k g_{i,n}(x) v_{i,n}\\
  \bullet \quad & N_n(x_0,r):= \frac{E_n(x_0,r)}{H_n(x_0,r)} \qquad (\text{Almgren frequency function}).
\end{split}
\]
We also set
\[
\begin{split}
  \bullet \quad & H_{\infty}(x_0,r):= \frac{1}{r^{N-1}} \int_{\partial B_r(x_0)} \sum_{i=1}^k v_{i}^2\\
  \bullet \quad & E_{\infty}(x_0,r):= \frac{1}{r^{N-2}} \int_{B_r(x_0)} \sum_{i=1}^k |\nabla v_{i}|^2\\
  \bullet \quad & N_{\infty}(x_0,r):= \frac{E_{\infty}(x_0,r)}{H_{\infty}(x_0,r)} \qquad (\text{Almgren frequency function}).
\end{split}
\]

Parts of the proofs of the following results can be obtained by slightly modifying those of Proposition 3.9 in \cite{nttv} (where a specific choice of the reaction terms is considered), of the results of Section 2 in \cite{tt} (where segregated configurations are considered), or of the results in Subsection 3.1 in \cite{SoZi} (where the reaction term $f_{i,\beta}(x)$ is replaced by $f_{i,\beta}(x,u_i)$). We will only prove what requires something new.

Since the limit function $\mf{v}$ is non-trivial and continuous, there exists $0<r_1<r_2$ and $x_0 \in \R^N$ such that $H(x_0,r) \neq 0$ for every $r \in (r_1,r_2)$.

\begin{lemma}\label{lem: derivatives}
Let $r \in (r_1,r_2)$. Then
\[
\frac{d}{d r}H_n(x_0,r) = \frac{2}{r^{N-1}} \int_{\pa B_r(x_0)} \sum_{i=1}^d v_{i,n} \pa_\nu v_{i,n} = \frac{2 E_n(x_0,r)}{r},
\]
and 
\begin{multline*}
N_n(x_0,r+\delta) - N_n(x_x,r)  = \int_{r}^{r+\delta}  \frac{2}{s^{2N-3} H_n(x_0,s)} \left[\left(\int_{\pa B_s(x_0)} \sum_i (\pa_\nu v_{i,n})^2 \right) \left( \int_{\pa B_s(x_0)} \sum_i v_{i,n}^2 \right) \right. \\
\left. - \left( \int_{\pa B_s(x_0)} \sum_i v_{i,n} \pa_{\nu} v_{i,n} \right)^2\right]  +o_n(1),
\end{multline*}
where $o_n(1) \to 0$ as $n \to \infty$, whenever $\delta$ is such that $r+\delta \in (r_1,r_2)$.
\end{lemma}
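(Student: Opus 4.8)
The plan is to prove the two identities of Lemma~\ref{lem: derivatives} separately: the first by a direct differentiation, the second by combining a Rellich--Pohozaev identity for system~\eqref{system blow-up} with the evolution in $r$ of $H_n$ and $E_n$, carefully tracking the terms produced by the forcing $g_{i,n}$ and by the competition terms carrying the large factor $M_n$.

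For the first identity I would rescale $\pa B_r(x_0)$ onto $\pa B_1$, differentiate under the integral sign and rescale back, obtaining $\tfrac{d}{dr}H_n(x_0,r)=\tfrac{2}{r^{N-1}}\int_{\pa B_r(x_0)}\sum_i v_{i,n}\,\pa_\nu v_{i,n}$. Then the divergence theorem gives $\int_{\pa B_r(x_0)}\sum_i v_{i,n}\,\pa_\nu v_{i,n}=\int_{B_r(x_0)}\sum_i(|\nabla v_{i,n}|^2+v_{i,n}\Delta v_{i,n})$, and inserting~\eqref{system blow-up} together with the symmetry $a_{ij}=a_{ji}$ (so that $\sum_{i\neq j}a_{ij}|v_{i,n}|^{p+1}|v_{j,n}|^{p+1}=2\sum_{i<j}a_{ij}|v_{i,n}|^{p+1}|v_{j,n}|^{p+1}$) identifies this integral exactly with $r^{N-2}E_n(x_0,r)$, whence $\tfrac{d}{dr}H_n=2E_n/r$.

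For the second identity I would test the equation for $v_{i,n}$ against $(x-x_0)\cdot\nabla v_{i,n}$ on $B_r(x_0)$ and sum over $i$. The Laplacian term produces the standard Rellich combination $\tfrac{N-2}{2}\int_{B_r}\sum|\nabla v_{i,n}|^2-\tfrac r2\int_{\pa B_r}\sum|\nabla v_{i,n}|^2+r\int_{\pa B_r}\sum(\pa_\nu v_{i,n})^2$; the forcing term $\sum_i\int_{B_r}g_{i,n}\,(x-x_0)\cdot\nabla v_{i,n}$ is $o_n(1)$ by Cauchy--Schwarz, using $\|g_{i,n}\|_{L^\infty}\to0$ from Lemma~\ref{lem: basic prop}-(3) (in the sublinear range combined with the $L^\infty$ bound on $\mf{v}_n$) and the uniform $H^1_{\rm loc}$ bound of Lemma~\ref{lem 3.6 notateve}-(ii); and the competition term, after writing $v_{i,n}|v_{i,n}|^{p-1}\nabla v_{i,n}=\tfrac1{p+1}\nabla|v_{i,n}|^{p+1}$, symmetrizing via $a_{ij}=a_{ji}$ and integrating by parts, reduces, up to explicit factors of $r$, to $\int_{B_r}M_n\sum a_{ij}|v_{i,n}|^{p+1}|v_{j,n}|^{p+1}$ and $\int_{\pa B_r}M_n\sum a_{ij}|v_{i,n}|^{p+1}|v_{j,n}|^{p+1}$; since $a_{ij}\neq0$ only for $(i,j)\in\mathcal{K}_2$, both are $o_n(1)$ by Lemma~\ref{lem 3.6 notateve}-(ii) (the volume one directly, the boundary one after integrating over a range of radii, hence for a.e.\ $r$). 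Differentiating $E_n(x_0,r)=r^{-(N-2)}\int_{B_r}(\cdots)$ directly, the spherical competition and forcing integrals are again $o_n(1)$ for a.e.\ $r$, and substituting the Rellich relation to eliminate $\int_{\pa B_r}\sum|\nabla v_{i,n}|^2$ makes the two $\pm\tfrac{N-2}{r}E_n$ contributions cancel, yielding $E_n'(x_0,r)=\tfrac{2}{r^{N-2}}\int_{\pa B_r}\sum(\pa_\nu v_{i,n})^2+o_n(1)$. Finally $N_n'=E_n'/H_n-2N_n^2/r$, together with the exact identities $r^{N-2}E_n=\int_{\pa B_r}\sum v_{i,n}\pa_\nu v_{i,n}$ and $r^{N-1}H_n=\int_{\pa B_r}\sum v_{i,n}^2$, makes the Cauchy--Schwarz defect on $\pa B_r$ the principal (nonnegative) term, with an error $o_n(1)/H_n$; since $v_{i,n}\to v_i$ uniformly on compacta, $H_n\to H_\infty>0$ uniformly on $(r_1,r_2)$, so $H_n$ is bounded below for large $n$, and integrating $N_n'$ over $(r,r+\delta)\subset(r_1,r_2)$ gives the claimed formula — the error integrates to $o_n(1)$ because $\int_{B_{r_2}}M_n|v_{i,n}|^{p+1}|v_{j,n}|^{p+1}\to0$ and $\|g_{i,n}\|_{L^\infty}\to0$ while $\|\nabla v_{i,n}\|_{L^2(B_{r_2})}$ stays bounded.

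I expect the main obstacle to be exactly the bookkeeping of the error terms in the Rellich--Pohozaev step: the competition terms carry the diverging factor $M_n$, so a priori they are only $O(1)$ and not negligible, and it is only the integral bounds of Lemma~\ref{lem 3.6 notateve} (available here because $r_n=|x_n-y_n|$ and $M_n\to\infty$) that force their volume and spherical contributions to vanish; moreover these spherical contributions are small only for a.e.\ $r$, which is the reason the conclusion must be stated in integrated form over $(r,r+\delta)$ rather than pointwise in $r$. A secondary, minor, subtlety is that $f$ is only assumed bounded rather than differentiable, so the forcing terms must be controlled via energy estimates and Cauchy--Schwarz instead of a second integration by parts.
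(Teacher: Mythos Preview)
Your proposal is correct and follows essentially the same route as the paper's proof: compute $H_n'$ by rescaling and the divergence theorem, derive a Rellich--Pohozaev identity for system~\eqref{system blow-up} (the paper just cites \cite[Lemma~3.3]{SoZi} for this), and then show that all the forcing and competition contributions integrate to $o_n(1)$ over $(r,r+\delta)$ via Lemma~\ref{lem: basic prop}-(3), Lemma~\ref{lem 3.6 notateve}-(ii), and the uniform lower bound on $H_n$ coming from $H_\infty>0$ on $(r_1,r_2)$. Your observation that the spherical competition terms are controlled only after integration in $r$ --- and that this is precisely why the statement is given in integrated form --- is exactly the point.
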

\begin{proof}
Being $a_{ij}=0$ for every $(i,j) \in \mathcal{K}_1$ (see definition \eqref{def indexes}), we can directly repeat the proof of Lemma 3.3 in \cite{SoZi}, obtaining
\begin{align*}
\frac{\de}{\de r} & N_n(x_0,r)  =\\
&  \frac{2}{r^{2N-3} H_n(x_0,r)} \left[\left(\int_{\pa B_r(x_0)} \sum_i (\pa_\nu v_{i,n})^2 \right) \left( \int_{\pa B_r(x_0)} \sum_i v_{i,n}^2 \right) - \left( \int_{\pa B_r(x_0)} \sum_i v_{i,n} \pa_{\nu} v_{i,n} \right)^2\right] \\
& +  \frac{\left(4-\frac{2pN}{p+1}\right)M_n}{H_n(x_0,r) r^{N-1}} \int_{B_r(x_0)} \sum_{i<j} a_{ij} |v_{i,n}|^{p+1} |v_{j,n}|^{p+1}  \\
& + \frac{2pM_n}{(p+1)H_n(x_0,r) r^{N-2}}  \int_{\pa B_r(x_0)} \sum_{i<j} a_{ij} |v_{i,n}|^{p+1} |v_{j,n}|^{p+1} \\
& + \frac{1}{H_n(x_0,r) r^{N-1}} \int_{B_r(x_0)} \left[\sum_i g_{i,n}(x) v_{i,n} + 2\sum_i g_{i,n}(x) \nabla v_{i,n} \cdot (x-x_0) \right] \\
& -\frac{1}{r^{N-2}H_n(x_0,r)} \int_{\pa B_r(x_0)} g_{i,n}(x) v_{i,n}.
\end{align*}
The thesis follows thanks to point (3) of Lemma \ref{lem: basic prop} and to point ($ii$) of Lemma \ref{lem 3.6 notateve}, having observed that for every $\delta>0$ the function $H_n(x_0,\cdot)$ is uniformly bounded from below in $[r_1+\delta,r_2-\delta]$. 
\end{proof}

The main consequences of the previous lemma are summarized in the following statement.

\begin{proposition}\label{prop: consequence almgren}
For every $x_0 \in \R^N$ we have that $H_{\infty}(x_0,r) \neq 0$ for every $r>0$; the function $N_\infty(x_0,\cdot)$ is absolutely continuous and monotone non-decreasing, and
\[
\frac{\de}{\de r} \log H_{\infty}(x_0,r) = \frac{2}{r}N_{\infty}(x_0,r).
\]
Moreover, if $N_{\infty}(x_0,r)= \gamma$ for every $r \in [\rho_1,\rho_2]$, then $\mf{v}=r^\gamma \hat{\mf{v}}(\theta)$ in $\{\rho_1 < r< \rho_2\}$, where $(r,\theta)$ denotes a system of polar coordinates centred in $x_0$.
\end{proposition}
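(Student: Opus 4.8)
The plan is to transfer the almost-monotonicity of Lemma~\ref{lem: derivatives} to the limit function $\mf{v}$, and to exploit the clean structure of $\mf{v}$ recorded in Lemma~\ref{lem: end blow-up}: $v_iv_j\equiv 0$ whenever $(i,j)\in\Kcal_2$, and $v_i$ is harmonic in $\{\sum_{j\in I_h}|v_j|>0\}$ for $i\in I_h$. As a first step, fix $x_0\in\R^N$ and note that, integrating by parts and using that $\Delta v_i$ is supported on $\pa\{\sum_{j\in I_h}|v_j|>0\}$, where $v_i=0$, one obtains for a.e.\ $r>0$ the identity $\frac{d}{dr}H_\infty(x_0,r)=\frac{2}{r^{N-1}}\int_{\pa B_r(x_0)}\sum_i v_i\,\pa_\nu v_i=\frac{2E_\infty(x_0,r)}{r}$; in particular $E_\infty\geq 0$, so $H_\infty(x_0,\cdot)$ is non-decreasing, and wherever it is positive, $\frac{d}{dr}\log H_\infty(x_0,r)=\frac{2}{r}N_\infty(x_0,r)$. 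For the behaviour of $N_\infty$ I would pass to the limit in Lemma~\ref{lem: derivatives}: the terms $g_{i,n}$ are negligible by Lemma~\ref{lem: basic prop}(3), the competition terms $M_n|v_{i,n}|^{p+1}|v_{j,n}|^{p+1}$ vanish in $L^1_\loc$ by Lemma~\ref{lem 3.6 notateve}, and $\mf{v}_n\to\mf{v}$ uniformly on compacts and in $H^1_\loc$, so that $H_n\to H_\infty$, $E_n\to E_\infty$, $N_n\to N_\infty$ on intervals where $H_\infty>0$; since the bracket in Lemma~\ref{lem: derivatives} is non-negative (Cauchy--Schwarz) and converges for a.e.\ radius, Fatou's lemma gives that $N_\infty(x_0,\cdot)$ is monotone non-decreasing there. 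Its absolute continuity, together with the explicit expression of $\frac{d}{dr}N_\infty(x_0,r)$ as a positive multiple of the Cauchy--Schwarz defect $\big(\int_{\pa B_r}\sum_i(\pa_\nu v_i)^2\big)\big(\int_{\pa B_r}\sum_i v_i^2\big)-\big(\int_{\pa B_r}\sum_i v_i\,\pa_\nu v_i\big)^2$, then follows by carrying out the Almgren computation directly for $\mf{v}$, exactly as in Section~2 of \cite{tt} (the grouping is immaterial, since $a_{ij}=0$ inside each $I_h$).

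Next I would prove that $H_\infty(x_0,r)>0$ for \emph{every} $x_0$ and \emph{every} $r>0$. Since $v_1$ is non-constant and vanishes somewhere (Lemma~\ref{lem: end blow-up}), $\mf{v}\not\equiv 0$, so there is $y$ with $\mf{v}(y)\neq 0$; by continuity $H_\infty(x_0,|y-x_0|)>0$, and hence, $H_\infty(x_0,\cdot)$ being non-decreasing, $H_\infty(x_0,r)>0$ for all $r\geq|y-x_0|$. Let $(a,b)$ be the connected component of $\{r>0:\ H_\infty(x_0,r)>0\}$ containing $|y-x_0|$; then $b=+\infty$, and it remains to exclude $a>0$. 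On $(a,b)$ both the log-derivative identity and the monotonicity of $N_\infty$ hold, so for $a<r<r_0<b$
\[
\log\frac{H_\infty(x_0,r_0)}{H_\infty(x_0,r)}=\int_r^{r_0}\frac{2N_\infty(x_0,s)}{s}\,\de s\le 2N_\infty(x_0,r_0)\log\frac{r_0}{r},
\]
whence $H_\infty(x_0,r)\ge H_\infty(x_0,r_0)\,(r/r_0)^{2N_\infty(x_0,r_0)}$; letting $r\to a^+$ and using that $H_\infty(x_0,\cdot)$ is continuous (as $\mf{v}$ is), we would get $H_\infty(x_0,a)>0$ if $a>0$, contradicting the maximality of $(a,b)$. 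Therefore $a=0$, $H_\infty(x_0,\cdot)>0$ on $(0,+\infty)$, and all the identities of the first step hold on all of $(0,+\infty)$.

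For the rigidity statement, suppose $N_\infty(x_0,\cdot)\equiv\gamma$ on $[\rho_1,\rho_2]$. Then $\frac{d}{dr}N_\infty\equiv 0$ there, so equality holds in the Cauchy--Schwarz defect for a.e.\ $r\in(\rho_1,\rho_2)$; regarding $(v_i)_i$ and $(\pa_\nu v_i)_i$ as elements of $L^2(\pa B_r(x_0);\R^d)$, this forces $\pa_\nu v_i=c(r)v_i$ on $\pa B_r(x_0)$ with the \emph{same} scalar $c(r)$ for all $i$. Dividing $2\int_{\pa B_r}\sum_i v_i\,\pa_\nu v_i$ by $\int_{\pa B_r}\sum_i v_i^2$ and comparing with $\frac{d}{dr}\log H_\infty(x_0,r)=\frac{2\gamma}{r}$ gives $c(r)=\gamma/r$, i.e.\ $\pa_r v_i=\frac{\gamma}{r}v_i$ in $\{\rho_1<r<\rho_2\}$; integrating this radial ODE along the rays issued from $x_0$ yields $v_i(r,\theta)=r^\gamma\hat v_i(\theta)$ with $\hat v_i(\theta):=\rho_1^{-\gamma}v_i(\rho_1,\theta)$, that is $\mf{v}=r^\gamma\hat{\mf{v}}(\theta)$ on that annulus.

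The step I expect to be the main obstacle is the positivity of $H_\infty$: a priori it is only known to be non-zero on \emph{some} interval and for \emph{some} centre, and since a subharmonic function can vanish on an open ball one cannot invoke a unique-continuation property for $\tfrac12\sum_i v_i^2$; positivity everywhere must instead be bootstrapped from the monotonicity of $H_\infty$ combined with the log-derivative identity and the monotonicity of $N_\infty$, as above. A secondary technical point is the limit passage in Lemma~\ref{lem: derivatives}: one needs convergence of the boundary quantities $\int_{\pa B_s}\sum_i(\pa_\nu v_{i,n})^2$ for a.e.\ $s$ --- which holds since $H^1_\loc$-convergence gives $H^1(\pa B_s)$-convergence for a.e.\ $s$ --- together with the genuine negligibility of the reaction and competition terms, precisely what Lemmas~\ref{lem: basic prop}(3) and~\ref{lem 3.6 notateve} ensure.
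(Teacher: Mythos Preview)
Your proposal is correct and follows essentially the same approach as the paper's proof: pass to the limit in Lemma~\ref{lem: derivatives} using the $H^1_{\loc}$ and uniform convergence from Lemma~\ref{lem 3.6 notateve}, obtain the Cauchy--Schwarz defect formula for $N_\infty$, and deduce monotonicity; then use the log-derivative identity together with the upper bound $N_\infty(x_0,r)\le N_\infty(x_0,r_0)$ to rule out $H_\infty$ vanishing at any positive radius; finally, equality in Cauchy--Schwarz gives the radial ODE $\pa_r v_i=\frac{\gamma}{r}v_i$ and hence homogeneity. The only cosmetic difference is that you also derive $H_\infty'=\tfrac{2}{r}E_\infty$ directly for $\mf{v}$ via integration by parts, whereas the paper obtains it purely by passing to the limit in the first identity of Lemma~\ref{lem: derivatives}; the latter route is slightly safer here since at this stage $\mf v$ is only known to be $\mathcal{C}^{0,\alpha}$, but your argument goes through as well once one notes $v_i\Delta v_i=0$ in the distributional sense.
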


\begin{proof}
The result can be proved as in steps 4, 5 and 6 of Proposition 3.9 in \cite{nttv}, and thus here we only sketch the argument. 

Given $x_0\in \R^N$, let $r_1<r_2$ be such that $H_\infty(x_0,r)\neq 0$ in $(r_1,r_2)$. By Lemma \ref{lem: derivatives}, we have 
\begin{multline*}
N_n(x_0,r+\delta) - N_n(x_0,r) =   \int_{r}^{r+\delta} \frac{2}{s^{2N-3} H_n(x_0,s)} \left[ \left(\int_{\pa B_s(x_0)} \sum_i (\pa_\nu v_{i,n})^2 \right) \left( \int_{\pa B_s(x_0)} \sum_i v_{i,n}^2 \right) \right. \\
\left. - \left( \int_{\pa B_s(x_0)} \sum_i v_{i,n} \pa_{\nu} v_{i,n} \right)^2\right] + o_n(1),
\end{multline*}
where $o_n(1) \to 0$ as $n \to \infty$, for any $r,\delta$ such that $r,r+\delta \in (r_1,r_2)$. Passing to the limit in the previous identity, we obtain
\begin{multline}\label{der di N integrata}
N_\infty(x_0,r+\delta) - N_\infty(x_0,r) =   \int_{r}^{r+\delta} \frac{2}{s^{2N-3} H_\infty(x_0,s)} \left[ \left(\int_{\pa B_s(x_0)} \sum_i (\pa_\nu v_{i})^2 \right) \left( \int_{\pa B_s(x_0)} \sum_i v_{i}^2 \right) \right. \\
\left. - \left( \int_{\pa B_s(x_0)} \sum_i v_{i} \pa_{\nu} v_{i} \right)^2\right],
\end{multline}
and the right hand side is nonnegative by the Cauchy-Schwarz inequality. This proves the monotonicity of $N_\infty(x_0,\cdot)$. 

To show that $H_{\infty}(x_0,r) \neq 0$ for every $r>0$, we first observe that by Lemma \ref{lem: derivatives} the function $H_\infty(x_0,\cdot)$ is non-decreasing in $r$ when $H_\infty(x_0,r) \neq 0$. Thus, if $H_\infty(x_0,r) = 0$ for some positive $r$, it is well defined the number $0<r_0:= \inf\{r>0: H_\infty(x_0,r) \neq 0\}$, and $H_\infty(x_0,r)>0$ for every $r>r_0$. On the other hand, by the monotonicity of $N_\infty(x_0,\cdot)$, we have also
\[
\frac{d}{dr}\log H_\infty(x_0,r) = \frac{2N_\infty(x_0,r)}{r} \le \frac{C}{r} \quad \Longrightarrow \quad H_\infty(x_0,r_2) \le H_\infty(x_0,r_1) \left(\frac{r_2}{r_1}\right)^{2C}
\]
for every $r_1,r_2 \in (r_0,r_0+1)$; taking the limit as $r_1 \to r_0^+$, by continuity, we infer that $H_\infty(x_0,r_2) = 0$ for every $r_2 \in (r_0,r_0+1)$, a contradiction.

It remains to prove that if $N_\infty(x_0,r) \equiv \gamma$ is constant on an interval $r \in (\rho_1,\rho_2)$, then the function $\mf{v}$ is radially homogeneous. To this aim, we observe that in such case the right hand side in \eqref{der di N integrata} is necessarily $0$ for almost every $r$, which, by the Cauchy-Schwarz inequality, is possible only if 
\[
	(x-x_0) \cdot \nabla \mf{v}_\infty (x) = \lambda(x - x_0) \mf{v}_\infty 
\]
Inserting this relation in the definition of $N_\infty(x_0,r)$, we can directly compute $\lambda(x-x_0) = \gamma$ and the thesis follows.
\end{proof}

%
%
%
%
%
%
%

\subsection{Conclusion of the proof of the uniform H\"older bounds}

Using Lemma \ref{lem: end blow-up} and Proposition \ref{prop: consequence almgren} we can complete the proof of Theorem \ref{thm: holder bounds}.

We recall that $(v_1,\dots,v_d)$ is globally $\alpha$-H\"older continuous, and, by Proposition \ref{lem: end blow-up}, it is possible to choose $x_0$ such that $v_i(x_0) = 0$ for every $i$. We claim that $N_\infty(x_0,r) \equiv \alpha$ for $r>0$. Indeed, if $N_\infty(x_0,\bar r) \le \alpha -\eps$ for some $\eps>0$, then by monotonicity
\[
\frac{\de}{\de r} H_\infty(x_0,r) = \frac{2}{r}N_\infty(x_0,r) \le \frac{2 (\alpha-\eps)}{r}
\]
for every $r \in (0,\bar r)$, which implies $H(r) \ge C r^{2(\alpha-\eps)}$ for $0<r<\bar r$. On the contrary, by H\"older continuity and the fact that $v_i(x_0) = 0$ for all $i$ we have also $H_\infty(x_0,r) \le C r^{2\alpha}$ for all $r>0$, a contradiction for $r$ small. Arguing in a similar way for $r$ large it is possible to rule out the possibility that $H_\infty(x_0,\bar r) \ge \alpha +\eps$ for some $\bar r,\eps>0$. 

As a consequence $N_\infty(x_0,r) \equiv \alpha$, whence thanks to Proposition \ref{prop: consequence almgren} we deduce that $v_1(x) = r^\alpha g_1(\theta)$. Therefore, the zero set $\Gamma = \{v_1 = 0\}$ is a cone with respect to any of its points, i.e. is an affine subspace of $\R^N$. Now there are two cases: either the dimension of $\Gamma$ is equal to $N-1$, or it is smaller than $N-1$. In the former case, $v_1$ is a positive harmonic $\alpha$-H\"older continuous function in a half-space. We extend it by odd symmetry in the all of $\R^N$, obtaining a sign-changing globally $\alpha$-H\"older continuous harmonic function in $\R^N$, in contradiction with Corollary \ref{lem:liouville1}. If on the contrary the dimension of $\Gamma$ is smaller than $N-1$, then $v_1$ is harmonic in $\R^N$ minus a set of zero capacity, so that $v_1$ is a nonconstant nonnegative $\alpha$-H\"older continuous harmonic function in $\R^N$, again a contradiction.

\subsection{Uniform H\"older bounds at the boundary}\label{fino al bordo} We now consider the case of uniform H\"older bounds at the boundary of $\Omega$, for a smooth domain, that is, we give a proof of Theorem \ref{thm: holder bounds boundary}. We still consider solutions $\mf{u}_\beta$ of the system \eqref{eq:main_system}, under the same assumptions of the interior H\"older bounds; moreover, on (a portion of) the boundary of $\Omega$, we assume that $\mf{u}_\beta = 0$. In particular, we assume that $\mf{u}_\beta$ solve
\[
	\begin{cases}
		-\Delta u_i=f_{i,\beta}-\beta \mathop{\sum_{j=1}^d}_{j\neq i} a_{ij} u_i |u_i|^{p-1}|u_j|^{p+1} & \text{ in $\Omega$,} \\
		u_{i,\beta} = 0 &\text{ on $\partial \Omega\cap B_3$}
	\end{cases}
	\qquad i=1,\ldots, d.
\]
For $\eta \in \Ccal_c^1(\R^N)$ as in \eqref{def eta}, we wish to show that uniform bounds in $L^{\infty}(B_3)$ of $\{\mf{u}_\beta\}$ imply that the function $\{\eta \mf{u}_\beta\}$ are uniformly bounded in $\Ccal^{0,\alpha}(B_3)$ for any $\alpha \in (0,1)$.

The proof is based on a contradiction argument, much similar to the proof that we gave for the interior estimates. Indeed, until Lemma \ref{lem 3.6 notateve}, the two proofs coincide. At that point we have to distinguish the possible behaviours of the scaled sets $\Omega_n := (B_2 \cap \Omega - x_n) / r_n$: choosing $r_n = |x_n - y_n|$, in the case of interior estimate, we already knew that
\[
	\frac{\dist(x_n, \partial (\Omega \cap B_2))}{r_n} \to \infty, 
\]
that is, the scaled domains exhausted $\R^N$; this conclusion followed by our specific choice of $\eta$. In the present setting, it may happen that the scaled domains converge to an half plane, as consequence of the presence of the boundary of $\Omega$, where the functions $\mf{u}_\beta$ assume their null Dirichlet boundary condition. To roll out this scenario, we consider the following result.
\begin{lemma}
We have 
\[
	\lim_{n \to \infty} \frac{ \min(\dist(x_n, \partial \Omega), \dist(y_n, \partial \Omega))}{|x_n - y_n|} = +\infty.
\]
\end{lemma}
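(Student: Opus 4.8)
The plan is to argue by contradiction, and the heart of the matter is a uniform \emph{linear} decay estimate for $|\mf{u}_\beta|$ near the smooth piece $\partial\Omega\cap B_3$ where the Dirichlet condition is imposed. We are inside the blow-up/contradiction scheme of Theorem \ref{thm: holder bounds}: in particular $L_n\to+\infty$ and, exactly as in the interior argument up to Lemma \ref{lem 3.6 notateve} and as noted right after \eqref{absurd assumption}, $r_n:=|x_n-y_n|\to 0$. Assume the conclusion fails; then along a subsequence $\min(\dist(x_n,\partial\Omega),\dist(y_n,\partial\Omega))\le C\,r_n$ for some $C>0$, and since $|x_n-y_n|=r_n$ the triangle inequality gives \emph{both} $\dist(x_n,\partial\Omega)\le (C+1)r_n$ and $\dist(y_n,\partial\Omega)\le(C+1)r_n$; in particular both distances tend to $0$.

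The crucial step is a uniform boundary estimate. First I would observe, via Kato's inequality, that since the competition term $-\beta_n\sum_{j\neq i}a_{ij}u_{i,n}|u_{i,n}|^{p-1}|u_{j,n}|^{p+1}$ has a sign opposite to that of $u_{i,n}$, one has $-\Delta|u_{i,n}|\le |f_{i,\beta_n}|\le C_0$ in $\Omega\cap B_3$, with $C_0$ independent of $n$ (using assumption (H), the uniform bound $m:=\sup_n\|\mf{u}_n\|_{L^\infty(\Omega\cap B_3)}$, and the boundedness of the reaction terms up to $\partial\Omega\cap B_3$). Thus each $w_{i,n}:=|u_{i,n}|$ is a nonnegative subsolution of $-\Delta w=C_0$ in $\Omega\cap B_3$ with $0\le w_{i,n}\le m$ and $w_{i,n}=0$ on $\partial\Omega\cap B_3$. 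Since this portion of the boundary is smooth and $\partial\Omega\cap\overline{B_2}$ is compact, a standard barrier argument (a fixed supersolution of $-\Delta\psi=C_0$ on a small region touching $\partial\Omega$, equal to $m$ on its interior boundary and $0$ on $\partial\Omega$, which dominates every $w_{i,n}$ by comparison and is itself $\le L\,\dist(\cdot,\partial\Omega)$ near the boundary piece) together with a finite covering would yield $\delta_0>0$ and $L_0>0$, both independent of $n$, with
\[
|u_{i,n}(x)|\le L_0\,\dist(x,\partial\Omega)\qquad\text{for all }x\in\Omega\cap\overline{B_2}\ \text{with}\ \dist(x,\partial\Omega)<\delta_0,
\]
for every $i$ and $n$ (for such $x$ the nearest boundary point lies in the smooth piece $\partial\Omega\cap B_3$).

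With this in hand the conclusion is immediate. For $n$ large both $x_n$ and $y_n$ are within $\delta_0$ of $\partial\Omega$, so, using $0\le\eta\le 1$,
\[
L_n r_n^{\alpha}=|(\eta u_{1,n})(x_n)-(\eta u_{1,n})(y_n)|\le |u_{1,n}(x_n)|+|u_{1,n}(y_n)|\le L_0\big(\dist(x_n,\partial\Omega)+\dist(y_n,\partial\Omega)\big)\le 2L_0(C+1)\,r_n,
\]
whence $L_n r_n^{\alpha-1}\le 2L_0(C+1)$; since $L_n\to+\infty$ and $r_n\to 0^+$ with $\alpha-1<0$, the left-hand side diverges, a contradiction, which proves the lemma.

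The main obstacle is precisely the uniform near-boundary decay of the second step: merely knowing that $\{\bar{\mf{v}}_n\}$ has $\alpha$-Hölder seminorm $1$ and vanishes on $\partial\Omega$ is \emph{not} enough, as it would only give that a sum of $\alpha$-th powers of $\dist(x_n,\partial\Omega)/r_n$ and $\dist(y_n,\partial\Omega)/r_n$ is $\ge 1$. What one needs is the \emph{linear} rate, and this is where the fact that $|u_{i,n}|$ is an honest subsolution with uniformly bounded right-hand side enters (Kato's inequality disposing of the sign-indefinite interaction term), together with the smoothness of $\partial\Omega\cap B_3$. A heavier alternative would be to blow up around $x_n$, show that the rescaled functions converge to a nontrivial globally $\alpha$-Hölder limit on a half-space which vanishes on the bounding hyperplane, and rule this out via a reflection argument combined with the Liouville theorems of the appendix; the barrier route above is shorter.
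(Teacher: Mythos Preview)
Your proof is correct and takes a genuinely different route from the paper's. The paper argues by contradiction via blow-up: assuming the ratio is bounded, it passes to a limit $\mf{v}$ which is globally $\alpha$-H\"older, nontrivial, and vanishes on a half-space; by Kato's inequality $|v_1|$ is then nonnegative subharmonic and identically zero on that half-space, and a reflection combined with the two-phase Liouville theorem (Lemma~\ref{lem:liouville2}) yields the contradiction. This is exactly the ``heavier alternative'' you describe at the end of your write-up.

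Your barrier argument avoids both the blow-up and the Liouville step entirely. Since $|u_{i,n}|$ is a bounded subsolution of $-\Delta w=C_0$ in $\Omega\cap B_3$ vanishing on the smooth portion $\partial\Omega\cap B_3$, a standard barrier (uniform exterior ball plus comparison) gives the linear boundary decay $|u_{i,n}|\le L_0\,\dist(\cdot,\partial\Omega)$ with constants independent of $n$; plugging this into the defining identity $L_n r_n^{\alpha}=|(\eta u_{1,n})(x_n)-(\eta u_{1,n})(y_n)|$ immediately forces $L_n r_n^{\alpha-1}$ to be bounded, which is impossible. This is more elementary and, arguably, more transparent about what really drives the result: one needs the \emph{linear} (not merely $\alpha$-H\"older) rate of vanishing at $\partial\Omega$, and the subharmonicity coming from Kato's inequality delivers precisely that via a classical barrier. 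The paper's approach has the advantage of fitting seamlessly into the existing blow-up framework already set up for the interior estimates, at the cost of invoking the Liouville machinery once more.

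One small caveat worth flagging: the bound $|f_{i,\beta_n}|\le C_0$ \emph{up to} $\partial\Omega\cap B_3$ is not literally what (H) states (which only gives bounds on compacts $K\Subset\Omega$ or $K\Subset\Omega\times\R^d$). However, the paper's own blow-up proof needs the same uniform control near the boundary for $g_{i,n}\to 0$ to hold there, so this is an implicit standing assumption in either approach and not a defect specific to your argument.
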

\begin{proof}
By contradiction, if for example
\[
	\frac{ \dist(x_n, \partial \Omega)}{|x_n - y_n|} \leq C
\]
then there exists a sequence $x_{0,n} \in \R^N$, $|x_{0,n}| \leq C$ such that
\[
	x_n + x_{0,n} |x_n - y_n| \in \partial \Omega \qquad \text{and } \mf{v}_n(x_{0,n}) = 0.
\]
Let $r_n = |x_n - y_n|$. Up to a subsequence, using Lemma \ref{lem: basic prop}-(1) and -(4), we see that there exists $\mf{v} \in \Ccal^{0,\alpha}(\R^N)$ such that
\[
	\mf{\bar v}_n \to \mf{v} \quad \text{in $\Ccal^{0,\alpha}_\loc$, } \mf{v}_n \to \mf{v} \quad \text{locally uniformly in $\R^N$}.
\]
Moreover, up to a translation and a rotation, we may assume that $\mf{v} = 0$ in the half space $\{x \cdot e_1 \leq 0\}$. Moreover, thanks to our choice for $r_n$, at least one component of $\mf{v}$ is nontrivial. Without loss of generality, let us assume that $v_{1} \neq 0$. Regardless of the behaviour of $M_n$, by the Kato inequality we see that
\[
	-\Delta |v_1| \leq 0, \quad |v_1| \geq 0 \quad \text{and $|v_1| = 0$ in $\{x \cdot e_1 \leq 0\}$}.
\]
Letting $w_1(x) = |v_1(x - 2 (x \cdot e_1) e_1)|$ and applying Lemma \ref{lem:liouville2}, we find the desired contradiction.
\end{proof}
Let us observe that in the previous proof, we did not use the variational structure of the system. Now that we have established that the boundary of $\Omega$ is far from the points $x_n$ and $y_n$, the proof runs as in the standard case.

\section{Properties of the limit profiles}\label{sec: properties}


We shall now improve the regularity results so far obtained for the functions in the family $\{\mf{u}_\beta\}_\beta$ and, in particular, we aim at showing that, under a little more restrictive assumption on the nonlinearities $f_{i,\beta}$, any limit of the family (as $\beta \to \infty$) is an element of the class $\Gcal(\Omega)$. In order to verify the previous claim (and, as a consequence, Theorem \ref{thm: consequences}), we shall prove several intermediate results.

First, using the information that the functions $\{\mf{u}_\beta\}_\beta$ constitute a family which is uniformly bounded in the $\Ccal^{0,\alpha}$-norm, as a direct consequence of the Ascoli-Arzela compactness criterion, we can show that
\begin{lemma}\label{lem: cu and int 0}
Under the same assumptions of Theorem \ref{thm: holder bounds}, up to a subsequence we have that there exists a limiting configuration $\mf{u} \in H^1 \cap \Ccal(\Omega)$ such that
\[
	\mf{u}_\beta \to \mf{u} \qquad \text{strongly in $H^1 \cap \Ccal^{0,\alpha}(K)$, for all $\alpha\in (0,1)$}
\]
for any set $K \Subset \Omega$. Moreover
\begin{enumerate}
	\item the components of $\mf{u}$ are segregated in groups, that is, $u_i u_j \equiv 0$ in $\Omega$ for every $(i,j) \in \mathcal{K}_2$;
	\item for any $K \Subset \Omega$, we have 
	\[
		\beta \int_{K} |u_{i,\beta}|^{p+1} |u_{j,\beta}|^{p+1} \to 0 \qquad \text{ for every $(i,j) \in \mathcal{K}_2$;}
	\]
	\item for $ i\in I_h$, each component $u_i$ satisfies
	\[
		-\Delta u_i = f_i(x, \mf{u})  \qquad \text{ in } \left\{\sum_{j\in I_h} |u_j|>0\right\}.
	\]
\end{enumerate}
\end{lemma}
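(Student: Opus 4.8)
The plan is to follow the proof of Theorem B in \cite{nttv} and of Lemma \ref{lem 3.6 notateve}, the only genuinely new point being the bookkeeping forced by the group structure. First I would derive uniform a priori bounds. Fix $K \Subset \Omega$ and a cut-off $\varphi \in \Ccal^\infty_c(\Omega)$ with $0 \le \varphi \le 1$ and $\varphi \equiv 1$ on $K$. Testing the $i$-th equation in \eqref{eq:main_system} against $u_{i,\beta}\varphi^2$ and absorbing the gradient term by Young's inequality gives, using the uniform $L^\infty$ bound and assumption (H) (the sublinear case $0<p<1$ being harmless, since $\mf{u}_\beta$ is $L^\infty$-bounded on $K$),
\[
\int_{\supp\varphi}|\nabla u_{i,\beta}|^2\varphi^2 + \beta\int_{\supp\varphi}|u_{i,\beta}|^{p+1}\sum_{j\neq i}a_{ij}|u_{j,\beta}|^{p+1}\varphi^2 \leq C(K),
\]
hence a uniform $H^1_\loc$ bound and $\beta\int_K |u_{i,\beta}|^{p+1}|u_{j,\beta}|^{p+1}\leq C(K)$ for $(i,j)\in\mathcal K_2$; testing the Kato inequality $-\Delta|u_{i,\beta}|\leq |f_{i,\beta}| - \beta|u_{i,\beta}|^p\sum_{j\neq i}a_{ij}|u_{j,\beta}|^{p+1}$ against $\varphi$ and using this $H^1$ bound yields in the same way $\beta\int_K |u_{i,\beta}|^{p}|u_{j,\beta}|^{p+1}\leq C(K)$. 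Combining the uniform $\Ccal^{0,\alpha}$ estimate of Theorem \ref{thm: holder bounds} with Ascoli--Arzel\`a and a diagonal argument over an exhaustion of $\Omega$, up to a subsequence $\mf{u}_\beta\to\mf{u}$ in $\Ccal^{0,\alpha}_\loc(\Omega)$ for every $\alpha\in(0,1)$, with $\mf{u}\in H^1\cap\Ccal(\Omega)$ and $\mf{u}_\beta\rightharpoonup\mf{u}$ in $H^1_\loc(\Omega)$.

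Statement (1) then follows by dividing $\beta\int_K|u_{i,\beta}|^{p+1}|u_{j,\beta}|^{p+1}\leq C(K)$ by $\beta$ and letting $\beta\to\infty$: uniform convergence gives $\int_K|u_i|^{p+1}|u_j|^{p+1}=0$, so $u_iu_j\equiv 0$ almost everywhere on $K$, hence everywhere by continuity, for $(i,j)\in\mathcal K_2$. For (2) I would use the same splitting as in Lemma \ref{lem 3.6 notateve}: since $u_iu_j\equiv 0$ one has $K\subseteq (K\cap\{u_i=0\})\cup(K\cap\{u_j=0\})$, and on $K\cap\{u_i=0\}$ the uniform convergence gives $u_{i,\beta}\to 0$, so
\[
\beta\int_{K\cap\{u_i=0\}}|u_{i,\beta}|^{p+1}|u_{j,\beta}|^{p+1} \leq \|u_{i,\beta}\|_{L^\infty(K\cap\{u_i=0\})}\,\beta\int_K |u_{i,\beta}|^{p}|u_{j,\beta}|^{p+1} \longrightarrow 0,
\]
and symmetrically on $K\cap\{u_j=0\}$.

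For (3), fix $x_0$ with $\sum_{j\in I_h}|u_j(x_0)|>0$; then $u_{i'}(x_0)\neq 0$ for some $i'\in I_h$, so there are $\rho,\gamma>0$ with $|u_{i',\beta}|\geq\gamma$ on $B_\rho(x_0)$ for $\beta$ large. Given $i\in I_h$ and $k\notin I_h$, using $|u_{k,\beta}|^{p+1}\leq\gamma^{-(p+1)}|u_{i',\beta}|^{p+1}|u_{k,\beta}|^{p+1}$ on $B_\rho(x_0)$, the $L^\infty$ bound on $u_{i,\beta}$, and statement (2) (note $(i',k)\in\mathcal K_2$),
\[
\beta\int_{B_\rho(x_0)}|u_{i,\beta}|^{p}|u_{k,\beta}|^{p+1} \leq \frac{\|\mf{u}_\beta\|_\infty^{p}}{\gamma^{p+1}}\,\beta\int_{B_\rho(x_0)}|u_{i',\beta}|^{p+1}|u_{k,\beta}|^{p+1}\longrightarrow 0;
\]
since $a_{ij}=0$ for $j\in I_h$, this shows that the whole competition term in the $i$-th equation tends to $0$ in $L^1(B_\rho(x_0))$. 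As $f_{i,\beta}(\cdot,\mf{u}_\beta)\to f_i(\cdot,\mf{u})$ uniformly on $B_\rho(x_0)$ (using $f_{i,\beta}\to f_i$ in $\Ccal_\loc(\Omega\times\R^d)$ and the uniform convergence of $\mf{u}_\beta$), passing to the limit in the weak formulation of \eqref{eq:main_system} tested against $\psi\in\Ccal^\infty_c(B_\rho(x_0))$ gives $-\Delta u_i=f_i(x,\mf{u})$ in $\mathcal D'(B_\rho(x_0))$, and (3) follows since $x_0$ is arbitrary in the open set $\{\sum_{j\in I_h}|u_j|>0\}$. Finally, strong $H^1_\loc$ convergence is obtained exactly as in Lemma \ref{lem 3.6 notateve}: by Fatou one finds, for a.e. small $r$, that $\liminf_\beta\int_{\partial B_r(x_0)}|\nabla u_{i,\beta}|^2<\infty$, and testing the equation against $u_{i,\beta}-u_i$ on $B_r(x_0)$ one controls the boundary term by $\|u_{i,\beta}-u_i\|_{L^\infty(\partial B_r(x_0))}$ and the interior right-hand side by $\|u_{i,\beta}-u_i\|_{L^\infty(B_r(x_0))}$ times the above uniform bounds, whence $\int_{B_r(x_0)}|\nabla u_{i,\beta}|^2\to\int_{B_r(x_0)}|\nabla u_i|^2$; together with the weak convergence this gives strong convergence, and covering $K$ by finitely many such balls finishes the proof.

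I expect the main obstacle --- conceptually mild but technically the delicate point --- to be statement (3): in contrast with the purely competitive setting of \cite{nttv}, where $u_i$ satisfies its equation in $\{u_i>0\}$, here $u_i$ may vanish on part of the support of its own group, so one cannot use $u_{i,\beta}$ to absorb the interaction and must instead exploit a sibling component $u_{i'}$ with $i'\in I_h$ that stays bounded away from zero near $x_0$; keeping careful track throughout of which index pairs lie in $\mathcal K_1$ and which in $\mathcal K_2$ is precisely what makes the argument slightly more involved than in the classical case.
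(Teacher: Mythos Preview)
Your proof is correct and follows essentially the same approach as the paper, which itself merely refers back to Lemma~\ref{lem 3.6 notateve} and \cite[Theorem~1.4]{nttv} for the details. The only cosmetic difference is in point~(3): the paper (in the analogous Lemma~\ref{lem: end blow-up}-($iii$)) uses the lower bound on the whole sum $\sum_{j\in I_h}|u_{j,\beta}|$ to absorb $|u_{i,\beta}|^p$, whereas you single out one sibling $u_{i'}$ with $u_{i'}(x_0)\neq 0$; both arguments are equivalent.
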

\begin{proof}
Most of the details of the proof have already been encountered in the previous section: we point out also \cite[Theorem 1.4]{nttv} and Lemma \ref{lem 3.6 notateve} for similar computations.
\end{proof}

Next, under an additional assumption of the nonlinearity $f_{i,\beta}$, we shall show that the variational structure of the original system, in a sense, passes to the limit together with the functions $\{\mf{u}_\beta\}_\beta$. This strong property of the limiting function $\mf{u}$ is rigorously stated as the validity of the Pohozaev identity.
\begin{lemma}
Let $\mf{u}$ be in the limit class of $\{\mf{u}_\beta\}_\beta$. Let us assume that there exist $f_i \in \Ccal(\Omega \times \R^d)$, $i=1,\dots, k $, such that $f_{i,\beta} \to f_i$ in $\Ccal_\loc(\Omega \times \R^d)$. Then for every $x_0 \in \Omega$ and a.e.~$0<r<\dist(x_0,\partial \Omega)$ it holds
\begin{align*}
(2-N) \sum_{i=1}^d \int_{B_r(x_0)} |\nabla u_i|^2 &= r \sum_{i=1}^d \int_{\pa B_r(x_0)} \left( 2 (\pa_{\nu} u_i)^2 -|\nabla u_i|^2 \right) \\
& \qquad + 2 \sum_{i=1}^d \int_{B_r(x_0)} f_{i}(x,\mf{u}) \nabla u_i \cdot (x-x_0).
\end{align*}
\end{lemma}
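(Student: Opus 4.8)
The plan is to derive the Pohozaev--type (local Pohozaev) identity for $\mf u$ as the limit of the corresponding identities for $\mf u_\beta$, exploiting the strong $H^1_{\loc}$ convergence from Lemma~\ref{lem: cu and int 0} together with the vanishing of the interaction terms. First I would establish the Pohozaev identity at the level of $\beta$: since each $u_{i,\beta}$ is a smooth solution of \eqref{eq:main_system}, testing the equation against $\nabla u_{i,\beta}\cdot(x-x_0)$ and integrating by parts over $B_r(x_0)$ gives, for a.e.\ $r$,
\begin{align*}
(2-N)\int_{B_r(x_0)} |\nabla u_{i,\beta}|^2 &= r\int_{\pa B_r(x_0)}\left(2(\pa_\nu u_{i,\beta})^2 - |\nabla u_{i,\beta}|^2\right) + 2\int_{B_r(x_0)} f_{i,\beta}\,\nabla u_{i,\beta}\cdot(x-x_0)\\
&\quad - 2\beta\int_{B_r(x_0)} \Bigl(\mathop{\sum_{j\neq i}} a_{ij} u_{i,\beta}|u_{i,\beta}|^{p-1}|u_{j,\beta}|^{p+1}\Bigr)\nabla u_{i,\beta}\cdot(x-x_0).
\end{align*}
Here, when $p\ge 1$, $f_{i,\beta}=f_{i,\beta}(x)$, and when $0<p<1$ one replaces it by $f_{i,\beta}(x,\mf u_\beta)$; in both cases the term passes to the limit by the hypothesis $f_{i,\beta}\to f_i$ in $\Ccal_\loc$ and the uniform convergence $\mf u_\beta\to\mf u$.

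The crucial point is to handle the competition term. Summing over $i$ and using $a_{ij}=a_{ji}$, one rewrites
\[
\sum_{i=1}^d \beta\,\mathop{\sum_{j\neq i}} a_{ij} u_{i,\beta}|u_{i,\beta}|^{p-1}|u_{j,\beta}|^{p+1}\nabla u_{i,\beta}\cdot(x-x_0) = \frac{\beta}{p+1}\sum_{i<j} a_{ij}\,\nabla\bigl(|u_{i,\beta}|^{p+1}|u_{j,\beta}|^{p+1}\bigr)\cdot(x-x_0),
\]
so that the sum of the interaction contributions equals, after one further integration by parts,
\[
\frac{\beta}{p+1}\sum_{i<j} a_{ij}\left( N\int_{B_r(x_0)} |u_{i,\beta}|^{p+1}|u_{j,\beta}|^{p+1} - r\int_{\pa B_r(x_0)} |u_{i,\beta}|^{p+1}|u_{j,\beta}|^{p+1}\right).
\]
By Lemma~\ref{lem: cu and int 0}(2), the volume term $\beta\int_{B_r(x_0)}|u_{i,\beta}|^{p+1}|u_{j,\beta}|^{p+1}$ tends to $0$ for $(i,j)\in\mathcal K_2$, while for $(i,j)\in\mathcal K_1$ we have $a_{ij}=0$; hence the volume part vanishes in the limit. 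For the boundary term, a Fubini/mean-value argument (integrating the volume bound in $r$) shows that $\beta\int_{\pa B_r(x_0)}|u_{i,\beta}|^{p+1}|u_{j,\beta}|^{p+1}\to 0$ for a.e.\ $r$, up to a subsequence; this is the one technical subtlety and the main obstacle of the proof.

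Finally, for a.e.\ $r$ one has $u_{i,\beta}\to u_i$ strongly in $H^1(B_r(x_0))$ \emph{and} in $H^1(\pa B_r(x_0))$ (again using that the strong $H^1_{\loc}$ convergence forces, along a subsequence, strong $H^1$ convergence on a.e.\ sphere, via Fubini applied to $\int |\nabla u_{i,\beta}-\nabla u_i|^2$). Passing to the limit term by term in the $\beta$-level Pohozaev identity summed over $i$, the bulk and boundary gradient terms converge by strong $H^1$ convergence on $B_r(x_0)$ and $\pa B_r(x_0)$ respectively, the forcing terms converge by $\Ccal_\loc$ convergence of $f_{i,\beta}$, and the interaction terms vanish as explained. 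This yields exactly the claimed identity
\[
(2-N)\sum_{i=1}^d\int_{B_r(x_0)}|\nabla u_i|^2 = r\sum_{i=1}^d\int_{\pa B_r(x_0)}\bigl(2(\pa_\nu u_i)^2 - |\nabla u_i|^2\bigr) + 2\sum_{i=1}^d\int_{B_r(x_0)} f_i(x,\mf u)\,\nabla u_i\cdot(x-x_0)
\]
for a.e.\ $0<r<\dist(x_0,\pa\Omega)$, which is (G3). (The stated identity is the sum over $i$ of the rearranged Pohozaev relations; rearranging $2(\pa_\nu u_i)^2-|\nabla u_i|^2$ on $\pa B_r$ and using $r\,\pa_r\!\int_{\pa B_r}|\nabla u_i|^2$ recovers the symmetric form in the statement.)
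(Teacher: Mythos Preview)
Your approach is essentially the same as the paper's: derive the Pohozaev identity at level $\beta$ by testing against $\nabla u_{i,\beta}\cdot(x-x_0)$ and integrating by parts, sum over $i$ using $a_{ij}=a_{ji}$ so that the interaction term becomes a divergence and splits into a volume and a boundary integral of $\beta|u_{i,\beta}|^{p+1}|u_{j,\beta}|^{p+1}$, then pass to the limit using the strong $H^1_{\loc}$ convergence and the vanishing of the interaction from Lemma~\ref{lem: cu and int 0}(2). You have in fact been more explicit than the paper on two points it leaves implicit: the Fubini argument showing that the boundary interaction term vanishes for a.e.\ $r$, and the passage to the limit of the $\pa B_r$ gradient terms via strong $H^1$ convergence on a.e.\ sphere. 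Your final parenthetical about ``rearranging $2(\pa_\nu u_i)^2-|\nabla u_i|^2$'' is unnecessary---that is already the form appearing in the statement---but this does not affect the argument.
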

\begin{proof}
In order to prove the result, it is sufficient to prove the validity of similar identities of the original functions $\{\mf{u}_\beta\}_\beta$, and then exploit the strong convergence properties of the family to conclude. In particular, under the assumption of the lemma, multiplying the equation \eqref{eq:main_system} with $\nabla u_{i,\beta} \cdot (x-x_0)$ and integrating by parts over $B_r(x_0)$ (we recall once again that the function $\mf{u}_\beta$ is, by standard regularity argument, a $\mathcal{C}^{1,\alpha}$-solution of \eqref{eq:main_system} for every $0<\alpha<1$), we obtain
\begin{multline*}
	(2-N) \sum_{i=1}^d \int_{B_r(x_0)} |\nabla u_{i,\beta}|^2 = r \sum_{i=1}^d \int_{\pa B_r(x_0)} \left( 2 (\pa_{\nu} u_{i,\beta})^2 -|\nabla u_{i,\beta}|^2 \right)\\
	+ 2 \sum_{i=1}^d \int_{B_r(x_0)} f_{i,\beta}(x,\mf{u}) \nabla u_{i,\beta} \cdot (x-x_0) 	+ \int_{B_r(x_0)} \beta N \sum_{i \neq j} a_{ij} |u_{i,\beta}|^{p+1} |u_{j,\beta}|^{p+1} \\
	- r \int_{\partial B_r(x_0)} \beta \sum_{i \neq j} a_{ij} |u_{i,\beta}|^{p+1} |u_{j,\beta}|^{p+1}.
\end{multline*}
The conclusion now follows from Lemma \ref{lem: cu and int 0}-(2).
\end{proof}

A deep consequence of the variational structure of the limiting system is expressed by the Almgren's monotonicity formula. From now on, we assume that the limiting profile $\mf{u}$ is non trivial, since otherwise all the following results are tautologically true.

Similarly to the previous section, we define, for $x_0\in\Omega$ and $r>0$ small,
\[
E(x_0,\mf{u},r)=\frac{1}{r^{N-2}}\sum_{i=1}^d\int_{B_r(x_0)} (|\nabla u_i|^2-f_i(x,\mf{u})u_i),\qquad H(x_0,\mf{u},r)=\frac{1}{r^{N-1}}\sum_{i=1}^d \int_{\partial B_r(x_0)} u_i^2
\]
and, whenever it makes sense, the Almgren's quotient by
\[
N(x_0,\mf{u},r)=\frac{E(x_0,\mf{u},r)}{H(x_0,\mf{u},r)}.
\]
We have
\begin{theorem}\label{thm:Almgren}
There exists $C>0$ for which the following holds: for every $\tilde \Omega\Subset \Omega$ there exists $\tilde r>0$ such that for every $x_0\in \tilde \Omega$ and $r\in (0,\tilde r]$ we have $H(x_0,(u,v),r)\neq 0$, $N(x_0,(u,v),\cdot)$ is absolutely continuous function, and
$$
\frac{d}{dr}N(x_0,\mf u,r)\geq -2Cr(N(x_0,\mf u,r)+1).
$$
In particular, $e^{C r^2}(N(x_0,\mf u,r)+1)$ is a non decreasing function for $r\in (0,\tilde r]$ and the limit $N(x_0,\mf u,0^+):=\lim_{r\to 0^+} N(x_0,\mf u,r)$ exists and is finite. Also,
$$
\frac{d}{dr}\log(H(x_0,\mf u,r))=\frac{2}{r}N(x_0,\mf u,r)\qquad \forall r\in (0,\tilde r).
$$
\end{theorem}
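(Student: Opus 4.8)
The plan is to prove Theorem~\ref{thm:Almgren} by following the classical Almgren monotonicity scheme, adapting the computation already carried out for the blow-up sequence in Lemma~\ref{lem: derivatives} and Proposition~\ref{prop: consequence almgren}, but now working directly with the limit profile $\mf{u}$ and keeping track of the error coming from the reaction terms $f_i(x,\mf{u})$, which satisfy (L). First I would record the differentiation formula for $H$: for a.e.~$r$ small, using the strong $H^1_{\loc}$ convergence and the Pohozaev identity proved in the previous lemma,
\[
\frac{d}{dr}H(x_0,\mf{u},r)=\frac{2}{r^{N-1}}\sum_i\int_{\partial B_r(x_0)}u_i\,\partial_\nu u_i = \frac{2}{r}E(x_0,\mf{u},r),
\]
which is the analogue of the first identity in Lemma~\ref{lem: derivatives}; here one uses that $-\Delta u_i=f_i(x,\mf{u})$ in $\{\sum_{j\in I_h}|u_j|>0\}$ and that $u_i$ vanishes on the complement together with $|\nabla u_i|$ (Lipschitz regularity, Theorem~\ref{thm: consequences}(4) — or, if one wants to stay self-contained up to this point, the a.e.~vanishing of $\nabla u_i$ on $\{u_i=0\}$), so that the contribution of the measure $\Mcal_i$ to the integration by parts is zero. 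Next I would differentiate $E$: a Rellich--Pohozaev type computation gives
\[
\frac{d}{dr}E(x_0,\mf{u},r)=\frac{2}{r^{N-2}}\sum_i\int_{\partial B_r(x_0)}(\partial_\nu u_i)^2 + R(x_0,r),
\]
where the remainder $R$ collects the terms involving $f_i(x,\mf{u})$ and its gradient contracted with $(x-x_0)$; by (L) and the boundedness of $\mf{u}$ on $\tilde\Omega$ one bounds $|R(x_0,r)|\le C(E(x_0,\mf{u},r)+H(x_0,\mf{u},r))$ with $C$ depending only on the $L^\infty$ bound and the constant in (L), uniformly in $x_0\in\tilde\Omega$ and $r\le\tilde r$.

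Combining the two derivative formulas and using Cauchy--Schwarz on $\partial B_r(x_0)$ exactly as in Lemma~\ref{lem: derivatives}, one obtains
\[
\frac{d}{dr}\log N(x_0,\mf{u},r) \ge \frac{2}{r^{2N-3}H^2}\Bigl[\Bigl(\int_{\partial B_r}\textstyle\sum_i(\partial_\nu u_i)^2\Bigr)\Bigl(\int_{\partial B_r}\textstyle\sum_i u_i^2\Bigr)-\Bigl(\int_{\partial B_r}\textstyle\sum_i u_i\partial_\nu u_i\Bigr)^2\Bigr] - C r\Bigl(1+\frac1{N(x_0,\mf{u},r)}\Bigr),
\]
and the bracketed term is $\ge 0$; multiplying through by $N$ (after checking $N\ge 0$, or more precisely after absorbing the sign issue into the additive constant) yields $\frac{d}{dr}N(x_0,\mf{u},r)\ge -2Cr(N(x_0,\mf{u},r)+1)$, which is the claimed differential inequality. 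Integrating the inequality $\frac{d}{dr}\log(N+1)\ge -2Cr$ gives that $e^{Cr^2}(N(x_0,\mf{u},r)+1)$ is non-decreasing, hence the limit $N(x_0,\mf{u},0^+)$ exists and is finite; the formula $\frac{d}{dr}\log H=\frac{2}{r}N$ is then just the first derivative identity rewritten.

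The main preliminary obstacle — which has to be dispatched before any of the above makes sense — is showing that $H(x_0,\mf{u},r)\ne 0$ for all small $r$ and all $x_0\in\tilde\Omega$, with a threshold $\tilde r$ uniform in $x_0$. This I would handle as in Proposition~\ref{prop: consequence almgren}: if $H(x_0,\mf{u},\rho)=0$ for some $\rho$, then $H$ vanishes identically on a neighbourhood (using the derivative formula together with the perturbed monotonicity, in a bootstrap argument: $\frac{d}{dr}\log H=\frac{2}{r}N\le\frac{C'}{r}$ once $N$ is bounded, forcing $H$ to vanish on an interval if it vanishes at an interior point), hence $\mf{u}\equiv 0$ near $x_0$; one then propagates this by a connectedness/unique-continuation argument to contradict $\mf{u}\not\equiv 0$. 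The uniformity of $\tilde r$ comes from the fact that all the constants (the $L^\infty$ bound, the (L) constant, and the Poincaré-type constants used to bound $E$ in terms of $H$ on small balls) are uniform on $\tilde\Omega\Subset\Omega$. The delicate point in the differentiation step itself is justifying the integration by parts in the presence of the Radon measures $\Mcal_i$ concentrated on $\Gamma_{\mf{u}}$: this is where the Lipschitz regularity of the limit (and the resulting a.e.~vanishing of $\nabla u_i$ on the nodal set, together with the fact that $\Gamma_{\mf{u}}$ has vanishing $\Hh^{N-1}$-measure away from a lower-dimensional set) is essential, and I would invoke the Pohozaev identity of the previous lemma, valid for a.e.~$r$, rather than differentiating term by term, to sidestep the regularity subtleties.
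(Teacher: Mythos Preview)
Your outline is correct and is essentially the paper's approach: the paper's proof simply invokes \cite[Theorem 3.21]{rtt} and isolates the one inequality that needs to be checked, namely
\[
\frac{1}{r^{N}}\sum_{i}\int_{B_r(x_0)}|f_i(x,\mf u)\,u_i|\le C\bigl(E(x_0,\mf u,r)+H(x_0,\mf u,r)\bigr),
\]
obtained from (L) via $|f_i u_i|\le C\sum_{j\in I_h}u_j^2$ followed by the Poincar\'e inequality on $B_r$. Two small points worth tightening: (i) your remainder bound should carry an extra factor of $r$ (this is exactly what the Poincar\'e step gives, and is what produces the $-2Cr(N+1)$ rather than $-2C(N+1)$); saying only ``by (L) and the $L^\infty$ bound'' is not the mechanism the paper uses and would not yield an estimate in terms of $E+H$. (ii) Be careful with the circularity you flagged: Lipschitz regularity and the Hausdorff-dimension bound on $\Gamma_{\mf u}$ are proved \emph{after} Theorem~\ref{thm:Almgren}, so at this stage you must rely solely on the Pohozaev identity of the preceding lemma (obtained by passing to the limit from the smooth $\beta$-level) to bypass integration by parts against the measures $\Mcal_i$---which is exactly what you say at the end, so just make that the primary argument rather than a fallback.
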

\begin{proof} One can follow exactly the proof of Theorem 3.21 in \cite{rtt}, observing that $C>0$ is a constant such that
\[
\frac{1}{r^{N}}\sum_{i=1}^d \int_{B_r(x_0)} f_i(x,\mf{u}) \leq C (E(x_0,\mf{u},r)+H(x_0,\mf{u},r))
\]
for every $r>0$ small enough, $x_0\in \Omega$ (compare with Lemma 3.19 in \cite{rtt}).
Such inequality holds since, for each $i\in I_h$, by assumption (G2),
\begin{align*}
\frac{1}{r^{N}} \int_{B_r(x_0)}f_i(x,\mf{u}) &\leq \frac{C_1}{r^{N}}\sum_{j\in I_h} \int_{B_r(x_0)}  u_j^2\\
	&\leq C_1' \sum_{j\in I_h} \left( \frac{1}{r^{N-2}} \int_{B_r(x_0)} |\nabla u_j|^2\, dx+\frac{1}{r^{N-1}}\int_{\partial B_r(x_0)}u_j^2 \right)
\end{align*}
by the Poincar\'e inequality, and hence, summing up for every $i\in I_h$ and for $h=1,\ldots, m$,
\[
\frac{1}{r^{N}} \sum_{i=1}^d\int_{B_r(x_0)}f_i(x,\mf{u}) \leq \frac{C_2}{N-1} \sum_{i=1}^d \left( \frac{1}{r^{N-2}} \int_{B_r(x_0)} |\nabla u_i|^2\, dx+\frac{1}{r^{N-1}}\int_{\partial B_r(x_0)}u_i^2 \right).
\] 
Next we  observe that
\begin{equation*}
\begin{split}
\frac{1}{r^{N-2}} \sum_{i=1}^k \int_{B_r(x_0)}  |\nabla u_i|^2 =E(x_0,\mf u,r)+\frac{1}{r^{N-2}}\sum_{i=1}^k \int_{B_r(x_0)}f_i(x,\mf{u}) u_i.
\end{split}
\end{equation*}
Thus for $r$ small enough such that $\frac{C_2}{N-1}  r^2\leq 1/2$ the result follows, with $C=\frac{2C_2}{N-1}$. 
\end{proof}

%

We are now in a position to conclude with the last result of the present section: in the following proposition, we show that any segregated $H^1(\Omega)$ solution which belongs to $\Ccal^{0,\alpha}(\Omega)$ for any $\alpha \in (0,1)$ and also satisfies the Pohozaev identity, is actually more regular and belongs to $Lip(\Omega)$.

\begin{proposition}\label{prp: lip}
Let $\mf{u}=(u_1,\dots,u_d) \in H^1(\Omega, \R^d) \setminus \{\mf{0}\}$ be such that:
\begin{itemize}
\item $\mf{u} \in \Ccal^{0,\alpha}(\Omega)$ for any $\alpha \in (0,1)$, and such that $u_i u_j \equiv 0$ in $\Omega$ for every $(i,j) \in \mathcal{K}_2$;
\item for $ i\in I_h$, each component $u_i$ satisfies the compatibility condition
	\[
		-\Delta u_i = f_i(x, \mf{u})  \qquad \text{ in } \left\{\sum_{j\in I_h} |u_j|>0\right\},
	\]
where there exists $C>0$ such that
\[
	\sup_{i \in I_h} \sup_x  \left|\frac{f_i(x,\mf{s})}{\sum_{j \in I_h} |s_j|} \right|  \le C
\]
for every $\mf{s} \in [0,1]^N$, for every $h$;
\item for every $x_0 \in \Omega$ and $0<r<\dist(x_0,\partial \Omega)$ it holds
\begin{align*}
(2-N) \sum_{i=1}^d \int_{B_r(x_0)} |\nabla u_i|^2 &= r \sum_{i=1}^d \int_{\pa B_r(x_0)} \left( 2 (\pa_{\nu} u_i)^2 -|\nabla u_i|^2 \right) \\
& \qquad + 2 \sum_{i=1}^d \int_{B_r(x_0)} f_{i}(x,\mf{u}) \nabla u_i \cdot (x-x_0).
\end{align*}
\end{itemize}
Then $\mf{u} \in Lip(\Omega)$.
\end{proposition}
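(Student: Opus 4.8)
The strategy is the standard one for proving optimal Lipschitz regularity for segregated configurations via an Almgren-type argument: I will show that the $\Ccal^{0,\alpha}$-bound can be upgraded to $\Ccal^{0,1}$ by a blow-up/contradiction scheme that uses the monotonicity of the Almgren frequency. More precisely, suppose $\mf{u}$ were not locally Lipschitz; then there are points $x_n \to x_0 \in \overline{\Omega'}$ (for some $\Omega' \Subset \Omega$) at which the (local) Lipschitz seminorm of $\mf{u}$ blows up. Rescaling, I set $\mf{w}_n(x) := \mf{u}(x_n + r_n x)/L_n$, where $L_n$ is chosen so that the blow-up sequence has a normalized ``Lipschitz deficit'' on $B_1$ and $r_n$ so that $L_n/r_n$ captures the blow-up rate. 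Up to a subsequence, $\mf{w}_n \to \mf{w}$ in $\Ccal^{0,\gamma}_\loc$ for every $\gamma<1$ and in $H^1_\loc$, $\mf{w}$ is globally Hölder, nonconstant, still segregated in groups, and each $w_i$ is harmonic in $\{\sum_{j\in I_h}|w_j|>0\}$ (the reaction term $f_i(x,\mf u)$ scales away since it is controlled by $\sum_{j\in I_h}|u_j|$, which is $O(L_n)$, producing a right-hand side of order $L_n r_n^2 / L_n \to 0$ after the further factor hidden in the choice of $r_n$). The key point is that $\mf{w}$ satisfies the Pohozaev identity of Definition (G3) with zero forcing term, hence lies in the (nonnegative part can be arranged, or signed version of the) segregated class and the Almgren frequency $N_\infty(x_0,\mf{w},\cdot)$ is well-defined, monotone nondecreasing, with $\frac{d}{dr}\log H_\infty = \frac{2}{r}N_\infty$, exactly as in Proposition~\ref{prop: consequence almgren} / Theorem~\ref{thm:Almgren} in the $f\equiv 0$ case.

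\textbf{Extracting the contradiction.} Having the frequency function at hand, I localize at a zero $x_0$ of all components of $\mf{w}$ lying on the group interface (such a point exists, because otherwise some $w_i$ would be a nonconstant globally Hölder harmonic function in $\R^N$, contradicting the Liouville theorem, Corollary~\ref{lem:liouville1}). The normalization of the blow-up forces $N_\infty(x_0,\mf{w},0^+)\le 1$: indeed $H_\infty(x_0,\mf w,r)\ge c\,r^{2N_\infty(x_0,\mf w,0^+)+2\cdot 0}$ growth from below at small scales would be incompatible with $\mf{w}$ being the limit of sequences whose rescaled Lipschitz deficit on $B_1$ is bounded, if $N_\infty<1$; more precisely, the blow-up is designed so that $\sup_{B_r}|\mf{w}| / r$ does not vanish as $r\to 0$, which together with $N_\infty(x_0,\mf w,0^+)=\alpha'$ for some $\alpha'$ and the doubling estimate $H_\infty(x_0,\mf w,r)\sim r^{2\alpha'}$ forces $\alpha'\le 1$. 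On the other hand, the monotonicity of $N_\infty$ combined with the doubling property propagated from the original sequence $\{\mf u_\beta\}$ through Theorem~\ref{thm: holder bounds} forces $N_\infty(x_0,\mf w,0^+)\ge 1$: this is the usual lower semicontinuity of the frequency under blow-up together with the fact that at any interface point of a segregated harmonic configuration the frequency is at least $1$ (the homogeneous degree-$1$ harmonic polynomials, split across a hyperplane, give the minimal-energy tangent profiles, as in the proof of Theorem~\ref{thm: holder bounds}). Hence $N_\infty(x_0,\mf w,r)\equiv 1$ on an interval, so by Proposition~\ref{prop: consequence almgren} (applied in the present $f\equiv 0$ setting) $\mf w$ is $1$-homogeneous around $x_0$; its nodal set $\{w_1=0\}$ is then a cone, in fact an affine subspace, and repeating the dichotomy at the end of Subsection~``Conclusion of the proof of the uniform Hölder bounds'' one concludes $w_1$ is either a signed harmonic function on a half-space (extend oddly, contradict Corollary~\ref{lem:liouville1}) or harmonic across a set of zero capacity (hence a nonconstant Hölder harmonic function on $\R^N$, again a contradiction). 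Either way, $\mf w$ must be constant, contradicting the normalization.

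\textbf{The main obstacle.} The delicate part is making the blow-up \emph{rate} selection rigorous so that the limiting $\mf{w}$ genuinely inherits \emph{both} a normalized nonconstant profile \emph{and} the validity of the Pohozaev/Almgren machinery with vanishing forcing — in particular, checking that $f_i(x,\mf u)$ truly scales out under the hypothesis $|f_i(x,\mf s)| \le C\sum_{j\in I_h}|s_j|$, which is exactly where (L) (rather than merely (H)) is used: it guarantees $|f_i(x_n+r_n x,\mf u(x_n+r_n x))| \le C\sum_{j\in I_h}|u_j(x_n+r_n x)| = O(L_n)$ so that after dividing by $L_n$ and multiplying by $r_n^2$ the forcing term in the rescaled equation is $O(r_n^2)\to 0$, and it propagates through the Pohozaev identity in the form one gets from the displayed identity in the statement. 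A second, more technical point is the lower bound $N_\infty(x_0,\mf w,0^+)\ge 1$: this requires knowing that blow-ups of $\mf u$ at arbitrary points have frequency bounded below by $1$, which in turn follows from the already-established fact (end of Section~\ref{sec: bounds}) that blow-ups of the \emph{$\beta$-family} at any scale have frequency at least $1$ because the only admissible tangent maps are degree-$\ge 1$ harmonic functions; transferring this to $\mf u$ uses the strong $H^1_\loc$ convergence in Lemma~\ref{lem: cu and int 0} and the continuity of the frequency under such convergence. Once these two ingredients are in place, the rest is a routine assembly of the Liouville theorems and the homogeneity-of-minimal-frequency-profiles argument already used verbatim in Subsection~\ref{fino al bordo} and the conclusion of Section~\ref{sec: bounds}.
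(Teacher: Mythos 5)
Your proposal takes a blow-up/contradiction route aimed at producing a nonconstant limiting profile and then killing it with a Liouville theorem. The paper's proof is different: it characterizes Lipschitz continuity through the uniform boundedness of the Morrey quotient $r^{-N}\sum_i\int_{B_r(x_0)}|\nabla u_i|^2$, bounds that quotient at nodal points by integrating $\frac{d}{dr}\log(H/r^2)=\frac{2}{r}(N-1)\ge \frac{4}{r}(e^{-Cr^2}-1)$ (which uses the frequency lower bound $N(x,\mf u,0^+)\ge 1$ on $\Gamma_{\mf u}$ and the uniform upper bound of Lemma \ref{lem: bound Almgren}), and then disposes of points off the nodal set by Calder\'on--Zygmund estimates in the ball $B_{\rho_n}(x_n)$ where a single group solves the equation, reducing the last case ($r_n\ll\rho_n\to0$) to \cite[Theorem 8.3, case II]{ctvIndiana}. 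No Liouville theorem at the critical exponent is ever invoked.

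The difference matters because your final step contains a genuine gap. After forcing $N_\infty(x_0,\mf w,\cdot)\equiv 1$ you obtain a $1$-homogeneous segregated harmonic profile and claim to conclude by "repeating the dichotomy at the end of the H\"older-bound proof": odd reflection across the hyperplane $\{w_1=0\}$ plus Corollary \ref{lem:liouville1}. But Corollary \ref{lem:liouville1} holds only for $\alpha\in(0,1)$: the odd reflection of $\gamma(x\cdot\nu)^+$ is the linear function $\gamma(x\cdot\nu)$, which is harmonic, globally Lipschitz, and nonconstant, so no contradiction arises. Indeed the two-plane profile $(\gamma(x\cdot\nu)^+,\gamma(x\cdot\nu)^-)$ is exactly the admissible blow-up limit at regular free-boundary points (Proposition \ref{prop:blowup_halfspace}), so a nonconstant degree-one limit is not absurd --- it is the generic picture. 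This is precisely the obstruction that makes \emph{uniform} Lipschitz bounds for the $\beta$-family a hard open problem in this setting (as the authors note, referring to \cite{SoZi} for the pairwise case), and it is why the paper proves Lipschitz continuity of the \emph{limit} profile by the Morrey-quotient argument instead. Your identification of hypothesis (L) as the reason the forcing term scales away is correct, but it is not where the difficulty lies; as written, the contradiction you aim for does not exist, and the argument cannot be closed along these lines without importing the substantially more delicate machinery of \cite{SoZi}.
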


The proof is based on the simple observation that a function $\mf{u} \in H^1(\Omega)$ is locally Lipschtiz continuous if and only if for any $K \Subset \Omega$ there exists a constant $C>0$ and a radius $0 < \bar r < \dist(K,\partial \Omega)$ such that for any $x_0 \in K$ and $0<r< \bar r$ it holds
\begin{equation}\label{eqn: morrey}
	\frac{1}{r^N} \sum_{i=1}^{d} \int_{B_r(x_0)} |\nabla u_i|^2 \leq C.
\end{equation}
In order to show that the previous inequality is true, we proceed with several steps in the same way as \cite[Section 4]{nttv}: we refer to that paper for the omitted details in the proofs. First, we recall that
\[
	\Gamma_\mf{u} := \{x \in \Omega : \mf{u}(x) = 0 \}.
\]
Let $K \Subset \Omega$ be a fixed subset of $\Omega$ and let $R = \min( \tilde r, \dist(K, \partial \Omega))$, where $\tilde r$ is the radius introduced in Theorem \ref{thm:Almgren}. Reasoning exactly as in \cite[Corollaries 2.6, 2.7 and 2.8]{tt}, we can show the following.

\begin{lemma}\label{lem: bound Almgren}
On the previous assumptions:
\begin{itemize}
	\item the map $\Omega \to \R$, $x\mapsto N(x,\mf u,0^+)$ is upper semi-continuous;
	\item the set $\Gamma_\mf{u}$ has empty interior. Moreover
\[
	\lim_{r \to 0^+} N(x,\mf{u},r) \geq 1 \qquad \forall x \in \Gamma_\mf{u};
\]
	\item there exists a constant $C>0$ such that
\[
	N(x,\mf{u},r) \leq C \qquad \forall x \in K, 0 < r < R.
\]
\end{itemize}
\end{lemma}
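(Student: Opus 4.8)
The three assertions are, in the present group-segregated and possibly sign-changing setting, the counterparts of \cite[Corollaries 2.6--2.8]{tt}, and the plan is to follow that scheme, everything resting on Theorem \ref{thm:Almgren} and on the uniformity of its threshold $\tilde r$ and constant $C$ over a fixed $\tilde\Omega\Subset\Omega$. So fix once and for all an open set with $K\Subset\tilde\Omega\Subset\Omega$ and let $\tilde r,C$ be as in Theorem \ref{thm:Almgren} for this $\tilde\Omega$.

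For the upper semi-continuity, the key observation is that Theorem \ref{thm:Almgren} gives that $r\mapsto e^{Cr^2}\bigl(N(x_0,\mf u,r)+1\bigr)$ is non-decreasing on $(0,\tilde r]$, so that
\[
N(x_0,\mf u,0^+)+1=\inf_{0<r\le\tilde r}\ e^{Cr^2}\bigl(N(x_0,\mf u,r)+1\bigr).
\]
For each fixed $r\in(0,\tilde r]$ the maps $x_0\mapsto H(x_0,\mf u,r)$ and $x_0\mapsto E(x_0,\mf u,r)$ are continuous on $\tilde\Omega$ (after a translation these are integrals over a fixed ball and sphere of the quantities built from $\mf u\in H^1\cap\Ccal(\Omega)$ and from $f_i(\cdot,\mf u)$), and $H(x_0,\mf u,r)\neq0$ by Theorem \ref{thm:Almgren}; hence $x_0\mapsto N(x_0,\mf u,r)$ is continuous on $\tilde\Omega$. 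An infimum of a family of continuous functions is upper semi-continuous, which yields the first claim.

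For the second bullet: if $\Gamma_{\mf u}$ had non-empty interior it would contain a ball $B_\rho(x_0)$ with $x_0\in\tilde\Omega$, and then $H(x_0,\mf u,r)=0$ for $0<r<\min(\rho,\tilde r)$, contradicting Theorem \ref{thm:Almgren}; so $\Gamma_{\mf u}$ has empty interior. For $x_0\in\Gamma_{\mf u}$ set $\gamma:=N(x_0,\mf u,0^+)$. The almost-monotonicity of $e^{Cr^2}(N(x_0,\mf u,r)+1)$ forces $N(x_0,\mf u,r)\le\gamma+\eps$ for $r$ small (any $\eps>0$), so integrating $\frac{d}{dr}\log H(x_0,\mf u,r)=\frac2r N(x_0,\mf u,r)$ gives $H(x_0,\mf u,r)\ge c_\eps\, r^{2(\gamma+\eps)}$ for $r$ small; on the other hand, since $u_i(x_0)=0$ and $\mf u\in\Ccal^{0,\alpha}(\Omega)$ for every $\alpha\in(0,1)$, we have $H(x_0,\mf u,r)\le C_\alpha\, r^{2\alpha}$. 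Comparing the two bounds as $r\to0$ forces $\gamma+\eps\ge\alpha$ for all $\eps>0$ and all $\alpha\in(0,1)$, hence $\gamma\ge1$.

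For the third bullet, set $R:=\min(\tilde r,\dist(K,\partial\Omega))$. By the monotonicity in Theorem \ref{thm:Almgren}, $N(x,\mf u,r)+1\le e^{CR^2}\bigl(N(x,\mf u,R)+1\bigr)$ for all $x\in K$ and $0<r\le R$, so it suffices to bound $N(x,\mf u,R)$ uniformly in $x\in K$. The numerator $E(x,\mf u,R)$ is bounded above uniformly in $x\in K$ because $\mf u\in H^1_{\loc}(\Omega)$ and $|f_i(\cdot,\mf u)u_i|\le C\sum_{j\in I_h}|u_j|\,|u_i|$ is bounded on a neighbourhood of $K$; the denominator $x\mapsto H(x,\mf u,R)$ is continuous and strictly positive on the compact set $\overline K\subset\tilde\Omega$ (positivity being precisely the content of Theorem \ref{thm:Almgren}, as $R\le\tilde r$), hence bounded below by a positive constant, and the quotient is therefore uniformly bounded. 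The one point requiring care beyond \cite{tt} — already prepared in Theorem \ref{thm:Almgren} through the Poincar\'e-type estimate $r^{-N}\sum_{i=1}^d\int_{B_r(x_0)}f_i(x,\mf u)\le C\bigl(E(x_0,\mf u,r)+H(x_0,\mf u,r)\bigr)$ — is that $f_i$ now depends on the full vector $\mf u$ and that components may change sign inside a group; granted that, the genuinely new manipulation here is the use, at a zero of $\mf u$, of the whole scale $\alpha\uparrow1$ of H\"older exponents to raise the frequency lower bound to $1$.
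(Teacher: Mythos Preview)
Your proof is correct and follows precisely the scheme the paper invokes (the reference to \cite[Corollaries 2.6--2.8]{tt}): upper semi-continuity from the representation of $N(\cdot,\mf u,0^+)+1$ as an infimum of continuous functions via the almost-monotonicity of $e^{Cr^2}(N+1)$; empty interior and the lower bound on the frequency from the doubling/growth estimates on $H$; and the uniform upper bound from monotonicity plus continuity and positivity of $H(\cdot,\mf u,R)$ on $\overline K$. The one genuine adaptation with respect to \cite{tt} --- replacing the Lipschitz bound $H\le Cr^2$ (available in the class $\Gcal$ of \cite{tt}) by the family of H\"older bounds $H\le C_\alpha r^{2\alpha}$ for every $\alpha<1$ --- is exactly what is needed here, since at this stage of Proposition~\ref{prp: lip} Lipschitz continuity has not yet been established; you identify and carry this out correctly.
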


We have
\begin{lemma}\label{lem: morrey fb}
There exists a constant $C>0$ such that
\[
	\frac{1}{r^N} \sum_{i=1}^{d} \int_{B_r(x_0)} |\nabla u_i|^2 \leq C \qquad \forall x_0\in K \cap \Gamma_{\mf{u}}, 0 < r < R.
\]
\end{lemma}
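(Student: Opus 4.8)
The plan is to exploit the Almgren monotonicity formula from Theorem~\ref{thm:Almgren} together with the uniform bound on the frequency function furnished by Lemma~\ref{lem: bound Almgren}. Fix $x_0\in K\cap\Gamma_{\mf u}$ and write $H(r)=H(x_0,\mf u,r)$, $N(r)=N(x_0,\mf u,r)$. From Lemma~\ref{lem: bound Almgren} we have $N(x_0,\mf u,r)\le C$ for all $0<r<R$, with $C$ independent of $x_0\in K$. The key differential identity is
\[
\frac{d}{dr}\log H(x_0,\mf u,r)=\frac{2}{r}N(x_0,\mf u,r)\le \frac{2C}{r},
\]
which, integrated between $r$ and $R$, yields a doubling-type estimate: $H(x_0,\mf u,r)\le C_1 r^{2C}$ for $0<r<R$, where $C_1$ depends only on $\sup_{x_0\in K}H(x_0,\mf u,R)$, which is finite because $\mf u\in H^1_{\loc}$ and by continuity of $H$ in $x_0$. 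Actually for the Morrey estimate we need a bound on $E$, so I would rather argue as follows: since $\mf u(x_0)=0$ and $\mf u\in \Ccal^{0,\alpha}$ for \emph{every} $\alpha\in(0,1)$, one has $H(x_0,\mf u,r)\le C r^{2\alpha}$ for each such $\alpha$; combining with the lower bound $N(x_0,\mf u,r)\ge 1$ valid on $\Gamma_{\mf u}$ and the monotonicity, this forces suitable control from both sides.

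More precisely, the heart of the argument is to bound $E(x_0,\mf u,r)=N(r)H(r)$. By Lemma~\ref{lem: bound Almgren}, $N(r)\le C$, so it suffices to bound $H(r)$ by $C r^{N-?}$ in the right normalization; unraveling the $r^{1-N}$ weight in $H$ and the $r^{2-N}$ weight in $E$, the claimed Morrey bound $r^{-N}\sum_i\int_{B_r(x_0)}|\nabla u_i|^2\le C$ is equivalent to
\[
\frac{1}{r^{N-2}}\sum_{i=1}^d\int_{B_r(x_0)}|\nabla u_i|^2\le C r^{2},
\]
i.e. to $E(x_0,\mf u,r)+ (\text{lower-order term from }f_i)\le C r^2$. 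Now from $\frac{d}{dr}\log H=\frac{2}{r}N\le\frac{2C}{r}$ integrated \emph{downward} from any fixed $\rho\in(0,R)$, together with the $\Ccal^{0,\alpha}$ bound giving $H(\rho)\le C\rho^{2\alpha}$ for $\alpha$ close to $1$, one deduces $H(x_0,\mf u,r)\le C r^{2}$ for $r$ small (choosing $\alpha$ appropriately and absorbing constants uniformly in $x_0\in K$, using again the boundedness of $N$ to transfer the estimate across scales). Then $E(x_0,\mf u,r)=N(r)H(r)\le C\cdot C r^{2}$, and together with the bound on $\frac{1}{r^N}\sum_i\int_{B_r(x_0)}f_i(x,\mf u)u_i$ coming from assumption (L) and the Poincar\'e inequality (exactly the estimate proved inside Theorem~\ref{thm:Almgren}), we obtain $\frac{1}{r^{N-2}}\sum_i\int_{B_r(x_0)}|\nabla u_i|^2\le C r^2$, which is the desired inequality after dividing by $r^2$.

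The main obstacle I anticipate is making all constants \emph{uniform in $x_0\in K\cap\Gamma_{\mf u}$}: the radius $\tilde r$ from Theorem~\ref{thm:Almgren} is already uniform, and the frequency bound from Lemma~\ref{lem: bound Almgren} is uniform, but one must check that the ``seed'' quantity $H(x_0,\mf u,R)$ (or the $\Ccal^{0,\alpha}$ constant used at a fixed scale) is controlled uniformly — this follows from the continuity of $x_0\mapsto H(x_0,\mf u,R)$ and compactness of $\overline K$, plus the fact that $\mf u\in\Ccal^{0,\alpha}(\Omega)$ with a norm that is finite on $\overline{K'}$ for $K\Subset K'\Subset\Omega$. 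A secondary technical point is handling the lower-order reaction term: one uses exactly the inequality $\frac{1}{r^N}\sum_i\int_{B_r(x_0)}f_i(x,\mf u)\le C(E(x_0,\mf u,r)+H(x_0,\mf u,r))$ established in the proof of Theorem~\ref{thm:Almgren}, together with $|f_i(x,\mf u)u_i|\le C\big(\sum_{j\in I_h}|u_j|\big)|u_i|$ from (L), to absorb it into the leading term. This is the same scheme as in \cite[Section 4]{nttv}, and the grouping structure only enters through the already-proven frequency estimates, so no genuinely new difficulty arises here beyond bookkeeping.
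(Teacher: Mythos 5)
Your overall architecture is the same as the paper's: combine the Almgren monotonicity of Theorem~\ref{thm:Almgren} with the frequency bounds of Lemma~\ref{lem: bound Almgren}, deduce a quadratic decay of $H(x_0,\mf{u},r)$, convert it into a bound on $E$ via $N\le C$, and absorb the reaction term with the Poincar\'e inequality. However, the step where you actually derive $H(x_0,\mf{u},r)\le Cr^2$ is carried out with the inequality pointing the wrong way. The identity $\frac{d}{dr}\log H=\frac{2}{r}N$ combined with the \emph{upper} bound $N\le C$ gives, upon integration over $(r,\rho)$, the estimate $H(\rho)\le H(r)(\rho/r)^{2C}$, i.e.\ a \emph{lower} bound on $H(r)$ in terms of $H(\rho)$; it cannot produce the upper bound you need. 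What is required (and what the paper uses) is the \emph{lower} bound on the frequency at nodal points: from $N(x_0,\mf{u},0^+)\ge 1$ on $\Gamma_{\mf{u}}$ and the almost-monotonicity of $e^{Cr^2}(N+1)$ one gets $N(x_0,\mf{u},r)\ge 2e^{-Cr^2}-1$, whence
\[
\frac{d}{dr}\log\frac{H(x_0,\mf{u},r)}{r^2}=\frac{2}{r}\bigl(N(x_0,\mf{u},r)-1\bigr)\ge \frac{4}{r}\bigl(e^{-Cr^2}-1\bigr)\ge -C'r,
\]
which integrates over $(r,R)$ to $H(x_0,\mf{u},r)/r^2\le C\,H(x_0,\mf{u},R)/R^2$, the right-hand side being uniformly bounded on $K$ by continuity of $\mf{u}$. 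Your proposed fallback via H\"older regularity does not close this gap either: $\mf{u}\in\Ccal^{0,\alpha}$ for every $\alpha<1$ only yields $H(x_0,\mf{u},r)\le C_\alpha r^{2\alpha}$ with constants that need not stay bounded as $\alpha\to1$, so it cannot deliver genuine $r^2$ decay --- indeed $H(r)\lesssim r^2$ at nodal points is essentially the Lipschitz-type information that Proposition~\ref{prp: lip} is in the process of establishing, so it cannot be presupposed. Once the quadratic decay of $H$ is obtained correctly, the remainder of your argument ($E\le CH$ from the frequency upper bound, and the Poincar\'e absorption of the $f_i u_i$ term as in Theorem~\ref{thm:Almgren}) coincides with the paper's and is fine.
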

\begin{proof}
This is a direct consequence of Theorem \ref{thm:Almgren} and Lemma \ref{lem: bound Almgren}. Indeed, as the Almgren quotient is bounded from below, we have
\[
	2 \leq e^{Cr^2} (N(x_0,\mf{u}, r) + 1) \implies N(x_0,\mf{u}, r) > 2e^{-Cr^2} - 1
\]
and, moreover,
\[
	\frac{d}{d r} \log \frac{H(x_0,\mf{u},r)}{r^2} = \frac{2}{r}\left(N(x_0,\mf{u},r)-1\right) \geq \frac{4}{r} \left(e^{-Cr^2} - 1\right).
\]
Integrating the previous inequality in $(r, R)$, for a generic $0 < r < R$ we find that there exists yet another constant $C>0$ such that
\[
	\frac{H(x_0,\mf{u},r)}{r^2} \leq C \frac{H(x_0,\mf{u},R)}{R^2} \qquad \text{for all $0 < r < R$.}
\]
We then exploit the boundedness of the Almgren quotient, from which we obtain
\[
	N(x_0,\mf{u}, r) < C \implies \frac{E(x,\mf{u},r) + H(x,\mf{u},r)}{r^2} \leq C \frac{H(x,\mf{u},r)}{r^2}  \leq C \frac{H(x_0,\mf{u},R)}{R^2}.
\]
Let us observe that, since $\mf{u}$ is continuous and $R$ is a fixed positive radius, the last term of the previous inequality is bounded uniformly from above. The conclusion now follows from an application of Poincar\'e inequality, see also Theorem \ref{thm:Almgren}.
\end{proof}

\begin{proof}[Conclusion of the proof of Proposition \ref{prp: lip}]
We are now in a position to conclude the uniform boundedness of the Morrey quotient \eqref{eqn: morrey}, and in turn, the Lipschitz continuity of the functions $\mf{u}$. To do so, we resort once again to a contradiction argument, and we assume that there exists a sequence $(x_n,r_n)$ so that $x_n \in K$ and $r_n > 0$, for which
\[
	\phi(x_n,r_n) = \frac{1}{r_n^N} \sum_{i=1}^{d} \int_{B_{r_n}(x_n)} |\nabla u_i|^2 \to +\infty.
\]
As $\mf{u} \in H^1(\Omega)$, it is easy to see that, necessarily, $r_n \to 0$. Let $x_0 =\lim x_n$. At first, we rule out two initial cases:
\begin{itemize}
	\item $x_0 \not \in \Gamma_\mf{u}$. Indeed, this is the content of of Lemma \ref{lem: morrey fb}, which would otherwise imply $\phi(x_n,r) < C$.
	\item it must be $\rho_n := \dist(x_n, \Gamma_\mf{u}) \to 0$. Otherwise, let $\bar \rho > 0$ be such that $\rho_n > \bar \rho$. For any fixed $n$, there would exists $h \in \{1, \dots, m\}$ such that for all $j \not \in I_h$, $u_j = 0$, while for $i \in I_h$
	\[
		-\Delta u_i = f_i(x,\mf{u}) \qquad \text{in $B_{\bar \rho} (x_n)$.}
	\]
	As a result, by the Calderon-Zygmund inequality (see \cite[Theorem 9.11]{GiTr}), we have the uniform control
	\[
		\| \mf{u} \|_{W^{2,q}(B_{\bar \rho / 2})} \leq C_q \left( \|\mf{u}\|_{L^q(B_{\bar \rho})} + \|\mf{f}\|_{L^q(B_{\bar \rho})}  \right)
	\]
	for a constant $C$ which is independent of $x_n$. Recalling the assumptions on $f_i$ and the boundedness of $\mf{u}$, we see that in the previous estimate we can take any power $1 < q < \infty$: in particular, for $q > N$, by the Sobolev embedding theorem the Morrey quotient $\phi(x_n,r)$ is bounded from above independently of $0 < r <  \bar \rho/ 2$.
\end{itemize}
We can easily exclude another possible behaviour of the sequence $(x_n,r_n)$. Letting $\bar x_n \in \Gamma_\mf{u}$ be any point of the free boundary such that $\rho_n = \dist(x_n, \bar x_n) = 2 \dist(x_n, \Gamma_u)$ we have
\begin{itemize}
	\item $r_n / \rho_n \to 0$, that is, $\rho_n$ can not be comparable with $r_n$. Indeed, if there exists $C > 0$ such that $r_n > C \rho_n$, then
	\[
		\frac{1}{r_n^N} \sum_{i=1}^{d} \int_{B_{r_n}(x_n)} |\nabla u_i|^2 \leq \frac{1}{(C \rho_n)^N} \sum_{i=1}^{d} \int_{B_{4\rho_n}(\bar x_n)} |\nabla u_i|^2 \leq \frac{C}{\rho_n^N} \sum_{i=1}^{d} \int_{B_{2\rho_n}(\bar x_n)} |\nabla u_i|^2.
	\]
	As a consequence, we have reduced this case to the estimate from above on points of the free boundary $\Gamma_\mf{u}$, thus leading to a contradiction.
\end{itemize}
To conclude the proof, we can reason as in \cite[Theorem 8.3, case II]{ctvIndiana} .
\end{proof}

\section{Regularity of the free boundary $\widetilde \Gamma_{\mf{u}}$ for $\mf{u}\in \mathcal{G}(\Omega)$}\label{sec: regularity}

We will divide the proof of the regularity of $\widetilde \Gamma_{\mf u}$ in two subsections: in the next one, we first present some general Boundary Harnack Principles, and then in Subsection \ref{subseq:ProofRegularity} we prove Theorem \ref{thm:regularity_G}. If we assume moreover that $u_i \geq 0$ for every $i$, they we can actually prove regularity results for the whole nodal set $\Gamma_{\mf{u}}$. Since the proof of this case presents very few differences with respect to the general proof, the case of nonnegative components will be treated in Remark \ref{rem:positive}.

\subsection{Boundary Harnack Principles on NTA and Reifenberg flat domains}\label{subseq:Harnack}

Let $\omega$ be a \emph{non-tangencially-accessible} (NTA) domain, a notion introduced in \cite{JerisonKenig}. We start by proving a Boundary Harnack Principle for solutions of
\begin{equation}\label{eq:-Delta_w=lambda w}
-\Delta u=a(x) u,\qquad a\in L^\infty(\omega),
\end{equation}
which will be a straightforward extension of the seminal paper of Jerison and Kenig \cite{JerisonKenig} (see also the book by Kenig \cite{Kenig}).

\begin{lemma}\label{lemma:boundaryHarnack_same_lambda}
Let $\omega$ be an NTA domain, $a\in L^\infty(\omega)$, and $x_0\in \partial \omega$. Then there exist $R_0,C>0$ (depending only on $a(x)$ and the NTA constants) such that for every $0<2r<R_0$ and for every $u,v$ solutions of \eqref{eq:-Delta_w=lambda w} in $\omega\cap B_{2r}(x_0)$ with $u=v=0$ on $\partial \omega\cap B_{2r}(x_0)$, and $u,v> 0$ in $\omega$, then $u/v$ can be extended up to $\partial \omega\cap B_{r}(x_0)$, and
\begin{equation}\label{eq:BHarnack}
C^{-1}\frac{v(y)}{u(y)}\leq \frac{v(x)}{u(x)}\leq C\frac{v(y)}{u(y)} \quad \forall x\in \overline \omega \cap B_r(x_0),\  y\in \omega \cap B_r(x_0).
\end{equation}
Moreover, there exists $\alpha\in (0,1)$ such that 
\[
\frac{v}{u}\text{ is H\"older continuous of order } \alpha \text{ on } \overline{\omega \cap B_r(x_0)}.
\]
More precisely,
\[
\left|\frac{v(x)}{u(x)}-\frac{v(y)}{u(y)}\right|\leq C \frac{v(z)}{u(z)} \frac{|x-y|^\alpha}{r^\alpha},\qquad \forall x,y \in \overline{\omega\cap B_r(x_0)},\ z\in \omega\cap  B_r(x_0).
\]
\end{lemma}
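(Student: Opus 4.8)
The plan is to reduce the statement for the variable-coefficient equation $-\Delta u = a(x) u$ to the classical boundary Harnack principle for \emph{nonnegative harmonic functions} on NTA domains, as established by Jerison and Kenig \cite{JerisonKenig} (see also \cite{Kenig}). The key device is a change of the governing operator via a multiplicative perturbation: since $a \in L^\infty(\omega)$, on each ball $B_{2r}(x_0)$ with $2r < R_0$ small (depending on $\|a\|_{L^\infty}$) the operator $-\Delta - a(x)$ is coercive, and one can produce a positive solution $\phi$ of $-\Delta \phi = a(x)\phi$ in $\omega \cap B_{2r}(x_0)$ with $\phi = 1$ on $\partial(\omega \cap B_{2r}(x_0))$ — or, more conveniently, work directly with the given $u$ and $v$. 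Writing $w := v/u$, a direct computation shows that $w$ solves the divergence-form equation $-\operatorname{div}(u^2 \nabla w) = 0$ in $\omega \cap B_{2r}(x_0)$, because the zeroth-order terms cancel: $-\operatorname{div}(u^2\nabla(v/u)) = -u\Delta v + v \Delta u = -a u v + a u v = 0$. Thus $w$ is a positive solution of a uniformly elliptic equation in divergence form whose coefficient $u^2$ is, by the interior Harnack inequality and the Carleson estimate for $u$, comparable on Whitney cubes; this is exactly the setting in which the De Giorgi--Nash--Moser theory plus the geometry of NTA domains yields both the two-sided bound \eqref{eq:BHarnack} and the boundary Hölder continuity of $w$.

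The steps, in order, would be: (i) Fix $R_0$ so that for $2r < R_0$ the maximum principle and Harnack inequality for $-\Delta - a$ hold on $\omega \cap B_{2r}(x_0)$; in particular positive solutions satisfy the interior Harnack inequality with constants depending only on $\|a\|_{L^\infty}$ and $N$, and the Carleson-type estimate near the boundary holds (here one invokes the NTA structure to reach boundary points through Harnack chains, exactly as in \cite{JerisonKenig}). (ii) Establish the Carleson estimate: if $u > 0$ solves the equation in $\omega \cap B_{2r}(x_0)$ and vanishes on $\partial\omega \cap B_{2r}(x_0)$, then $\sup_{\omega \cap B_{r}(x_0)} u \leq C\, u(A_r(x_0))$ where $A_r(x_0)$ is a non-tangential corkscrew point at scale $r$. (iii) Combine the Carleson estimate applied to $u$ and to $v$ with the interior Harnack inequality to obtain the global comparison \eqref{eq:BHarnack}: both ratios $v(x)/u(x)$ and $v(y)/u(y)$ are controlled, up and down, by $v(A_r(x_0))/u(A_r(x_0))$. (iv) Upgrade the $L^\infty$ comparison to Hölder continuity of $w = v/u$ up to the boundary: using that $w$ solves $-\operatorname{div}(u^2\nabla w) = 0$ and that, by the comparison just proven, the oscillation of $w$ on $\omega \cap B_\rho(x_0)$ decays geometrically in $\rho$ (a standard oscillation-decay argument combining the boundary comparison with the interior Hölder estimate of De Giorgi--Nash applied to the divergence-form equation for $w$), one deduces $\operatorname{osc}_{\omega\cap B_\rho(x_0)} w \leq C (\rho/r)^\alpha\, w(A_r(x_0))$ for some $\alpha \in (0,1)$, which is the quantitative last display of the statement.

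The main obstacle — and the place where care is needed — is step (iv), the boundary Hölder continuity of the quotient. The two-sided bound \eqref{eq:BHarnack} alone only gives that $w$ is bounded above and below near $\partial\omega$; to get decay of oscillation one must run the iteration carefully, applying the comparison principle not just to the pair $(u,v)$ but to the pairs $(u, v - c\,u)$ and $(c'\,u - v, u)$ for suitable constants $c, c'$ chosen at each dyadic scale so that the new functions are again nonnegative solutions vanishing on the boundary, and then invoking the Carleson estimate again. This is precisely the scheme of \cite{JerisonKenig}; the only genuinely new point here is checking that every ingredient (interior Harnack, Carleson estimate, corkscrew/Harnack-chain arguments) is stable under the zeroth-order perturbation $a(x)u$ for $2r < R_0$, which is routine but must be stated, since the constants $R_0$ and $C$ are asserted to depend only on $a$ and the NTA constants. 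A secondary subtlety is the measurability/regularity needed to justify the identity $-\operatorname{div}(u^2\nabla(v/u)) = 0$: since $u, v$ are $\mathcal{C}^{1,\alpha}$ in the interior (elliptic regularity, $a \in L^\infty$ giving $W^{2,q}_{\loc}$ hence $\mathcal{C}^{1,\gamma}_{\loc}$) and $u > 0$ in $\omega$, the computation is legitimate on every compact subset of $\omega$, and one passes to the boundary via the estimates above rather than by any regularity of $w$ up to $\partial\omega$ a priori.
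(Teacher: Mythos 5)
The paper's proof is a two-line reduction that you came within a hair of and then set aside: it takes a positive solution $\varphi_0$ of $-\Delta \varphi_0 = a(x)\varphi_0$ on the \emph{full ball} $B_{2R_0}(x_0)$ (which exists for $R_0$ small, since the first Dirichlet eigenvalue of $-\Delta - a$ on small balls is positive), so that $\varphi_0$ is bounded above and below by positive constants on $B_{2r}(x_0)$. Then $u/\varphi_0$ and $v/\varphi_0$ both solve $\div(\varphi_0^2\nabla(\,\cdot\,))=0$, a \emph{uniformly elliptic} divergence-form equation, and the whole statement follows from the Boundary Harnack Principle already available for such operators on NTA domains (Kenig, Lemma~1.3.7 and Corollary~1.3.9). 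You mentioned constructing an auxiliary positive solution $\phi$ but then chose to ``work directly with $u$ and $v$''; that choice is what forces you to re-derive the entire Jerison--Kenig machinery (Carleson estimate, Harnack chains, oscillation iteration) for the perturbed operator, rather than quoting it.

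Within your route there are two genuine gaps. First, in step (iii) the two-sided comparison \eqref{eq:BHarnack} does not follow from the Carleson estimate plus the interior Harnack inequality: for $x$ close to $\partial\omega$ both $u(x)$ and $v(x)$ tend to zero, and Carleson only gives the upper bounds $u(x)\le C\,u(A_r)$, $v(x)\le C\,v(A_r)$; to control $v(x)/u(x)$ one needs the matching \emph{lower} bound on $u$ near the boundary in terms of harmonic measure or the Green function (the ``box lemma'' / comparison-with-harmonic-measure step, which is the actual crux of the BHP in \cite{JerisonKenig}), and this is absent from your outline. Second, the equation $-\div(u^2\nabla(v/u))=0$ has a coefficient $u^2$ that degenerates on $\partial\omega\cap B_{2r}(x_0)$, so De Giorgi--Nash--Moser does not apply up to the boundary; comparability of $u^2$ on Whitney cubes puts you in the realm of degenerate/weighted elliptic theory, whose applicability (doubling of $u^2\,dx$, weighted Poincar\'e inequalities) itself rests on BHP-type estimates for $u$ — so as stated this step is close to circular. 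Both difficulties evaporate once the quotient is taken with respect to a solution that is positive on the whole ball, which is exactly the paper's device.
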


\begin{proof}
 Take $\varphi_0$ a solution of
\[
-\Delta \varphi_0=a(x) \varphi_0 \text{ in } B_{2R_0}(x_0),\qquad \varphi_0>0 \text{ on } B_{2R_0}(x_0)
\]
(which exists for $R_0>0$ sufficiently small, depending on $a(x)$). Then
\[
\div \left(\varphi_0^2\nabla \left(\frac{u}{\varphi_0}\right)\right)=\div\left(\varphi_0^2\nabla \left(\frac{v}{\varphi_0}\right)\right)=0 \qquad \text{ in } B_{2R_0}(x_0)
\]
and we can apply the classical Boundary Harnack Principle for divergence-type operators \cite[Lemma 1.3.7 \& Corollary 1.3.9]{Kenig} to $u/\varphi_0, v/\varphi_0$, which provides the result.
\end{proof}
Now the main focus will be to prove H\"older continuity up to the boundary for quotients of solutions to two problems of type \eqref{eq:-Delta_w=lambda w} with different potentials $a(x),b(x)$. For that, we will need to require extra assumptions for the solutions, and assume that $\omega$ is  a $(\delta,R)$--Reifenberg flat domain (see \cite{KenigToro}, or Proposition \ref{prop:Reifenberg} ahead to check the definition). We shall always take $\delta=\delta(N)>0$ small so that $\omega$ is also an NTA domain (\cite[Theorem 3.1]{KenigToro}). We will show the following.

\begin{proposition}\label{prop:u_i/u_1_alpha_Holder_generalcase}
Let $\omega$ be a $(\delta,R)$--Reifenberg flat domain, $a,b\in L^\infty(\omega)$, $x_0\in \partial \omega$ and $R_0>0$. Take $u,v$ solutions of
\[
-\Delta u=a(x)u,\quad -\Delta v=b(x)v \qquad \text{ in } \omega \cap B_{R_0}(x_0),
\]
\[
u,v>0 \text{ in } \omega\cap B_{R_0(x_0)},\qquad u,v=0 \text{ in } \partial\omega\cap B_{R_0(x_0)},
\]
with $u$ Lipschitz continuous in $\overline{\omega\cap B_{R_0}(x_0)}$. Assume moreover that: given $x_n\in \partial \omega$ with $x_n\to x_0$ and $t_n\to 0^+$, there exists $\rho_n$, $\gamma>0$ and $e\in S^{N-1}$ such that
\begin{equation}\label{eq:assumption_technical}
\frac{\rho_n}{t_n^{1+\eps}}\not \to 0 \text{ as } n\to\infty, \qquad \forall \eps \text{ small}
\end{equation}
and
\[
\frac{u(x_n+t_nx)}{\rho_n}\to \gamma(x\cdot e)^+,\qquad \left|\frac{v(x_n+t_nx)}{\rho_n}\right| \leq C \qquad \text{ uniformly in each compact set.}
\]
Then $v/u$ can be continuously extended up to the boundary of $\omega$, and there exists $C>0$ such that, for $r$ sufficiently small,
\[
\left|\frac{v(x)}{u(x)}-\frac{v(x_0)}{u(x_0)}\right|\leq C r^{\alpha} \qquad \forall x\in \overline{B_{r}(x_0)\cap \omega}.
\]
\end{proposition}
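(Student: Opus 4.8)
The plan is to work directly with the quotient $w:=v/u$. From $u^{2}\nabla w=u\nabla v-v\nabla u$ and the equations for $u,v$ one computes that $w$ solves the degenerate elliptic equation
\[
-\div\!\big(u^{2}\nabla w\big)=(b-a)\,u^{2}\,w\qquad\text{in }\omega\cap B_{R_{0}}(x_{0}).
\]
Since $u$ is Lipschitz up to $\partial\omega$ and, by the blow-up hypothesis, non-degenerate near $x_{0}$, the weight $u^{2}$ is comparable to $\dist(\cdot,\partial\omega)^{2}$; moreover, at scale $t_{n}$ around a boundary point $x_{n}\to x_{0}$ the rescaled weight $\big(u(x_{n}+t_{n}\,\cdot)/\rho_{n}\big)^{2}$ converges to $\gamma^{2}\big((x\cdot e)^{+}\big)^{2}$, while the zero-order coefficient $(b-a)(x_{n}+t_{n}\,\cdot)\,t_{n}^{2}$ tends to $0$. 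The whole argument is a contradiction/blow-up scheme built on this rescaled picture.

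\emph{Step 1: boundedness of $w$ near $x_{0}$.} If it failed, there would be $y_{n}\in\omega$, $y_{n}\to x_{0}$, with $|w(y_{n})|\to\infty$. Setting $t_{n}:=\dist(y_{n},\partial\omega)\to0$ and choosing a closest point $x_{n}\in\partial\omega$ (so $x_{n}\to x_{0}$), the unit vector $\hat y_{n}:=(y_{n}-x_{n})/t_{n}$ makes a small angle with the half-space direction $e$ of the hypothesis (by $(\delta,R)$-Reifenberg flatness), hence $\hat y_{n}\cdot e\ge 1/2$. By interior elliptic estimates and the bound $|v(x_{n}+t_{n}\,\cdot)/\rho_{n}|\le C$, along a subsequence $v(x_{n}+t_{n}\,\cdot)/\rho_{n}\to V$ locally uniformly with $|V|\le C$, while $u(x_{n}+t_{n}\,\cdot)/\rho_{n}\to\gamma(x\cdot e)^{+}$; writing $\hat y_{n}\to e^{*}$ one gets
\[
w(y_{n})=\frac{v(x_{n}+t_{n}\hat y_{n})/\rho_{n}}{u(x_{n}+t_{n}\hat y_{n})/\rho_{n}}\ \longrightarrow\ \frac{V(e^{*})}{\gamma\,(e^{*}\cdot e)},
\]
a finite number, contradicting $|w(y_{n})|\to\infty$.

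\emph{Step 2: decay of oscillation, and conclusion.} I would prove that there are $\theta\in(0,1)$ and $r_{1}>0$ with $\osc_{B_{r/2}(x_{0})\cap\omega}w\le\theta\,\osc_{B_{r}(x_{0})\cap\omega}w$ for $0<r<r_{1}$; this iterates to $|w(x)-w(x_{0})|\le C r^{\alpha}$ on $\overline{B_{r}(x_{0})\cap\omega}$ and gives the continuous extension to $\partial\omega$. Arguing by contradiction, take $r_{n}\to0$ along which the ratio of the two oscillations tends to $1$, put $\omega_{n}:=\osc_{B_{r_{n}}(x_{0})\cap\omega}w$ (finite by Step 1), $\tilde u_{n}(x):=u(x_{0}+r_{n}x)/\rho_{n}\to\gamma(x\cdot e)^{+}$, and $w_{n}(x):=\big(w(x_{0}+r_{n}x)-c_{n}\big)/\omega_{n}$ with $c_{n}$ chosen so that $|w_{n}|\le1$ on $B_{1}$. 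Then $w_{n}$ solves $-\div(\tilde u_{n}^{2}\nabla w_{n})=\varepsilon_{n}\tilde u_{n}^{2}w_{n}$ with $\varepsilon_{n}\to0$ on $\tilde\omega_{n}:=(\omega-x_{0})/r_{n}\to H:=\{x\cdot e>0\}$. By uniform interior estimates for this locally uniformly elliptic equation together with a uniform boundary Hölder estimate up to the Reifenberg-flat part of $\partial\tilde\omega_{n}$, one extracts $w_{n}\to w_{\infty}$ uniformly on compacts of $\overline H\cap B_{3/4}$; the oscillation passes to the limit, so $w_{\infty}$ is a \emph{non-constant} bounded solution of $\div\!\big(((x\cdot e)^{+})^{2}\nabla w_{\infty}\big)=0$ in $H$ (the case $\omega_{n}\not\to0$ is handled by the same rescaling without dividing by $\omega_{n}$). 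This contradicts a Liouville statement: writing $e=e_{N}$, $y=x_{N}$, the model operator is $\partial_{yy}+\tfrac2y\partial_{y}+\Delta_{x'}$, so $W(\zeta,x'):=w_{\infty}(|\zeta|,x')$ with $\zeta\in\R^{3}$ is a bounded harmonic function on $\R^{N+2}$, hence constant — equivalently, one of the Liouville theorems collected in the appendix applies.

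\emph{The main obstacle} lies in the boundary control in Step 2: the convergence $w_{n}\to w_{\infty}$ is a priori only interior, whereas the oscillation we must keep alive in the limit may be attained near $\{x\cdot e=0\}$, where the equation degenerates and $w_{n}$ obeys no boundary condition. This forces a uniform boundary regularity estimate for the degenerate operator $\div(u^{2}\nabla\cdot)$ on $(\delta,R)$-Reifenberg flat domains; note that $u^{2}\simeq\dist^{2}$ is \emph{not} a Muckenhoupt $A_{2}$ weight, so the Fabes–Kenig–Serapioni theory does not apply directly, and one must exploit the special algebraic form of the weight (equivalently, the three-extra-dimensions extension used above) together with the flatness of the boundary to localise. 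A secondary technical point is to check that the normalisation $\rho_{n}$ given by the hypothesis — squeezed between $t_{n}^{1+\varepsilon}$ (via \eqref{eq:assumption_technical}) and $C t_{n}$ (via the Lipschitz bound on $u$) — keeps every blow-up limit non-degenerate.
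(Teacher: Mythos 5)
Your reduction to the degenerate equation $-\div(u^{2}\nabla w)=(b-a)u^{2}w$ for $w=v/u$ is algebraically correct, and Step 1 (boundedness of $w$ via blow-up) is sound. But the proof has a genuine gap, which you yourself flag as "the main obstacle": the entire oscillation-decay argument of Step 2 hinges on a uniform H\"older (or oscillation) estimate for $\div(u^{2}\nabla\cdot)$ \emph{up to} the portion of $\partial\omega$ where the weight degenerates, and this estimate is nowhere established. Without it the compactness scheme fails at exactly the critical point: the oscillation of $w_{n}$ on $B_{1}$ may be attained at points drifting into $\{x\cdot e=0\}$, where you only have interior convergence, so the limit $w_{\infty}$ can be constant while $\osc_{B_{r_n/2}}w/\osc_{B_{r_n}}w\to 1$, and no contradiction is reached. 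The Liouville step has the same defect in miniature: to apply the three-extra-dimensions trick you must know that $w_{\infty}$ extends as a (bounded) weak solution across $\{x\cdot e=0\}$, which again is boundary regularity for the degenerate operator; and in any case the blow-up produces a solution only on $H\cap B_{3/4}$, not on all of $H$, so a global Liouville theorem is not directly applicable. A secondary inaccuracy: the claim $u^{2}\simeq\dist^{2}$ is not justified by the hypotheses. The normalisation $\rho_{n}$ is only squeezed between $t_{n}^{1+\eps}$ and $Ct_{n}$, so $u$ may be strictly sublinear in the distance; the paper's Lemma \ref{lemma:properties_of_rho} deliberately proves only the weaker facts ($|\nabla u|^{2}d^{2}/u^{2}$ bounded above and below, and $u^{2-\eps}/d^{2}\to\infty$), which is all that assumption \eqref{eq:assumption_technical} yields.

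For contrast, the paper sidesteps the degenerate-operator issue entirely. It perturbs $u$ into $g\circ u$ and $h\circ u$ with $g(s)=s+s^{3-\eps}/((3-\eps)(2-\eps))$ and $h(s)=s-s^{3-\eps}/((3-\eps)(2-\eps))$; Lemma \ref{lemma:properties_of_rho} shows these are respectively a subsolution and a supersolution of $-\Delta\,\cdot\,=b(x)\,\cdot$ near $x_{0}$. Their $b(x)$-harmonic replacements $\bar u_{r},\tilde u_{r}$ sandwich $u$ with $1\leq\bar u_{r}/u\leq 1+Cr^{2-\eps}$ (here the Lipschitz continuity of $u$ enters), and then the classical, non-degenerate boundary Harnack principle of Lemma \ref{lemma:boundaryHarnack_same_lambda} applied to the pair $(v,\bar u_{r})$ — two positive solutions of the \emph{same} equation — gives the H\"older continuity of $v/\bar u_{r}$, hence of $v/u=(v/\bar u_{r})(\bar u_{r}/u)$. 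If you want to salvage your route, you would need to supply the missing boundary regularity theory for $\div(u^{2}\nabla\cdot)$ on Reifenberg-flat domains, which is a substantial piece of work in its own right.
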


The aim of the remainder of the subsection is to prove this result. The idea is to consider suitable deformations of $u$ so that the resulting functions are either sub or supersolutions of the equation $- \Delta w=b(x) w$ with comparable boundary data with respect to $u$, considering afterwards some $b(x)$-harmonic extensions in view of using Lemma \ref{lemma:boundaryHarnack_same_lambda}. 

We start by deforming $u$ into a subsolution. Take $\eps>0$, and let $g:\R\to \R$ be defined as
\[
g(s)=s+\frac{s^{3-\eps}}{(3-\eps)(2-\eps)}.
\]
Then
\begin{align}
-\Delta (g\circ u )&=-\div (g'(u)\nabla u)=-g'(u)\Delta u-g''(u)|\nabla u|^2 = a(x) g'(u)u-g''(u)|\nabla u|^2\\
			 &=a(x) \left(1+\frac{u^{2-\eps}}{2-\ep}\right)u-u^{1-\eps} |\nabla u|^2=u\left(a(x)-|\nabla u|^2u^{-\eps}+\frac{a(x)}{2-\eps}u^{2-\eps}\right)\\
			 &= u\left(a(x) -\frac{|\nabla u|^2 d^2(x)}{u^2} \frac{u^{2-\eps}}{d^2(x)}+\frac{a(x)}{2-\eps}u^{2-\eps}\right),\label{eq:-Delta g(v)}
\end{align}
in $\omega\cap B_{R_0}(x_0)$, where we denote $d(x):=\dist(x,\partial \omega)$.

\begin{lemma}\label{lemma:properties_of_rho}
Given $x_0\in \partial\omega$, there exists $R_0$ and $C>0$ such that:
\begin{itemize}
\item[(i)] $\displaystyle C^{-1}\leq \frac{|\nabla u (x)|^2d^2(x)}{u^2(x)}\leq C$ for every $x\in B_{R_0}(x_0)\cap \omega$;
\item[(ii)] $\displaystyle \lim_{x\to x_0}\frac{u(x)^{2-\ep}}{d^2(x)}=+\infty$, for every $\ep>0$ small.
\end{itemize}
\end{lemma}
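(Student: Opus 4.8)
The statement asks for two pieces of boundary information near a fixed point $x_0 \in \partial\omega$: a two-sided bound on the ratio $|\nabla u|^2 d^2 / u^2$, and the blow-up $u^{2-\eps}/d^2 \to +\infty$ as $x \to x_0$. Here $u$ is a positive solution of $-\Delta u = a(x)u$ in $\omega \cap B_{R_0}(x_0)$, Lipschitz up to the boundary, vanishing on $\partial\omega \cap B_{R_0}(x_0)$, and $\omega$ is $(\delta,R)$--Reifenberg flat, hence NTA. The heart of both statements is the fact that, for such solutions on NTA domains, $u$ vanishes \emph{linearly} at the boundary: $u(x) \asymp d(x)$ locally, and $|\nabla u(x)| \asymp 1$ in the non-tangential sense near regular boundary points. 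The plan is to extract this from the Boundary Harnack Principle of Lemma \ref{lemma:boundaryHarnack_same_lambda} together with the hypothesis \eqref{eq:assumption_technical}--type blow-up information available for $u$ (and, more fundamentally, from the Reifenberg flatness, which forces the boundary to be flat at small scales).

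\emph{Step 1 (linear growth from below and above).} First I would fix $R_0$ small enough that Lemma \ref{lemma:boundaryHarnack_same_lambda} applies in $\omega \cap B_{2R_0}(x_0)$, and that a positive solution $\varphi_0$ of $-\Delta\varphi_0 = a(x)\varphi_0$ exists and is comparable to a positive constant on $B_{2R_0}(x_0)$. Then $w := u/\varphi_0$ satisfies $\mathrm{div}(\varphi_0^2 \nabla w) = 0$ in $\omega \cap B_{2R_0}(x_0)$, $w = 0$ on $\partial\omega \cap B_{2R_0}(x_0)$, $w > 0$ inside. For such nonnegative solutions of uniformly elliptic divergence-form equations on NTA (in fact Reifenberg-flat, hence also satisfying the corkscrew and Harnack chain conditions) domains, one has the standard Carleson-type estimate and the fact that $w$ decays linearly: there is $C>0$ with $C^{-1} d(x) \le w(x)/w(A_r(x_0)) \cdot (1/r) \le C d(x)$ for $x \in \omega \cap B_r(x_0)$, where $A_r(x_0)$ is a corkscrew point at scale $r$; combined with $\varphi_0 \asymp 1$, this gives $C^{-1} d(x) \le u(x) \le C d(x)$ in $\omega \cap B_{R_0}(x_0)$ after normalizing, i.e. \textbf{$u(x) \asymp d(x)$}. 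This already yields part (ii): since $u \ge C^{-1} d$ we get $u^{2-\eps}/d^2 \ge C^{-1} d^{-\eps} \to +\infty$ as $x \to x_0$ because $d(x) \to 0$ and $\eps > 0$.

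\emph{Step 2 (gradient bounds — part (i)).} The upper bound $|\nabla u(x)|^2 d^2(x)/u^2(x) \le C$ is the more delicate half. The upper bound on $|\nabla u|$ is immediate: $u$ is Lipschitz up to the boundary, so $|\nabla u| \le L$, and combined with $u \ge C^{-1} d$ from Step 1 this gives $|\nabla u|^2 d^2/u^2 \le L^2 C^2$. For the lower bound $|\nabla u|^2 d^2/u^2 \ge C^{-1}$, i.e. $|\nabla u(x)| \ge c\, u(x)/d(x) \ge c'$ (using $u \le C d$), one uses interior gradient estimates applied at the scale $d(x)$: in the ball $B_{d(x)/2}(x) \subset \omega$, the function $u$ is a positive solution with $u(x) \asymp \sup_{B_{d(x)/2}(x)} u$ (interior Harnack), so by the Caccioppoli/interior-gradient estimate $\sup_{B_{d(x)/4}(x)} |\nabla u| \ge c\, \mathrm{osc}_{B_{d(x)/2}(x)} u / d(x)$; but along a non-tangential path from $x$ toward $x_0$ the value of $u$ drops from $\asymp d(x)$ to $0$, forcing oscillation $\asymp d(x)$, hence $|\nabla u| \gtrsim 1$ somewhere nearby at scale $d(x)$, and then a Harnack-type argument for $|\nabla u|$ (or simply re-running the estimate at the point $x$ itself using that $u/d$ is bounded below uniformly and $u$ is a nontrivial solution) propagates this to $x$. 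Alternatively — and this is cleaner — one invokes the blow-up hypothesis already granted in Proposition \ref{prop:u_i/u_1_alpha_Holder_generalcase}: at any boundary point, rescalings of $u$ converge to $\gamma(x\cdot e)^+$ with $\gamma > 0$, whose gradient is exactly $\gamma$ on the positive side; passing this convergence to the (locally $C^{1,\alpha}$, by interior estimates) gradients and combining with $d(x_n + t_n x)/t_n \to (x\cdot e)^+$ gives precisely $|\nabla u|^2 d^2/u^2 \to \gamma^2/\gamma^2 = 1$ along any sequence approaching the boundary, which yields (i) with constants close to $1$.

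\emph{Main obstacle.} The hard part is the lower bound on $|\nabla u| \cdot d/u$ in part (i): it genuinely requires that the boundary be regular enough (Reifenberg flatness) that $u$ does not degenerate to vanish faster than linearly in the gradient sense, i.e. it encodes the \emph{non-degeneracy} of $u$ at $\partial\omega$. The clean route is to reduce to the flat blow-up $\gamma(x\cdot e)^+$ via the compactness argument: establish local uniform $C^{1,\alpha}$ bounds for the rescaled solutions $u(x_n + t_n x)/\rho_n$ (interior elliptic regularity away from the limiting hyperplane, plus boundary $C^{1,\alpha}$ regularity for the Reifenberg-flat domain using that the rescaled domains converge to a half-space), pass to the limit to get convergence of gradients, and read off the ratio. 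One must be careful that the normalization $\rho_n$ in \eqref{eq:assumption_technical} is compatible — condition $\rho_n/t_n^{1+\eps} \not\to 0$ ensures the limit $\gamma > 0$ is nondegenerate and that $\rho_n \asymp t_n$ up to the relevant precision, so that dividing $|\nabla u|$, $d$, and $u$ by the appropriate powers of $t_n$ all balance. Once the ratio is shown to converge to $1$ along every boundary-approaching sequence, (i) follows on a small enough $B_{R_0}(x_0)$ by a routine contradiction/compactness argument, and (ii) is already in hand from Step 1.
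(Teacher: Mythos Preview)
Your ``alternative'' route at the end of Step~2 --- pass to the blow-up $u(x_n'+t_n x)/\rho_n \to \gamma(x\cdot e)^+$, upgrade the convergence to $\mathcal{C}^{1,\alpha}$ away from the limiting hyperplane by interior elliptic regularity, and read off that the ratio $|\nabla u|^2 d^2/u^2$ tends to a finite positive number along any sequence approaching the boundary --- is exactly the paper's proof of part~(i), carried out by contradiction. So that part is fine.

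The gap is in Step~1 and in your proof of part~(ii). The claim $C^{-1}d(x)\le u(x)\le C d(x)$ does \emph{not} follow from NTA or Reifenberg-flat structure alone: on a cone of aperture different from $\pi$, a positive harmonic function vanishing on the boundary decays like $d^\alpha$ with $\alpha\neq 1$. The Carleson estimate on NTA domains compares $u$ to its value at a corkscrew point, not to $d$. Your Step~1 therefore asserts something not available from the hypotheses, and both halves of your first approach to (i), as well as your entire argument for (ii), rest on it.

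The paper proves (ii) without ever claiming $u\asymp d$. From the blow-up convergence one gets $u(x_n)/\rho_n\to \gamma(\bar x\cdot e)^+\in(0,\infty)$, hence $u(x_n)\asymp\rho_n$. Then, writing $t_n=d(x_n)$,
\[
\frac{u(x_n)^{2-\eps}}{t_n^2}=\frac{u(x_n)^{2-\eps}}{\rho_n^{2-\eps}}\left(\frac{\rho_n}{t_n^{2/(2-\eps)}}\right)^{2-\eps},
\]
and since $2/(2-\eps)=1+\eps'$ with $\eps'>0$, hypothesis~\eqref{eq:assumption_technical} gives $\rho_n/t_n^{1+\eps'}\not\to 0$, forcing the right-hand side to $+\infty$. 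This is precisely the ``$\rho_n\asymp t_n$ up to the relevant precision'' you allude to at the very end, but note that the hypothesis only gives $\rho_n\gtrsim t_n^{1+\eps'}$ for every small $\eps'$, which is strictly weaker than $\rho_n\asymp t_n$ and is exactly tailored to make (ii) work; your written argument for (ii) needs the stronger (and unavailable) linear bound.
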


\begin{proof}
(i) The proof goes by contradiction. Suppose there exists $r_n\to 0$ and $x_n\in B_{r_n}(x_0)\cap \omega$ such that
\[
\frac{|\nabla u(x_n)|^2d^2(x_n)}{u^2(x_n)} \qquad \text{ converges either to } 0 \text{ or to } +\infty.
\]
Let $t_n:=d(x_n)\to 0$ and take $x_n'\in \partial\omega$ such that $d(x_n)=|x_n-x_n'|$. Then, by  assumption, there exists $\rho_n$ such that the blowup sequence  
\[
u_{n}(x):=\frac{u(x_n'+t_n x)}{\rho_n},\text{ extended by 0 to } \frac{B_{R_0}(x_0)-x_n'}{t_n},
\]
converges (without loss of generality) to $\bar u=\gamma(x\cdot e)^+$, for some $\gamma>0$, $e\in S^{N-1}$. Observe that
\[
\frac{|\nabla u(x_n)|^2d^2(x_n)}{u^2(x_n)}=\frac{|\nabla u_n(\frac{x_n-x_n'}{t_n})|^2}{u_n^2(\frac{x_n-x_n'}{t_n})}
\]
Now $ \dist(\frac{x_n-x_n'}{t_n},\frac{\partial \Omega-x_n'}{t_n} )=\left|\frac{x_n-x_n'}{t_n}\right|=1$ and $\frac{x_n-x_n'}{t_n}\to \bar x\in \partial B_1(0)$. Since, by elliptic regularity, the convergence $u_{n}\to \bar u$ is $\mathcal{C}^{1,\alpha}$ in the complementary of any strip around $\{x\cdot e=0\}$, then we have
\[
\frac{|\nabla u_n(\frac{x_n-x_n'}{t_n})|^2}{u_n^2(\frac{x_n-x_n'}{t_n})}\to \frac{|\nabla \bar u(\bar x) |^2}{\bar u^2(\bar x)}=\frac{1}{((\bar x\cdot e)^+)^2}\in (0,+\infty),
\]
which is a contradiction.

\medbreak

\noindent (ii) Let $x_n\to x_0$ and $t_n:=|x_n'-x_n|=d(x_n)$, and take the corresponding $\rho_n$ given by the statement of Proposition \ref{prop:u_i/u_1_alpha_Holder_generalcase}. By defining $u_n$ as before, one can check that there exists $C>0$ such that
\[
\frac{u(x_n)}{\rho_n}=u_n\left(\frac{x_n-x_n'}{t_n}\right)\in [1/C,C].
\]
Take $\eps'>0$ so that \eqref{eq:assumption_technical} holds and let $\eps/(2-\eps)>\eps'$. Then
\[
\frac{u(x_n)^{2-\eps}}{t_n^2}=\frac{u(x_n)^{2-\eps}}{\rho_n^{2-\eps}}\left(\frac{\rho_n}{t_n^\frac{2}{2-\eps}}\right)^{2-\eps}\to +\infty \qquad \text{ as } n\to \infty. \qedhere
\]
\end{proof}

A simple consequence of the previous lemma together with \eqref{eq:-Delta g(v)} is the following:

\begin{lemma}\label{lemma:properties_of_g(rho)}
Given $x_0\in \partial\omega$, there exists $R_0>0$ such that
\[
-\Delta (g\circ u) \leq b(x) (g\circ u),\quad g\circ u>0 \qquad  \text{ in } \omega \cap B_{R_0}(x_0)
\]
\end{lemma}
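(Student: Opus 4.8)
The plan is to derive the inequality directly from the identity \eqref{eq:-Delta g(v)} and the two bounds of Lemma~\ref{lemma:properties_of_rho}. First note that, writing $g(s)=s\bigl(1+\tfrac{s^{2-\eps}}{(3-\eps)(2-\eps)}\bigr)$, the positivity $g\circ u>0$ in $\omega\cap B_{R_0}(x_0)$ is automatic from $u>0$, so the real content is the differential inequality on a possibly smaller ball. Dividing \eqref{eq:-Delta g(v)} by $u>0$, the claim $-\Delta(g\circ u)\le b(x)\,(g\circ u)$ becomes equivalent to the scalar inequality
\[
a(x)-\frac{|\nabla u|^2 d^2(x)}{u^2}\,\frac{u^{2-\eps}}{d^2(x)}+\frac{a(x)}{2-\eps}\,u^{2-\eps}\ \le\ b(x)\Bigl(1+\frac{u^{2-\eps}}{(3-\eps)(2-\eps)}\Bigr),
\]
and the idea is to show that the negative term on the left, which blows up as $x\to x_0$, eventually dominates all the remaining (bounded) quantities.

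Concretely, I would use Lemma~\ref{lemma:properties_of_rho}(i) to fix $C>0$ with $\tfrac{|\nabla u|^2 d^2(x)}{u^2}\ge C^{-1}$ on $\omega\cap B_{R_0}(x_0)$, so that the left-hand side above is at most
\[
a(x)+\frac{a(x)}{2-\eps}\,u^{2-\eps}-\frac{1}{C}\,\frac{u^{2-\eps}}{d^2(x)}.
\]
Setting $\Lambda:=\max\{\|a\|_{L^\infty(\omega)},\|b\|_{L^\infty(\omega)}\}$ and $M:=\sup_{\omega\cap B_{R_0}(x_0)}u^{2-\eps}<\infty$ (finite since $u$ is assumed Lipschitz, hence bounded, up to the boundary in Proposition~\ref{prop:u_i/u_1_alpha_Holder_generalcase}), the desired inequality is implied by
\[
\frac{1}{C}\,\frac{u^{2-\eps}}{d^2(x)}\ \ge\ 2\Lambda+\Lambda M\Bigl(\frac{1}{2-\eps}+\frac{1}{(3-\eps)(2-\eps)}\Bigr)=:L,
\]
a finite constant independent of $x$. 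Finally I would invoke Lemma~\ref{lemma:properties_of_rho}(ii): since $\tfrac{u(x)^{2-\eps}}{d^2(x)}\to+\infty$ as $x\to x_0$, there is $R_0>0$ such that $\tfrac{1}{C}\tfrac{u(x)^{2-\eps}}{d^2(x)}\ge L$ for all $x\in\omega\cap B_{R_0}(x_0)$, which gives the claim on $\omega\cap B_{R_0}(x_0)$.

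The argument is essentially a comparison of bounded versus divergent terms, so there is no serious obstacle; the only points requiring care are checking that the error terms ($a(x)$, $b(x)$, $u^{2-\eps}$) are uniformly bounded on $\omega\cap B_{R_0}(x_0)$ — which follows from $a,b\in L^\infty(\omega)$ and the boundedness of $u$ up to the boundary — and keeping track of the direction of the inequality while choosing $R_0$ small via Lemma~\ref{lemma:properties_of_rho}(ii). One should also recall that $\eps>0$ must be taken small enough for Lemma~\ref{lemma:properties_of_rho}(ii) to apply, which is already part of the standing choice of $\eps$.
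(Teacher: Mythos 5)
Your proof is correct and is precisely the argument the paper intends: the lemma is stated there as ``a simple consequence'' of the identity \eqref{eq:-Delta g(v)} and Lemma \ref{lemma:properties_of_rho}, and your comparison of the bounded terms ($a$, $b$, $u^{2-\eps}$) against the divergent term $\tfrac{1}{C}\,u^{2-\eps}/d^2(x)$ is exactly the omitted computation.
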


Now take the function $h:\R^+\to \R$ defined by 
\[
h(s)=s-\frac{s^{3-\ep}}{(3-\ep)(2-\ep)}
\]
and observe that $h\circ u>0$ and 
\begin{align*}\label{eq:-Delta h(u_1)}
-\Delta (h\circ u )&=-\div (h'(u)\nabla u)=-h'(u)\Delta u-h''(u)|\nabla u|^2 = a(x) h'(u)u-h''(u)|\nabla u|^2\\
			 &= u\left(a(x) +\frac{|\nabla u|^2 d^2(x)}{u^2} \frac{u^{2-\ep}}{d^2(x)}-\frac{a(x)}{(2-\ep)}u^{2-\ep}\right)\geq b(x)(h\circ u)
\end{align*}
in $B_r(x_0)\cap \omega$, for sufficiently small $r>0$ (again by Lemma \ref{lemma:properties_of_rho}).

Let $\bar u_r$ and $\tilde u_r$ the $b(x)$--harmonic extensions in $B_r(x_0)\cap \omega$ of $g\circ u$ and $h\circ u$ respectively, that is:
\[
\begin{cases}
-\Delta \bar u_r=b(x)\bar u_r & \text{ in } B_r(x_0)\cap \omega\\
\bar u_r=g\circ u &\text{ on } \partial (B_r(x_0)\cap \omega)
\end{cases},
\qquad
\begin{cases}
-\Delta \tilde u_r=b(x)\tilde u_r & \text{ in } B_r(x_0)\cap \omega\\
\tilde u_r=h\circ u &\text{ on } \partial (B_r(x_0)\cap \omega)
\end{cases}.
\]
By the comparison principle and the definitions of $g$ and $h$, one has
\[
u\leq g\circ u\leq \bar u_r,\quad \text{ and } \quad \tilde u_r\leq h\circ u \leq u.
\]
Moreover, on $\partial (B_r(x_0)\cap\Omega)$, by using the fact that $u$ is Lipschitz continuous,
\[
g \circ u = h\circ u \left[1+\frac{\frac{2u^{2-\ep}}{(3-\ep)(2-\ep)}  }{1-\frac{u^{2-\ep}}{(3-\ep)(2-\ep)}} \right]\leq h\circ u (1+Cr^{2-\ep}).
\]
Thus, for $C>0$  independent of $r>0$,
\[
\bar u_r\leq \tilde u_r(1+C r^{2-\ep}),\quad \text{whence} \quad u\leq \bar u_r\leq \tilde u_r(1+C r^{2-\ep})\leq u(1+Cr^{2-\ep}),
\]
and in particular
\[
1\leq\frac{\bar u_r}{u}\leq (1+C r^{2-\ep}) \quad \text{ in } B_r(x_0)\cap \omega.
\]

\begin{lemma}
Under the previous notations, there exist $0<\delta \ll 1$, $0<\alpha<1$, $C>0$, such that
\[
\left|\frac{v(x)}{\bar u_r(x)}-\frac{v(y)}{\bar u_r(y)} \right|\leq Cr^{(1-\delta)\alpha/\delta},\qquad \forall x,y \in \overline{\omega\cap B_{r^{1/\delta}}(x_0)}.
\]
\end{lemma}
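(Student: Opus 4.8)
\emph{Plan.} The idea is to apply the boundary Harnack principle of Lemma \ref{lemma:boundaryHarnack_same_lambda} to the pair $(v,\bar u_r)$, which now solve the \emph{same} equation. Indeed, both $v$ and $\bar u_r$ solve $-\Delta w=b(x)w$ in $\omega\cap B_r(x_0)$; both are positive there ($v>0$ by hypothesis, and $\bar u_r\ge g\circ u\ge u>0$ by the comparison already established); and both vanish on $\partial\omega\cap B_r(x_0)$ (for $\bar u_r$ this is because on that portion of $\partial(\omega\cap B_r(x_0))$ one has $\bar u_r=g\circ u=g(0)=0$). Assuming $r<R_0$, Lemma \ref{lemma:boundaryHarnack_same_lambda} applied on $B_r(x_0)$ (so with radius $r/2$, and with constants $C,\alpha$ and threshold $R_0$ depending only on $\|b\|_{L^\infty}$ and on the NTA constants of $\omega$, in particular \emph{not} on $r$) gives
\[
\left|\frac{v(x)}{\bar u_r(x)}-\frac{v(y)}{\bar u_r(y)}\right|\le C\,\frac{v(z)}{\bar u_r(z)}\,\frac{|x-y|^{\alpha}}{(r/2)^{\alpha}}\qquad\forall x,y\in\overline{\omega\cap B_{r/2}(x_0)},\ z\in\omega\cap B_{r/2}(x_0).
\]
Fixing any $\delta\in(0,1)$ and restricting to $x,y\in\overline{\omega\cap B_{r^{1/\delta}}(x_0)}$, which for $r$ small is contained in $B_{r/2}(x_0)$ since $1/\delta>1$, we have $|x-y|\le 2r^{1/\delta}$, and the estimate becomes $\big|\tfrac{v(x)}{\bar u_r(x)}-\tfrac{v(y)}{\bar u_r(y)}\big|\le C'\,\tfrac{v(z)}{\bar u_r(z)}\,r^{(1-\delta)\alpha/\delta}$. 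Thus the lemma will follow once we exhibit, for each small $r$, a point $z_r\in\omega\cap B_{r/2}(x_0)$ with $v(z_r)/\bar u_r(z_r)\le C_0$ for a constant $C_0$ independent of $r$.

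\emph{Choice of $z_r$ and the uniform bound.} I would take $z_r$ to be a corkscrew (non-tangential) point of the NTA domain $\omega$ at $x_0$ and scale $r/2$, so that $z_r\in\omega\cap B_{r/2}(x_0)$ and $\dist(z_r,\partial\omega)\ge c\,r/2$. Since $\bar u_r\ge u>0$ on $\omega\cap B_r(x_0)$, it suffices to bound $v(z_r)/u(z_r)$. This is where the structural hypotheses of Proposition \ref{prop:u_i/u_1_alpha_Holder_generalcase} enter, via a compactness argument: if $v(z_{r_n})/u(z_{r_n})\to+\infty$ along some $r_n\to0$, apply the blow-up hypothesis with $x_n\equiv x_0$ and $t_n:=r_n/2$, obtaining $\rho_n$, $\gamma>0$, $e\in S^{N-1}$ with $u(x_0+t_nx)/\rho_n\to\gamma(x\cdot e)^+$ locally uniformly and $|v(x_0+t_nx)/\rho_n|\le C$. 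Writing $z_{r_n}=x_0+t_n\xi_n$ with $|\xi_n|<1$ and $\dist(\xi_n,(\partial\omega-x_0)/t_n)\ge c$, the Reifenberg flatness of $\omega$ together with the nondegenerate blow-up limit forces $(\omega-x_0)/t_n\to\{x\cdot e>0\}$ locally; hence (up to a subsequence) $\xi_n\to\xi$ with $\xi\cdot e\ge c_0>0$, so that $u(z_{r_n})/\rho_n\to\gamma(\xi\cdot e)>0$ while $v(z_{r_n})/\rho_n$ stays bounded, whence $v(z_{r_n})/u(z_{r_n})$ is bounded — a contradiction. Since $r_n\to0$ was arbitrary, this yields $\sup_{0<r<r_0}v(z_r)/u(z_r)<\infty$, and combining with the previous paragraph gives the claimed estimate with $C:=C'\sup_r v(z_r)/u(z_r)$.

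\emph{Main obstacle.} The delicate point is precisely the $r$-uniform bound $v(z_r)/u(z_r)\le C_0$ at the corkscrew point; this is where all the quantitative content of the hypotheses is used, namely the normalization $\rho_n$ obeying \eqref{eq:assumption_technical}, the nondegeneracy $\gamma>0$ of the half-plane profile, the Lipschitz regularity of $u$ (already exploited to build $\bar u_r$ and to compare it with $u$), and the fact that on a Reifenberg-flat domain the rescalings of a nondegenerate boundary-vanishing solution converge to a half-space solution, so that a non-tangential point persists in the interior of the limiting half-space. One should also keep track of the fact that the boundary Harnack constants in Lemma \ref{lemma:boundaryHarnack_same_lambda} do not degenerate as $r\to0$, which is exactly the scale-invariant form in which that lemma is stated.
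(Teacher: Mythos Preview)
Your proof is correct and follows essentially the same approach as the paper: apply Lemma~\ref{lemma:boundaryHarnack_same_lambda} to the pair $(v,\bar u_r)$ which now satisfy the same equation, then restrict to the smaller ball $B_{r^{1/\delta}}(x_0)$ and show that the quotient $v(z_r)/\bar u_r(z_r)$ at a non-tangential point is bounded uniformly in $r$ via the blow-up assumptions (the paper phrases this last step as ``reasoning as in the proof of Lemma~\ref{lemma:properties_of_rho}''). Your treatment of the uniform bound on $v(z_r)/u(z_r)$ is in fact more explicit than the paper's.
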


\begin{proof}
By applying Lemma \ref{lemma:boundaryHarnack_same_lambda} to $v$ and $\bar u_r$, we deduce the existence of $C>0$ (independent of $r$) such that
\[
\left|\frac{v(x)}{\bar u_r(x)}-\frac{v(y)}{\bar u_r(y)} \right|\leq C \frac{v(\xi)}{\bar u_r(\xi)}\frac{|x-y|^\alpha}{r^\alpha},\qquad \forall x,y \in \overline{\omega\cap B_{r}(x_0)},\ z\in \omega \cap B_r(x_0).
\]
Reasoning as in the proof of Lemma \ref{lemma:properties_of_rho}, by choosing $\xi=\xi_r\in \partial B_{r/2}(x_0)\cap \omega$ such that $\dist(\xi,\partial \Omega)\geq r \ep$ (which exists since $\omega$ is Reifenberg flat) one proves that the quotient $\frac{v(\xi)}{\bar u_r(\xi)}$ is bounded. Take $\delta>0$ small. Then we conclude that
\[
\left|\frac{u_i(x)}{u_r(x)}-\frac{u_i(y)}{u_r(y)} \right|\leq C' \frac{|x-y|^\alpha}{r^\alpha}\leq C'\frac{r^{\alpha/\delta}}{r^\alpha}=C'r^{(1-\delta)\alpha/\delta},\qquad \forall x,y \in \overline{\omega\cap B_{r^{1/\delta}}(x_0)}. \qedhere
\]
\end{proof}

\begin{proof}[Proof of Proposition \ref{prop:u_i/u_1_alpha_Holder_generalcase}]
Using the decomposition
\[
\frac{v}{u}=\frac{v}{ \bar u_r} \frac{\bar u_r}{u} ,
\]
we have
\begin{align*}
\left|\frac{v(x)}{u(x)}-\frac{v(x_0)}{u(x_0)}\right| &\leq \left| \frac{v(x)}{\bar u_r(x)}-\frac{v(x_0)}{\bar u_r(x_0)}\right| \left| \frac{\bar u_r(x)}{u(x)}\right| + \left| \frac{\bar u_r(x)}{u(x)}-\frac{\bar u_r(x_0)}{u(x_0)}\right| \left| \frac{v(x_0)}{\bar u_r(x_0)}\right|\\
&\leq C' r^{(1-\delta)\alpha/\delta}(1+C r^{2-\ep})+C'' r^{2-\ep}\leq \kappa r^{(1-\delta)\alpha/\delta}
\end{align*}
for every $x\in \overline{\Omega\cap B_{r^{1/\delta}}(x_0)}$, and the result follows.
\end{proof}

\subsection{Conclusion of the proof of regularity results}\label{subseq:ProofRegularity}

After having established some Boundary Harnack Principles in the previous subsection, the proof of Theorem \ref{thm:regularity_G} will mostly follow the papers \cite{rtt,tt}. In \cite{tt}, the case $\#I_h=1$ is treated, while in \cite{rtt} although the segregation is between groups, only the case $f_i(x,\mf{u})=\lambda_i \mf{u}$  is handled. We will prove Theorem \ref{thm:regularity_G} highlighting only the strategy as well as  the main differences with respect to \cite{rtt,tt}.

We observe that Theorem \ref{thm:Almgren} and Lemma \ref{lem: bound Almgren} hold, as they are stated, also for functions $\mf{u} \in \Gcal (\Omega)$: the proofs proceed as in the quoted statements. Moreover, we have that
\begin{equation}\label{wlog}\tag{A}
\text{ For every } x_0\in \widetilde \Gamma_{\mf u},\ \delta>0, \text{ there exists } k\neq h \text{ such that } \sum_{i\in I_h} |u_i|, \sum_{j\in I_k} |u_j|\not\equiv 0 \text{ in } B_\delta(x_0).
\end{equation}

For $x_0\in \Omega$, let $(x_n)$, $x_n\to x_0$ and $t_n\to 0^+$, we define the blow-up sequence $\mf{u}_n:=(u_{1,n},\ldots, u_{d,n})$, as
\[
	u_{i,n}(x):=\frac{u_i(x_n+t_n x)}{\sqrt{H(x_n,\mf{u},t_n)}},\qquad x\in \Omega_n:=\frac{\Omega-x_n}{t_n}.
\]
Observe that 
\[
-\Delta u_{i,n}=f_{i,n}(x,u_{i,n})-\Mcal_{i,n}
\]
with
\[
f_{i,n}(x,s)=\frac{t_n^2}{\sqrt{H(x_n,\mf{u},t_n)}}f_i(x_n+t_nx,\sqrt{H(x_n,\mf{u},t_n)}s)\] 
and
\[
\Mcal_{i,n}(E)=\frac{1}{t_n^{N-2}\sqrt{H(x_n,\mf{u},t_n)}}\Mcal(x_n+t_n E).
\]
Reasoning as in Theorem 4.1, Corollary 4.3 and Corollary 4.5 in \cite{rtt}, one proves the following.

\begin{theorem}
Within the previous framework, given $x_n\to x_0\in \Omega$ and $t_n\to 0^+$, there exists $\bar{\mf{u}}$ with $\bar u_i\cdot \bar u_j\equiv 0$ whenever $i\in I_h$, $j\in I_k$ with $h\neq k$ and measures $\Mcal_i\in \Mcal_{loc}(\R^N)$ such that, up to a subsequence,
\begin{align*}
&\mf{u}_n\to \bar{\mf{u}}\qquad  \text{ in } \mathcal{C}^{0,\alpha}_{loc}\cap H^1_{loc}(\R^N),\ \forall 0<\alpha<1\\
&\Mcal_{i,n}\rightharpoonup \bar{\mathcal{M}}_i\qquad  \text{ weakly--}\star \Mcal_{loc}(\R^N). 
\end{align*}
Moreover, $-\Delta \bar u_i=-\bar \Mcal_i$, the measures $\bar \Mcal_i$ are concentrated on $\Gamma_{\bar{\mf{u}}}$, and it holds
\begin{equation}\label{eq:pohozaev_for_bar_uv2}
(2-N) \sum_{i=1}^d \int_{B_r(x)} |\nabla\bar u_i|^2=\sum_{i=1}^d \int_{\partial B_r(x)} r(2(\partial_{n} \bar u_i)^2-|\nabla \bar u_i|^2)\quad \forall x\in \R^N,\ r>0.
\end{equation}
In particular, $\bar{\mf{u}}\in \Gcal_{loc}(\R^N)$.

Finally, if either $x_n\equiv x_0$, or $x_n\in \Gamma_{\mf u}$ and $N(x_0,\mf{u},0^+)=1$, then
\[
\bar u_i=r^\alpha g_i(\theta),\qquad \text{with }\alpha=N(0,\bar u,r).
\]
\end{theorem}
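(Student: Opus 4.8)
The argument is a blow-up one, essentially identical in structure to \cite[Theorem 4.1, Corollaries 4.3 and 4.5]{rtt}; the only new features are the non-trivial grouping (segregation takes place between the classes $I_h$ rather than pairwise) and the fact that $\mf{f}$ is only required to satisfy (L) instead of being linear. Both turn out to be harmless, because after rescaling the reaction term disappears. The first step is to record the normalisation built into the definition of $\mf{u}_n$: a change of variables gives $H(0,\mf{u}_n,1)=1$ for every $n$. Combining this with the almost-monotonicity of $e^{Cr^2}(N+1)$ from Theorem \ref{thm:Almgren} and the uniform bound $N(x,\mf{u},0^+)\le C$ on compact subsets of $\Omega$ from Lemma \ref{lem: bound Almgren}, one gets a uniform doubling estimate $H(0,\mf{u}_n,R)\le C(R)$ for every fixed $R>0$ and all large $n$. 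Since $\mathcal{M}_{i,n}\ge 0$ and $f_{i,n}(x,\mf{u}_n)\to 0$ locally uniformly (the rescaled reaction term carries an extra factor $t_n^2$, thanks to (L) and the boundedness of $\mf{u}_n$), this propagates to uniform $H^1_{\loc}$ bounds; uniform $\mathcal{C}^{0,\alpha}_{\loc}$ bounds for $\{\mf{u}_n\}$ then follow from interior elliptic regularity away from $\Gamma_{\mf{u}}$ and, near the common nodal set, from the scaling-invariant Morrey estimate already exploited in Lemma \ref{lem: morrey fb}. Testing the equations against a cut-off also yields uniform mass bounds for $\mathcal{M}_{i,n}$ on compact sets.

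With these estimates in hand, the plan is to extract a subsequence along which $\mf{u}_n\to\bar{\mf{u}}$ in $\mathcal{C}^{0,\alpha}_{\loc}\cap H^1_{\loc}(\R^N)$ — the strong $H^1$ convergence being obtained by testing the equation for $u_{i,n}$ against $\varphi^2(u_{i,n}-\bar u_i)$ and integrating by slices, as in Lemma \ref{lem 3.6 notateve}, noting that $\mathcal{M}_{i,n}$ charges only the set $\{u_{i,n}=0\}$, where moreover $|\bar u_i|\to 0$ — and $\mathcal{M}_{i,n}\rightharpoonup\bar{\mathcal{M}}_i$ weakly-$\star$ in $\mathcal{M}_{\loc}(\R^N)$. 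Passing to the limit in $-\Delta u_{i,n}=f_{i,n}(x,\mf{u}_n)-\mathcal{M}_{i,n}$ in $\mathcal{D}'(\R^N)$ and using $f_{i,n}(x,\mf{u}_n)\to 0$ gives $-\Delta\bar u_i=-\bar{\mathcal{M}}_i$, with $\bar{\mathcal{M}}_i\ge 0$; these measures are concentrated on $\Gamma_{\bar{\mf{u}}}$ since, at any point where $\bar{\mf{u}}\neq\mf 0$, we have $\mf{u}_n\neq\mf 0$ nearby for large $n$, and there $\mathcal{M}_{i,n}\equiv 0$ (recall $\mathcal{M}_i$ is supported on $\Gamma_{\mf{u}}$, hence $\mathcal{M}_{i,n}$ on $\{\mf{u}_n=\mf 0\}$). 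Segregation between groups, $\bar u_i\bar u_j\equiv 0$ for $(i,j)\in\mathcal{K}_2$, passes to the limit from the same property of $\mf{u}$ by uniform convergence. The Pohozaev identity (G3) for $\mf{u}$, rescaled at $x_n$, survives the limit by the strong $H^1_{\loc}$ convergence and the vanishing of the rescaled reaction terms, which is exactly \eqref{eq:pohozaev_for_bar_uv2}. Finally, Lipschitz continuity of $\bar{\mf{u}}$ follows from Proposition \ref{prp: lip}, applied to $\bar{\mf{u}}$ (which is $\mathcal{C}^{0,\alpha}$, segregated, satisfies the compatibility condition with the admissible nonlinearity $f\equiv 0$, and satisfies \eqref{eq:pohozaev_for_bar_uv2}); combined with the previous points, this gives $\bar{\mf{u}}\in\mathcal{G}_{\loc}(\R^N)$.

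For the homogeneity statement, assume $x_n\equiv x_0$, or $x_n\in\Gamma_{\mf{u}}$ with $N(x_0,\mf{u},0^+)=1$. The rescaled Almgren quotient satisfies $N(0,\mf{u}_n,r)=N(x_n,\mf{u},r t_n)+o_n(1)$ for each fixed $r>0$, and by the almost-monotonicity of $e^{Cr^2}(N+1)$ — and, in the second case, by the upper semicontinuity of $x\mapsto N(x,\mf{u},0^+)$ together with $N(\cdot,\mf{u},0^+)\ge 1$ on $\Gamma_{\mf{u}}$ from Lemma \ref{lem: bound Almgren} — one gets $N(x_n,\mf{u},r t_n)\to N(x_0,\mf{u},0^+)=:\alpha$. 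Since $E$, $H$, $N$ are continuous along the convergence $\mf{u}_n\to\bar{\mf{u}}$ in $\mathcal{C}^{0,\alpha}_{\loc}\cap H^1_{\loc}$ and the rescaled reaction terms drop out, this forces $N(0,\bar{\mf{u}},r)\equiv\alpha$ for all $r>0$; the rigidity in the Almgren formula for $\bar{\mf{u}}$ — the equality case of the Cauchy--Schwarz inequality entering $\tfrac{d}{dr}N$, exactly as in Proposition \ref{prop: consequence almgren} — then yields $(x-0)\cdot\nabla\bar u_i=\alpha\,\bar u_i$, i.e. $\bar u_i=r^\alpha g_i(\theta)$ with $\alpha=N(0,\bar{\mf{u}},r)$.

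\textbf{Main difficulty.} The delicate step is the uniform $\mathcal{C}^{0,\alpha}_{\loc}$ (equivalently, Lipschitz) bound on the blow-up sequence near the common nodal set: this rests on the scaling-invariant Morrey estimate, which in turn needs the lower bound $N(\cdot,\mf{u},0^+)\ge 1$ on $\Gamma_{\mf{u}}$ and the almost-monotone character of the frequency, and on carefully keeping track of which regime the base points $x_n$ fall into and of the size of $H(x_n,\mf{u},t_n)$ relative to $t_n$. All the remaining points are routine adaptations of the arguments in \cite{rtt,tt}.
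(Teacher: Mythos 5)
Your proposal is correct and follows essentially the same route as the paper, which for this theorem simply refers to the arguments of Theorem 4.1, Corollary 4.3 and Corollary 4.5 in \cite{rtt}: normalisation $H(0,\mf{u}_n,1)=1$ plus the almost-monotonicity and uniform boundedness of the Almgren quotient give doubling and uniform $H^1_{\loc}$ and Morrey bounds, the rescaled reaction terms vanish like $t_n^2$ by (L), and compactness, weak-$\star$ convergence of the measures, the limiting Pohozaev identity, and the rigidity in the frequency formula yield the remaining claims. Your identification of the delicate point (the scaling-invariant Morrey estimate relative to $H(x_n,\mf{u},t_n)$ near the nodal set) matches where the real work lies in the cited references.
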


Given $y\in \Omega$, from now we define the set of all possible blowup limits at $y$ by
\[
\Bcal\Ucal_{y}=\left\{(\bar u,\bar v):  \begin{array}{l}\exists\; x_n\to x_0,\ t_n\to 0 \text{ such that, for every $i$,}\\[4pt]
					 \displaystyle u_{i,n}:=\frac{u_i(x_n+t_n\cdot)}{\sqrt{H(x_n,\mf{u}, t_n)}}\to \bar u_i  	 \text{ strongly in }  H^1_\textrm{loc}(\R^N)\cap \mathcal{C}^{0,\alpha}_\textrm{loc}(\R^N)
					 		 \end{array}\right\}
\]

With the latter compactness result at hand, one can prove a gap condition of the values of $N(x_0,\mf{u},0^+)$, and to characterise completely the blow up limits at points where $N(x_0,\mf{u},0^+)$.

\begin{proposition}\label{prop:blowup_halfspace} Let $\mf{u}\in \Gcal(\Omega)$ and $x_0\in \Gamma_{\mf u}$. Then either
\begin{equation}\label{eq:gap}
N(x_0,\mf{u},0^+)=1 \qquad \text{ or } N(x_0,\mf{u},0^+)\geq 3/2.
\end{equation}
Moreover, if $x_0\in \widetilde \Gamma_{\mf u}$ with $N(x_0,\mf u,0^+)=1$ and $\bar{\mf{u}}\in \Bcal\Ucal_{x_0}$, then there exists $\nu\in S^{N-1}$, $k\neq h$ and $\alpha_i,\beta_j\in \R$ for $i\in I_h, j\in I_k$ such that
\[
\bar u_i=\alpha_i (x\cdot \nu)^+ \text{ for } i\in I_h,\qquad \bar u_j=\beta_j (x\cdot \nu)^+ \text{ for } j\in I_k
\]
Moreover, we have the following compatibility condition
\[
\sum_{i\in I_h} \alpha_i^2=\sum_{j\in I_j} \beta_j^2\neq 0, \quad \text{ so that } \sum_{i\in I_h} |\nabla \bar u_i|^2=\sum_{j\in I_h} |\nabla \bar u_j|^2 \text{ on } \{x\cdot \nu=0\}.
\]
\end{proposition}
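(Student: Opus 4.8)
The plan is to blow up $\mf u$ at $x_0$ so as to produce a homogeneous limit profile in $\Gcal_{\loc}(\R^N)$, and then to analyse homogeneous elements of $\Gcal_{\loc}(\R^N)$ by means of the Almgren frequency, the bound on the dimension of the nodal set, and the Liouville-type classification results. More precisely, since $x_0\in\Gamma_{\mf u}$, Lemma~\ref{lem: bound Almgren} gives $\alpha:=N(x_0,\mf u,0^+)\ge 1$; applying the previous compactness theorem with the fixed centres $x_n\equiv x_0$ and radii $t_n\to 0^+$ produces $\bar{\mf u}\in\Bcal\Ucal_{x_0}\cap\Gcal_{\loc}(\R^N)$, homogeneous of degree $\alpha$, with $-\Delta\bar u_i=-\bar\Mcal_i$, $\bar\Mcal_i\ge 0$ concentrated on $\Gamma_{\bar{\mf u}}$, $\bar u_i\bar u_j\equiv 0$ across distinct groups, and each $\bar u_i$ harmonic in $\{\sum_{j\in I_h}|\bar u_j|>0\}$ for $i\in I_h$. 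Writing $\bar u_i=r^\alpha\hat u_i(\theta)$, one checks, using $\bar\Mcal_i\ge 0$ together with Hopf's lemma, that no $\bar u_i$ changes sign, so that on each connected component $\omega$ of its support $\hat u_i$ is a positive multiple of the first Dirichlet eigenfunction of $-\Delta_{S^{N-1}}$ on $\omega$, with eigenvalue $\alpha(\alpha+N-2)$. Hence the cells of the partition of $S^{N-1}$ induced by $\bar{\mf u}$ all share the first eigenvalue $\alpha(\alpha+N-2)$, and, since $\Hh_\text{dim}(\Gamma_{\bar{\mf u}})\le N-1$, they cover $S^{N-1}$ up to a null set.

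For the gap \eqref{eq:gap} I would argue on the number $q$ of such cells. If $q=1$ then $\hat u_i$ is constant and $\alpha(\alpha+N-2)=0$, contradicting $\alpha\ge1$; thus $q\ge2$. Summing the first eigenvalues over the cells and comparing with the optimal $q$-partition value $\Lcal_q(S^{N-1})$ of the sphere gives $q\,\alpha(\alpha+N-2)\ge\Lcal_q(S^{N-1})$, which yields $\alpha\ge1$ when $q=2$ and, since $\Lcal_q/q$ is non-decreasing in $q$ and $\Lcal_3/3$ corresponds exactly to the exponent $3/2$, yields $\alpha\ge 3/2$ when $q\ge3$. It then remains to exclude $q=2$ with $1<\alpha<3/2$, and this is where I expect the main difficulty: the plan is a Federer-type dimension reduction, blowing $\bar{\mf u}$ up again at points of $\Gamma_{\bar{\mf u}}\setminus\{0\}$ so as to lower the ambient dimension while keeping the top frequency and the two-group structure, descending to $N=1$, where the only homogeneous two-phase element of $\Gcal_{\loc}(\R)$ has degree $1$. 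This last step, and the low-dimensional classification supporting it — which draws on the Liouville-type theorems of the Appendix and on the arguments of \cite{rtt,tt} — is the technical core of the statement.

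Turning to the structure at points with $N(x_0,\mf u,0^+)=1$: assume $x_0\in\widetilde\Gamma_{\mf u}$ and $\alpha=1$. By property~\eqref{wlog} and the non-degeneracy of $H(x_0,\mf u,\cdot)$ from Theorem~\ref{thm:Almgren}, at least two groups survive in $\bar{\mf u}$, so $q\ge2$; moreover each cell carries a positive first eigenfunction with eigenvalue $1\cdot(1+N-2)=N-1$. Since $N-1$ is precisely the first Dirichlet eigenvalue of a half-sphere, and (up to rotation) the half-sphere is the unique proper subdomain of $S^{N-1}$ realizing it, each cell is a half-sphere; two disjoint half-spheres covering $S^{N-1}$ must be complementary, so $q=2$, the two cells are $\{x\cdot\nu>0\}$ and $\{x\cdot\nu<0\}$ for some $\nu\in S^{N-1}$, they lie in two distinct groups $I_h,I_k$, and their first eigenfunctions are multiples of $(x\cdot\nu)^{\pm}$. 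Thus $\bar u_i=\alpha_i(x\cdot\nu)^+$ for $i\in I_h$ and $\bar u_j=\beta_j(x\cdot\nu)^-$ for $j\in I_k$ (equivalently $\beta_j(x\cdot(-\nu))^+$; so the two groups occupy opposite half-spaces), with $\alpha_i$, $\beta_j$ of a fixed sign, again by $\bar\Mcal_i\ge0$. Non-triviality of $\sum_{i\in I_h}\alpha_i^2$ and $\sum_{j\in I_k}\beta_j^2$ is immediate since $\bar{\mf u}\not\equiv0$.

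The remaining point is the compatibility $\sum_{i\in I_h}\alpha_i^2=\sum_{j\in I_k}\beta_j^2$, equivalently the weak reflection law $\sum_{i\in I_h}|\nabla\bar u_i|^2=\sum_{j\in I_k}|\nabla\bar u_j|^2$ on $\{x\cdot\nu=0\}$. This does not follow from membership in $\Gcal_{\loc}(\R^N)$ alone (the two half-space profiles satisfy (G1)--(G3) for any values of the coefficients), but from $\bar{\mf u}$ being a blow-up of the fixed solution $\mf u$: the plan is to exploit the Pohozaev identity satisfied by $\mf u$ near $x_0$ together with the strong convergence and the convergence of the Almgren frequencies $N(x_n,\mf u,t_n\cdot)\to N(0,\bar{\mf u},\cdot)\equiv\alpha=1$, arguing as in \cite{tt,rtt}; the half-space description obtained above then makes the balance of the normal traces across $\{x\cdot\nu=0\}$ the only possibility.
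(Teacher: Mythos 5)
There is a genuine gap, and it sits at the foundation of your structural analysis of the blow-up. You claim that, using $\bar\Mcal_i\ge 0$ and Hopf's lemma, no component $\bar u_i$ of the homogeneous blow-up changes sign, and you then treat the supports of the components as nodal cells of a partition of $S^{N-1}$ carried by signed first eigenfunctions. This is false: $\bar u_1(x)=x_1$ is harmonic (so $\bar\Mcal_1=0$), homogeneous of degree $1$, and sign-changing; the paper itself records the configuration $\bar u_1=x_1$, $\bar u_2=x_2$ with both components in the same group as an admissible element of $\Gcal_{\loc}(\R^N)$ (see Remark \ref{rem:positive}). Since components within one group need not have disjoint supports, the objects you call ``cells'' are not a partition, and a component may change sign inside its group's support. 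In the paper the signedness of each nontrivial component on each side of the interface is a \emph{conclusion}, reached only after one proves — via the Clean-Up Lemma of \cite{rtt} together with condition \eqref{wlog}, which is exactly where the hypothesis $x_0\in\widetilde\Gamma_{\mf u}$ enters — that $\Gamma_{\bar{\mf u}}$ is a hyperplane whose complement has two components, each occupied by a single group in which the nontrivial components do not vanish; one then reduces to a two-component pair and invokes \cite[Lemma 6.1]{tt}. Your argument assumes this structure rather than proving it.

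The proof of the gap \eqref{eq:gap} has two further problems. First, your counting argument for $q\ge 3$ cells invokes the optimal $q$-partition value $\Lcal_q(S^{N-1})$, the monotonicity of $\Lcal_q/q$, and the identification of $\Lcal_3/3$ with the exponent $3/2$; none of these is established in the cited literature for $N\ge 3$ (the optimal $3$-partition of the sphere is an open problem), and the only partition inequality actually available and used in the paper is the two-set Friedland--Hayman inequality $\gamma(\lambda_1(A))+\gamma(\lambda_1(B))\ge 2$ from \cite{acf}. Second, you yourself flag that the case of two cells with $1<\alpha<3/2$ — which is the entire content of the gap — is left unresolved, with only a vague plan of dimension reduction ``down to $N=1$''. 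The paper's route is to adapt \cite[Proposition 4.7]{rtt}, and it explicitly warns that the nontrivial grouping and the possible sign changes make even that adaptation delicate. By contrast, your final point is sound in spirit: the compatibility condition $\sum_{i\in I_h}\alpha_i^2=\sum_{j\in I_k}\beta_j^2$ indeed does not follow from membership in $\Gcal_{\loc}(\R^N)$ alone and is extracted from the Pohozaev identity inherited by the blow-up, which is exactly how the paper concludes (via \eqref{eq:pohozaev_for_bar_uv2} and the argument of \cite[Theorem 4.16]{rtt}).
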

\begin{proof} (Sketch) Repeating the proof in \cite[Proposition 4.7]{rtt}, one proves \eqref{eq:gap}. Observe that the fact of having nontrivial grouping combined with eventually sign-changing solutions which are not minimisers, makes the proof more delicate than the one appearing in \cite[Lemma 4.1]{CaffLin} and \cite[Proposition 3.7]{tt}. 

Moreover, one sees that if $N(x_0,\mf{u},0^+)=1$  and $\bar{\mf{u}}\in \Bcal\Ucal_{x_0}$, then $\Gamma_{\bar{\mf{u}}}$ is a vector space having dimension at most $N-1$, being exactly $N-1$ except in the possible case where all but one group of components is trivial. However, this latter case is excluded for $x_0\in \widetilde \Gamma_{\mf{u}}$ by the Clean Up Lemma \cite[Proposition 4.15]{rtt} combined with condition \eqref{wlog}. Thus the situation is as follows at such points: $\Gamma_{\bar{\mf u}}$ has exactly two connected components, let us denote them by $A$ and $B$. In such a case, one shows that there exists $h\neq k$ such that
\[
\text{ for each } i \in I_h,\qquad \text{ either }|\bar u_i|>0 \text{ in } A \ \text{ and }\  \bar u_i=0 \text{ on } \partial A,\qquad  \text{or }\quad  \bar u_i\equiv 0.
\]
and
\[
\text{ for each } j \in I_k,\qquad \text{ either }|\bar u_j|>0 \text{ in } B \ \text{ and }\  \bar u_i=0 \text{ on } \partial B,\qquad  \text{or }\quad  \bar u_j\equiv 0.
\]
(where we have also taken in consideration assumption \eqref{wlog}). Then all functions are first eigenfunctions on the corresponding support, and if $I_h=\{h_1,\ldots, h_l\}$, $I_k=\{k_1,\ldots, k_{\tilde l}\}$  there exists $\alpha_i, \beta_j \in \R$ with $i\in I_h$, $j\in I_k$ such that
\[
\bar u_{h_i}=\alpha_i u_{h_1},\qquad \bar u_{k_j}=\beta_j u_{k_1}.
\]
Now since $\bar{\mf u}\in \mathcal{G}_{loc}(\R^N)$, the new functions
\[
\tilde u:=\sqrt{\sum_{i\in I_h} \alpha_i^2}|\bar u_{h_1}|,\qquad \tilde v:=\sqrt{\sum_{i\in I_k} \beta_i^2}|\bar u_{k_1}|
\]
are such that $(\tilde u,\tilde v)$ belong to $\Gcal_{loc}(\R^N)$ in the case $d=2$ (the case of exactly two segregated species). Thus by \cite[Lemma 6.1]{tt} we have that $\Gamma_{\bar{\mf u}}=\{x\cdot \nu=0\}$ for some $\nu\in S^{N-1}$, and $\bar u_{h_1}=\gamma (x\cdot \nu)^+$, $\bar u_{k_1}=\gamma (x\cdot \nu)^-$, with $\gamma>0$. By using \eqref{eq:pohozaev_for_bar_uv2} and reasoning exactly as in point 3. of the proof of Theorem 4.16 in \cite{rtt}, we get
$\sum_{i\in I_h} \alpha_i^2=\sum_{j\in I_k} \beta_j^2$.
\end{proof}

Following the literature, we now define the regular and singular sets as
\begin{align*}
&\Rcal_{\mf u}=\{x\in \widetilde \Gamma_{\mf u}:\ N(x_0,\mf u,0^+)=1\},\\
&S_{\mf u}=\{x\in \widetilde \Gamma_{\mf u}: N(x_0,\mf u,0^+)> 1\}=\{x\in \Gamma_{\mf u}: N(x_0,\mf u,0^+)\geq  3/2\}.
\end{align*}
We can apply the Federer's Reduction Principle (see for instance Appendix A in \cite{simon}), proving already part of Theorem \ref{thm:regularity_G}. 

\begin{theorem}\label{thm:hausdorff_measures_of_nodalsets}
For any $N\geq 2$ we have that:
\begin{itemize}
\item[1.] $\Hh_\textrm{dim}(\Gamma_{\mf u})\leq N-1$;
\item[2.] $\Hh_\textrm{dim}(S_{\mf u})\leq N-2$. Moreover, if $N=2$, for any compact $\tilde \Omega \Subset \Omega$ the set $S_{\mf u}\cap \tilde \Omega$ is finite.
\end{itemize}
\end{theorem}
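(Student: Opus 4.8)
The plan is to obtain both items as a consequence of the Federer Reduction Principle (see Appendix~A in \cite{simon}), run on the class $\Gcal_{\loc}(\R^N)$ of admissible blow-up limits, in the spirit of \cite{rtt,tt}. All the analytic ingredients are already at hand: the blow-up compactness theorem stated just above (together with the fact that a blow-up taken at a \emph{fixed} centre $x_0\in\Gamma_{\mf u}$ is homogeneous of degree $N(x_0,\mf u,0^+)$), the Almgren monotonicity formula and the frequency bound (Theorem~\ref{thm:Almgren} and Lemma~\ref{lem: bound Almgren}, both valid as they stand for $\mf u\in\Gcal(\Omega)$), the gap condition \eqref{eq:gap}, and the classification of $1$-homogeneous profiles (Proposition~\ref{prop:blowup_halfspace}). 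What remains is essentially to package these into the abstract machinery.

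First I would fix the abstract set-up. For item~1 take the singular-set map $\bar{\mf u}\mapsto\Gamma_{\bar{\mf u}}$; for item~2 take $\bar{\mf u}\mapsto S_{\bar{\mf u}}=\{x:\ N(x,\bar{\mf u},0^+)\ge 3/2\}$, which by \eqref{eq:gap} equals $\{x\in\Gamma_{\bar{\mf u}}:\ N(x,\bar{\mf u},0^+)>1\}$ and is relatively closed by the upper semicontinuity of $x\mapsto N(x,\bar{\mf u},0^+)$ (Lemma~\ref{lem: bound Almgren}). The hypotheses of the reduction principle are then verified as follows. Closure under rescaling and translation: the normalized rescalings $x\mapsto\bar{\mf u}(y+\rho x)/\sqrt{H(y,\bar{\mf u},\rho)}$ still lie in $\Gcal_{\loc}(\R^N)$, since (G1) (with vanishing reaction), the Pohozaev identity \eqref{eq:pohozaev_for_bar_uv2} and the nonnegativity and support properties of the measures are scale covariant, and the two singular-set maps transform accordingly. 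Closure under limits and existence of homogeneous blow-ups at singular points: this is exactly the compactness theorem above, the fixed-centre blow-ups being homogeneous of degree equal to their Almgren frequency. Upper semicontinuity of the singular-set map: immediate for $\Gamma$ from locally uniform convergence, and for $S$ a consequence again of Lemma~\ref{lem: bound Almgren} together with a diagonal argument on the blow-up scales. Finally, the dimension-reduction step --- a homogeneous $\bar{\mf u}\in\Gcal_{\loc}(\R^N)$ invariant under translations along a $k$-dimensional subspace $V$ descends, splitting $\R^N=V\times V^{\perp}$, to an element of $\Gcal_{\loc}(\R^{N-k})$ --- is routine, since restriction preserves (G1)--(G2) and \eqref{eq:pohozaev_for_bar_uv2}.

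Next I would identify the critical dimension $d$ that the principle returns, i.e.\ the largest integer such that there exists a homogeneous $\bar{\mf u}\in\Gcal_{\loc}(\R^N)\setminus\{\mf 0\}$, invariant under a $d$-dimensional subspace, with $0$ in its singular set. For item~1 every such $\bar{\mf u}$ is homogeneous of some degree $N(0,\bar{\mf u},0^+)\ge 1$ (Lemma~\ref{lem: bound Almgren}), hence it cannot be invariant under all of $\R^N$, because a nonzero constant is not homogeneous of positive degree; therefore $d\le N-1$, and $d=N-1$ is attained by the $1$-homogeneous profile of Proposition~\ref{prop:blowup_halfspace}, whose nodal set is a hyperplane $\{x\cdot\nu=0\}$. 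This yields $\Hh_\text{dim}(\Gamma_{\mf u})\le N-1$. For item~2, if such a $\bar{\mf u}$ were invariant under a subspace $V=\{x\cdot\nu=0\}$ of dimension $N-1$, then by homogeneity each component would reduce, on each side of $V$, to $c_i(x\cdot\nu)^{\gamma}$ for a suitable constant $c_i$ and exponent $\gamma$; but $-\Delta\bar u_i$ is a nonnegative measure supported on $\{x\cdot\nu=0\}$, which forces $\gamma=1$ (the value $\gamma=0$ being incompatible with positive homogeneity unless $\bar{\mf u}\equiv\mf 0$), hence $N(0,\bar{\mf u},0^+)=1<3/2$ and $0\notin S_{\bar{\mf u}}$. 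Thus $d\le N-2$, so $\Hh_\text{dim}(S_{\mf u})\le N-2$; and when $N=2$ this gives $d\le 0$, whence the reduction principle implies that $S_{\mf u}$ has no accumulation point in $\Omega$, so $S_{\mf u}\cap\tilde\Omega$ is finite for every $\tilde\Omega\Subset\Omega$.

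The main obstacle is not the dimension count --- which ultimately rests on the elementary one-variable computation ruling out $(N-1)$-invariant homogeneous profiles of frequency $\ge 3/2$, together with Proposition~\ref{prop:blowup_halfspace} --- but rather the verification that, in the present setting with nontrivial grouping and possibly sign-changing components, the class $\Gcal_{\loc}(\R^N)$ is genuinely stable under the blow-up operations the reduction principle demands and that all blow-ups at singular points are bona fide homogeneous members of this class with the expected frequency. These facts, however, are exactly what the compactness theorem above, Theorem~\ref{thm:Almgren}, Lemma~\ref{lem: bound Almgren} and Proposition~\ref{prop:blowup_halfspace} provide, so at this stage the work reduces to assembling them as in \cite{rtt,tt}; the sign-changing phenomenon introduces no additional difficulty here, since the frequency is built from $\sum_i u_i^2$ rather than from the individual components.
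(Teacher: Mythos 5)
Your proposal is correct and follows exactly the route the paper takes: the paper's proof consists of invoking Federer's Reduction Principle and deferring the details to \cite[Theorem 4.5 \& Remark 4.7]{tt}, which is precisely the machinery you assemble (scale/translation invariance and compactness of the blow-up class, homogeneity of fixed-centre blow-ups with degree equal to the Almgren frequency, and the one-dimensional computation ruling out $(N-1)$-invariant profiles of frequency $\ge 3/2$). You simply spell out the standard verifications that the paper leaves to the cited reference.
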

\begin{proof}
For the complete details, see \cite[Theorem 4.5 \& Remark 4.7]{tt}.
\end{proof}

Moreover, the information for the blowups in $\Bcal\Ucal_{x_0}$ with $x_0\in \Rcal_{\mf u}$, allows us to reason as in \cite[Lemma 3.5 \& Proposition 5.4]{tt}, proving the following ($d_{\rm Hausd}$ denotes the Hausdorff distance).

\begin{proposition}\label{prop:Reifenberg}
Fix $x_0\in \Rcal_{\mf u}$. Then there exists $R_0>0$ such that the set $B_{R_0}(x_0)\setminus \Gamma_{\mf u}$ has exactly two connected components $\Omega_1,\Omega_2$, which are $(\delta,R)$--Reifenberg flat for every small $\delta>0$ and some $R=R(\delta)$. More precisely: for every $\delta>0$ there exists $R>0$ such that whenever $x\in \Gamma_{\mf u}\cap B_R(x_0)$, $0<r<R$ there exists an hyperplane $H=H_{x,r}$ containing $x$ satisfying
\begin{enumerate}
\item[i)]  $\text{d}_{\rm Hausd}(\Gamma_{\mf u}\cap B_r(x),H\cap B_r(x))\leq \delta r$
\item[ii)] there exists a unitary vector $\nu=\nu_{x,r}$ orthogonal to $H_{x,r}$ such that
\[
\{y+t\nu\in B_r(x):\ y\in H,\ t\geq \delta r\}\subset \Omega_1,\qquad \{y-t\nu\in B_r(x):\ y\in H,\ t\geq \delta r\}\subset \Omega_2.
\]
\end{enumerate}
\end{proposition}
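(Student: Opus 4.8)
The plan is to adapt to the present ``grouped'' framework the blow-up scheme of \cite[Lemma 3.5 and Proposition 5.4]{tt}. The whole statement is a soft consequence of the classification of blow-up limits at regular points given by Proposition \ref{prop:blowup_halfspace} — every such limit is a ``two half-spaces'' configuration, linear on each side of a flat interface $\{x\cdot\nu=0\}$ — combined with the compactness theorem for blow-up sequences stated above. Throughout, for a group $h$ and a vector function $\mf v$ I write $U_h(\mf v):=\{\sum_{i\in I_h}v_i^2>0\}$.

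\emph{Localization of regular points.} First I would exploit the upper semicontinuity of $x\mapsto N(x,\mf u,0^+)$ (Lemma \ref{lem: bound Almgren}) together with the frequency gap \eqref{eq:gap}: since $N(x_0,\mf u,0^+)=1$, there is $R_0>0$ with $N(x,\mf u,0^+)=1$ for every $x\in\Gamma_{\mf u}\cap B_{2R_0}(x_0)$. Let $(h_0,k_0)$ be the pair of groups occurring in a blow-up of $\mf u$ at $x_0$ (Proposition \ref{prop:blowup_halfspace}). Invoking the Clean Up Lemma \cite[Proposition 4.15]{rtt} and assumption \eqref{wlog}, and shrinking $R_0$ if necessary, one gets: on $B_{2R_0}(x_0)$ only the groups $h_0,k_0$ are nontrivial; every point of $\Gamma_{\mf u}\cap B_{2R_0}(x_0)$ belongs to $\widetilde\Gamma_{\mf u}$; and every element of $\Bcal\Ucal_x$ at such a point is of the form given by Proposition \ref{prop:blowup_halfspace} relative to $(h_0,k_0)$, in particular with nodal set an affine hyperplane through $x$. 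By mutual segregation of $h_0$ and $k_0$, this already yields the disjoint decomposition $B_{2R_0}(x_0)\setminus\Gamma_{\mf u}=U_{h_0}(\mf u)\sqcup U_{k_0}(\mf u)$ into open sets, which will be the two connected components of the statement once flatness is established (connectedness of each following from Reifenberg flatness for small $\delta$, cf.\ \cite[Theorem 3.1]{KenigToro}).

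\emph{Quantitative flatness.} Then I would prove (i)--(ii) by contradiction: fix $\delta>0$ and suppose that for every $R$ there are $x_R\in\Gamma_{\mf u}\cap B_R(x_0)$ and $0<r_R<R$ admitting no hyperplane through $x_R$ that satisfies (i)--(ii) at scale $r_R$; take $R=1/n$, set $x_n:=x_{1/n}\to x_0$, $r_n:=r_{1/n}\to0$, and blow up, $\mf u_n(x):=\mf u(x_n+r_n x)/\sqrt{H(x_n,\mf u,r_n)}$. By the compactness theorem, up to a subsequence $\mf u_n\to\bar{\mf u}$ in $\Ccal^{0,\alpha}_{\loc}\cap H^1_{\loc}(\R^N)$ with $\bar{\mf u}\in\Gcal_{\loc}(\R^N)$, $\bar{\mf u}\not\equiv0$, $0\in\Gamma_{\bar{\mf u}}$; since $x_n\in\Gamma_{\mf u}$ and $N(x_0,\mf u,0^+)=1$, the theorem gives that $\bar{\mf u}$ is homogeneous, and a standard semicontinuity argument on the frequency gives that its degree equals $N(x_0,\mf u,0^+)=1$; moreover $0\in\widetilde\Gamma_{\bar{\mf u}}$ by the localization step and the Clean Up Lemma. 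Hence Proposition \ref{prop:blowup_halfspace} applies to $\bar{\mf u}$ at the origin and furnishes $\nu\in S^{N-1}$ with $\Gamma_{\bar{\mf u}}=\{x\cdot\nu=0\}$, $U_{h_0}(\bar{\mf u})=\{x\cdot\nu>0\}$, $U_{k_0}(\bar{\mf u})=\{x\cdot\nu<0\}$. The uniform convergence $\mf u_n\to\bar{\mf u}$, the mutual segregation of $U_{h_0}(\mf u_n)$ and $U_{k_0}(\mf u_n)$, and the flatness of the limit nodal set then force, exactly as in the classical theory, $\Gamma_{\mf u_n}\cap B_1\to\{x\cdot\nu=0\}\cap B_1$ in the Hausdorff distance and, for $n$ large, the inclusion of the $\delta$-shifted half-spaces $\{y\pm t\nu\in B_1:\ y\cdot\nu=0,\ t\ge\delta\}$ into $U_{h_0}(\mf u_n)$, resp.\ $U_{k_0}(\mf u_n)$ — two different components of $B_1\setminus\Gamma_{\mf u_n}$. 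Rescaling back, the hyperplane $x_n+r_n\{x\cdot\nu=0\}$, which contains $x_n$, satisfies (i)--(ii) for $\mf u$ at $x_n$ and scale $r_n$, a contradiction. Running this for $\delta\to0$ yields the radii $R=R(\delta)$ of the statement.

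\emph{Main difficulty.} I expect the genuinely delicate steps to be the two uses of the Clean Up Lemma of \cite{rtt}: excluding that a third group is active near $x_0$ and, above all, excluding that the second active group $k_0$ degenerates to zero under blow-up (which would leave a one-phase, possibly non-flat, limit). Both reduce to the principle that a component whose density stays uniformly small on a ball must vanish there; this, together with the nondegeneracy needed to pass from $\Ccal^0$-convergence of $\mf u_n$ to Hausdorff convergence of $\Gamma_{\mf u_n}$, is where the grouped, non-variational setting makes the argument heavier than in \cite{tt}. The homogeneity-degree computation and the identification of $\Omega_1,\Omega_2$ are, by comparison, routine.
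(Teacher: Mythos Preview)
Your proposal is correct and follows essentially the same route as the paper, which in fact gives no detailed proof and simply refers to \cite[Lemma~3.5 \& Proposition~5.4]{tt}; your write-up is a faithful adaptation of that blow-up/contradiction scheme to the grouped setting, with the Clean~Up Lemma of \cite{rtt} playing the role you describe. One small simplification: you do not need to re-apply Proposition~\ref{prop:blowup_halfspace} to the limit $\bar{\mf u}$ at the origin; as stated in this paper, Proposition~\ref{prop:blowup_halfspace} already classifies \emph{every} element of $\Bcal\Ucal_{x_0}$ (i.e.\ blow-ups along moving centers $x_n\to x_0$) as a two--half-space configuration, so the contradiction follows directly once you know $\bar{\mf u}\in\Bcal\Ucal_{x_0}$.
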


In view of proving Theorem \ref{thm:regularity_G}, let us now focus in the local regularity of $\Rcal_{\mf u}$. Fix $x_0$ in such set. Then, from the previous proposition, we get the existence of $R_0>0$, sets $\Omega_1,\Omega_2$, and $k\neq h$ such that
\[
\sum_{i\in I_h} |u_i|>0 \text{ in } \Omega_1,\qquad \sum_{j\in I_k} |u_j|>0 \text{ in } \Omega_2.
\]
Let $1 \leq h_1, k_1 \leq d$ and $l, \tilde l$ be such that $I_h=\{h_1,\ldots, h_1+l=:h_l\}$ and $I_k=\{k_1,\ldots, k_1+\tilde l=:k_{\tilde l}\}$ and define
\[
\mf{u}^h:=(u_{h_1},\ldots, u_{h_l}),\qquad \mf{u}^k:=(u_{k_1},\ldots, u_{k_{\tilde l}}).
\]

Let us check that in a neighbourhood of $x_0$ at least one component of $\mf{u}^h$ and of $u_{h_l}$ does not change sign.

\begin{lemma}
There exists $R>0$, $h_i\in \{h_1,\ldots, h_l\}$ and $k_j\in \{k_1,\ldots, k_{\tilde l}\}$ such that
\[
\text{ either } \quad u_{h_i}>0 \quad \text{ or } \quad u_{h_i}<0 \quad \Omega_1\cap B_r(x_0),
\]
and
\[
\text{ either } \quad u_{k_i}>0 \quad \text{ or } \quad u_{k_i}<0 \quad \Omega_2\cap B_r(x_0).
\]
\end{lemma}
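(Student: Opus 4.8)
The plan is to blow up at $x_0$, identify the blow-up as an ``aligned'' half-plane profile via Proposition~\ref{prop:blowup_halfspace}, pick out one component of the blow-up with a non-zero coefficient, and then show that for rescalings at sufficiently small scales this sign-definiteness persists, so that unscaling gives sign-definiteness of a genuine component in a genuine neighbourhood.

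More precisely: fix $x_0\in\Rcal_{\mf u}$ and let $R_0,\Omega_1,\Omega_2,h,k,\nu$ be as furnished (after Proposition~\ref{prop:Reifenberg}), so $\sum_{i\in I_h}|u_i|>0$ in $\Omega_1$, $\sum_{j\in I_k}|u_j|>0$ in $\Omega_2$, and all remaining components vanish identically on each set; we argue for $\Omega_1$ and $\mf{u}^h$, the case of $\Omega_2$, $\mf{u}^k$ being identical. Since $N(x_0,\mf u,0^+)=1$, Proposition~\ref{prop:blowup_halfspace} gives $t_n\to 0^+$ with $u_{i,n}(x):=u_i(x_0+t_n x)/\sqrt{H(x_0,\mf u,t_n)}\to \bar u_i$ in $H^1_{\loc}\cap\mathcal{C}^{0,\alpha}_{\loc}(\R^N)$, where $\bar u_i=\alpha_i(x\cdot\nu)^+$ for $i\in I_h$, with $\sum_{i\in I_h}\alpha_i^2\neq 0$. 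Hence there is $i_0\in I_h$ with $\alpha_{i_0}\neq 0$; replacing $u_{i_0}$ by $-u_{i_0}$ if needed, assume $\alpha_{i_0}>0$, so $\bar u_{i_0}\geq 0$ on $\R^N$ and $\bar u_{i_0}>0$ on $\{x\cdot\nu>0\}$. Set $w_n:=u_{i_0,n}$ and $\Omega_{1,n}:=(\Omega_1-x_0)/t_n$; rescaling Proposition~\ref{prop:Reifenberg}, for each fixed small $\delta>0$ the set $\Omega_{1,n}\cap B_1$ contains $\{x\cdot\nu>\delta\}\cap B_1$ and is contained in $\{x\cdot\nu>-\delta\}$ once $n$ is large.

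Now collect the properties of $w_n$. Since $\mf u\in\Gcal(\Omega)$ we have $-\Delta w_n=f_{i_0,n}-\Mcal_{i_0,n}$ on $B_1$, with $\Mcal_{i_0,n}\geq 0$ supported on $\Gamma_{\mf u,n}:=(\Gamma_{\mf u}-x_0)/t_n$, and by assumption (L) together with the $L^\infty$-bound on $\{u_{i,n}\}$ one gets $\|f_{i_0,n}\|_{L^\infty(B_1)}\leq C t_n^2\sum_{j\in I_h}\|u_{j,n}\|_{L^\infty(B_1)}=:\eps_n\to 0$. On $\Omega_{1,n}$ the measure $\Mcal_{i_0,n}$ vanishes (it avoids $\Gamma_{\mf u,n}$), so there $|\Delta w_n|\leq\eps_n$, and $w_n=0$ on $\partial\Omega_{1,n}\cap B_1$ since $\partial\Omega_1\cap B_{R_0}(x_0)\subseteq\Gamma_{\mf u}$. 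Moreover $w_n\to\bar u_{i_0}$ uniformly on $\overline{B_1}$ and, by interior elliptic estimates, in $\mathcal{C}^1_{\loc}(\{x\cdot\nu>0\})$; thus for fixed small $\delta$ and $n$ large, $w_n\geq\tfrac12\alpha_{i_0}\delta>0$ and $|\nabla w_n|\geq\tfrac12\alpha_{i_0}>0$ on $\{x\cdot\nu\geq\delta\}\cap\overline{B_{3/4}}$. Since $\Mcal_{i_0,n}\geq 0$, the function $w_n-\tfrac{\eps_n}{2N}|x|^2$ is superharmonic in $\Omega_{1,n}\cap B_1$, and comparison on the boundary (where it is $\geq -\eps_n$ on $\partial\Omega_{1,n}$, since $w_n=0$ there, and $\geq\bar u_{i_0}-o_n(1)\geq -o_n(1)$ on the spherical part) yields $w_n\geq -o_n(1)$ in $\Omega_{1,n}\cap B_{3/4}$. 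On the bulk $\{x\cdot\nu\geq\delta\}$ strict positivity is already known; it remains to propagate it across the strip $\Sigma_n:=\Omega_{1,n}\cap B_{1/2}\cap\{x\cdot\nu<\delta\}$, on which $w_n=0$ on $\partial\Omega_{1,n}$, $w_n\geq\tfrac12\alpha_{i_0}\delta$ on the interior wall $\{x\cdot\nu=\delta\}$, $|\Delta w_n|\leq\eps_n$, and $w_n\geq -o_n(1)$ on the remaining part of $\partial\Sigma_n$; a barrier/maximum–principle comparison on the (flat) domain $\Sigma_n$ then forces $w_n>0$ throughout $\Sigma_n\cap B_{1/4}$ for $n$ large. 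Hence $w_n>0$ in $\Omega_{1,n}\cap B_{1/4}$ for all large $n$; fixing one such $n$ and setting $r:=t_n/4$ and unscaling, $u_{i_0}>0$ in $\Omega_1\cap B_r(x_0)$, i.e. the claim for $\mf{u}^h$ with $h_i=i_0$. Repeating the argument in $\Omega_2$ with a component $u_{j_0}$, $\beta_{j_0}\neq 0$, gives the statement for $\mf{u}^k$.

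The main obstacle is the last step, namely controlling $w_n$ (equivalently $u_{i_0}$) in the thin region next to the interface $\{x\cdot\nu=0\}$: there the blow-up limit degenerates to $0$, so uniform convergence gives no sign information, and at this point of the paper $\Gamma_{\mf u}$ is only known to be Reifenberg flat, not $\mathcal{C}^1$, so one cannot invoke boundary gradient bounds or a classical Hopf lemma. One must instead exploit that the competition measure $\Mcal_{i_0}$ has a sign (making $w_n$ a subsolution of $\Delta$ up to a vanishing error), that the reaction term $f_{i_0,n}$ is negligible at small scales thanks to (L), and the flatness of $\Omega_{1,n}$, to run a boundary comparison pushing positivity from the interior wall $\{x\cdot\nu=\delta\}$ down to the interface; the estimates needed here are of the same nature as those behind the Reifenberg flatness and the clean-up arguments in \cite{tt,rtt}.
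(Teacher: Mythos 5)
Your setup (blow up at $x_0$, use Proposition \ref{prop:blowup_halfspace} to find a component $u_{i_0}$, $i_0\in I_h$, with $\alpha_{i_0}\neq 0$, normalise so that the blow-up limit is $\alpha_{i_0}(x\cdot\nu)^+\geq 0$) coincides with the first step of the paper's argument, and your estimates $\|f_{i_0,n}\|_{L^\infty}\to 0$ and $w_n\geq -o_n(1)$ in $\Omega_{1,n}\cap B_{3/4}$ are correct. The gap is exactly at the point you yourself flag as ``the main obstacle'': the final barrier/maximum-principle comparison on the thin strip $\Sigma_n$ cannot force $w_n>0$ there. On $\partial\Omega_{1,n}$ the boundary datum is exactly $0$, so any comparison function lying below $w_n$ on $\partial\Sigma_n$ must itself be $\leq 0$ on that portion; together with the $-o_n(1)$ datum on the spherical part and the forcing $\eps_n$, the best a comparison yields is $w_n\geq -o_n(1)$ in $\Sigma_n$, uniformly but not strictly positive. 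Quantitatively, at distance $s$ from $\partial\Omega_{1,n}$ the harmonic measure of the wall $\{x\cdot\nu=\delta\}$ is of order $s/\delta$, so the lower bound one extracts is of order $\alpha_{i_0}s - C(\eps_n+o_n(1))$, which is useless for $s$ small relative to the (fixed) scale $n$ — and that is precisely the region where $u_{i_0}$ could a priori change sign. A strong maximum principle does not rescue this either, since $-\Delta w_n=f_{i_0,n}$ is not bounded below by $-C|w_n|$ (assumption (L) only controls $f_{i_0}$ by the sum over the whole group), and $\Gamma_{\mf u}$ is only Reifenberg flat, so no Hopf lemma is available.

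The paper closes this gap differently: it writes $u_{i_0}=w_n-z_n$ on $B_{2t_n}(x_0)\cap\Omega_1$, where $w_n$ and $z_n$ are the $\tfrac{f(u_{i_0})}{u_{i_0}}$-harmonic extensions of $u_{i_0}^+$ and $u_{i_0}^-$ respectively (so both are \emph{positive} solutions of the \emph{same} linear equation, vanishing on $\partial\Omega_1$), shows via the blow-up that the ratio $z_n/w_n$ is small at one well-chosen interior point $x_0+t_n\bar y$, and then invokes the boundary Harnack principle of Lemma \ref{lemma:boundaryHarnack_same_lambda} to propagate the bound $z_n/w_n<1$ uniformly up to $\partial\Omega_1$ throughout $B_{t_n}(x_0)\cap\Omega_1$. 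It is this uniform-up-to-the-boundary control of a ratio of two positive solutions — not a barrier for $u_{i_0}$ itself — that rules out a sign change arbitrarily close to the free boundary. If you want to complete your argument you should replace the strip comparison by this decomposition plus Lemma \ref{lemma:boundaryHarnack_same_lambda} (which is available at this stage, since it only requires the NTA property, guaranteed by Proposition \ref{prop:Reifenberg}).
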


\begin{proof}
From Proposition \ref{prop:blowup_halfspace}, we have, for any given $t_n\to 0$,
\[
\frac{u_{h_i}(x_0+t_nx)}{\sqrt{H(x_0,\mf u,t_n)}}\to \bar u\not\equiv 0
\]
for some $h_i\in I_h$ (the index eventually depending on $\{t_n\}$). Assume without loss of generality that $\bar u\geq 0$.

Define by $w_n,z_n$ the $\frac{f(u_{h_i})}{u_{h_i}}$--harmonic extensions of $u_{h_i}^+$ and $u^-_{h_i}$ on $B_{2t_n}(x_0)\cap \Omega_1$, namely:
\[
-\Delta w_n=\frac{f(u_{h_i})}{u_{h_i}} w_n \quad -\Delta z_n=\frac{f(u_{h_i})}{u_{h_i}} z_n \quad \text{ in } B_{2t_n}(x_0)\cap \Omega,
\]
\[
w_n=u_{h_i}^+,\quad z_n=u_{h_i}^- \qquad \text{ on } \partial (B_{2t_n}(x_0)\cap \Omega).
\]
and observe that $u_{h_i}=w_n-z_n$ in $B_{2 t_n}(x_0)\cap \Omega_1$. Let $\widetilde w_n$, $\widetilde z_n$ denote the blowups
\[
\widetilde w_n=\frac{w_n(x_0+t_n x)}{\rho_n}\ \text{ and }\ \widetilde z_n=\frac{z_n(x_0+t_n x)}{\rho_n}, \qquad \text{defined in $B_2(0)\cap\left( \frac{\Omega_1-x_0}{t_n}\right)$}. 
\]
At the limit, we find a harmonic equation in a half sphere (since $\Omega_1$ is a Reifenberg flat domain), and the boundary data converges to $\bar u$ and $0$ respectively. Hence we have
\[
\widetilde w_n\to \bar u\geq 0,\qquad \widetilde z_n\to 0.
\]
Thus there exists $\bar y\in \partial B_{1/2}(0)\cap \left(\frac{\Omega_1-x_0}{t_n}\right)$ such that
\[
\frac{z_n(x_0+t_n\bar y)}{w_n(x_0+t_n\bar y)}=\frac{\widetilde z_n(\bar y)}{\widetilde w_n(\bar y)}<\frac{1}{C}
\]
for $n$ large, where $C$ is the constant appearing in \eqref{eq:BHarnack}. Then by this very same lemma applied to $z_n,w_n$, we have 
\[
\frac{z_n(x_0+t_n x)}{w_n(x_0+t_n x)}\leq C\frac{\widetilde z_n(\bar y)}{\widetilde w_n(\bar y)}<1 \qquad \forall x\in B_1(0)\cap \left(\frac{\Omega-x_0}{t_n}\right),
\]
and so $u_{h_i}=w_n-z_n>0$ in $B_{t_n}(x_0)\cap \Omega_1$ for sufficiently large $n$. The proof for $u_{k_i}$ is analogous.
\end{proof}

Assume, without loss of generality, that $u_{h_1}>0$ in $\Omega_1$ and $u_{k_1}>0$ in $\Omega_2$. 

\begin{lemma}\label{lemma:Holder_different_a}
There exists $C>0$ such that, for $r$ sufficiently small
\[
\left|\frac{u_{h_1+i}(x)}{u_{h_1}(x)}-\frac{u_{h_1+i}(x_0)}{u_{h_1}(x_0)}\right|\leq C r^{\alpha} \qquad \forall x\in \overline{B_{r}(x_0)\cap \Omega_1},\ i=2,\ldots, l
\]
and
\[
\left|\frac{u_{k_1+j}(x)}{u_{k_1}(x)}-\frac{u_{k_1+i}(x_0)}{u_{k_1}(x_0)}\right|\leq C r^{\alpha} \qquad \forall x\in \overline{B_{r}(x_0)\cap \Omega_2},\ j=2,\ldots, \tilde l.
\]
\end{lemma}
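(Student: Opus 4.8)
The plan is to derive this from the variable--coefficient boundary Harnack inequality, Proposition \ref{prop:u_i/u_1_alpha_Holder_generalcase}, applied on the Reifenberg flat domain $\Omega_1$ (the estimate for $\mf{u}^k$ on $\Omega_2$ being proved identically, with $u_{k_1}$ in place of $u_{h_1}$). Recall that, by Proposition \ref{prop:Reifenberg}, $\Omega_1$ is $(\delta,R)$--Reifenberg flat near $x_0$ for every small $\delta$, that $u_{h_1}$ is Lipschitz by (G1) and, after the relabelling arranged following the preceding lemma, satisfies $u_{h_1}>0$ in $\Omega_1\cap B_r(x_0)$, and that $u_{h_1}=0$ on $\partial\Omega_1\cap B_{R_0}(x_0)\subseteq\Gamma_{\mf{u}}$. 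Since $\Omega_1$ is connected and $u_iu_j\equiv 0$ for $(i,j)\in\Kcal_2$, every component $u_j$ with $j\notin I_h$ vanishes identically on $\Omega_1$; hence, by (G2), $-\Delta u_j=f_j(x,\mf{u})$ in $\Omega_1$ for $j\in I_h$, and, near $\Gamma_{\mf{u}}$ (where $\mf{u}$ is small), assumption (L) gives $|f_j(x,\mf{u})|\le C\sum_{l\in I_h}|u_l|$.

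Two preliminary reductions are needed before the Proposition can be invoked. First, the comparability estimate $\sum_{l\in I_h}|u_l|\le C\,u_{h_1}$ in $\Omega_1\cap B_r(x_0)$: away from $\partial\Omega_1$ this is clear since $u_{h_1}$ is positive and continuous there, while near $\partial\Omega_1$ it follows from $|u_l|\le C\,\dist(\cdot,\partial\Omega_1)$ (Lipschitz, vanishing on $\partial\Omega_1$) together with the \emph{non-degeneracy} $u_{h_1}\ge c\,\dist(\cdot,\partial\Omega_1)$ near $\partial\Omega_1$ --- the latter being the Carleson/boundary Harnack lower bound for positive solutions of $-\Delta w=a(x)w$, $a\in L^\infty$, vanishing on a Reifenberg flat (hence NTA) boundary portion (\cite{JerisonKenig,KenigToro}), as is implicit already in the proof of the preceding lemma. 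With the comparability at hand, $a(x):=f_{h_1}(x,\mf{u})/u_{h_1}(x)\in L^\infty(\Omega_1\cap B_r(x_0))$, so $-\Delta u_{h_1}=a(x)u_{h_1}$; moreover, for $\varepsilon>0$ small the function $v:=u_{h_1}+\varepsilon\,u_{h_1+i}$ satisfies $v\ge(1-C\varepsilon)u_{h_1}>0$ in $\Omega_1\cap B_r(x_0)$, $v=0$ on $\partial\Omega_1\cap B_{R_0}(x_0)$, $v$ is Lipschitz, and $-\Delta v=b(x)v$ with $b(x):=(f_{h_1}(x,\mf{u})+\varepsilon f_{h_1+i}(x,\mf{u}))/v(x)\in L^\infty$ (its numerator is $\lesssim\sum_{l\in I_h}|u_l|\lesssim u_{h_1}\lesssim v$). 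If $u_{h_1+i}\equiv 0$ near $x_0$ the asserted bound is trivial; otherwise, applying Proposition \ref{prop:u_i/u_1_alpha_Holder_generalcase} with the pair $(u_{h_1},v)$ yields $|v/u_{h_1}-v(x_0)/u_{h_1}(x_0)|\le C r^\alpha$ on $\overline{B_r(x_0)\cap\Omega_1}$, and since $v/u_{h_1}=1+\varepsilon\,u_{h_1+i}/u_{h_1}$, dividing by $\varepsilon$ gives exactly the claim; the case of $\Omega_2$ is identical.

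The crux is verifying the structural hypothesis \eqref{eq:assumption_technical} of Proposition \ref{prop:u_i/u_1_alpha_Holder_generalcase} for $u=u_{h_1}$: given $x_n\in\partial\Omega_1$, $x_n\to x_0$, $t_n\to 0^+$, one must produce $\rho_n$, $\gamma>0$ and $e\in S^{N-1}$ with $u_{h_1}(x_n+t_n\cdot)/\rho_n\to\gamma(x\cdot e)^+$, $|u_{h_1+i}(x_n+t_n\cdot)/\rho_n|\le C$ locally, and $\rho_n/t_n^{1+\varepsilon}\not\to 0$ for every small $\varepsilon$. I would take $\rho_n:=\sqrt{H(x_n,\mf{u},t_n)}$. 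Since $x_0\in\Rcal_{\mf{u}}$, i.e. $N(x_0,\mf{u},0^+)=1$, the blow--up compactness theorem preceding Proposition \ref{prop:blowup_halfspace} applies and, up to a subsequence, $\mf{u}(x_n+t_n\cdot)/\rho_n\to\bar{\mf{u}}\in\Gcal_{\loc}(\R^N)$, positively homogeneous of some degree $\alpha$. Near $x_0$ the interface $\Gamma_{\mf{u}}$ is Reifenberg flat, so $\Gamma_{\bar{\mf{u}}}$ is a hyperplane $\{x\cdot\nu=0\}$; by condition \eqref{wlog} (valid at every $x_n$, which lies on the $\Omega_1$--$\Omega_2$ interface) together with the Clean--Up Lemma both sides are non-trivial, the $I_h$--components living on the $\Omega_1$--side; since the rescaled forcing terms vanish ($t_n^2/\rho_n\sim t_n\to 0$), $\bar u_{h_1}$ is harmonic on that side, homogeneous of degree $\alpha$, vanishing on $\{x\cdot\nu=0\}$, non-negative there (as $u_{h_1}>0$ in $\Omega_1\cap B_r(x_0)$) and $\equiv 0$ on the other side, so $\bar u_{h_1}=\gamma(x\cdot\nu)^+$ once $\alpha=1$. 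That $\alpha=1$ and $\gamma>0$ follow from the non-degeneracy of the previous paragraph: it forces $H(x_n,\mf{u},t_n)\ge t_n^{1-N}\int_{\partial B_{t_n}(x_n)}u_{h_1}^2\ge c\,t_n^2$, while $H(x_n,\mf{u},t_n)\asymp t_n^{2\alpha}$ by homogeneity, whence $\alpha\le 1$; combined with $\alpha\ge 1$ (Lemma \ref{lem: bound Almgren}, $N\ge 1$ on $\Gamma_{\mf{u}}$) this gives $\alpha=1$, and the same lower bound yields $\bar u_{h_1}\gtrsim(x\cdot\nu)^+$, i.e. $\gamma>0$. The growth condition is then immediate, $\rho_n\ge c\,t_n$ so $\rho_n/t_n^{1+\varepsilon}\ge c\,t_n^{-\varepsilon}\to\infty$, and the bound on $u_{h_1+i}(x_n+t_n\cdot)/\rho_n$ holds because the whole rescaled vector converges (if the convergence holds only along subsequences, \eqref{eq:assumption_technical}, and the way it is used in the proof of Proposition \ref{prop:u_i/u_1_alpha_Holder_generalcase}, are to be read accordingly).

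The main obstacle is precisely this non-degeneracy of $u_{h_1}$ near $\partial\Omega_1$ in a fixed neighbourhood of $x_0$: it is what simultaneously makes the potentials bounded, keeps the modified numerator $v$ positive in the reduction above, and --- most importantly --- pins the boundary blow--up of $u_{h_1}$ to a positive multiple of a half--plane linear function, which is exactly what hypothesis \eqref{eq:assumption_technical} requires. Once it is secured, the rest is a bookkeeping rearrangement of the boundary Harnack machinery developed in Subsection~\ref{subseq:Harnack}.
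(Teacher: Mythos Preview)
Your overall strategy---reduce to Proposition \ref{prop:u_i/u_1_alpha_Holder_generalcase} after checking its hypotheses---is the paper's strategy as well, and your device $v=u_{h_1}+\varepsilon\,u_{h_1+i}$ for handling a possibly sign-changing numerator is a neat alternative to the paper's use of the $f_{h_1+i}/u_{h_1+i}$-harmonic extensions of $u_{h_1+i}^{\pm}$.

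The gap is the linear non-degeneracy $u_{h_1}\ge c\,\dist(\cdot,\partial\Omega_1)$ near $\partial\Omega_1\cap B_r(x_0)$, on which your whole argument rests (boundedness of $a$ and $b$, positivity of $v$, and---crucially---the verification of \eqref{eq:assumption_technical} via $H(x_n,\mf u,t_n)\ge c\,t_n^2$). This is \emph{not} a Carleson or boundary Harnack statement for NTA or Reifenberg flat domains: those results compare two positive solutions with each other, or a solution with harmonic measure, but say nothing about linear growth in the Euclidean distance. On a general NTA domain a positive harmonic function vanishing on a boundary portion can grow like $\dist^\gamma$ with $\gamma>1$; for $(\delta,R)$-Reifenberg flat domains with small $\delta$ one gets $\gamma$ close to $1$, but not $\gamma=1$, and this is not enough for your estimate $\rho_n\ge c\,t_n$. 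Nor is the non-degeneracy ``implicit in the preceding lemma'', which only compares $u_{h_i}^+$ with $u_{h_i}^-$ via boundary Harnack.

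The paper obtains what you need by two separate and more robust arguments, neither of which invokes a comparison with $\dist$. First, the growth condition \eqref{eq:assumption_technical} comes from Almgren monotonicity: since $x_0\in\Rcal_{\mf u}$ and $x\mapsto N(x,\mf u,0^+)$ is upper semicontinuous, for every small $\varepsilon'$ one has $N(x,\mf u,r)\le 1+\varepsilon'$ for $x\in\Gamma_{\mf u}$ near $x_0$ and $r$ small, whence $H(x_n,\mf u,t_n)\ge C\,t_n^{2(1+\varepsilon')}$ by integrating $(\log H)'=2N/r$. This gives $\rho_n/t_n^{1+\varepsilon}\not\to 0$ for $\varepsilon>\varepsilon'$, which is exactly what \eqref{eq:assumption_technical} asks. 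Second, the non-triviality of the blow-up of $u_{h_1}$ (your ``$\gamma>0$'') is obtained by comparing $u_{h_1}$ not with $\dist$ but with $U:=\sum_{i\in I_h}|u_i|$, whose blow-up is already known to be $\gamma(x\cdot\nu)^+$ by Proposition \ref{prop:blowup_halfspace}. The comparison $u_{h_1}\ge C_1 U$ is carried out via the deformation $g\circ U$ (a subsolution of $-\Delta w=b(x)w$ for any bounded $b$, by Lemma \ref{lemma:properties_of_g(rho)}), its $f_{h_1}(x,\mf u)/u_{h_1}$-harmonic extension $\tilde U_r$, and the boundary Harnack Lemma \ref{lemma:boundaryHarnack_same_lambda} applied to the pair $(u_{h_1},\tilde U_r)$. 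This same inequality $u_{h_1}\ge C_1 U$ is what makes the potentials $f_j(x,\mf u)/u_{h_1}$ bounded, replacing your appeal to $u_{h_1}\gtrsim\dist$. In short, you have correctly located the crux, but the shortcut you take to resolve it is not available; the paper's two ingredients (Almgren near regular points, and the $g$-deformation plus boundary Harnack comparison with $U$) are what fill the gap.
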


\begin{proof}
We prove that, given $x_n\in \Rcal_{\mf u}$ with $x_n\to x_0\in \Rcal_{\mf u}$, and $t_n\to 0^+$,
\begin{enumerate}
\item For every $\eps>0$ small, 
\[
\frac{H(x_n,\mf u,t_n)}{t_n^{2+\eps}}\not \to 0 \quad \text{ as } n\to \infty;
\]
\item we have
\[
\frac{u_{h_1}(x_n+t_n x)}{\sqrt{H(x_n,\mf u,t_n)}}\not \to 0\ \quad \text{ as } n\to \infty;
\]
\end{enumerate}
The first point is a consequence of $N(x',\mf u,0^+)=1$ for $x'\in \Rcal_{\mf u}$. In fact, for every $\eps>0$ small there exists $\bar r>0$ such that for $r\leq \bar r$
\[
N(x',\mf u,r)\leq 1+\ep' \qquad x'\in B_{\delta}(x_0)\cap \Gamma_{\mf u}.
\]
Thus we deduce from Theorem \ref{thm:Almgren} that, for some $C>0$,
\[
C \leq \frac{H(x_n,\mf u,t_n)}{t_n^{2(1+\ep)}}.
\]

As for the second point, take $U(x):=\sum_{i\in I_h} |u_i|$, which satisfies $-\Delta U\leq \lambda U$ in $\Omega_1$. Observe that 
\[
U_n(x)=\frac{U(x_n+t_nx)}{\sqrt{H(x_n,\mf u,t_n)}}\to \gamma(x\cdot \nu)^2\not\equiv 0.
\]
Thus we can reason as in the proof of Lemma \ref{lemma:properties_of_g(rho)} and conclude that for some $R_0>0$ small enough,
\[
-\Delta (g\circ U)\leq -\left\|\frac{f_{h_1}(x,\mf u)}{u_{h_1}}\right\|_\infty (g\circ U) \text{ in } \Omega_1\cap B_{R_0}(x_0).
\]
For sufficiently small $r>0$, let us define $\tilde U_r$ as the $\frac{f_{h_1}(x,\mf u)}{u_{h_1}}$--harmonic extension of $g\circ U$ in $B_r(x_0)\cap \Omega_1$, namely
\[
-\Delta \tilde U_r=\frac{f_{h_1}(x,\mf u)}{u_{h_1}} \tilde U_r \text{ in } B_r(x_0)\cap \Omega_1,\qquad \tilde U_r=g\circ U\geq 0 \text{ on } \partial (B_r(x_0)\cap \Omega_1).
\]
By the comparison principle, for $r>0$ small,
\[
U\leq g\circ U\leq \tilde U_r \qquad \text{ in } B_r(x_0)\cap \Omega_1.
\]
Thus, by Lemma \ref{lemma:boundaryHarnack_same_lambda}, we have that, for any $y\in B_{r/2}(x_0)\cap \Omega_1$ fixed,
\[
C_1:=C^{-1}\frac{u_{h_1}(y)}{\tilde U_r(y)}\leq \frac{u_{h_1}(x)}{\tilde U_r(x)} \qquad \forall x\in B_{r/2}(x_0)\cap \overline \Omega_1.
\]
Thus we obtain the sought lower bound
\[
\frac{u_{h_1}(x_n+t_n x)}{\sqrt{H(x_n,\mf u,t_n)}}\geq C_1 \frac{U(x_n+t_n x)}{\sqrt{H(x_n,\mf u,t_n)}}\not \to 0.
\]
Now if  $u_{h_1+i}$ is signed, we apply directly Proposition \ref{prop:u_i/u_1_alpha_Holder_generalcase}. If instead changes sign, we apply this proposition to the $\frac{f(u_{h_1+i})}{u_{h_1+i}}$--harmonic extensions of $u_{h_1+i}^+$ and $u_{h_1+i}^-$ on $B_r(x_0)\cap \Omega_1$, for sufficiently small $r>0$.
\end{proof}

\begin{theorem}\label{thm:regularity}
The map
\[
|\mf{u}^h(x)|-|\mf{u}^k(x)|=\sqrt{\sum_{i\in I_h} u_i^2(x)}-\sqrt{\sum_{j\in I_k} u_j^2(x)}
\]
is differentiable at each $x_0\in \Rcal_{\mf u}$ with 
\[
\nabla(|\mf{u}^h|-|\mf{u}^k|)(x_0)=:\nu(x_0)\neq 0
\]
where $x_0\mapsto \nu(x_0)$ is $\alpha$--H\"older continuous. In particular, $\Rcal_{\mf u}$ is locally a $\mathcal{C}^{1,\alpha}$--hypersurface, for some $\alpha\in (0,1)$.
\end{theorem}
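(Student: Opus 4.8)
The plan is to reduce, near a regular point, the sign-changing function $w:=|\mf{u}^h|-|\mf{u}^k|$ to a difference of positive solutions of linear equations with bounded potentials on two Reifenberg flat domains, then to read off $\nabla w$ at the point from a (necessarily linear) blow-up, and finally to propagate the estimate to nearby regular points using the quantitative Boundary Harnack inequalities of Subsection \ref{subseq:Harnack}; the $\Ccal^{1,\alpha}$ regularity of $\Rcal_{\mf u}$ then comes from the implicit function theorem applied to $w$. For the first step: fix $x_0\in\Rcal_{\mf u}$, so that $w$ is Lipschitz on $\Omega$ (since $\mf u\in\Gcal(\Omega)$) and $w(x_0)=0$. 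By Proposition \ref{prop:Reifenberg} there are $R_0>0$ and the two connected components $\Omega_1,\Omega_2$ of $B_{R_0}(x_0)\setminus\Gamma_{\mf u}$, both $(\delta,R)$--Reifenberg flat; recalling from the lemmas preceding Theorem \ref{thm:regularity} that (after fixing signs) $u_{h_1}>0$ in $\Omega_1$, $u_{k_1}>0$ in $\Omega_2$, and from Lemma \ref{lemma:Holder_different_a} that the quotients $c_i:=u_{h_1+i}/u_{h_1}$ and $d_j:=u_{k_1+j}/u_{k_1}$ extend $\alpha$--H\"older continuously up to $\Gamma_{\mf u}$ near $x_0$, one gets $|\mf u^h|=u_{h_1}\psi_h$ in $\Omega_1$ with $\psi_h:=\bigl(1+\sum_{i\ge2}c_i^2\bigr)^{1/2}$ $\alpha$--H\"older up to the boundary and $1\le\psi_h\le C$, and similarly $|\mf u^k|=u_{k_1}\psi_k$ in $\Omega_2$. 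By (L), $|f_{h_1}(x,\mf u)|\le C\sum_{j\in I_h}|u_j|\le C'u_{h_1}$ near $x_0$, so $u_{h_1}$ solves $-\Delta u_{h_1}=a(x)u_{h_1}$ with $a\in L^\infty$ in $\Omega_1\cap B_{R_0}(x_0)$, and likewise $u_{k_1}$ in $\Omega_2$. Since $u_{h_1}$ is Lipschitz and vanishes on $\Gamma_{\mf u}$, one has $u_{h_1}(x)\le C|x-x_0|$, whence $w(x)=\psi_h(x_0)u_{h_1}(x)+O(|x-x_0|^{1+\alpha})$ in $\Omega_1$ and $w(x)=-\psi_k(x_0)u_{k_1}(x)+O(|x-x_0|^{1+\alpha})$ in $\Omega_2$; it therefore suffices to study $\bar w:=\psi_h(x_0)u_{h_1}$ on $\Omega_1$ and $-\psi_k(x_0)u_{k_1}$ on $\Omega_2$, each of which solves a linear equation with $L^\infty$ potential.

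Next I would establish differentiability of $w$ at $x_0$. The rescalings $v_t(x):=w(x_0+tx)/t$ are uniformly Lipschitz with $v_t(0)=0$, hence precompact in $\Ccal^0_{\loc}(\R^N)$; by the previous step they differ by $O(t^\alpha)$ on compacts from the rescalings of $\bar w$, whose zeroth order term vanishes in the limit. Using that $(\Omega_1-x_0)/t$ and $(\Omega_2-x_0)/t$ subconverge, by Proposition \ref{prop:Reifenberg}, to complementary half-spaces $\{x\cdot\nu>0\}$, $\{x\cdot\nu<0\}$, every subsequential limit $v$ of $v_t$ is harmonic off $\{x\cdot\nu=0\}$, hence — being globally Lipschitz with $v(0)=0$ — affine in each half-space and continuous across: $v(x)=A\cdot x$ on $\{x\cdot\nu\ge0\}$, $v(x)=A'\cdot x$ on $\{x\cdot\nu\le0\}$ with $A-A'$ parallel to $\nu$. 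On the other hand, Proposition \ref{prop:blowup_halfspace} (together with the convergence of the $c_i,d_j$) shows the $\sqrt{H(x_0,\mf u,\cdot)}$--normalized blow-up of $w$ equals $\gamma_0(x\cdot\nu)$ with $\gamma_0^2=\sum_{I_h}\alpha_i^2=\sum_{I_k}\beta_j^2>0$; this forces $A$ and $A'$ to point along $\nu$, and then $A-A'\parallel\nu$ together with the equality of the one-sided slopes — exactly the compatibility condition \eqref{eq:reflectionlaw} in Proposition \ref{prop:blowup_halfspace} — gives $A=A'=\gamma\nu$ for some $\gamma>0$, so $v(x)=\gamma(x\cdot\nu)$ is linear. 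Thus $w$ is differentiable at $x_0$ with $\nabla w(x_0)=\gamma\nu\neq0$, provided the limit direction $\nu$ (and hence $\gamma$) is independent of the subsequence; this uniqueness of the blow-up/tangent at a regular point would be proved as in \cite{rtt,tt}, combining the Reifenberg flatness with a geometric decay of the flatness extracted from the Boundary Harnack estimates of Subsection \ref{subseq:Harnack}. One sets $\nu(x_0):=\nabla w(x_0)$.

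For the H\"older dependence and the conclusion: since $\Rcal_{\mf u}$ is relatively open in $\widetilde\Gamma_{\mf u}$ and all the constants entering the two preceding steps (those of Proposition \ref{prop:Reifenberg}, of Lemma \ref{lemma:boundaryHarnack_same_lambda} and Proposition \ref{prop:u_i/u_1_alpha_Holder_generalcase}, and of Lemma \ref{lemma:Holder_different_a}) depend on the base point only through the fixed bound in (L), the flatness decay underlying the uniqueness of the tangent is quantitative and uniform on compact subsets of $\Rcal_{\mf u}$. This yields $|\nu(x_0)-\nu(x_1)|\le C|x_0-x_1|^{\alpha}$ for $x_0,x_1\in\Rcal_{\mf u}$ close, and the expansion $w(x)=\nu(x_0)\cdot(x-x_0)+O(|x-x_0|^{1+\alpha})$ with uniform constant, i.e.\ $w\in\Ccal^{1,\alpha}$ in a neighbourhood of $\Rcal_{\mf u}$ with non-vanishing gradient there. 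Since near a regular point $\Rcal_{\mf u}$ coincides with $\{w=0\}$, the implicit function theorem shows that $\Rcal_{\mf u}$ is locally a $\Ccal^{1,\alpha}$ hypersurface with H\"older continuous normal field $x_0\mapsto\nu(x_0)$, which is the assertion of Theorem \ref{thm:regularity} (and completes the proof of Theorem \ref{thm:regularity_G}).

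The main obstacle will be the step just described informally as ``uniqueness of the blow-up at a regular point'': one needs it \emph{with an explicit rate}, and \emph{uniformly in the base point}. This is precisely where the new Boundary Harnack Principle of Subsection \ref{subseq:Harnack}, the Reifenberg flatness of Proposition \ref{prop:Reifenberg} and hypothesis (L) have to be played against each other, and where the sign-changing nature of the components bites, since one must first reduce to the non-vanishing representatives $u_{h_1},u_{k_1}$ (via the lemma preceding Theorem \ref{thm:regularity}) before the positive-solution machinery of Lemmas \ref{lemma:boundaryHarnack_same_lambda} and \ref{lemma:Holder_different_a} can be invoked. By contrast, the algebraic reduction of the first step and the final implicit function theorem argument are routine.
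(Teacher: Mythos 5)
Your overall architecture (reduce $w=|\mf{u}^h|-|\mf{u}^k|$ near $x_0\in\Rcal_{\mf u}$ to the non-vanishing representatives $u_{h_1},u_{k_1}$ via Lemma \ref{lemma:Holder_different_a}, identify the blow-up as $\gamma(x\cdot\nu)$ via Proposition \ref{prop:blowup_halfspace}, then upgrade to a H\"older-continuous normal) is aligned with the paper's, and your first reduction $w(x)=\psi_h(x_0)u_{h_1}(x)+O(|x-x_0|^{1+\alpha})$ is essentially the paper's passage to the scalar functions $\mf{u}^h_{x_0}=\Ucal^h(x_0)\cdot\mf{u}^h$. But there is a genuine gap, and it sits exactly where the theorem's content lies: the existence of the full limit $\lim_{t\to0}w(x_0+tx)/t$ (equivalently, uniqueness of the blow-up, including the convergence of $H(x_0,\mf u,t)/t^2$ needed to pass from the $\sqrt{H}$-normalization of Proposition \ref{prop:blowup_halfspace} to your rescaling $v_t$) and the H\"older dependence of $\nu(x_0)$ on the base point. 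You defer both to an unproved ``quantitative flatness decay extracted from the Boundary Harnack estimates,'' and you explicitly flag this as the main obstacle; without it the argument only yields that every subsequential limit of $v_t$ is of the form $\gamma(x\cdot\nu)$ with $\gamma,\nu$ a priori depending on the subsequence, so neither $\nabla w(x_0)$ nor its H\"older modulus is actually produced. The paper supplies this missing engine differently: it introduces, for each $r$, the solution $\psi_{x_0,r}$ of a Poisson problem on the full ball $B_r(x_0)$ with boundary datum $\mf{u}^h_{x_0}-\mf{u}^k_{x_0}$ (so that the singular measures $\Mcal^h_{x_0},\Mcal^k_{x_0}$ are removed), and defines $\nu(x_0):=\lim_{r\to0}\nabla\psi_{x_0,r}(x_0)$; the existence of this limit, its non-vanishing, the first-order expansion of $w$, and the H\"older continuity of $x_0\mapsto\nu(x_0)$ all come from the dyadic comparison of $w$ with $\psi_{x_0,r}$ carried out in \cite[Proposition 4.24 and Lemma 4.26]{rtt}, fuelled by the Almgren monotonicity and the Pohozaev identity rather than by Boundary Harnack alone.

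A second, smaller issue: the final appeal to the classical implicit function theorem is not available, because the preceding steps give differentiability of $w$ only \emph{at points of} $\Rcal_{\mf u}$, with a uniform first-order Taylor expansion there, not $\Ccal^{1}$ regularity of $w$ on an open neighbourhood (off $\Gamma_{\mf u}$ the gradient of $w$ is controlled only by elliptic estimates, and nothing in your argument glues these into a continuous gradient across the interface). One needs the Reifenberg-flat version of this step, i.e.\ the statement that a Reifenberg-flat level set admitting a H\"older-continuous, non-degenerate ``normal'' in the above pointwise sense is a $\Ccal^{1,\alpha}$ hypersurface; this is precisely \cite[Theorem 4.27]{rtt}, which the paper invokes to close the proof. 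Both gaps are fillable by importing those arguments, but as written your proposal asserts rather than proves the two facts that constitute the theorem.
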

\begin{proof} (Sketch)
1. For $x\in \Omega_1$, let
\[
\Ucal^h(x)=\frac{\mf{u}^h(x)}{|\mf{u}^h(x)|}=\frac{(u_{h_1}(x),\ldots,  u_{h_1+l}(x))}{\sqrt{u_{h_1}^2(x)+\ldots +u_{h_1+l}^2(x)}}
\]
and, for $x\in \Omega_2$,
\[
\Ucal^k(x)=\frac{\mf{u}^k(x)}{|\mf{u}^k(x)|}=\frac{(u_{k_1}(x),\ldots,  u_{k_1+\tilde l}(x))}{\sqrt{u_{k_1}^2(x)+\ldots +u_{k_1+\tilde l}^2(x)}}.
\]
Since we can rewrite
\[
\Ucal^h=\frac{\left(1,\frac{u_{h_1+1}}{u_{h_1}},\ldots,  \frac{u_{h_1+l}}{u_{h_1}}\right)}{\sqrt{1+\left(\frac{u_{h_1+1}}{u_{h_1}}\right)^2+\ldots +\left(\frac{u_{h_1+l}}{u_{h_1}}\right)^2}},\qquad \Ucal^k=\frac{\left(1,\frac{u_{k_1+1}}{u_{k_1}},\ldots,  \frac{u_{k_1+\tilde l}}{u_{k_1}}\right)}{\sqrt{1+\left(\frac{u_{k_1+1}}{u_{k_1}}\right)^2+\ldots +\left(\frac{u_{k_1+\tilde l}}{u_{k_1}}\right)^2}}
\]
then, applying Lemma \ref{lemma:Holder_different_a}, we deduce that
\[
|\Ucal^h(x)-\Ucal^h(x_0)|\leq C r^\alpha,\quad |\Ucal^k(x)-\Ucal^k(x_0)|\leq C r^\alpha \qquad \forall x\in B_r(x_0),\ r \text{ small}.
\]

\medbreak

2. Let us consider
\[
\mf{u}^h_{x_0}(x)=\Ucal^h(x_0)\cdot \mf{u}^h(x) \text{ for } x\in \Omega_1,\qquad \mf{u}^k_{x_0}(x)=\Ucal^k(x_0)\cdot \mf{u}^k(x) \text{ for } x\in \Omega_2,
\]
which satisfy
\[
-\Delta \mf{u}^h_{x_0}=\sum_{i\in I_h} \Ucal^h_i(x_0) f_i(x,\mf u)-\Mcal^h_{x_0},\quad -\Delta \mf{u}^k_{x_0}=\sum_{j\in I_k} \Ucal^h_j(x_0) f_j(x,\mf u)-\Mcal^k_{x_0}
\]
in $B_r(x_0)$, where $\Mcal^h_{x_0},\Mcal_{x_0}^k$ are nonnegative Radon measures concentrated on $\Gamma_{\mf u}$. Taking $\psi_{x_0,r}$ as the solution of
\[
\begin{cases}
-\Delta \psi_{x_0,r}=\sum_{i\in I_h} \Ucal^h_i(x_0) f_i(x,\mf u)-\sum_{j\in I_k} \Ucal^h_j(x_0) f_j(x,\mf u) & \text{ in } B_r(x_0)\\
\psi_{x_0,r}=\mf{u}^h_{x_0}-\mf{u}^k_{x_0} & \text{ on } \partial B_r(x_0)
\end{cases}
\]
and reasoning exactly as in \cite[Proposition 4.24 \& Lemma 4.26]{rtt}, we obtain the existence of
\[
\nu(x_0):=\lim_{r\to 0}\nabla \psi_{x_0,r}(x_0)\neq 0
\]
and, moreover, the function $\nu: \Gamma_{\mf u}\to \R^N$, $x_0\mapsto \nu(x_0)$ is H\"older continuous. Then Theorem 4.27 in \cite{rtt} provides the final conclusion.
\end{proof}

\begin{proof}[Conclusion of the proof of Theorem \ref{thm:regularity_G}]
Taking in consideration Theorem \ref{thm:hausdorff_measures_of_nodalsets} and Theorem \ref{thm:regularity}, we see that the only thing left to prove are conditions \eqref{eq:reflectionlaw} and \eqref{eq:vanishingofgradient}. 

With respect to the first one, we fix $x_0\in \Rcal_{\mf u}$. Let su observe first of all that, given $x\in \Omega_1$ and $d(x):=d(x,\Gamma_{\mf u})$,
\[
\Ucal^h(x)=\frac{\left(\frac{u_{h_1}(x)}{d(x)},\ldots,  \frac{u_{h_1+l}(x)}{d(x)}\right)}{\sqrt{\left(\frac{u_{h_1}(x)}{d(x)}\right)^2+\ldots +\left(\frac{u_{h_1+l}(x)}{d(x)}\right)^2}}\to -\frac{\partial_\nu \mf{u}^h(x_0)}{|\partial_\nu \mf{u}^h(x_0)|}\qquad \text{ as $x\to x_0$.}
\]
Thus
\[
\nabla \left(\sum_{i\in I_h} u_i^2(x)\right)^{1/2}=\sum_{i\in I_h} u_i \nabla u_i(x) \left(\sum_{i\in I_h} u_i^2(x)\right)^{-1/2}\to |\nabla \mf{u}^h(x_0)| \qquad \text{ as } x\to x_0.
\]
Likewise, we can show that
\[
\nabla \left(\sum_{j\in I_k} u_j^2(x)\right)^{1/2}\to |\nabla \mf{u}^h(x_0)| \qquad \text{ as } x\to x_0,
\]
whence \eqref{eq:reflectionlaw} is a direct consequence of the fact that $|\mf{u}^h|-|\mf{u}^k|$ is differentiable at $x_0$.

As for \eqref{eq:vanishingofgradient}, given $x_0\in S_{\mf u}$,  combining the fact that $N(x,\mf u,0^+)\geq 3/2$ for every $x\in S_{\mf u}$ with Theorem \ref{thm:Almgren} yields
\[
H(x,\mf u,0^+)\leq C r^{3} \qquad \forall x\in S_{\mf u}\cap B_\delta(x_0)
\]
(for $C$ independent from $x$). Using Theorem \ref{thm:Almgren} and the assumptions on $f_i$, it is straightforward to show that
\[
\frac{1}{r^N}\int_{B_{r}(x)} |\nabla \mf{u}^h|^2\leq Cr \qquad \forall x\in S_{\mf u}\cap B_\delta(x_0),\ r\leq \bar r
\]
which allows to arrive at the desired conclusion.
\end{proof}

\begin{remark}\label{rem:positive} 
When $u\in \mathcal{G}(\Omega)$ has nonnegative components, we can replace in the previous considerations $\widetilde \Gamma_{\mf{u}}$ by $\Gamma_{\mf{u}}$. The only difference is that, in such case, we can no longer assume condition \eqref{wlog}. However, this condition was only needed for the proof of Proposition \ref{prop:blowup_halfspace}, namely to prove that if $x_0\in \widetilde \Gamma_{\mf{u}}$ and $N(x_0,\mf{u},0^+)=1$, then $\Gamma_{\bar{ \mf{u}}}$ is a hyperplane. The proof now goes as follows: always following \cite{rtt}, if $x_0\in \Gamma_{\mf{u}}$, $N(x_0,\mf{u},0^+)=1$  and $\bar{\mf{u}}\in \Bcal\Ucal_{x_0}$, then $\Gamma_{\bar{\mf{u}}}$ is a vector space having dimension at most $N-1$, being exactly $N-1$ except in the possible case where all but one group of components is trivial. However, inspecting the proof of \cite[Proposition 4.7]{rtt}, we see that in case all groups are trivial except one, then all nonzero components of the blowup limit must be harmonic in $\R^N$, thus sign-changing. Since $u_i\geq 0$, we get a contradiction. Thus $\Gamma_{\bar{\mf{u}}}$ is always a hyperplane. Notice that this argument fails if $\mf{u}$ has sign-chasing components, as shown by the counterexample $\bar u_1(x) = x_1$, $\bar u_2(x) = x_2$, $\bar u_i \ge 0$ for $i \ge 3$, with $\bar u_1$ and $\bar u_2$ in the same group.
\end{remark}

\appendix

\section{Liouville-type theorems}\label{sec: appendix}

In this appendix we collect all the necessary Liouville theorems that are needed along the paper. Almost all of them had already been proven in previous papers, and for those we give the precise references.

\begin{lemma}\label{lem:liouville2}
Let $u,v\in H^1_{\rm loc}(\R^N)\cap C(\R^N)$ be nonnegative functions satisfying $u\cdot v\equiv 0$ and
\[
-\Delta u\leq 0,\quad -\Delta v\leq 0 \qquad \text{ in } \R^N.
\]
If
\[
\mathop{\sup_{x,y\in \R^N}}_{x\neq y} \frac{|u(x)-u(y)|}{|x-y|^\alpha}<\infty\quad \text{ and } \quad \mathop{\sup_{x,y\in \R^N}}_{x\neq y} \frac{|v(x)-v(y)|}{|x-y|^\alpha}<\infty,
\]
then either $u\equiv 0$ or $v\equiv 0$.
\end{lemma}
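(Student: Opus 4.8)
The plan is to exploit the segregation together with subharmonicity to show that one of the two functions must be a global constant, and then use the H\"older decay to force that constant to be zero unless the other function vanishes identically. First I would observe that, since $u\cdot v\equiv 0$ and $u,v$ are continuous, the open sets $\{u>0\}$ and $\{v>0\}$ are disjoint. On $\{u>0\}$, $-\Delta u\le 0$, so $u$ is subharmonic there; on the complement (which contains $\{v>0\}$) $u$ vanishes. The key point is a doubling/monotonicity-type argument: for any $x_0\in\R^N$ consider the spherical average $\bar u(x_0,r):=\fint_{\partial B_r(x_0)} u$. Because $-\Delta u\le 0$ in all of $\R^N$, the function $r\mapsto \bar u(x_0,r)$ is non-decreasing; on the other hand, the global $\alpha$-H\"older bound gives $\bar u(x_0,r)\le u(x_0)+Cr^\alpha$ and, more importantly, it gives a bound on the growth of the Almgren-type or simply on the radial behaviour. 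The cleanest route, following the standard Liouville arguments used in this circle of ideas (e.g.\ in \cite{ctv,CaffLin,tt}), is to use the monotonicity of an Alt--Caffarelli--Friedman-type functional.

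Concretely, I would introduce, for a common zero $x_0$ of $u$ and $v$ (which exists: if $\{u>0\}=\R^N$ then $u$ is a subharmonic function with at most H\"older growth, hence bounded, hence constant by Liouville for subharmonic functions, and a positive constant is incompatible with $u\cdot v\equiv 0$ unless $v\equiv 0$; symmetrically for $v$; so we may assume both vanish somewhere, and by continuity and disjointness of the positivity sets they share a common zero after possibly translating), the ACF functional
\[
J(r):=\frac{1}{r^{4}}\left(\int_{B_r(x_0)}\frac{|\nabla u|^2}{|x-x_0|^{N-2}}\,dx\right)\left(\int_{B_r(x_0)}\frac{|\nabla v|^2}{|x-x_0|^{N-2}}\,dx\right).
\]
By the Alt--Caffarelli--Friedman monotonicity formula (valid for the pair of nonnegative subharmonic functions with disjoint supports), $J$ is non-decreasing in $r$. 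On the other hand, the $\alpha$-H\"older bound with $\alpha<1$ gives, after testing the equations against $u\varphi^2$ and $v\varphi^2$ with suitable cut-offs, the Caccioppoli-type estimates $\int_{B_r(x_0)}|\nabla u|^2\le Cr^{N-2+2\alpha}$ and similarly for $v$, whence each weighted Dirichlet integral above is $O(r^{2\alpha})$ and therefore $J(r)=O(r^{4\alpha-4})\to 0$ as $r\to0^+$ (since $4\alpha-4<0$). Wait --- this is the wrong direction; $J$ non-decreasing plus $J(0^+)=0$ only says $J\ge0$. Instead I would let $r\to\infty$: the same Caccioppoli estimates on dyadic annuli combined with the growth $\osc_{B_r} u\le Cr^\alpha$ show that $J(r)\to 0$ as $r\to\infty$ as well; monotonicity then forces $J\equiv 0$, i.e.\ one of the two weighted Dirichlet integrals vanishes for all $r$, so one of $u,v$ is constant, hence (being nonnegative, subharmonic with sublinear growth and vanishing somewhere) identically zero.

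Let me restructure this more carefully in the final write-up. The cleanest self-contained argument avoiding ACF: since $u$ is nonnegative, subharmonic on $\R^N$, and satisfies $\sup_{B_r}u\le \sup_{B_r}|u|\le |u(0)|+Cr^\alpha$, the mean value inequality for subharmonic functions gives $u(x)\le \fint_{B_R(x)}u\le |u(0)|+C(|x|+R)^\alpha$ for all $R$, and letting $R\to\infty$ would not immediately help; rather I use that a nonnegative subharmonic function with $u(x)=o(|x|)$ at infinity which vanishes at some point must be identically zero --- this is false in general (e.g.\ $u=x_1^+$ is not subharmonic... actually $x_1^+$ \emph{is} subharmonic). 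So the genuinely needed input is the disjoint-support structure, i.e.\ the ACF inequality. Thus the main obstacle, and the step I expect to occupy most of the work, is establishing the needed decay $J(r)\to 0$ as $r\to+\infty$: one must combine the H\"older bound with a Caccioppoli estimate to control $\int_{B_r}|\nabla u|^2|x-x_0|^{2-N}$ by $C r^{2\alpha}$, using $\alpha<1$ crucially (for $\alpha=1$ the functional need not decay, reflecting the genuine counterexample $u=x_1^+,\ v=x_1^-$). Once $J\equiv 0$ is obtained, concluding that the corresponding function is constant and then zero is routine. I would therefore present the proof as: (1) reduce to the existence of a common zero $x_0$ by the subharmonic Liouville theorem; (2) state the ACF monotonicity for $(u,v)$; (3) prove $J(r)=O(r^{4\alpha-4})\cdot(\dots)$, more precisely $J(r)\to 0$ as $r\to\infty$, via Caccioppoli plus the H\"older bound; (4) deduce $J\equiv 0$ and hence $\nabla u\equiv 0$ or $\nabla v\equiv 0$ on all of $\R^N$; (5) conclude.
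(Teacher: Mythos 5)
Your proposal follows essentially the same route as the paper: the paper's proof is just a citation of Proposition 2.2 in \cite{nttv}, and that proposition is proved exactly by the combination you describe, namely the Alt--Caffarelli--Friedman monotonicity formula centred at a common zero of $u$ and $v$, together with the Caccioppoli/H\"older estimate $\int_{B_r(x_0)}|\nabla u|^2|x-x_0|^{2-N}\le Cr^{2\alpha}$ (summed over dyadic annuli), which gives $J(r)\le Cr^{4(\alpha-1)}\to 0$ as $r\to\infty$ and hence $J\equiv 0$ because $\alpha<1$; the conclusion that one of the two functions is constant, hence zero, then follows as you say. One sub-claim in your reduction step is false as stated, although harmless: a nonnegative subharmonic function with H\"older growth need not be bounded (for $N\ge 2$ the function $|x|^\alpha$ is subharmonic, nonnegative, globally $\alpha$-H\"older and unbounded), and a bounded subharmonic function need not be constant when $N\ge 3$. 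You do not need any such Liouville statement: if $\{u>0\}=\R^N$ then $v\equiv 0$ immediately from $u\cdot v\equiv 0$, and otherwise, assuming both functions nontrivial, $\{u=0\}$ and $\{v=0\}$ are nonempty closed sets whose union is the connected space $\R^N$, so they must intersect, which yields the common zero $x_0$ at which to apply the ACF formula.
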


\begin{proof}
See Proposition 2.2 in \cite{nttv}.
\end{proof}

\begin{corollary}\label{lem:liouville1}
Let $u$ be a harmonic function in $\R^N$ such that, for some $\alpha\in (0,1)$, there holds
\[
\mathop{\sup_{x,y\in \R^N}}_{x\neq y} \frac{|u(x)-u(y)|}{|x-y|^\alpha}<\infty.
\]
Then u is constant.
\end{corollary}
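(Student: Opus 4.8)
The plan is to reduce Corollary~\ref{lem:liouville1} to Lemma~\ref{lem:liouville2}. Let $u$ be harmonic on $\R^N$ with a uniform $\alpha$-H\"older bound, $\alpha\in(0,1)$. Decompose $u$ into its positive and negative parts, $u=u^+-u^-$, and observe that both $u^+$ and $u^-$ are nonnegative, continuous, satisfy $u^+\cdot u^-\equiv 0$, and — by the Kato inequality applied to the harmonic function $u$ — satisfy $-\Delta u^+\le 0$ and $-\Delta u^-\le 0$ in the distributional sense on $\R^N$. Moreover, since $|t^+-s^+|\le |t-s|$ and $|t^--s^-|\le|t-s|$ for all reals $t,s$, the functions $u^+$ and $u^-$ inherit the uniform $\alpha$-H\"older bound of $u$; in particular they lie in $H^1_{\mathrm{loc}}(\R^N)\cap C(\R^N)$. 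Thus the hypotheses of Lemma~\ref{lem:liouville2} are met, and we conclude that either $u^+\equiv 0$ or $u^-\equiv 0$, i.e. $u$ has a constant sign.

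Without loss of generality assume $u\ge 0$ (otherwise replace $u$ by $-u$). Now I would exploit the H\"older bound directly to force $u$ constant. Fix any point $x_0\in\R^N$. For every $R>0$, the mean value property of the (nonnegative) harmonic function $u$ on the ball $B_R(x_0)$, combined with the elementary bound
\[
|u(x)-u(x_0)|\le C\,|x-x_0|^\alpha\le C R^\alpha\qquad\text{for }x\in B_R(x_0),
\]
shows nothing useful by itself, so instead I would use the gradient estimate for harmonic functions: for $u$ harmonic on $B_R(x_0)$,
\[
|\nabla u(x_0)|\le \frac{N}{R}\,\sup_{B_R(x_0)}|u-u(x_0)|\le \frac{N}{R}\,C R^\alpha = N C R^{\alpha-1}.
\]
Letting $R\to\infty$ and using $\alpha<1$ gives $\nabla u(x_0)=0$. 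Since $x_0$ was arbitrary, $u$ is constant on $\R^N$, which completes the proof.

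There is no serious obstacle here; the only point requiring a little care is the justification that $u^{\pm}$ are subharmonic (which is the standard Kato/convexity argument for $|u|=u^++u^-$ and for the truncations of a harmonic function) and that they retain the H\"older seminorm, which is immediate from the $1$-Lipschitz character of $t\mapsto t^{\pm}$. Alternatively, one can bypass Lemma~\ref{lem:liouville2} entirely and run the interior gradient estimate argument of the second paragraph directly on $u$ itself, since that argument never used the sign of $u$ — it only used harmonicity on arbitrarily large balls and the sublinear growth $\sup_{B_R(x_0)}|u-u(x_0)|=O(R^\alpha)$ with $\alpha<1$. Either route yields the conclusion; I would present the direct gradient-estimate argument as the main line, since it is the shortest and self-contained, mentioning the reduction to Lemma~\ref{lem:liouville2} only if one prefers to keep the appendix's Liouville statements tightly interlinked.
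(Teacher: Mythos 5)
Your proposal is correct. The paper gives no explicit proof of this statement: it is presented as a corollary of Lemma \ref{lem:liouville2}, the intended derivation being exactly your first paragraph (apply the lemma to the pair $(u^+,u^-)$, which are nonnegative, subharmonic as maxima of harmonic functions, disjointly supported, and inherit the H\"older seminorm), followed by the Liouville theorem for one-signed harmonic functions. Your main line, however, is genuinely different and, as you note, strictly shorter: the interior gradient estimate $|\nabla u(x_0)|\le \frac{N}{R}\sup_{B_R(x_0)}|u-u(x_0)|\le NCR^{\alpha-1}\to 0$ never uses the sign of $u$, only harmonicity on all of $\R^N$ and the sublinear oscillation bound coming from $\alpha<1$; this is the classical Liouville theorem for harmonic functions of sublinear growth and is entirely self-contained. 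What the paper's framing buys is cohesion of the appendix (everything reduced to the segregation lemma of [nttv]); what your direct argument buys is independence from Lemma \ref{lem:liouville2} and a proof that would survive even if that lemma were unavailable. Both routes are complete; your observation that the reduction to Lemma \ref{lem:liouville2} becomes redundant once you invoke the gradient estimate is accurate.
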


\begin{lemma}\label{lem:liouville3}
Let $u,v\in H^1_{\rm loc}(\R^N)\cap C(\R^N)$ be nonnegative solutions of the systems
\begin{equation}\label{eq:Liouville_systeminequalities}
\begin{cases}
-\Delta u\leq -\kappa u^p v^{p+1}\\
-\Delta v\leq -\kappa v^pu^{p+1}
\end{cases}
\qquad \text{ in } \R^N,
\end{equation}
with $\kappa>0$ and $p>0$. If
\[
\mathop{\sup_{x,y\in \R^N}}_{x\neq y} \frac{|u(x)-u(y)|}{|x-y|^\alpha}<\infty\quad \text{ and } \quad \mathop{\sup_{x,y\in \R^N}}_{x\neq y} \frac{|v(x)-v(y)|}{|x-y|^\alpha}<\infty,
\]
then either $u\equiv 0$ or $v\equiv 0$.
\end{lemma}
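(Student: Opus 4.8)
The plan is to argue by contradiction: assume $u\not\equiv 0$ and $v\not\equiv 0$, and reach an absurdity by combining the competition with the sublinear growth forced by the H\"older bound. First some reductions. Since $-\Delta u\le -\kappa u^pv^{p+1}\le 0$ and $-\Delta v\le -\kappa v^pu^{p+1}\le 0$, both $u$ and $v$ are nonnegative subharmonic functions; being globally $\alpha$-H\"older with $\alpha\in(0,1)$, they grow strictly sublinearly, $u(x)+v(x)\le C(1+|x|^\alpha)$. If $u$ were a nonzero constant, the first inequality would force $v^{p+1}\le 0$, hence $v\equiv 0$; so we may assume both $u$ and $v$ are non-constant. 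Testing the two inequalities against $u\eta^2$ and $v\eta^2$, with a standard cut-off $\eta$ equal to $1$ on $B_R$ and supported in $B_{2R}$, and using the sublinear growth, we obtain a Caccioppoli-type estimate
\[
\int_{B_R}\bigl(|\nabla u|^2+|\nabla v|^2\bigr)+\int_{B_R}(uv)^{p+1}\ \le\ \frac{C}{R^2}\int_{B_{2R}}(u^2+v^2)\ \le\ C\,R^{N-2+2\alpha}\qquad\text{for every }R>0.
\]

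The heart of the proof is an Alt--Caffarelli--Friedman / Conti--Terracini--Verzini type monotonicity formula for the competing pair, incorporating the interaction energy. For $x_0\in\R^N$ set
\[
\gamma_u(x_0,r)=\frac{1}{r^{2}}\int_{B_r(x_0)}\frac{|\nabla u|^2+\kappa\,u^{p+1}v^{p+1}}{|x-x_0|^{N-2}}\,dx,\qquad \gamma_v(x_0,r)=\frac{1}{r^{2}}\int_{B_r(x_0)}\frac{|\nabla v|^2+\kappa\,u^{p+1}v^{p+1}}{|x-x_0|^{N-2}}\,dx,
\]
and $J(x_0,r)=\gamma_u(x_0,r)\,\gamma_v(x_0,r)$. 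One shows that $J(x_0,\cdot)$ is monotone non-decreasing. As in the classical ACF argument, this rests on (a) differentiating $\gamma_u,\gamma_v$, integrating by parts and using that $u,v$ are subsolutions with the interaction term entering with a favourable sign, so that each logarithmic derivative is bounded below by $\tfrac 2r$ times a spherical ``frequency'' attached to a weighted eigenvalue on $\{u>0\}\cap\partial B_r$ (resp.\ $\{v>0\}\cap\partial B_r$); and (b) a Friedland--Hayman type inequality which, exploiting that the potential $u^{p+1}v^{p+1}$ penalizes the overlap of the two positivity sets, gives that the sum of the two spherical frequencies is $\ge 2$, i.e. $\tfrac{\partial_r\gamma_u}{\gamma_u}+\tfrac{\partial_r\gamma_v}{\gamma_v}\ge 0$.

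To conclude, from the Caccioppoli bound and a dyadic estimate of the weighted integrals one gets $\gamma_u(x_0,r),\gamma_v(x_0,r)\le C\,r^{2\alpha-2}$, hence $J(x_0,r)\le C\,r^{4\alpha-4}\to 0$ as $r\to+\infty$ because $\alpha<1$. Since $J(x_0,\cdot)$ is non-decreasing and nonnegative, $J(x_0,\cdot)\equiv 0$, so for each $r$ either $\gamma_u(x_0,r)=0$ or $\gamma_v(x_0,r)=0$; as the set of radii on which $\gamma_u(x_0,\cdot)$ vanishes is an interval containing $0$, and likewise for $\gamma_v$, one of the two, say $\gamma_u(x_0,\cdot)$, must vanish for all $r>0$. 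Then $\nabla u\equiv 0$ on $\R^N$, so $u$ is constant, contradicting the reduction above. Hence one of $u,v$ is identically zero. (Equivalently, $J\equiv 0$ forces $uv\equiv 0$ and then one invokes Lemma~\ref{lem:liouville2}.)

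The main obstacle is establishing the monotonicity formula for a general exponent $p>0$, especially in the range $0<p<1$. Unlike the quartic case $p=1$ of \cite{nttv}, the interaction density $u^{p+1}v^{p+1}$ is not $2$-homogeneous under the natural scaling, so its contribution to $\partial_r J$ carries a $p$- and $N$-dependent coefficient (of the type $4-\tfrac{2pN}{p+1}$ appearing in Lemma~\ref{lem: derivatives}) whose sign must be monitored. One must therefore arrange the computation so that the potentially ``bad'' part of this term is absorbed using the smallness of $\int_{B_r}(uv)^{p+1}$ provided by the Caccioppoli estimate, or replaced by a robust comparison with radial barriers. Proving the Friedland--Hayman inequality for the associated weighted spherical eigenvalue problem in this generality is the other delicate ingredient; the remaining steps are routine adaptations of the classical ACF machinery.
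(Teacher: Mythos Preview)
Your approach is the same as the paper's---an ACF/Conti--Terracini--Verzini monotonicity functional combined with the sublinear growth forced by the H\"older bound---and you correctly identify the obstacle (the $p$-dependent coefficient in $\partial_r J$ when $p\neq 1$). But you leave that obstacle open, and the paper's contribution is precisely how to bypass it; your suggestions (``absorb using smallness'', ``radial barriers'') are not concrete enough to count as a proof.

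The paper does \emph{not} try to prove that $J$ itself is monotone, nor does it establish a Friedland--Hayman inequality for the weighted spherical eigenvalue problem with potential. Instead it proceeds as follows. First, normalize by $r^{4-\varepsilon}$ with $\varepsilon>0$ chosen so that $4\alpha<4-\varepsilon$; the Caccioppoli bound then gives $J(r)\to 0$. Second, monotonicity of $J(r)$ for large $r$ reduces to the inequality
\[
\gamma(\Lambda_1(r))+\gamma(\Lambda_2(r))>\frac{4-\varepsilon}{2},
\]
where $\Lambda_i(r)$ are Rayleigh-type quotients on $\partial B_r$ that include the interaction term $r^2\int_{\partial B_1}u_{(r)}^{p+1}v_{(r)}^{p+1}$. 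Third---and this is the key idea---the displayed inequality is proved by a \emph{compactness argument on the sphere}: if it fails along $r_n\to\infty$, the $\Lambda_i(r_n)$ are bounded, and multiplying the two resulting bounds together with the lower bound $\frac{1}{r^{N-1}}\int_{\partial B_r}u^2,\ \frac{1}{r^{N-1}}\int_{\partial B_r}v^2\ge\delta>0$ (valid since $u,v$ are nontrivial and subharmonic) yields that the $L^2(\partial B_1)$-normalized traces $\tilde u_n,\tilde v_n$ are bounded in $H^1(\partial B_1)$ with $\int_{\partial B_1}\tilde u_n^{p+1}\tilde v_n^{p+1}\to 0$. Passing to weak limits $\tilde u,\tilde v$ with $\tilde u\cdot\tilde v\equiv 0$, the \emph{classical} Friedland--Hayman inequality (disjoint supports, no potential) gives $\gamma(\lambda_1(\{\tilde u>0\}))+\gamma(\lambda_1(\{\tilde v>0\}))\ge 2>(4-\varepsilon)/2$, a contradiction.

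The $\varepsilon$-slack in the normalization absorbs whatever loss comes from the inhomogeneity of the interaction term, and the limit procedure reduces everything to the standard segregated case, so no new spherical inequality is required. This is the missing mechanism in your sketch.
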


\begin{proof}
For $p\geq 1$, this result is a particular case of Corollary 1.14-(ii) of \cite{SoTe}. Here we present a proof that covers all $p>0$. Initially, we will follow closely the proofs \cite[Lemma 2.5 \& Proposition 2.6]{nttv} and \cite[Section 5]{SoTe}, to which we refer for the complete details. However, at a certain point we will need an extra argument to conclude the case $p<1$.

Let us assume by contradiction that both $u,v\not\equiv 0$. Since $u$ and $v$ are subharmonic, then we have
\begin{equation}\label{eq:bdedawayfrom0}
\frac{1}{r^{n-1}} \int_{\partial B_r}u^2,\ \frac{1}{r^{n-1}}\int_{\partial B_r} v^2 \geq \delta>0 \qquad \text{ for $r$ large}
\end{equation}

\medbreak

\noindent \emph{Step 1.} We define the function
\[
f(r)=\begin{cases}
\frac{2-N}{2}r^2+\frac{N}{2} & \text{ if } r\leq 1\\
\frac{1}{r^{N-2}} & \text{ if } r>0,
\end{cases}
\]
which is $\mathcal{C}^1$ and superharmonic in $\R^N$. For each $r>0$, let $\eta_r$ be the cutoff function such that $0\leq \eta_r\leq 1$, $|\nabla \eta_r|\leq C/r$, $\eta_r=1$ in $B_r$, $\eta_r=0$ in $\R^N\setminus B_{2r}$. By multiplying the first inequality in \eqref{eq:Liouville_systeminequalities} by $\eta^2 f(|x|) u$, and using also the uniform H\"older bounds, we deduce that
\[
\int_{B_r} f(|x|)(|\nabla u|^2+u^{p+1}v^{p+1})\leq C r^{2\alpha}
\]
for large $r>0$ (cf. with \cite[p. 276]{nttv}). Performing an analogue reasoning for the second inequality, we finally conclude that
\[
\int_{B_r} f(|x|)(|\nabla u|^2+u^{p+1}v^{p+1}) \cdot \int_{B_r} f(|x|)(|\nabla v|^2+u^{p+1}v^{p+1})\leq C r^{4\alpha}\ \text{ for large } r>0.
\]

\medbreak

\noindent \emph{Step 2.} Fix $\eps>0$ so that $4\alpha<4-\eps$. We will prove that
\[
J(r):=\frac{1}{r^{4-\eps}} \int_{B_r}f(|x|)(|\nabla u|^2+u^{p+1}v^{p+1}) \cdot \int_{B_r} f(|x|)(|\nabla v|^2+u^{p+1}v^{p+1})
\]
is increasing for $r$ large, which contradicts the conclusion of the previous step.

Using $f(|x|)u$ and $f(|x|)v$ as test functions in \eqref{eq:Liouville_systeminequalities}, we can deduce (compare with \cite[p. 275]{nttv})
\[
\frac{J'(r)}{J(r)}\geq -\frac{4-\eps}{r}+\frac{2\gamma(\Lambda_1(r))}{r}+\frac{2\gamma(\Lambda_2(r))}{r},
\]
where $\gamma(x):=\sqrt{((N-2)/2)^2+x}-(N-2)/2$, and
\[
\Lambda_1(r)=\frac{\int_{\partial B_1} (|\nabla_\theta u_{(r)}|^2+r^2 u_{(r)}^{p+1}v_{(r)}^{p+1})}{\int_{\partial B_1} u_{(r)}^2},\qquad \Lambda_2(r)=\frac{\int_{\partial B_1} (|\nabla_\theta v_{(r)}|^2+r^2 u_{(r)}^{p+1}v_{(r)}^{p+1})}{\int_{\partial B_1} u_{(r)}^2}
\]
for $u_{(r)}(\theta)=u(r\theta)$, $v_{(r)}(\theta)=v(r\theta)$. We recall from \cite[p. 441]{acf} that
\[
\gamma(\lambda_1(A))+\gamma(\lambda_2(B))\geq 2
\]
for every partition of the sphere $S^{N-1}$ in two open sets $A,B$ (here $\lambda_1(E)$ denotes the first Dirichlet eigenvalue on $E\subset S^{N-1}$). We claim that
\begin{equation}\label{eq:contradiction_inequality}
\gamma(\Lambda_1(r))+\gamma(\Lambda_2(r))> \frac{4-\eps}{2},
\end{equation}
which ends this proof. Suppose, in view of a contradiction, that for some $r_n\to \infty$, 
\begin{equation}\label{abs hp 37}
\gamma(\Lambda_1(r_n))+\gamma(\Lambda_2(r_n)) \leq \frac{4-\eps}{2}. 
\end{equation}
Then, in particular, both $\Lambda_1(r_n)$ and $\Lambda_2(r_n)$ are bounded, and
\[
r_n^2 \int_{\partial B_1} u_{(r_n)}^{p+1}v_{(r_n)}^{p+1} \leq C\int_{\partial B_1} u^2_{(r_n)},\ C \int_{\partial B_1} v_{(r_n)}^2.
\]
By multiplying these two inequalities, we deduce that
\[
r_n^2 \int_{\partial B_1} u_{(r_n)}^{p+1}v_{(r_n)}^{p+1} \leq C \| u^2_{(r_n)} \|_{L^2(\partial B_1)}\| v^2_{r_n}\|_{L^2(\partial B_1)}\leq C' \| u^2_{(r_n)} \|^{p+1}_{L^2(\partial B_1)}\| v^2_{r_n}\|^{p+1}_{L^2(\partial B_1)},
\]
where the last inequality comes from \eqref{eq:bdedawayfrom0}. As a consequence, recalling also \eqref{abs hp 37}, the normalised functions 
\[
\tilde u_{n}= \frac{u_{(r_n)}}{\|u_{(r_n)}\|_{L^2(\partial B_1)}},\qquad \tilde v_{n}= \frac{v_{(r_n)}}{\|v_{(r_n)}\|_{L^2(\partial B_1)}}
\]
are uniformly bounded in $H^1(\partial B_1)$, and
\[
r_n^2\int_{\partial B_1} \tilde u_{n}^{p+1} \tilde v_{n}^{p+1} \leq C.
\]
Thus, up to a subsequence, $\tilde u_n\rightharpoonup \tilde u$, $\tilde v_n\rightharpoonup \tilde v$ weakly in $H^1(\partial B_1)$, with $\tilde u\cdot \tilde v\equiv 0$. This, in turn, gives:
\[
2>\frac{4-\eps}{2}\liminf_n \gamma(\Lambda_1(r_n))+\gamma(\Lambda_2(r_n))\geq \gamma(\lambda_1(\{\tilde u>0\}))+\gamma(\lambda_1(\{\tilde v>0\}))\geq 2,
\]
a contradiction.
\end{proof}


\end{document}